\newtheorem{thm}{Theorem}[section]
\newtheorem{prop}{Proposition}[section]
\newtheorem{lem}{Lemma}[section]
\newtheorem{cor}{Corollary}[section]
\newcommand{\be}{\begin{equation}}
\newcommand{\ee}{\end{equation}}
\newcommand{\ur}{u_{r}}
\newcommand{\uthe}{u_{\theta}}
\newcommand{\dive}{\mbox{div}}
\newcommand{\pt}{\partial}
\newcommand{\ctthe}{\cot\theta}
\newcommand{\sthe}{\sin\theta}
\newcommand{\cthe}{\cos\theta}
\newcommand{\er}{e_r}
\newcommand{\ethe}{e_{\theta}}
\newcommand{\ephi}{e_{\phi}}
\renewenvironment{proof}{\noindent{\bf Proof.}}{\qed}
\begin{document}
\title{Homogeneous solutions of stationary Navier-Stokes equations with isolated singularities on the unit sphere. III. Two singularities}
\author{Li Li\footnote{Department of Mathematics, Harbin Institute of Technology, Harbin 150080, China. Email: lilihit@126.com}, 
	YanYan Li\footnote{Department of Mathematics, Rutgers University, 110 Frelinghuysen Road, Piscataway, NJ 08854, USA. Email: yyli@math.rutgers.edu}, 
	Xukai Yan\footnote{School of Mathematics, Georgia Institute of Technology, 686 Cherry St NW, Atlanta, GA 30313, USA. Email: xukai.yan@math.gatech.edu}}
\date{}
\maketitle

\emph{Dedicated to Luis Caffarelli on his 70th birthday, with admiration and friendship.}

\abstract{All $(-1)$-homogeneous axisymmetric no-swirl solutions
of incompressible stationary Navier-Stokes equations in three dimension
which are smooth on the unit sphere minus north and south poles
have been classified in our earlier work as a four dimensional surface 
with boundary.
In this paper, we establish near the no-swirl solution surface
existence, non-existence and uniqueness results on
$(-1)$-homogeneous axisymmetric solutions with nonzero swirl
which are smooth on the unit sphere minus north and south poles.}

\setcounter{section}{0}

\section{Introduction}\label{sec:intro}
Consider the incompressible stationary Navier-Stokes equations (NSE) in $\mathbb{R}^3$: 
\begin{equation}\label{NS}
\left\{
\begin{split}
	& -\triangle u + u\cdot \nabla u +\nabla p = 0, \\
	& \dive\textrm{ } u=0.
\end{split}
\right. 
\end{equation} 

The equations are invariant under the scaling $u(x)\to \lambda u(\lambda x)$ and $p(x)\to \lambda^2 p(\lambda x)$, $\lambda>0$. We study solutions which are invariant under the scaling. For such solutions $u$ is $(-1)$-homogeneous and $p$ is $(-2)$-homogeneous.  We call them $(-1)$-homogeneous solutions.

The NSE can be reformulated in spherical coordinates $(r,\theta,\phi)$. A vector field $u$ can be written as
\[
	u = u_r \er + u_\theta \ethe + u_\phi \ephi, 
\]
where 
\[
	\er = \left(
	\begin{matrix}
		\sthe\cos\phi \\
		\sthe\sin\phi \\
		\cthe
	\end{matrix} \right),  \hspace{0.5cm}
	\ethe = \left(
	\begin{matrix}
		\cthe\cos\phi  \\
		\cthe\sin\phi   \\
		-\sthe	
	\end{matrix} \right), \hspace{0.5cm}
	\ephi = \left(
	\begin{matrix}
		-\sin\phi \\  \cos\phi \\ 0
	\end{matrix} \right).  
\]
A vector field $u$ is called axisymmetric if $u_r$, $u_{\theta}$ and $u_{\phi}$ are independent of $\phi$, and is called {\it no-swirl} if $u_{\phi}=0$.  

Landau discovered in \cite{Landau} a three parameter family of explicit $(-1)$-homogeneous solutions of the stationary NSE \eqref{NS}, which are axisymmetric and with no swirl. These solutions are now called Landau solutions. Tian and Xin prove in \cite{TianXin} that all $(-1)$-homogeneous, axisymmetric nonzero solutions of \eqref{NS} in $C^{2}(\mathbb{R}^3\setminus\{0\})$ are Landau solutions. A classification of all $(-1)$-homogeneous solutions in $C^{2}(\mathbb{R}^3\setminus\{0\})$  was given by \v{S}ver\'{a}k in 2006:

\addtocounter{thm}{-1}
\renewcommand{\thethm}{A}%
\begin{thm}{\rm(\cite{Sverak})}
All (-1)-homogeneous nonzero solutions of \eqref{NS} in $C^2(\mathbb{R}^3\setminus\{0\})$ are Landau solutions.
\end{thm}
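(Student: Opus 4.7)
The plan is to reduce the problem to the axisymmetric case, to which Tian–Xin's classification applies, giving a Landau solution. The work therefore splits into a regularity step, an axisymmetry step, and an invocation of Tian–Xin.

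First I would set up the reduction to the sphere. Writing $u(x) = |x|^{-1} U(\sigma)$ and $p(x) = |x|^{-2} P(\sigma)$ with $\sigma = x/|x|\in S^2$, the stationary system \eqref{NS} becomes a coupled nonlinear PDE system on $S^2$ for $(U,P)$. Since $u \in C^2(\mathbb{R}^3\setminus\{0\})$, the pair $(U,P)$ is automatically $C^2$ on $S^2$; because the reduced equation is elliptic once the divergence-free constraint is treated as a Lagrange-multiplier condition for $P$, standard bootstrapping promotes $(U,P)$ to $C^\infty(S^2)$. This also gives automatic bounds on all angular derivatives.

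Next I would bring in the Bernoulli head $H := \tfrac12|u|^2 + p$, which is $(-2)$-homogeneous. A direct calculation (dotting \eqref{NS} with $u$, using $\dive u=0$, and adding the Poisson equation $\Delta p = -\mathrm{tr}((\nabla u)^2)$) yields the sign-definite identity
\[
-\Delta H + \dive(H u) + |\omega|^2 = 0, \qquad \omega = \nabla\times u.
\]
Restricting to $S^2$ and using $(-2)$-homogeneity, $\widetilde H := H|_{S^2}$ satisfies a second-order elliptic equation on $S^2$ whose right-hand side is $\leq 0$. The key step is then to exploit this sign, together with the rotational invariance of \eqref{NS}, to force an $SO(2)$-symmetry on $(U,P)$. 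Concretely I would examine the critical points and level sets of $\widetilde H$: the sign condition, combined with maximum-principle/Hopf-lemma arguments on $S^2$, should force $\widetilde H$ to depend on only one angular variable; that variable then identifies an axis of symmetry for the whole solution $u$. Once axisymmetry is established, Theorem of Tian–Xin (as cited above) applies and identifies $u$ as a Landau solution.

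The main obstacle is clearly the axisymmetry step. Unlike elliptic problems with a variational structure, the stationary NSE on $S^2$ does not admit an obvious symmetrization, so one must extract rigidity purely from the head-pressure identity, the strong maximum principle on $S^2$, and the interplay with $(-1)$-homogeneity. A possible subtlety is that critical-set analysis of $\widetilde H$ can admit degenerate configurations, so the argument may require refining the basic identity (for example, by combining it with the vorticity equation or with integration-by-parts identities against Killing fields on $S^2$) to rule out non-axisymmetric critical geometries before Tian–Xin can be applied.
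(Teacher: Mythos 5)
This statement is not proved in the paper at all: it is quoted as Theorem A from \v{S}ver\'{a}k's paper, so there is no in-paper argument to compare with, and your sketch has to stand on its own. It does not: the axisymmetry step, which is the entire content of the theorem once Tian--Xin is available, is asserted rather than proved, and the mechanism you propose cannot deliver it. Writing $H=r^{-2}h(\sigma)$ and restricting the identity $\Delta H-u\cdot\nabla H=|\omega|^2\ge 0$ to $\mathbb{S}^2$ gives
\begin{equation*}
\Delta_{\mathbb{S}^2}h-U_T\cdot\nabla_{\mathbb{S}^2}h+2\left(1+U_r\right)h\;\ge\;0,
\end{equation*}
and the zeroth-order coefficient $2(1+U_r)$ has no sign, so on the closed manifold $\mathbb{S}^2$ neither the strong maximum principle nor a Hopf-type argument yields constancy of $h$, let alone the much finer conclusion that $h$ depends on a single angular variable. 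Indeed any argument strong enough to force $h$ constant or $\omega\equiv 0$ would be wrong, since the Landau solutions themselves have non-constant head pressure and non-trivial vorticity; your scheme gives no mechanism that singles out an axis while still permitting these solutions. Critical-point or level-set analysis of a single scalar function on $\mathbb{S}^2$ cannot, by itself, produce the $SO(2)$-invariance you need.

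There is a second, independent gap: even if you knew that $\widetilde H$ (or $|U|$, or $P$) were invariant under rotations about some axis, this would not give axisymmetry of the full velocity field $u$ in the sense required by Tian--Xin, namely that all three spherical components $u_r,u_\theta,u_\phi$ are independent of $\phi$ about a common axis; symmetry of a scalar invariant does not transfer to the vector field without substantial extra work. The preliminary steps (reduction to $\mathbb{S}^2$ and smoothness of $(U,P)$ by elliptic bootstrap) are fine, and quoting Tian--Xin at the end is legitimate, but the rigidity step in between -- which is precisely what \v{S}ver\'{a}k's cited proof accomplishes by a different, self-contained argument -- is missing, so the proposal does not constitute a proof.
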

\renewcommand{\thethm}{\thesection.\arabic{thm}}%

We are interested in analyzing solutions which are smooth on $\mathbb{S}^2$ minus finite points.
In our first paper \cite{LLY1}, we have classified all axisymmtric no-swirl solutions with one singularity, as a two dimensional surface with boundary, and have proved the existence of a curve of axisymmetric solutions with nonzero swirl emanating from the interior and one part of the boundary of the surface of no-swirl solutions. We have also proved that there is no such curve of solutions for any point on the other part of the boundary. Uniqueness results of nonzero swirl solutions near the surface were also given in this paper.  In our second paper \cite{LLY2}, we classified  all $(-1)$-homogeneous, axisymmetric no-swirl solutions of \eqref{NS} which are smooth on $\mathbb{S}^2\setminus\{S,N\}$ as a four parameter family of solutions, where $S$ is the south pole and $N$ is the north pole. Our main result in this paper is to prove the existence of $(-1)$-homogeneous axisymmetric solutions with nonzero swirl emanating from the surface of no-swirl solutions, as well as nonexistence and uniqueness results.

%
%
%
%
%
%

For each $c_1\ge -1, c_2\ge -1$, let 
\[
    \bar{c}_3 (c_1,c_2) := -\frac{1}{2} \left( \sqrt{1+c_1} + \sqrt{1+c_2}  \right) \left( \sqrt{1+c_1} + \sqrt{1+c_2}  + 2 \right).
    \]
     Let  $c:=(c_1,c_2,c_3)$, define
\begin{equation*}
   J:=\{c\in \mathbb{R}^3 | c_1\geq -1, c_2\geq -1,c_3\ge \bar{c}_3(c_1,c_2)\}. 
\end{equation*}
In \cite{LLY2} we have proved that there exist $\gamma^-,\gamma^+\in C^0(J, \mathbb{R})$ such that  all $(-1)$-homogeneous axisymmetric no-swirl solutions of \eqref{NS} in $C^2(\mathbb{R}^3\setminus\{x_3-axis\})$ are a four parameter family $\{(u^{c,\gamma},p^{c,\gamma})\}$, where $c:=(c_1,c_2,c_3)$, $(c,\gamma)\in I$  and
\begin{equation*}
	I:= \{(c,\gamma)\in \mathbb{R}^4 \mid c_1\geq -1, c_2\geq -1, c_3\geq \bar{c}_3(c_1,c_2), \gamma^-(c) \leq \gamma\leq \gamma^+(c) \}. 
\end{equation*}

For an axisymmetric no-swirl solution $(u^{c,\gamma}, p^{c,\gamma})$ of \eqref{NS} in $C^{2}(\mathbb{S}^2\setminus\{N,S\})$, the linearized equation of \eqref{NS} at $(u^{c,\gamma}, p^{c,\gamma})$ is 
\begin{equation}\label{sec1:eq:LNS}
  \left\{
\begin{split}
	& -\triangle v  + u^{c,\gamma}\cdot \nabla v+ v\cdot \nabla u^{c,\gamma}+\nabla q = 0, \\
	& \dive\textrm{ } v=0.
\end{split}
\right.
\end{equation}
Define
\begin{equation}\label{eq_ab_1}
	a_{c,\gamma}(\theta)=-\int_{\frac{\pi}{2}}^{\theta} \left( 2\cot t+u_\theta^{c,\gamma} \right)dt, \quad     
	b_{c,\gamma}(\theta)=-\int_{\frac{\pi}{2}}^{\theta}u_\theta^{c,\gamma}dt,
\end{equation}
and

\begin{equation*}
  v_{c,\gamma}^{1}= -\frac{1}{\sin\theta}\int_{\frac{\pi}{2}}^{\theta}e^{-b_{c,\gamma}(t)}\sin t dt\cdot \vec{e}_{\phi},\quad v_{c,\gamma}^2=\frac{1}{\sin\theta}\vec{e}_{\phi}. 
\end{equation*}
Then, as explained towards the end of Section \ref{sec_2},  $\{v_{c,\gamma}^1, v_{c,\gamma}^2\}$ are linearly independent solutions of  \eqref{sec1:eq:LNS}, in spherical coordinates,  on $\mathbb{S}^2\setminus\{N,S\}$. 

 
 We define the following subsets of $J$ and $I$:
 \begin{equation}\label{eqJ_1}
   \begin{split}
      & J_1:=\{c\in J \mid c_1>-1,c_2>-1,c_3>\bar{c}_3\},\quad J_2:=\{c\in J \mid c_1=-1,c_2>-1,c_3>\bar{c}_3\},\\
      & J_3:=\{c\in J \mid c_1>-1,c_2=-1,c_3>\bar{c}_3\},\quad J_4:=\{c\in J \mid c_1=-1,c_2=-1,c_3>\bar{c}_3\},\\
      & J_5:=\{c\in J \mid c_1>-1,c_2>-1,c_3=\bar{c}_3\},\quad J_6:=\{c\in J \mid c_1=-1,c_2>-1,c_3=\bar{c}_3\},\\
      & J_7:=\{c\in J \mid c_1>-1,c_2=-1,c_3=\bar{c}_3\},\quad J_8:=\{c\in J \mid c_1=-1,c_2=-1,c_3=\bar{c}_3\},
   \end{split}
\end{equation}
and for $1\le k\le 4$,
	\begin{equation}\label{eq1_3}
	\begin{split}
	& I_{k,1}:=\{(c,\gamma)\in I \mid c\in J_k,\gamma^-(c)<\gamma<\gamma^+(c)\}, \\
	& I_{k,2}:=\{(c,\gamma)\in I \mid c\in J_k, c_1<-\frac{3}{4}, \gamma=\gamma^+(c)\},\\
	& I_{k,3}:=\{(c,\gamma)\in I \mid c\in J_k, c_2<-\frac{3}{4}, \gamma=\gamma^-(c)\}, 
  \end{split}
\end{equation}
and for $5\le k\le 8$, let $I_{k,l}:=\{(c,\gamma)\in J_k\times\{\gamma^+(c)\} \mid c_1,c_2<-\frac{3}{4}\}$, $l=1,2,3$.

Moreover, let 
\begin{equation*}
   \hat{I}:=\{(c,\gamma)\in I\setminus\cup_{1\le k\le 8, 1\le l\le 3} I_{k,l} | c_1> -\frac{3}{4} \textrm{ or } c_2> -\frac{3}{4}\}.
\end{equation*}

\begin{thm}\label{thm1}
	Let $K$ be a compact subset of one of the sets $I_{k,l}$, $1\le k\le 8$, $1\le l\le 3$. Then there exist  $\delta=\delta(K)>0$, and $(u,p)\in C^{\infty}(K\times B_{\delta}(0)\times (\mathbb{S}^2\setminus\{N,S\}))$ such that for every $(c,\gamma, \beta)\in K\times  B_{\delta}(0)$, $\beta=(\beta_1,\beta_2)$, $(u,p)(c,\gamma,\beta; \cdot)\in C^{\infty}(\mathbb{S}^2\setminus\{N,S\})$ satisfies \eqref{NS} in $\mathbb{R}^3\setminus\{(0,0,x_3)|x_3\in\mathbb{R}\}$, with nonzero swirl if $\beta\ne 0$, and $\| \sin\theta \left( u(c,\gamma,\beta) - u^{c,\gamma} \right) \|_{L^{\infty}(\mathbb{S}^2\setminus\{N,S\})}\to 0$ as $\beta\to 0$. Moreover, $ \frac{\partial }{\partial \beta_i}u(c,\gamma,\beta)|_{\beta=0}=v_{c,\gamma}^i$, $i=1,2$. 
  
	On the other hand, for $(c,\gamma)\in \hat{I}$, if there exist a sequence of solutions $\{u^i\}$ of \eqref{NS} in $C^{\infty}(\mathbb{S}^2\setminus\{N,S\})$, such that $\| \sin\theta (u^i-u^{c,\gamma})\|_{L^{\infty}(\mathbb{S}^2\setminus\{N,S\})}\to 0$ as $i\to \infty$, then for sufficiently large $i$, $u^i=u^{c_i,\gamma_i}+\frac{C_i}{\sin\theta}\vec{e}_{\phi}$ for some constants $c_i,\gamma_i, C_i$ satisfying $(c_i,\gamma_i)\to (c,\gamma)$ and $C_i\to 0$ as $i\to \infty$. 
\end{thm}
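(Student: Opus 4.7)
\emph{Proof plan.} The strategy is to reduce the $(-1)$-homogeneous axisymmetric stationary Navier--Stokes system to a closed system of ODEs in $\theta\in(0,\pi)$, to analyze its linearization at $u^{c,\gamma}$, and to apply an implicit function theorem to parametrize a neighborhood of $u^{c,\gamma}$ inside the nonlinear solution set.

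First, I would substitute the $(-1)$-homogeneous axisymmetric ansatz $u=u_r(\theta)\vec e_r+u_\theta(\theta)\vec e_\theta+u_\phi(\theta)\vec e_\phi$ into \eqref{NS}, obtaining an ODE system in $\theta$ together with prescribed asymptotic behavior at $\theta=0,\pi$ corresponding to smoothness on $\mathbb S^2\setminus\{N,S\}$. At a no-swirl background the linearization decouples: the linear ODE in the $u_\phi$ direction admits $v^1_{c,\gamma}$ and $v^2_{c,\gamma}$ as a fundamental pair, which is immediate from the integrating factor $e^{-b_{c,\gamma}}$ in \eqref{eq_ab_1}; in the $(u_r,u_\theta)$ directions the admissible kernel--compatible with regularity at one pole--is the tangent space to the four-parameter no-swirl surface, as already used in \cite{LLY2}.

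For the existence statement, fix a compact $K\subset I_{k,l}$ and consider the map
\[
F(c,\gamma,\beta;w) := \text{(residual of \eqref{NS})}\ \text{applied to}\ u^{c,\gamma}+\beta_1 v^1_{c,\gamma}+\beta_2 v^2_{c,\gamma}+w,
\]
defined on $K\times B_\delta(0)\times X$, where $X$ is a weighted function space on $(0,\pi)$ whose elements are transverse (in a suitable pairing) to the four-dimensional linearized kernel and whose regularity at the endpoints enforces smoothness on $\mathbb S^2\setminus\{N,S\}$. By construction $\partial_w F$ at $(c,\gamma,0;0)$ is an isomorphism onto a complementary target space $Y$, so the implicit function theorem furnishes a smooth correction $w=w(c,\gamma,\beta)$ with $w|_{\beta=0}=0$ and $\partial_{\beta_i}w|_{\beta=0}=0$, yielding the family $u(c,\gamma,\beta;\cdot)$ together with $\partial_{\beta_i}u|_{\beta=0}=v^i_{c,\gamma}$. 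Compactness of $K$ delivers a uniform $\delta=\delta(K)$, and the convergence $\|\sin\theta\,(u-u^{c,\gamma})\|_{L^\infty}\to 0$ as $\beta\to 0$ follows from the boundedness of $\sin\theta\cdot v^i_{c,\gamma}$ together with the smallness of $w$.

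For the rigidity statement on $\hat I$, if $\{u^i\}$ is a sequence of smooth solutions with $\|\sin\theta(u^i-u^{c,\gamma})\|_{L^\infty}\to 0$, I would project the perturbation onto the linearized kernel: the no-swirl component of the projection is absorbed into a parameter shift $(c,\gamma)\to(c_i,\gamma_i)$ in the four-parameter surface, while the swirl component is \emph{a priori} of the form $\beta_1^i v^1_{c,\gamma}+\beta_2^i v^2_{c,\gamma}$ plus a higher-order correction controlled by the implicit-function argument. The defining property of $\hat I$--that $(c,\gamma)$ lies outside every $I_{k,l}$ yet has $c_1>-3/4$ or $c_2>-3/4$--corresponds exactly to the regime in which the $v^1$-direction is obstructed, either by the non-integrability of $e^{-b_{c,\gamma}(t)}\sin t$ near a pole or by the $c_3=\bar c_3$ boundary analysis carried out in \cite{LLY2}. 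Consequently $\beta_1^i=0$ for large $i$, forcing $u^i-u^{c_i,\gamma_i}=\frac{C_i}{\sin\theta}\vec e_\phi$ with $C_i=\beta_2^i\to 0$.

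The principal obstacle is constructing the weighted function spaces $X,Y$ so that (i) the linearization is Fredholm with kernel exactly the explicit four-dimensional subspace, and (ii) implicit-function solutions automatically extend to smooth vector fields on $\mathbb S^2\setminus\{N,S\}$. A secondary--but substantial--obstacle is the case analysis across the eight strata $J_1,\ldots,J_8$ and the thresholds $c_i=-3/4$, $c_i=-1$, and $c_3=\bar c_3$, where the kernel and its admissible subspace change character so that both the existence and the rigidity arguments must be adapted individually.
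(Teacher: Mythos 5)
Your existence argument follows essentially the same route as the paper: write the problem as an ODE system for $U=\sin\theta\,(u_\theta,u_\phi)$, set up a map $G$ (your $F$) on weighted Banach spaces adapted to the endpoint behavior of $\bar U_\theta=U_\theta^{c,\gamma}$, identify the kernel of the linearization at $\tilde U=0$ (spanned by $V^1_{c,\gamma},V^2_{c,\gamma}$ or their admissible combinations in the $\theta$-component, and $V^3_{c,\gamma},V^4_{c,\gamma}$ in the swirl component), construct a right inverse, and apply the implicit function theorem with the kernel directions as parameters, restricting at the end to the two swirl parameters. Two caveats: (i) the ``residual of \eqref{NS}'' must be taken in the integrated form \eqref{sec31:eq:NSE}, and the three free constants $\hat c_1,\hat c_2,\hat c_3$ have to be absorbed into the map (the paper does this through the term $\tilde P(x)$ in \eqref{sec31:eq:G}, fixing $\hat c_i$ in terms of $\psi[\tilde U_\phi](\pm1)$ and $(\varphi_{c,\gamma}[\tilde U_\theta])''(0)$); otherwise $\partial_w F$ is not an isomorphism onto any natural target. (ii) What you list as the ``principal obstacle''---choosing the weights stratum by stratum so that the kernel is exactly the explicit finite-dimensional space and $W^{c,\gamma}$ is a bounded right inverse---is in fact the bulk of the paper (Sections 3--5, with four separate weight regimes keyed to $\bar U_\theta(\pm1)$ and $\eta_1,\eta_2$), so the proposal is a plan rather than a proof there; but the plan is the paper's plan.

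The genuine gap is in the rigidity statement for $(c,\gamma)\in\hat I$. Your proposed mechanism---project $u^i-u^{c,\gamma}$ onto the linearized kernel and control the remainder ``by the implicit-function argument,'' concluding that the obstructed $v^1$-coefficient vanishes---cannot work as stated. First, for $(c,\gamma)\in\hat I$ one has $U^{c,\gamma}_\theta(-1)>3$ or $U^{c,\gamma}_\theta(1)<-3$ (Case 4), and precisely in this regime no IFT framework is set up: the weighted spaces, right inverse, and uniqueness statement of Theorems 3.1--5.1 are unavailable, so there is nothing to control the ``higher-order correction.'' Second, the hypothesis is only $\|\sin\theta\,(u^i-u^{c,\gamma})\|_{L^\infty}\to0$, which is far weaker than smallness in the $\mathbf X$-norms used for the uniqueness parts of those theorems; a projection argument from $L^\infty$-closeness would at best give an approximate decomposition, whereas the theorem asserts the exact identity $u^i=u^{c_i,\gamma_i}+\frac{C_i}{\sin\theta}\vec e_\phi$. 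The paper's proof is of a different nature: it is a nonlinear rigidity argument imported from \cite{LLY1}. Uniform convergence of $U^i_\theta$ to $U^{c,\gamma}_\theta$ plus Theorem 1.3 of \cite{LLY1} forces $U^i_\theta(\pm1)$ to exist and to stay beyond the threshold ($>3$ at $-1$ or $<-3$ at $1$) for large $i$, and Theorem 1.4 of \cite{LLY1} then says that \emph{any} such solution has constant $U^i_\phi$; the classification of no-swirl solutions does the rest. Without invoking (or reproving) that threshold rigidity theorem, your argument does not establish the second half of Theorem \ref{thm1}. Relatedly, your description of $\hat I$ as the regime where $v^1$ is obstructed ``by non-integrability of $e^{-b_{c,\gamma}}\sin t$ or by the $c_3=\bar c_3$ boundary'' is not the right dividing line: $e^{-b_{c,\gamma}}$ already fails to be integrable at a pole whenever $|\bar U_\theta(\mp1)|\ge2$, which includes many strata where existence of swirl solutions \emph{does} hold; the relevant threshold is $|\bar U_\theta(\mp1)|=3$, i.e.\ $c_1,c_2=-\frac34$.
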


In the above theorem, $(u,p)\in C^{\infty}(\mathbb{S}^2\setminus\{N,S\})$ is understood to have been extended to $\mathbb{R}^3\setminus\{(0,0,x_3)|x_3 \in\mathbb{R}\}$ so that $u$ is $(-1)$-homogeneous and $p$ is $(-2)$-homogeneous. We use this convention throughout the paper unless otherwise stated.\\

%


In Section 2, we will state some properties of $(-1)$-homogeneous axisymmetric no-swirl solutions in $C^2(\mathbb{S}^2\setminus\{S,N\})$ that we have obtained in \cite{LLY2} and will use them in later sections. We will also introduce some notations in that section.  We will then prove the existence, nonexistence and uniqueness results of $(-1)$-homogeneous axisymmetric solutions with nonzero swirl in $C^2(\mathbb{S}^2\setminus\{S,N\})$ in three different cases in Section \ref{sec_3}, \ref{sec33:sec} and \ref{sec_5}. The proof of Theorem \ref{thm1} is given at the end of Section \ref{sec_5}, utilizing the results in Section \ref{sec_3}-\ref{sec_5}.


\bigskip

\noindent
{\bf Acknowledgment}. 
The work of the first named author is partially supported by NSFC grants No. 11871177. The work of the second named author is partially supported by NSF grants DMS-1501004.   The work of the third named author is partially supported by AMS-Simons Travel Grant and AWM-NSF Travel Grant.

\section{Preliminary}\label{sec_2}

An $(-1)$-homogeneous axisymmetric vector field $u$ is divergence free if and only if 
\begin{equation}\label{eq_divefree}
    \ur =-\frac{d \uthe}{d \theta} - \ctthe \uthe .
    \end{equation}

    Define new unknown functions and a different independent variable: 
\begin{equation*}
	x:=\cthe, \quad U_\theta := u_\theta \sthe, \quad U_\phi:= u_\phi \sthe, 
\end{equation*}
and let $U:=(U_{\theta},U_{\phi})$.

All axisymmetric no-swirl solutions on $\mathbb{S}^2\setminus\{N,S\}$ are given by the family $U^{c,\gamma}:=(U^{c,\gamma}_{\theta}, 0)$ with $(c,\gamma)\in I$, where $U^{c,\gamma}_{\theta}$ are solutions given by Theorem 1.2 in \cite{LLY2}. 

Denote $\bar{U}:=U^{c,\gamma}$ for convenience.

As explained in \cite{LLY1}, $(u,p)$ is a $(-1)$-homogeneous axisymmetric solution of \eqref{NS} if and only if $u_r$ is given by \eqref{eq_divefree}, $p$ is given by 
\begin{equation}\label{eq1_2}
      p=-\frac{1}{2}\left(\frac{d^2 \ur}{d\theta^2} + (\ctthe - \uthe) \frac{d \ur}{d\theta} + \ur^2 +\uthe^2+u^2_{\phi}\right),
\end{equation}
and $U:=(U_{\theta},U_{\phi})$ satisfies 

\begin{equation}\label{sec31:eq:NSE}
\left\{
\begin{split}
	& (1-x^2) U_\theta' + 2x U_\theta +\frac{1}{2} U_\theta^2 + \int_{0}^{x} \int_{0}^{l} \int_{0}^{t} \frac{2 U_\phi(s) U_\phi'(s)}{1-s^2} ds dt dl 
	 = P_{\hat{c}}(x),  \\
	& (1-x^2) U_\phi'' + U_\theta U_\phi' = 0. 
\end{split}
\right. 
\end{equation}
where 
\[
   P_{\hat{c}}= \hat{c}_1(1-x)+ \hat{c}_2(1+x)+ \hat{c}_3 (1-x^2),
\]
and $\hat{c}_1, \hat{c}_2, \hat{c}_3$ are constants. 

Moreover, for each $(c,\gamma)\in I$,  $\bar{U}=U^{c,\gamma}$ satisfies $\bar{U}_\phi \equiv 0$,
\begin{equation}\label{eq:UthP}
 (1-x^2)\bar{U}_{\theta}'+2x\bar{U}_{\theta}+\frac{1}{2}\bar{U}_{\theta}^2=c_1(1-x)+c_2(1+x)+c_3(1-x^2),
\end{equation}
and $\bar{U}_{\theta}(0)=\gamma$.

We first introduce the implicit function theorem (IFT) which we use: 
\addtocounter{thm}{-1}
\renewcommand{\thethm}{B}%
\begin{thm}{\bf (Implicit Function Theorem)}{\rm(\cite{Nirenberg})}\label{thm:IFT}
	Let $\mathbf{X},\mathbf{Y},\mathbf{Z}$ be Banach spaces and $f$ a continuous mapping of an open set $U\subset \mathbf{X}\times \mathbf{Y}\to \mathbf{Z}$. Assume that $f$ has a Fr\'{e}chet derivative with respect to $x$, $f_{x}(x,y)$ which is continuous in $U$. Let $(x_0,y_0)\in U$ and $f(x_0,y_0)=0$. If $A=f_{x}(x_0,y_0)$ is an isomorphism of $\mathbf{X}$ onto $\mathbf{Z}$ then
	
	(1) There is a ball $\{y:\|y-y_0\|<r\}=B_r(y_0)$ and a unique continuous map $u:B_r(y_0)\to \mathbf{X}$ such that $u(y_0)=x_0$ and $f(u(y),y)\equiv 0$.
	
	(2) If $f$ is of class $C^1$ then $u(y)$ is of class $C^1$ and $u_y(y)=-(f_{x}(u(y),y))^{-1}\circ f_{y}(u(y),y)$.
	
	(3) $u_{y}(y)$ belongs to $C^k$ if $f$ is in $C^k$, $k>1$. 
\end{thm}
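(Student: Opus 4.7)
The plan is to reduce the existence of the implicit function $u(y)$ to a fixed-point problem and apply the Banach contraction mapping principle, then upgrade regularity by differentiating the defining identity $f(u(y), y) \equiv 0$.

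\textbf{Step 1 (existence and uniqueness, part (1)).} Since $A$ is an isomorphism of $\mathbf{X}$ onto $\mathbf{Z}$, the equation $f(x,y)=0$ is equivalent to the fixed-point equation $T_y(x) = x$ for
\[
T_y(x) := x - A^{-1} f(x, y).
\]
Its Fr\'echet derivative in $x$ at $(x_0, y_0)$ is $I - A^{-1} A = 0$. Using continuity of $f_x$ (and openness of the set of isomorphisms in $\mathcal{L}(\mathbf{X},\mathbf{Z})$), I choose $\rho>0$ so small that $\bar B_\rho(x_0) \times \bar B_\rho(y_0) \subset U$ and $\|I - A^{-1} f_x(x,y)\| \le 1/2$ on this product. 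The Banach-space mean value inequality then yields $\|T_y(x_1) - T_y(x_2)\| \le \tfrac12 \|x_1 - x_2\|$ for $x_1, x_2 \in \bar B_\rho(x_0)$ and every $y \in \bar B_\rho(y_0)$. Next, using continuity of $f(x_0, \cdot)$ at $y_0$, I further shrink to $r \in (0, \rho]$ so that $\|A^{-1} f(x_0, y)\| \le \rho/2$ for $y \in B_r(y_0)$; combined with the half-contraction this forces $T_y$ to map $\bar B_\rho(x_0)$ into itself. The contraction mapping theorem supplies a unique fixed point $u(y) \in \bar B_\rho(x_0)$, and continuity of $u$ on $B_r(y_0)$ follows from the estimate
\[
\|u(y_1) - u(y_2)\| \le \tfrac12 \|u(y_1) - u(y_2)\| + \|A^{-1}\|\cdot\|f(u(y_2),y_1)-f(u(y_2),y_2)\|
\]
together with continuity of $f$ in $y$.

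\textbf{Step 2 (differentiability, part (2)).} Assuming $f \in C^1$, $f_x(u(y),y)$ stays in the open set of isomorphisms for $y$ near $y_0$, so $f_x(u(y),y)^{-1}$ exists and depends continuously on $y$. From
\[
0 = f(u(y+h), y+h) - f(u(y), y) = f_x(u(y),y)\bigl(u(y+h)-u(y)\bigr) + f_y(u(y),y)\, h + o\!\left(\|h\| + \|u(y+h)-u(y)\|\right),
\]
combined with the Lipschitz bound $\|u(y+h)-u(y)\| = O(\|h\|)$ inherited from Step 1, I solve for the increment to obtain
\[
u(y+h) - u(y) = -f_x(u(y),y)^{-1} f_y(u(y),y)\, h + o(\|h\|),
\]
which both proves Fr\'echet differentiability of $u$ at $y$ and identifies $u_y(y)$ with the stated formula; continuity of $u_y$ is immediate from continuity of $u$, $f_x$, $f_y$, and of inversion on the isomorphism set.

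\textbf{Step 3 (higher regularity, part (3)).} This follows by induction on $k$ using the identity $u_y(y) = -f_x(u(y),y)^{-1} f_y(u(y),y)$: inversion is $C^\infty$ on the open set of isomorphisms, and compositions of $C^{k-1}$ maps are $C^{k-1}$, so if $u \in C^{k-1}$ and $f\in C^k$ the right-hand side is $C^{k-1}$, giving $u \in C^k$. The main obstacle is the quantitative juggling in Step 1: one must first fix $\rho$ to guarantee the half-contraction in $x$ uniformly in $y$, then shrink $r \le \rho$ to restore invariance of the closed ball $\bar B_\rho(x_0)$ under $T_y$ without destroying the contraction constant. Once the Banach-space calculus (mean value inequality, openness of isomorphisms, smoothness of inversion) is in place, Steps 2 and 3 are essentially algebraic.
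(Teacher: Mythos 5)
This statement is Theorem B, which the paper imports from \cite{Nirenberg} without proof, so there is no in-paper argument to compare against; your contraction-mapping proof is the standard one and is correct. The reformulation $T_y(x)=x-A^{-1}f(x,y)$, the uniform $\tfrac12$-contraction obtained from continuity of $f_x$, the ball invariance after shrinking $r$, the differentiation of the identity $f(u(y),y)\equiv 0$ for part (2), and the bootstrap for part (3) are exactly the classical route found in the cited reference. The only detail worth tightening is the uniqueness claim in (1): the fixed-point argument gives uniqueness among solutions taking values in $\bar B_\rho(x_0)$, whereas uniqueness among all continuous maps on $B_r(y_0)$ with $u(y_0)=x_0$ requires the short additional observation that the set where two such maps coincide is nonempty, closed, and open in the (connected) ball $B_r(y_0)$.
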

\renewcommand{\thethm}{\thesection.\arabic{thm}}%

By the asymptotic behavior studies in \cite{LLY1}, if $\bar{U}_{\theta}(-1)=2$, $\eta_1:=\lim_{x\to -1}(\bar{U}_{\theta}-2)\ln(1+x)$ exists and $\eta_1=0$ or $4$. If $\bar{U}_{\theta}(1)=-2$, $\eta_2:=\lim_{x\to 1}(\bar{U}_{\theta}+2)\ln(1-x)$ exists and $\eta_2=0$ or $-4$.

To prove the existence of axisymmetric, with swirl solutions near $\bar{U}$ using the implicit function theorem, we construct function spaces according to the singular behaviors of $\bar{U}_{\theta}$ near the poles, which are determined by the values of $\bar{U}_{\theta}(-1)$, $\bar{U}_{\theta}(1)$, $\eta_1$ and $\eta_2$. 

Our proof of existence will be carried out in following separate cases:

Case 1:  $\big( \bar{U}_{\theta}(-1)<3$, $\bar{U}_{\theta}(-1)\ne 2$ or $\bar{U}_{\theta}(-1)=2$ with $\eta_1=0\big)$, AND $\big( \bar{U}_{\theta}(1)>-3$, $\bar{U}_{\theta}(1)\ne -2$ or $\bar{U}_{\theta}(1)=-2$ with $\eta_2=0\big)$.

Case 2: $\big( \bar{U}_{\theta}(-1)=2$ with $\eta_1=4$, and $(\bar{U}_{\theta}(1)>-3$, $\bar{U}_{\theta}(1)\ne -2$ or $\bar{U}_{\theta}(1)=-2$ with $\eta_2=0) \big)$ OR $\big(\bar{U}_{\theta}(1)=-2$ with $\eta_2=-4$, and $(\bar{U}_{\theta}(-1)<3$, $\bar{U}_{\theta}(-1)\ne 2$ or $\bar{U}_{\theta}(-1)=2$ with $\eta_1=0 ) \big)$.

Case 3: $\bar{U}_{\theta}(-1)=2$ with $\eta_1=4$, and $\bar{U}_{\theta}(1)=-2$ with $\eta_2=-4$.

Case 4: $\bar{U}_{\theta}(-1)\ge 3$ or $\bar{U}_{\theta}(1)\le -3$.\\


The axisymmetric no-swirl solution $\{U_{\theta}^{c,\gamma}\}$ of \eqref{eq:UthP} satisfies the following properties: 
\begin{prop}{\rm(Theorem 1.3 in \cite{LLY1} and Theorem 1.3 in \cite{LLY2})}\label{prop2_1}
Suppose $(c,\gamma)\in I$, then

	
	
	(i) For any $(c,\gamma)\in I$, $U_\theta^{c,\gamma}(\pm 1)$ both exist and are finite. Moreover,	
	\begin{equation*}
	\begin{split}
		& U_\theta^{c,\gamma}(-1) := \left\{
		\begin{array}{ll}
			2+2\sqrt{1+c_1}, &  \mbox{if } \gamma=\gamma^+(c), \\
			2-2\sqrt{1+c_1}, &  \mbox{otherwise},
		\end{array}
		\right. \\ 
		&U_\theta^{c,\gamma}(1) := \left\{
		\begin{array}{ll}
			-2-2\sqrt{1+c_2}, &  \mbox{if } \gamma=\gamma^-(c), \\
			2+2\sqrt{1+c_2}, &  \mbox{otherwise}. 
		\end{array}
		\right. 
		\end{split}
	\end{equation*}
	
	(ii) If $c_1=-1$, then $\eta_1:=\lim_{x\to -1}(U^{c,\gamma}_{\theta}-2)\ln(1+x)$ exists. Moreover, $\eta_1=0$ if $\gamma=\gamma^+(c)$ and $\eta_1=4$ if $\gamma<\gamma^+(c)$.
	
	If $c_2=-1$, then $\eta_2:=\lim_{x\to 1}(U^{c,\gamma}_{\theta}+2)\ln(1-x)$ exists. Moreover, $\eta_2=0$ if $\gamma=\gamma^-(c)$ and $\eta_2=-4$ if $\gamma>\gamma^-(c)$.
\end{prop}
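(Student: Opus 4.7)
The strategy is to view \eqref{eq:UthP} as a Riccati equation in $\bar{U}_{\theta}$ with regular singular points at $x=\pm 1$, and to analyze each endpoint separately.

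For part (i), the first step is to establish existence of the boundary limits. At $x=-1$ the coefficient $(1-x^{2})$ of $\bar{U}_{\theta}'$ vanishes, so any finite limit $\ell=\lim_{x\to -1^{+}}\bar{U}_{\theta}(x)$ must satisfy the algebraic relation obtained by formally setting $x=-1$:
\[
  -2\ell + \tfrac{1}{2}\ell^{2} = 2c_{1},
\]
whose roots are $\ell=2\pm 2\sqrt{1+c_{1}}$. To upgrade this formal identity to an actual limit, I would apply the classical Riccati substitution $\bar{U}_{\theta}=2(1-x^{2})w'/w$, which converts \eqref{eq:UthP} into the second-order linear equation $2(1-x^{2})^{2}w'' = P_{\hat{c}}(x) w$. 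This equation has a regular singular point at $x=-1$ with indicial exponents $s_{\pm}=\tfrac{1}{2}(1\pm\sqrt{1+c_{1}})$, and the two corresponding Frobenius branches give $\bar{U}_{\theta}(x)\to 4s_{\pm}=2\pm 2\sqrt{1+c_{1}}$. The analogous analysis at $x=+1$ yields the two possible values of $U_{\theta}^{c,\gamma}(1)$ as roots of $2\ell+\tfrac{1}{2}\ell^{2}=2c_{2}$.

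To identify which root is attained in terms of $\gamma$, I would invoke the classification in \cite{LLY2}: $\gamma=\bar{U}_{\theta}(0)$ parametrizes the solution family continuously on $[\gamma^{-}(c),\gamma^{+}(c)]$, and by continuous dependence the endpoint values $U_{\theta}^{c,\gamma}(\pm 1)$ depend continuously on $\gamma$. A shooting and monotonicity argument then shows that the subdominant Frobenius branch at $x=-1$ (exponent $s_{+}$), which gives the larger root $2+2\sqrt{1+c_{1}}$, is realized only at the extremal value $\gamma=\gamma^{+}(c)$, while all interior $\gamma$ land on the dominant root $2-2\sqrt{1+c_{1}}$. The endpoint $\gamma=\gamma^{-}(c)$ at $x=+1$ is handled symmetrically.

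For part (ii) with $c_{1}=-1$, the two indicial exponents coalesce at $s=\tfrac{1}{2}$, so Frobenius produces one regular solution and one with a logarithmic factor. Substituting $\bar{U}_{\theta}=2+v$ into \eqref{eq:UthP} and inserting the log-ansatz $v(x)=\eta_{1}/\ln(1+x) + o\!\left(1/\ln(1+x)\right)$ balances the resonant terms to leading order and forces either $\eta_{1}=0$ (the purely algebraic branch, corresponding to $\gamma=\gamma^{+}(c)$) or $\eta_{1}=4$ (the logarithmic branch, corresponding to $\gamma<\gamma^{+}(c)$). The analogous resonant analysis at $x=+1$ with $c_{2}=-1$ gives $\eta_{2}\in\{0,-4\}$.

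The main obstacle is the rigorous justification that the limit $\eta_{1}=\lim_{x\to -1}(\bar{U}_{\theta}-2)\ln(1+x)$ actually exists (not merely bounded) and takes one of the two discrete values, rather than any intermediate one. This requires iterating the integral form of the Riccati equation near $x=-1$ and controlling the quadratic nonlinearity via a precise barrier and monotonicity argument; the resulting asymptotic expansion up to error $o(1/\ln(1+x))$ is exactly what pins down $\eta_{1}$. These estimates are carried out in \cite{LLY1,LLY2}, and their combination supplies the proposition.
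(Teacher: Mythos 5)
You should first note that the paper contains no proof of this proposition to compare against: it is imported verbatim from Theorem 1.3 of \cite{LLY1} and Theorem 1.3 of \cite{LLY2}, and your closing appeal ``these estimates are carried out in \cite{LLY1,LLY2}'' makes your proposal, in substance, the same citation preceded by a heuristic outline. Judged as an argument, the outline is fine where it is easy and has gaps exactly where the cited theorems do the work. The algebraic relation $-2\ell+\tfrac12\ell^2=2c_1$ (and $2\ell+\tfrac12\ell^2=2c_2$ at $x=1$) only constrains a limit that is already assumed to exist and to make $(1-x^2)\bar{U}_\theta'$ vanish along the limit; establishing that $U^{c,\gamma}_\theta(\pm1)$ exist at all, ruling out oscillation and blow-up, is part of the content of part (i), not a by-product of the Riccati/Frobenius picture. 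More seriously, your branch-selection step is wrong as stated: you invoke ``continuous dependence'' to claim $\gamma\mapsto U^{c,\gamma}_\theta(\pm1)$ is continuous, but the proposition itself says this map takes only the two values $2\pm2\sqrt{1+c_1}$ and jumps precisely at $\gamma=\gamma^+(c)$ (when $c_1>-1$), so it is discontinuous there; continuity holds on compact subsets of $(-1,1)$, not up to the boundary. The identification of the subdominant Frobenius branch with the extremal parameter values, and in part (ii) the existence of $\eta_1,\eta_2$ together with the dichotomies $\{0,4\}$ and $\{0,-4\}$ (rather than intermediate values), are exactly the hard monotonicity/barrier estimates of \cite{LLY1,LLY2}; asserting ``a shooting and monotonicity argument'' does not supply them.

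One further point you should make explicit rather than pass over: your own computation at $x=+1$ gives the admissible limits as the roots $-2\pm2\sqrt{1+c_2}$ of $2\ell+\tfrac12\ell^2=2c_2$. The ``otherwise'' value printed in the proposition, $2+2\sqrt{1+c_2}$, is not such a root; consistency with part (ii) (both roots must coalesce to $-2$ when $c_2=-1$) and with the explicit solution $U^{c,\gamma}_\theta=(1+\sqrt{1+c_1})(1-x)+(-1-\sqrt{1+c_2})(1+x)$ on $c_3=\bar{c}_3$ shows the intended value is $-2+2\sqrt{1+c_2}$, i.e. the displayed formula carries a sign typo. Your sketch silently proves the corrected statement; a careful write-up should say so instead of claiming agreement with the formula as printed.
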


Let $I_{k,l}$ be the sets defined in Section \ref{sec:intro} by \eqref{eqJ_1} and \eqref{eq1_3}. By Proposition \ref{prop2_1} we have the following relations:

$U^{c,\gamma}$ satisfies Case 1 if and only if $(c,\gamma)\in I_{k,l}$ with $(k,l)\in A_1:=\{(k,l)\in \mathbb{Z}^2| k=1 \textrm{ or } 5\le k\le 8, 1\le l\le 3\}\cup\{(2,2), (3,3)\}$.

$U^{c,\gamma}$ satisfies Case 2 if and only if  $(c,\gamma)\in I_{k,l}$ with $(k,l)\in A_2:=\{(2,1), (2,3), (4,3)\}$ or $A_3:=\{(3,1), (3,2), (4,2)\}$.

$U^{c,\gamma}$ satisfies Case 3 if and only if $(c,\gamma)\in I_{k,l}$ with $(k,l)=(4,1)$.

\medskip

According to Theorem 1.1 in \cite{LLY2}, if $c_3=\bar{c}_3$, i.e. $(c,\gamma)\in I_{k,l}$ for $5\le k\le 8$,  then there is only one solution of \eqref{eq:UthP}, which is 
\[
   U_{\theta}^{c,\gamma}= (1+\sqrt{1+c_1})(1-x)+(-1-\sqrt{1+c_2})(1+x).
\]
By this fact and Theorem 1.5 in \cite{LLY2}, we have 


\begin{prop}\label{propA_1}{\rm(Theorem 1.1 and Theorem 1.5 in \cite{LLY2})}
Let $K$ be a compact set contained in one of $I_{k,l}$, $1\le k\le 8$, $l=1,2,3$. Then the solution $U^{c,\gamma}_{\theta}$ of equation \eqref{eq:UthP} is $C^{\infty}(K\times(-1,1))$. Moreover,	

	(1) If $k=1$, or $5\le k\le 8$ and $l=1,2,3$, or $(k,l)=(2,2)$ or $(3,3)$, then for $-1<x<1$
	\begin{equation*}
		|\pt_c^{\alpha} \pt_\gamma^j \bar{U}_\theta|\le C(m,K), \quad \mbox{for any } 0\le |\alpha|+j\leq m,
	\end{equation*}
	where $j=0$ if $l=2,3$, $\alpha_3=0$ if $5\le k\le 8$; $\alpha_1=0$ if $k=2,4$; and $\alpha_2=0$ if $k=3,4$. 
	
	(2) If $(k,l)=(2,1)$, $(2,3)$ or $(4,3)$, then for $-1<x<1$
	\begin{equation*}
		\left(\ln \frac{1+x}{3}\right)^2|\pt_c^{\alpha} \pt_\gamma^j \bar{U}_\theta|\le C(m,K), \quad \mbox{for any } 1\le |\alpha|+j\leq m, \alpha_1=0,  
	\end{equation*}
	where $j=0$ if $l=3$, and $\alpha_2=0$ if $k=4$. 

	(3) If $(k,l)=(3,1)$, $(3,2)$ or $(4,2)$, then for $-1<x<1$
	\begin{equation*}
		\left(\ln \frac{1-x}{3} \right)^2 |\pt_c^{\alpha} \pt_\gamma^j \bar{U}_\theta|\le C(m,K), \quad \mbox{for any } 1\le |\alpha|+j\leq m, \alpha_2=0, 
	\end{equation*}
	where $j=0$ if $l=2$, and $\alpha_1=0$ if $k=4$. 

	(4) If $(k,l)=(4,1)$, then for $-1<x<1$ and for any $1\le |\alpha|+j\leq m, \alpha_1=\alpha_2=0$,
	\begin{equation*}
		\left(\ln \frac{1+x}{3}\right)^2\left(\ln \frac{1-x}{3}\right)^2  |\pt_c^{\alpha} \pt_\gamma^j \bar{U}_\theta|\le C(m,K). 
	\end{equation*}
\end{prop}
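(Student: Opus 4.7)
The plan is to analyze \eqref{eq:UthP} as a family of ODEs parametrized by $(c,\gamma)$ and derive the derivative estimates from its linearizations in those parameters, using an integrating factor whose asymptotic behavior at $x = \pm 1$ is read off from Proposition \ref{prop2_1}. The $C^\infty$-regularity of $\bar U_\theta$ on $K \times (-1,1)$ follows from standard smooth-dependence theorems, since \eqref{eq:UthP} is a first-order ODE smooth in all arguments for $x \in (-1,1)$, with Cauchy data $\bar U_\theta(0) = \gamma$ (or, when $c_3 = \bar c_3$, given by the explicit formula recorded in the excerpt).

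Next I would differentiate \eqref{eq:UthP} once with respect to a parameter to obtain, for $v := \partial_{c_i} \bar U_\theta$ or $v := \partial_\gamma \bar U_\theta$, the linear equation
\begin{equation*}
    (1-x^2) v' + (2x + \bar U_\theta) v = f(x),
\end{equation*}
with $f \in \{-(1-x),\, -(1+x),\, -(1-x^2),\, 0\}$ depending on the variable differentiated. Introducing the integrating factor
\begin{equation*}
    M(x) := \exp\left(\int_0^x \frac{2t + \bar U_\theta(t)}{1-t^2}\, dt\right),
\end{equation*}
one has $v(x) = M(x)^{-1}\bigl(v(0) + \int_0^x M(t) f(t)/(1-t^2)\, dt\bigr)$, with $v(0)$ explicit. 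From Proposition \ref{prop2_1}, in the regular case $c_1 > -1$ one computes $M(x) \sim (1+x)^{\mp\sqrt{1+c_1}}$ near $x = -1$ (sign matching that in $\bar U_\theta(-1) = 2 \pm 2\sqrt{1+c_1}$), and symmetrically at $x = 1$; whereas if $c_1 = -1$ with $\eta_1 = 4$, the $4/\ln(1+x)$ tail in $\bar U_\theta$ gives $M(x) \sim (\ln(1+x))^{2}$, hence $M(x)^{-1} \sim (\ln\frac{1+x}{3})^{-2}$. For the admissible parameter directions (those listed in the statement), the corresponding $f$ vanishes at the singular endpoint in exactly the right way to make $\int_0^x M f/(1-t^2)\,dt$ bounded up to the endpoint, so multiplying by $M^{-1}$ produces the first-order bounds in (1)--(4) once the correspondence between Cases 1--4 and the index sets $I_{k,l}$ stated right after Proposition \ref{prop2_1} is invoked. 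The restrictions on $\alpha_1,\alpha_2,\alpha_3,j$ reflect that only tangential parameter derivatives are available on the boundary components of $I$ (where $c_i = -1$, $c_3 = \bar c_3$, or $\gamma = \gamma^\pm(c)$).

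The general case $|\alpha|+j \le m$ is handled by induction on $m$. Differentiating the linear equation for $v$ further yields the same linear operator on the left with right-hand side $F$ polynomial in lower-order derivatives of $\bar U_\theta$; the inductive hypothesis controls the factors of $F$, and the same integrating-factor identity closes the estimate. The main obstacle is the inductive step in cases (2)--(4): one must verify that products of terms of size $(\ln\frac{1\pm x}{3})^{-2}$ appearing in $F$, once multiplied by $M(t)/(1-t^2)$ (which grows like $(\ln(1+t))^{2}/(1+t)$ at the singular endpoint) and integrated, still produce a quantity bounded up to the endpoint, so that multiplication by $M^{-1}$ preserves the $(\ln\frac{1\pm x}{3})^{-2}$ bound. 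This reduces to elementary estimates of the type $\int_0^{1+x} |\ln s|^{-k}\, ds/s \sim |\ln(1+x)|^{-k+1}$, and the induction is arranged so that the additional negative powers of $\ln$ carried by the lower-order factors of $F$ exactly tame the growth of $M/(1-t^2)$ at each step.
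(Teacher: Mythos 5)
First, a point of comparison: this paper does not actually prove Proposition \ref{propA_1} --- it is imported wholesale from Theorems 1.1 and 1.5 of \cite{LLY2} --- so your proposal is not an alternative to an argument given here but an attempt to reconstruct the cited result from scratch. The strategy you choose (differentiate the Riccati equation \eqref{eq:UthP} in the parameters, solve the resulting linear equations by variation of parameters with integrating factor $M=e^{a_{c,\gamma}}$, read off the endpoint behavior of $e^{\pm a_{c,\gamma}}$ from Proposition \ref{prop2_1}, as the paper itself does in Lemma \ref{lem3_2_ab}, and induct on $|\alpha|+j$) is the natural route and is in the spirit of the analysis in \cite{LLY1,LLY2}.

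As written, however, the sketch has genuine gaps. (i) The hardest cases are exactly the strata $I_{k,2}$, $I_{k,3}$ with $1\le k\le 4$, where $\gamma=\gamma^{+}(c)$ or $\gamma^{-}(c)$ and $c_3>\bar{c}_3$: there the asserted bounds concern $c$-derivatives of the composite map $c\mapsto U^{c,\gamma^{\pm}(c)}_{\theta}$, and your smooth-dependence argument with Cauchy data $\bar U_\theta(0)=\gamma$ gives nothing unless you also prove smoothness of $\gamma^{\pm}(c)$ (or, equivalently, set up and control a terminal-value problem at the degenerate endpoint where $\bar U_\theta(-1)$, resp. $\bar U_\theta(1)$, takes the exceptional root $2+2\sqrt{1+c_1}$, resp. $-2-2\sqrt{1+c_2}$, which is a singular point of the ODE). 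That is the main content of the quoted theorems, and it is also why the weights in parts (1)--(3) change between the two branches; your closing remark about ``tangential derivatives'' does not resolve it, since tangential derivatives along $\{\gamma=\gamma^{\pm}(c)\}$ still differentiate $\gamma^{\pm}$. (ii) The stated asymptotics of the integrating factor are reversed: since $a_{c,\gamma}'=(2x+\bar U_\theta)/(1-x^2)$, near $x=-1$ one has $M\sim(1+x)^{\bar U_\theta(-1)/2-1}$, i.e.\ $M\sim(1+x)^{+\sqrt{1+c_1}}$ on the branch $\bar U_\theta(-1)=2+2\sqrt{1+c_1}$ and $M\sim(1+x)^{-\sqrt{1+c_1}}$ on the other, opposite to your $(1+x)^{\mp\sqrt{1+c_1}}$; the sign decides which of $M$, $M^{-1}$ degenerates and hence which integrals must be estimated. (iii) Relatedly, the claim that for admissible directions $\int_0^x Mf/(1-t^2)\,dt$ stays bounded up to the endpoint is false in general: already for $(c,\gamma)\in I_{1,1}$ and $f=1-x$ the integrand is of size $(1+t)^{-1-\sqrt{1+c_1}}$ and the integral diverges as $x\to-1$; the estimate closes only because the prefactor $M^{-1}$ vanishes at the matching rate, so the argument has to be run on the product $M(x)^{-1}\int_0^x Mf/(1-t^2)\,dt$, exactly as in the bounds for $W^{c,\gamma,i}_{\theta}$ in Lemma \ref{sec32:lem:W}. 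The induction scheme in your last paragraph is plausible once (i)--(iii) are repaired, but as it stands the proposal does not yet amount to a proof of the proposition, in particular not of the boundary-stratum cases.
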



We will work with $\tilde{U}:=U-\bar{U}$, a calculation gives
\[
	(1-x^2)U'_{\theta}+2xU_{\theta}+\frac{1}{2}U^2_{\theta} - c_1(1-x) - c_2 (1+x) - c_3 (1-x^2) = (1-x^2)\tilde{U}_{\theta}'+(2x+\bar{U}_{\theta})\tilde{U}_{\theta}+\frac{1}{2}\tilde{U}_{\theta}^2,
\]
where $\tilde{U}_{\phi}=U_{\phi}$. 
Denote
\begin{equation}\label{sec31:eq:psi}
	\psi[\tilde{U}_{\phi}, \tilde{V}_{\phi}](x):=\int_{0}^{x} \int_{0}^{l} \int_{0}^{t} \frac{2\tilde{U}_{\phi}\tilde{V}'_{\phi}}{1-s^2} ds dt dl, 
\end{equation}
and
\begin{equation*}
	\varphi_{c,\gamma}[\tilde{U}_{\theta}](x) := (1-x^2) \tilde{U}_{\theta}'+(2x+\bar{U}_{\theta})\tilde{U}_{\theta}+\frac{1}{2}\tilde{U}_{\theta}^2.
\end{equation*}
For convenience write $\psi[\tilde{U}_{\phi}](x):=\psi[\tilde{U}_{\phi}, \tilde{U}_{\phi}](x)$.
Define a map $G$ on $(c,\gamma,\tilde{U})$ by
\begin{equation}\label{sec31:eq:G}
	G(c,\gamma,\tilde{U}) =
	\left(
	\begin{matrix}
		(1-x^2)\tilde{U}'_{\theta}+(2x+\bar{U}_{\theta})\tilde{U}_{\theta}+\frac{1}{2}\tilde{U}^2_{\theta} + \psi[\tilde{U}_{\phi}](x) - \tilde{P}(x)\\
		(1-x^2)\tilde{U}''_{\phi}+(\tilde{U}_{\theta}+\bar{U}_{\theta})\tilde{U}'_{\phi}
	\end{matrix}
	\right),
\end{equation}
where 
$$
	\tilde{P}(x) = \frac{1}{2}\psi[\tilde{U}_{\phi}](-1)(1-x) + \frac{1}{2}\psi[\tilde{U}_{\phi}](1)(1+x) 
	- \frac{1}{2} ( \varphi_{c,\gamma} [ \tilde{U}_\theta ] )''(0) \cdot (1-x^2). 
$$
In the above expression $\psi[\tilde{U}_{\phi}](-1)$ and $\psi[\tilde{U}_{\phi}](1)$  are finite if $|\tilde{U}_{\phi}\tilde{U}'_{\phi}|\le C(1-x^2)^{-1-\epsilon}$ for some $\epsilon<1$, which will be satisfied in later applications with $\tilde{U}$ in  the spaces we constructed.

If $\tilde{U}$ satisfies $G(c,\gamma,\tilde{U})=0$, then $U=\tilde{U}+\bar{U}$ gives a solution of (\ref{sec31:eq:NSE}) with 
$$
	\hat{c}_1 = c_1 - \frac{1}{2}\psi[\tilde{U}_{\phi}](-1), \quad \hat{c}_2 = c_2 - \frac{1}{2}\psi[\tilde{U}_{\phi}](1),
$$
$$
	\hat{c}_3 = c_3 + \frac{1}{2} ( \varphi_{c,\gamma} [ \tilde{U}_\theta ] )''(0), 
$$
satisfying $U_{\theta}(-1)=\bar{U}_{\theta}(-1)$, $U_{\theta}(1)=\bar{U}_{\theta}(1)$. 

Denote 
\begin{equation}\label{sec31:eq:l}
	l_{c,\gamma}[\tilde{U}_{\theta}](x):=(1-x^2)\tilde{U}'_{\theta}(x)+(2x+\bar{U}_{\theta})\tilde{U}_{\theta}(x).
\end{equation}

Let $A$ and $Q$ be maps of  the form
\begin{equation}\label{sec31:eq:A}
	A(c,\gamma,\tilde{U}) =\left(
	\begin{matrix}
		A_\theta\\
		A_\phi
	\end{matrix}
	\right) 	 :=
	\left(
	\begin{matrix}
		l_{c,\gamma}[\tilde{U}_{\theta}](x) + \frac{1}{2} ( l_{c,\gamma} [ \tilde{U}_\theta ] )''(0) \cdot (1-x^2)\\
		(1-x^2)\tilde{U}''_\phi+\bar{U}_{\theta}\tilde{U}'_{\phi}
	\end{matrix}
	\right) ,
\end{equation}
and 
\begingroup
\begin{equation}\label{sec31:eq:Q}
\begin{split}
	& Q(\tilde{U},\tilde{V}) = \left(
	\begin{matrix}
		Q_\theta\\
		Q_\phi
	\end{matrix}
	\right) 	 \\
	& := 
	\left(
	\begin{matrix}
		\frac{1}{2}\tilde{U}_{\theta}\tilde{V}_{\theta} 
		+ \psi[\tilde{U}_{\phi}, \tilde{V}_{\phi}](x) 
		- \frac{1-x}{2} \psi[\tilde{U}_{\phi}, \tilde{V}_{\phi}](-1)
		- \frac{1+x}{2} \psi[\tilde{U}_{\phi}, \tilde{V}_{\phi}](1) 
		+ \frac{1}{4} (\tilde{U}_{\theta} \tilde{V}_{\theta})''(0)(1-x^2) \\
		\tilde{U}_{\theta}\tilde{V}'_{\phi} 
	\end{matrix}
	\right) .	
\end{split}
\end{equation}
\endgroup
Then $G(c,\gamma,\tilde{U})=A(c,\gamma,\tilde{U})+Q(\tilde{U}, \tilde{U})$. 

By computation, the linearized operator of $G$ with respect to $\tilde{U}$ at $(c,\gamma,\tilde{U})$ is given by
\begin{equation}\label{sec31:eq:Linear}
\begin{split}
	& L_{\tilde{U}}^{c,\gamma} \tilde{V} 
	:=A(c,\gamma,\tilde{V}) \\
	& +
	\left(
	\begin{matrix}
		\tilde{U}_{\theta} \tilde{V}_{\theta}
		+ \Psi_{\tilde{U}_{\phi}}[\tilde{V}_{\phi}](x) 
		- \frac{1-x}{2}\Psi_{\tilde{U}_{\phi}}[\tilde{V}_{\phi}](-1)
		- \frac{1+x}{2}\Psi_{\tilde{U}_{\phi}}[\tilde{V}_{\phi}](1)
		+ \frac{1}{2}(\tilde{U}_{\theta} \tilde{V}_{\theta})''(0)(1-x^2) \\
		\tilde{U}_{\theta} \tilde{V}'_{\phi}+ \tilde{V}_{\theta} \tilde{U}'_{\phi}
	\end{matrix}
	\right), 
\end{split}
\end{equation}
where 
\[
	\Psi_{\tilde{U}_{\phi}}[\tilde{V}_{\phi}](x):=\int_0^x \int_0^l \int_0^t \frac{\displaystyle 2(\tilde{U}_{\phi}(s) \tilde{V}'_{\phi}(s)+ \tilde{V}_{\phi}(s)\tilde{U}'_{\phi}(s))}{\displaystyle 1-s^2} ds dt dl. 
\]

In particular, at $\tilde{U}=0$, the linearized operator of $G$ with respect to $\tilde{U}$ is
\begin{equation}\label{sec31:eq:Linear0}
	L^{c,\gamma}_{0} \tilde{V}=A(c,\gamma,\tilde{V})=
		\left(
		\begin{matrix}
			l_{c,\gamma}[\tilde{V}_{\theta}](x)+\frac{1}{2}(l_{c,\gamma}[\tilde{V}_{\theta}])''(0)(1-x^2) \\
			(1-x^2) \tilde{V}_\phi'' + \bar{U}_\theta \tilde{V}_\phi'
		\end{matrix}
		\right). 
\end{equation}
After a change of variable $s=\cos t$,  we derive from (\ref{eq_ab_1}) that
\begin{equation}\label{sec31:eq:ab}
	a_{c,\gamma}(x)=\int_{0}^{x}\frac{\displaystyle 2s+\bar{U}_{\theta}}{\displaystyle 1-s^2}ds,\quad
	b_{c,\gamma}(x)=\int_{0}^{x}\frac{\displaystyle \bar{U}_{\theta}}{\displaystyle 1-s^2}ds,\quad -1<x<1.
\end{equation}

From the discussion in \cite{LLY2}, for all $(c,\gamma)\in I$, $\bar{U}_{\theta}$ is smooth in $(-1,1)$. So $a_{c,\gamma}, b_{c,\gamma}\in C^{\infty}(-1,1)$. 

By observation $a_{c,\gamma}(x)=-\ln(1-x^2)+b_{c,\gamma}(x)$. A calculation gives
\begin{equation}\label{sec31:eq:diff:a}
	a_{c,\gamma}'(x)=\frac{2x+\bar{U}_{\theta}(x)}{1-x^2}, \quad a_{c,\gamma}''(x)=\frac{2+\bar{U}'_{\theta}(x)}{1-x^2}+\frac{4x^2+2x\bar{U}_{\theta}(x)}{(1-x^2)^2}.
\end{equation}

Next, formally define the maps $W^{c,\gamma,i}_{\theta}$, $i=1,2a,2b,3$, on $\xi=(\xi_{\theta},\xi_{\phi})$ by 
\begin{equation}\label{sec31:eq:Wthei}
\begin{split}
	& W^{c,\gamma,1}_{\theta}(\xi)(x):=e^{-a_{c,\gamma}(x)}\int_{0}^{x}e^{a_{c,\gamma}(s)}\frac{\xi_{\theta}(s)}{1-s^2}ds,\\
	& W^{c,\gamma,2a}_{\theta}(\xi)(x):=e^{-a_{c,\gamma}(x)}\int_{-1}^{x}e^{a_{c,\gamma}(s)}\frac{\xi_{\theta}(s)}{1-s^2}ds,\\
	& W^{c,\gamma,2b}_{\theta}(\xi)(x):=e^{-a_{c,\gamma}(x)}\int_{1}^{x}e^{a_{c,\gamma}(s)}\frac{\xi_{\theta}(s)}{1-s^2}ds,\\
	& W^{c,\gamma,3}_{\theta}(\xi)(x):=e^{-a_{c,\gamma}(x)}\int_{-1}^{x}e^{a_{c,\gamma}(s)}\left(\frac{\xi_{\theta}(s)}{1-s^2}-C^{c,\gamma}_W(\xi)\right)ds,
\end{split}
\end{equation}
where 
\begin{equation}\label{eq:CW}
	C^{c,\gamma}_W(\xi)=\frac{1}{\int_{-1}^{1}e^{a_{c,\gamma}(s)}ds}\int_{-1}^{1}e^{a_{c,\gamma}(s)}\frac{\xi_{\theta}(s)}{1-s^2}ds.
\end{equation}

Define a map $W^{c,\gamma}_{\phi}$ on $\xi$ by
\begin{equation}\label{sec31:eq:Wphi}
	W^{c,\gamma}_\phi(\xi)(x) :=\int_{0}^{x}e^{-b_{c,\gamma}(t)}\int_{0}^{t}e^{b_{c,\gamma}(s)}\frac{\xi_{\phi}(s)}{1-s^2}dsdt.
\end{equation}

A calculation gives
\begin{equation}\label{sec31:eq:diff:Wthe}
\begin{split}
	&(W^{c,\gamma,i}_{\theta}(\xi))'(x)=-a_{c,\gamma}'(x)W^{c,\gamma,i}_{\theta}(x)+\frac{\xi_{\theta}(x)}{1-x^2}, \quad i=1,2a,2b, \\
	& (W^{c,\gamma,3}_{\theta}(\xi))'(x)=-a_{c,\gamma}'(x)W^{c,\gamma,3}_{\theta}(x)+\frac{\xi_{\theta}(x)}{1-x^2}-C^{c,\gamma}_W(\xi),
	\end{split}
\end{equation}
\begin{equation}\label{sec31:eq:diff1:Wphi}
	(W^{c,\gamma}_{\phi}(\xi))'(x)= e^{-b_{c,\gamma}(x)}\int_0^x e^{b_{c,\gamma}(s)}\frac{\xi_{\phi}(s)}{1-s^2} ds, 
\end{equation}
\begin{equation}\label{sec31:eq:diff2:Wphi}
	(W^{c,\gamma}_{\phi}(\xi))''(x)=  - b_{c,\gamma}'(x) (W^{c,\gamma}_{\phi}(\xi))'(x) + \frac{\xi_{\phi}(x)}{1-x^2}. 
\end{equation}
We will prove in the following subsections that $W^{c,\gamma} = (W_\theta^{c,\gamma},W_\phi^{c,\gamma})$ is, roughly speaking, a right inverse of $L_0^{c,\gamma}$.

Consider the following system of ordinary differential equations in $(-1,1)$:
\[
	\left\{
	\begin{matrix}
		(1-x^2) V_\theta' + (2x  + \bar{U}_\theta ) V_\theta + \frac{1}{2} ( l_{c,\gamma} [\tilde{V}_\theta] )'' (0) (1-x^2) = 0, \\
		(1-x^2) V_\phi'' + \bar{U}_\theta V_\phi' =0.
	\end{matrix}
	\right.
\]
All solutions $V\in C^1((-1,1),\mathbb{R}^2)$ are given by
\begin{equation}\label{sec31:eq:ker}
	V = d_1 V_{c,\gamma}^1 + d_2 V_{c,\gamma}^2 + d_3 V_{c,\gamma}^3 + d_4 V_{c,\gamma}^4
\end{equation}
where $d_1, d_2, d_3, d_4\in\mathbb{R}$, and 
\begin{equation}\label{eq_basis}
\begin{aligned}
	& V_{c,\gamma}^{1} := \left(
	\begin{matrix}
		e^{-a_{c,\gamma}(x)}  \\  0
	\end{matrix}\right),
	\quad 
	&& V_{c,\gamma}^{2} := \left(
	\begin{matrix}
		e^{-a_{c,\gamma}(x)} \int_0^x e^{a_{c,\gamma}(s)} ds  \\  0
	\end{matrix}\right),  
	\\
	& V_{c,\gamma}^{3} := \left(
	\begin{matrix}
		0  \\  \int_0^x e^{-b_{c,\gamma}(t)}dt
	\end{matrix}\right),
	\quad 
	&&V_{c,\gamma}^{4} := \left(
	\begin{matrix}
		0  \\  1
	\end{matrix}\right).    
	\end{aligned}
\end{equation}
Moreover, denote  
\begin{equation}\label{sec32:eq:V2a}
	V_{c,\gamma}^{2a} := \left(
	\begin{matrix}
		e^{-a_{c,\gamma}(x)} \int_{-1}^x e^{a_{c,\gamma}(s)} ds   \\  0
	\end{matrix}\right), 
\end{equation}
and 
\begin{equation}\label{sec32:eq:V2b}
	V_{c,\gamma}^{2b} := \left(
	\begin{matrix}
		e^{-a_{c,\gamma}(x)} \int_{1}^x e^{a_{c,\gamma}(s)} ds  \\  0
	\end{matrix}\right) . 
\end{equation}
It can be seen that $V_{c,\gamma}^{2a} = V_{c,\gamma}^{2} + V_{c,\gamma}^{1} \int_{-1}^0 e^{a_{c,\gamma}(s)} ds$, and $V_{c,\gamma}^{2b}= V_{c,\gamma}^{2} - V_{c,\gamma}^{1} \int_0^{1} e^{a_{c,\gamma}(s)} ds $.
Next, introduce the linear functionals $l_i$, $1\le i\le 4$ and $l_{2a}, l_{2b}$ acting on vector-valued functions $V(x)=(V_{\theta}(x), V_{\phi}(x))$ by
\begin{equation}\label{sec31:eq:fcnal:l}
	l_1(V) :=  V_\theta(0), \quad l_2(V) :=V_\theta'(0),\quad l_3(V)=V'_{\phi}(0), \quad l_4(V)=V_{\phi}(0).
\end{equation}
and 
\begin{equation}\label{eq_l2ab}
  l_{2a}(V):=V_{\theta}'(-1),\quad l_{2b}(V):=V_{\theta}'(1).
\end{equation}

By computation it can be checked that 
\[
	(l_i(V_{c,\gamma}^j)) = 
	\begin{pmatrix}
		1 & 0 & 0 & 0 \\
		l_2(V_{c,\gamma}^1) & 1 & 0 & 0 \\
		0 & 0 & 1 & 0 \\
		0 & 0 & 0 & 1 
	\end{pmatrix}, \qquad 1\le i,j\le 4. 
\]
So $(l_i(V_{c,\gamma}^j))$ is an invertible matrix. Similar results hold if we replace $l_2$ by $l_{2a}$ or $l_{2b}$, replace $V_{c,\gamma}^2$ by $V_{c,\gamma}^{2a}$ or $V_{c,\gamma}^{2b}$, respectively.

\section{Existence of axisymmetric, with swirl solutions around \texorpdfstring{$\bm{U^{c,\gamma}}$}{}, when \texorpdfstring{$\bm{(c,\gamma)\in I_{k,l}}$}{} with \texorpdfstring{$\bm{(k,l)\in A_1}$}{} }\label{sec_3}



Denote $\bar{U}_{\theta}=U^{c,\gamma}$. By the assumption of $(c,\gamma)$ in this case, we have $\big( \bar{U}_{\theta}(-1)<3$, $\bar{U}_{\theta}(-1)\not=2$ or $\bar{U}_{\theta}(-1)=2$ with $\eta_1=0\Big)$ and $\big(\bar{U}_{\theta}(1)> -3$, $\bar{U}_{\theta}(1)\not= -2$ or $\bar{U}_{\theta}(1)=-2$ with $\eta_2=0\Big)$. Choose $0<\epsilon<\frac{1}{2}$ satisfying
\[
\epsilon>\left\{
   \begin{split}
    &  \frac{\bar{U}_{\theta}(-1)}{4}, \quad \textrm{ if }\bar{U}_{\theta}(-1)<2,\\
    & \frac{\bar{U}_{\theta}(-1)}{2}-1, \quad  \textrm{ if }\bar{U}_{\theta}(-1)\ge 2,
   \end{split}
   \right.
\textrm{ and } \quad 
   \epsilon>\left\{
   \begin{split}
    &  -\frac{\bar{U}_{\theta}(1)}{4}, \quad  \textrm{ if }\bar{U}_{\theta}(1)>-2,\\
    & -\frac{\bar{U}_{\theta}(1)}{2}-1, \quad  \textrm{ if }\bar{U}_{\theta}(1)\le -2.
   \end{split}
   \right.
\]
For this fixed $\epsilon$, define
\[
	\begin{split}
	\mathbf{M}_1 = & \mathbf{M}_1(\epsilon)\\
		:= & \Big\{ \tilde{U}_{\theta}\in C^3\left(-1/2, 1/2 \right)\cap C^1(-1,1)\cap C[-1,1] \mid \tilde{U}_\theta(-1) = \tilde{U}_\theta(1) = 0, \\
		 & \| (1-x^2)^{-1+2\epsilon} \tilde{U}_{\theta} \|_{L^\infty(-1,1)} < \infty, 
		 \| (1-x^2)^{2\epsilon} \tilde{U}'_{\theta} \|_{L^\infty(-1,1)}<\infty, \\
		 & \|  \tilde{U}''_{\theta} \|_{L^\infty(-\frac{1}{2}, \frac{1}{2})}<\infty, \|  \tilde{U}'''_{\theta} \|_{L^\infty(-\frac{1}{2}, \frac{1}{2})}<\infty \Big\}, \\
	\mathbf{M}_2=& \mathbf{M}_2(\epsilon)\\
		:= & \Big\{ \tilde{U}_{\phi} \in C^2(-1,1) \mid 
		\| (1-x^2)^{\epsilon}\tilde{U}_{\phi} \|_{L^\infty(-1,1)}<\infty, \\
		& \|(1-x^2)^{1+\epsilon}\tilde{U}'_{\phi} \|_{L^\infty(-1,1)}<\infty, \|(1 - x^2)^{2+\epsilon} \tilde{U}''_{\phi} \|_{L^\infty(-1,1)}<\infty \Big\}, 
	\end{split}
\]
with the following norms accordingly
\[
	\begin{split}
	& \| \tilde{U}_{\theta} \|_{\mathbf{M}_1} := 
	\| (1-x^2)^{-1+2\epsilon} \tilde{U}_{\theta} \|_{L^\infty(-1,1)} 
	+ \| (1-x^2)^{2\epsilon} \tilde{U}'_{\theta} \|_{L^\infty(-1,1)} \\
	& \hspace{1.9cm} + \|  \tilde{U}''_{\theta} \|_{L^\infty(-\frac{1}{2}, \frac{1}{2})} 
	+ \|  \tilde{U}'''_{\theta} \|_{L^\infty(-\frac{1}{2}, \frac{1}{2})},\\
	& \| \tilde{U}_{\phi} \|_{\mathbf{M}_2} := \|(1-x^2)^{\epsilon}\tilde{U}_{\phi}\|_{L^\infty(-1,1)} + \|(1-x^2)^{1+\epsilon}\tilde{U}'_{\phi} \|_{L^\infty(-1,1)} + \| (1-x^2)^{2+\epsilon}\tilde{U}''_{\phi} \|_{L^\infty(-1,1)}. \\
	\end{split}
\]
Next define the following function spaces: 
\[
	\begin{split}
	& \mathbf{N}_1 = \mathbf{N}_1(\epsilon) := \{ \xi_{\theta}\in C^2(-1/2,1/2)\cap C[-1,1] \mid \xi_\theta(-1) = \xi_\theta(1) =\xi''_{\theta}(0) = 0, \\
	& \hspace{2.8 cm} \| (1-x^2)^{-1+2\epsilon} \xi_{\theta} \|_{L^\infty(-1,1)}<\infty, \|  \xi'_{\theta} \|_{L^\infty(-\frac{1}{2}, \frac{1}{2})}<\infty, \| \xi''_{\theta} \|_{L^\infty(-\frac{1}{2}, \frac{1}{2})}<\infty \}, \\
	& \mathbf{N}_2 = \mathbf{N}_2(\epsilon) := \{\xi_{\phi}\in C(-1,1) \mid \| (1-x^2)^{1+\epsilon}\xi_{\phi} \|_{L^\infty(-1,1)}<\infty\}, \\
	\end{split}
\]
with the following norms accordingly: 
\[
\begin{split}
	& \| \xi_{\theta} \|_{\mathbf{N}_1} := \| (1-x^2)^{-1+2\epsilon} \xi_{\theta} \|_{L^\infty(-1,1)}+\|  \xi'_{\theta} \|_{L^\infty(-\frac{1}{2}, \frac{1}{2})}+\| \xi''_{\theta} \|_{L^\infty(-\frac{1}{2}, \frac{1}{2})}, \\
	& \| \xi_{\phi} \|_{\mathbf{N}_2} := \|(1-x^2)^{1+\epsilon}\xi_{\phi} \|_{L^\infty(-1,1)}. 
	\end{split}
\]
Then let $\mathbf{X} := \{ \tilde{U} = (\tilde{U}_\theta, \tilde{U}_\phi ) \mid \tilde{U}_\theta \in \mathbf{M}_1, \tilde{U}_{\phi} \in \mathbf{M}_2\}$ with norm $ \| \tilde{U} \|_{\mathbf{X}} = \| \tilde{U}_\theta \|_{\mathbf{M}_1} + \| \tilde{U}_\phi \|_{\mathbf{M}_2}$, $\mathbf{Y} := \{ \xi = ( \xi_\theta, \xi_\phi ) \mid \xi_\theta \in \mathbf{N}_1, \xi_\phi \in \mathbf{N}_2 \}$, with norm $ \| \xi \|_{\mathbf{Y}} = \| \xi_\theta \|_{\mathbf{N}_1} + \| \xi_\phi \|_{\mathbf{N}_2}$. It can be proved that $\mathbf{M}_1$, $\mathbf{M}_2$, $\mathbf{N}_1$, $\mathbf{N}_2$, $\mathbf{X}$ and $\mathbf{Y}$ are Banach spaces.


Let $l_i:\mathbf{X}\to \mathbb{R}$, $1\le i\le 4$, be the bounded linear functionals defined by (\ref{sec31:eq:fcnal:l}) for each $V\in \mathbf{X}$. Define 
\begin{equation}\label{sec32:eq:X}
	 \mathbf{X}_1:= \ker l_1 \cap \ker l_2 \cap \ker l_3 \cap \ker l_4.\\
\end{equation}
It can be seen that $\mathbf{X}_1$ is independent of $(c,\gamma)$.

\begin{thm}\label{sec32:thm:1}
	For every compact subset $K\subset I_{1,1}$ ,  for every $(c,\gamma)\in K$, there exist $\delta = \delta(K) > 0$, and $V\in C^{\infty}( K\times B_{\delta}(0), \mathbf{X}_1)$ satisfying $V(c,\gamma,0)=0$ and $\displaystyle \frac{\partial V}{\partial \beta_i}\big |_{\beta=0}=0$, $1\le i\le 4$, $\beta=(\beta_1,\beta_2,\beta_3,\beta_4)$, such that 
	\begin{equation}\label{sec32:eq:U1}
		U = U^{c,\gamma} + \sum_{i=1}^{4} \beta_i V_{c,\gamma}^i + V(c,\gamma,\beta)
	\end{equation}
	satisfies equation (\ref{sec31:eq:NSE}) with $\hat{c}_1= c_1 - \frac{1}{2}\psi[\tilde{U}_{\phi}](-1)$, $\hat{c}_2 = c_2 - \frac{1}{2}\psi[\tilde{U}_{\phi}](1)$, $\hat{c}_3 = c_3 +\frac{1}{2}(\varphi_{c,\gamma}[\tilde{U}_{\theta}])''(0)$.

	Moreover, there exists some $\delta'=\delta'(K)>0$, such that if $\| U - U^{c,\gamma } \|_{\mathbf{X}} < \delta'$, $(c,\gamma) \in K$,  and $U$ satisfies  equation (\ref{sec31:eq:NSE}) with some constants $\hat{c}_1, \hat{c}_2, \hat{c}_3$, then (\ref{sec32:eq:U1}) holds for some $|\beta|< \delta$. 
\end{thm}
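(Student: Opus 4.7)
The plan is to apply the Implicit Function Theorem (Theorem B) to the map $G$ of \eqref{sec31:eq:G}, after quotienting out the four--dimensional kernel of the linearization $L_0^{c,\gamma}$ at $\tilde U=0$. The spanning set for that kernel is $\{V_{c,\gamma}^1,\ldots,V_{c,\gamma}^4\}$ from \eqref{eq_basis}, and the four functionals $l_1,\ldots,l_4$ in \eqref{sec31:eq:fcnal:l} provide a transversal; accordingly I would write every candidate perturbation uniquely as
\[
\tilde U \;=\; \sum_{i=1}^4 \beta_i V_{c,\gamma}^i + W, \qquad W\in \mathbf{X}_1,\ \beta_i=l_i(\tilde U),
\]
and reduce the problem to solving $F(c,\gamma,\beta,W):=G(c,\gamma,\sum_i\beta_i V_{c,\gamma}^i+W)=0$ for $W$.

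First I would verify that $G:K\times \mathbf{X}\to \mathbf{Y}$ is $C^\infty$ in a neighborhood of $(c,\gamma,0)$ for each $(c,\gamma)\in K$. The linear piece $A(c,\gamma,\cdot)$ sends $\mathbf{X}$ boundedly into $\mathbf{Y}$ because the weights $(1-x^2)^{-1+2\epsilon}$ etc.\ in the definition of $\mathbf{M}_1,\mathbf{M}_2$ are matched to the weights $(1-x^2)^{-1+2\epsilon},(1-x^2)^{1+\epsilon}$ in $\mathbf{N}_1,\mathbf{N}_2$, while smoothness of $\bar U_\theta$ on compact subintervals of $(-1,1)$ is guaranteed by Proposition \ref{propA_1}(1). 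The quadratic part $Q(\tilde U,\tilde U)$ is handled by the product rule for the weighted spaces: since $|\tilde U_\phi\tilde U_\phi'|\le C(1-x^2)^{-1-2\epsilon}$ with $2\epsilon<1$, the triple integral $\psi[\tilde U_\phi](\pm 1)$ converges, and the boundary--value correction $\tilde P$ in $G$ is precisely engineered to enforce the vanishing conditions $\xi_\theta(\pm 1)=\xi_\theta''(0)=0$ cut out of $\mathbf{N}_1$. Smoothness of $G$ in $(c,\gamma,\tilde U)$ then follows from the smooth $(c,\gamma)$--dependence of $\bar U^{c,\gamma}_\theta$ on compact subsets of $(-1,1)\times K$ given by Proposition \ref{propA_1}(1).

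The analytic heart of the argument is showing that the linearization $L_0^{c,\gamma}:\mathbf{X}_1\to \mathbf{Y}$ is an isomorphism, with operator norm of the inverse bounded uniformly in $(c,\gamma)\in K$. I would construct the inverse explicitly: given $\xi=(\xi_\theta,\xi_\phi)\in \mathbf{Y}$, the pair $(W_\theta^{c,\gamma,1}(\xi),W_\phi^{c,\gamma}(\xi))$ from \eqref{sec31:eq:Wthei}--\eqref{sec31:eq:Wphi} solves $L_0^{c,\gamma}W=\xi$ by \eqref{sec31:eq:diff:Wthe}--\eqref{sec31:eq:diff2:Wphi}. After correcting this particular solution by an element of the kernel, using the invertibility of the matrix $(l_i(V_{c,\gamma}^j))$ recorded just before this section, one obtains a solution in $\mathbf{X}_1$. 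Injectivity of $L_0^{c,\gamma}|_{\mathbf{X}_1}$ is automatic: any element of the kernel has the form \eqref{sec31:eq:ker}, and $l_1=\cdots=l_4=0$ forces all $d_i=0$. The norm estimates reduce to controlling $e^{\pm a_{c,\gamma}(x)}$ near $\pm 1$; here $a_{c,\gamma}'(x)=(2x+\bar U_\theta)/(1-x^2)$ gives the boundary behavior $e^{a_{c,\gamma}(x)}\sim (1-x^2)^{-1}(1\mp x)^{\bar U_\theta(\pm 1)/2}$, and the choice of $\epsilon$ at the start of the section (strictly above $\tfrac14 \bar U_\theta(-1)$, $-\tfrac14 \bar U_\theta(1)$ in the generic range) is exactly what makes the weighted integrals in $W^{c,\gamma,1}_\theta(\xi),W^{c,\gamma}_\phi(\xi)$ absolutely convergent with the required decay. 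I expect this weighted estimate, uniformly in $(c,\gamma)\in K$, to be the main technical obstacle, but it is a direct computation once the asymptotics of $\bar U_\theta^{c,\gamma}$ from Proposition \ref{propA_1}(1) and \eqref{sec31:eq:ab} are used.

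With this isomorphism established, Theorem B applied to $F$ at $(c,\gamma,0,0)$ yields $\delta=\delta(K)>0$ and a $C^\infty$ map $V(c,\gamma,\beta)\in \mathbf{X}_1$ with $V(c,\gamma,0)=0$ and $F(c,\gamma,\beta,V(c,\gamma,\beta))\equiv 0$. Implicit differentiation gives
\[
\partial_{\beta_i}V\big|_{\beta=0} \;=\; -(L_0^{c,\gamma})^{-1}\partial_{\beta_i}F\big|_{(c,\gamma,0,0)} \;=\; -(L_0^{c,\gamma})^{-1}\bigl(L_0^{c,\gamma}V_{c,\gamma}^i\bigr) \;=\; 0,
\]
because $V_{c,\gamma}^i\in \ker L_0^{c,\gamma}$. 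Setting $U=U^{c,\gamma}+\sum_i\beta_i V_{c,\gamma}^i+V(c,\gamma,\beta)$ then solves \eqref{sec31:eq:NSE} with $\hat c_j$ as stated, directly from the definition of $G$ and the identifications in Section \ref{sec_2}. For the uniqueness clause, if $\|U-U^{c,\gamma}\|_{\mathbf{X}}<\delta'$, decompose $\tilde U:=U-U^{c,\gamma}=\sum_i \beta_i V_{c,\gamma}^i+W$ with $\beta_i=l_i(\tilde U)$ and $W\in \mathbf{X}_1$; continuity of $l_i$ and $V_{c,\gamma}^i$ in $(c,\gamma)\in K$ gives $|\beta|<\delta$ and $\|W\|_{\mathbf{X}}$ small for $\delta'$ small enough, and the local uniqueness assertion of Theorem B forces $W=V(c,\gamma,\beta)$, which is \eqref{sec32:eq:U1}.
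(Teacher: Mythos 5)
Your proposal follows essentially the same route as the paper: reduce to $F(c,\gamma,\beta,W)=G(c,\gamma,\sum_i\beta_iV^i_{c,\gamma}+W)$ on $K\times\mathbb{R}^4\times\mathbf{X}_1$, prove $G$ smooth, show $L_0^{c,\gamma}:\mathbf{X}_1\to\mathbf{Y}$ is an isomorphism via the explicit right inverse $W^{c,\gamma}$ and the kernel $\mathrm{span}\{V^1_{c,\gamma},\dots,V^4_{c,\gamma}\}$, apply Theorem B, differentiate in $\beta_i$ to get $\partial_{\beta_i}V|_{\beta=0}=0$, and obtain the uniqueness clause from the decomposition along $\ker l_1\cap\dots\cap\ker l_4$ together with IFT local uniqueness (the paper gets the uniform bound on $(\beta,W)$ by a compactness argument, Lemma \ref{sec32:lem:combine:bdd}, while your direct use of the bounded functionals $l_i$ and of $V^i_{c,\gamma}\in C^\infty(K,\mathbf{X})$ accomplishes the same). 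One small correction: near $x=+1$ the asymptotics are $e^{a_{c,\gamma}(x)}\sim(1-x)^{-1-\bar U_\theta(1)/2}$ (not $(1-x)^{-1+\bar U_\theta(1)/2}$), as in Lemma \ref{lem3_2_ab}; with this fixed your convergence claim for the weighted integrals under the stated choice of $\epsilon$ is exactly the computation in Lemma \ref{sec32:lem:W}.
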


Let $l_{2a}, l_{2b}: \mathbf{X}\to \mathbb{R}$ be the bounded linear functionals defined by (\ref{eq_l2ab}) for each $V\in \mathbf{X}$. Define
\begin{equation}\label{sec32:eq:Xab}
  \begin{split}
    & \mathbf{X}_{2a}:= \ker l_{2a} \cap \ker l_3 \cap \ker l_4,\\
    &\mathbf{X}_{2b}:= \ker l_{2b} \cap \ker l_3 \cap \ker l_4.
  \end{split}
\end{equation}
The spaces $\mathbf{X}_{2a}$, $\mathbf{X}_{2b}$ are independent of $(c,\gamma)$. 
\begin{thm}\label{sec32:thm:2}
	For every compact subset $K$ of $I_{1,2}$ or $I_{2,2}$,  for every $(c,\gamma)\in K$, there exist $\delta = \delta(K) > 0$, and $V\in C^{\infty}( K\times B_{\delta}(0), \mathbf{X}_{2a})$ satisfying $V(c,\gamma,0)=0$ and $\displaystyle \frac{\partial V}{\partial \beta_i}\big |_{\beta=0}=0$, $i=2,3,4$, $\beta=(\beta_2,\beta_3,\beta_4)$, such that 
	\begin{equation}\label{sec32:eq:U2}
		U = U^{c,\gamma} + \beta_2 V_{c,\gamma}^{2a} + \beta_3 V_{c,\gamma}^3 +\beta_4 V_{c,\gamma}^4+ V(c,\gamma,\beta)
	\end{equation}
	satisfies equation (\ref{sec31:eq:NSE}) with $\hat{c}_1= c_1 - \frac{1}{2}\psi[\tilde{U}_{\phi}](-1)$, $\hat{c}_2 = c_2 - \frac{1}{2}\psi[\tilde{U}_{\phi}](1)$, $\hat{c}_3 = c_3 +\frac{1}{2}(\varphi_{c,\gamma}[\tilde{U}_{\theta}])''(0)$.

	Moreover, there exists some $\delta'=\delta'(K)>0$, such that if $\| U - U^{c,\gamma } \|_{\mathbf{X}} < \delta'$, $(c,\gamma) \in K$,  and $U$ satisfies  equation (\ref{sec31:eq:NSE}) with some constants $\hat{c}_1, \hat{c}_2, \hat{c}_3$, then (\ref{sec32:eq:U2}) holds for some $|(\beta_2,\beta_3,\beta_4)|< \delta$. 
\end{thm}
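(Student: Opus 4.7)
The plan is to apply the Implicit Function Theorem (Theorem B) in a manner parallel to Theorem \ref{sec32:thm:1}, replacing the pair $\{V_{c,\gamma}^1, V_{c,\gamma}^2\}$ by the single element $V_{c,\gamma}^{2a}$ and using $W^{c,\gamma,2a}_\theta$ in place of $W^{c,\gamma,1}_\theta$ as the $\theta$-component of a right inverse of $L_0^{c,\gamma}$. Writing $\tilde U := \beta_2 V_{c,\gamma}^{2a} + \beta_3 V_{c,\gamma}^3 + \beta_4 V_{c,\gamma}^4 + V$ with $V \in \mathbf{X}_{2a}$, I introduce
\[
F(c,\gamma,\beta,V) := G\bigl(c,\gamma,\, \beta_2 V_{c,\gamma}^{2a} + \beta_3 V_{c,\gamma}^3 + \beta_4 V_{c,\gamma}^4 + V\bigr).
\]
Since each of $V_{c,\gamma}^{2a}, V_{c,\gamma}^3, V_{c,\gamma}^4$ lies in the kernel of $L_0^{c,\gamma}$, one has $F(c,\gamma,0,0)=0$ and $F_V(c,\gamma,0,0)$ equals the restriction of $L_0^{c,\gamma}$ to $\mathbf{X}_{2a}$.

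First I would verify that $F$ takes values in $\mathbf{Y}$ with $C^\infty$ dependence on $(c,\gamma,\beta,V)\in K\times B_\delta(0)\times \mathbf{X}_{2a}$. Smoothness of $\bar U_\theta$, $a_{c,\gamma}$, $b_{c,\gamma}$ on $K$ (which falls into case (1) or (2) of Proposition \ref{propA_1}) together with the quadratic form of $Q$ in \eqref{sec31:eq:Q} makes this essentially a bookkeeping step. The choice of $\epsilon$ in $\mathbf{M}_1,\mathbf{M}_2$, dictated by the values of $\bar U_\theta(\pm 1)$, ensures the weighted $L^\infty$ norms close up under the quadratic nonlinearities and under the triple integrals against $(1-s^2)^{-1}$; the subtraction defining $\tilde P(x)$ inside $G$ enforces the boundary requirements $\xi_\theta(\pm 1)=0$ and $\xi''_\theta(0)=0$ built into $\mathbf{N}_1$, regardless of $\tilde U$.

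The central step is to show that $L_0^{c,\gamma}$ restricted to $\mathbf{X}_{2a}$ is an isomorphism onto $\mathbf{Y}$, with bounds uniform in $(c,\gamma)\in K$. For $\xi=(\xi_\theta,\xi_\phi)\in \mathbf{Y}$, the candidate preimage is
\[
\tilde V := \bigl(W^{c,\gamma,2a}_\theta(\xi),\ W^{c,\gamma}_\phi(\xi)\bigr) - d_2 V_{c,\gamma}^{2a} - d_3 V_{c,\gamma}^3 - d_4 V_{c,\gamma}^4,
\]
where $d_2,d_3,d_4$ are chosen by inverting the $3\times 3$ analogue of the matrix $(l_i(V_{c,\gamma}^j))$ computed at the end of Section \ref{sec_2} (with the substitutions $l_2\mapsto l_{2a}$, $V^2\mapsto V^{2a}$), so that $l_{2a}(\tilde V)=l_3(\tilde V)=l_4(\tilde V)=0$. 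The remaining content is three weighted-norm estimates: (i) $W^{c,\gamma,2a}_\theta(\xi)\in \mathbf{M}_1$ with $W^{c,\gamma,2a}_\theta(\xi)(\pm 1)=0$, by exploiting \eqref{sec31:eq:diff:Wthe} together with the asymptotics of $a_{c,\gamma}$ near $x=-1$ induced by $\bar U_\theta(-1)=2+2\sqrt{1+c_1}<3$ (which holds in both $I_{1,2}$ and $I_{2,2}$ since $c_1<-3/4$); (ii) the analogous behaviour near $x=1$, controlled by $\bar U_\theta(1)$; and (iii) $W^{c,\gamma}_\phi(\xi)\in \mathbf{M}_2$ via \eqref{sec31:eq:diff1:Wphi}--\eqref{sec31:eq:diff2:Wphi}. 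A direct verification using \eqref{sec31:eq:Linear0} then gives $L_0^{c,\gamma}\tilde V=\xi$, and all bounds are uniform on $K$ by Proposition \ref{propA_1} and compactness.

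With the isomorphism in place, Theorem B produces $V\in C^\infty(K\times B_\delta(0),\mathbf{X}_{2a})$ with $V(c,\gamma,0)=0$; differentiating $F\equiv 0$ in $\beta_i$ at $\beta=0$ and using $L_0^{c,\gamma}V_{c,\gamma}^{2a}=L_0^{c,\gamma}V_{c,\gamma}^3=L_0^{c,\gamma}V_{c,\gamma}^4=0$ yields $\partial_{\beta_i}V|_{\beta=0}=0$ for $i=2,3,4$. The uniqueness claim is the local-injectivity half of Theorem B: for $U$ solving \eqref{sec31:eq:NSE} with $\|U-U^{c,\gamma}\|_{\mathbf X}<\delta'$, the coefficients $\beta_2,\beta_3,\beta_4$ are recovered by applying $l_{2a},l_3,l_4$ to $U-U^{c,\gamma}$ and inverting the $3\times 3$ matrix above, and the residue lies in $\mathbf{X}_{2a}$ near $0$ where the IFT supplies uniqueness. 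I expect the main obstacle to be the uniform invertibility of $L_0^{c,\gamma}$ across the transition from $c_1>-1$ (in $I_{1,2}$) to $c_1=-1$ (in $I_{2,2}$): near $x=-1$ the weight $e^{a_{c,\gamma}(x)}$ degenerates with exponent depending on $c_1$, and the integrals defining $W^{c,\gamma,2a}_\theta(\xi)$ must be estimated so that the bound stays uniform as $c_1\to -1$. The hypothesis $c_1<-3/4$ (giving $\sqrt{1+c_1}<1/2$) together with the tailored $\epsilon$ in the definitions of $\mathbf{M}_1,\mathbf{M}_2$ is exactly what makes these estimates go through uniformly.
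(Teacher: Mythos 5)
Your proposal is correct and follows essentially the same route as the paper: the paper proves Theorem \ref{sec32:thm:1} in detail and then obtains Theorem \ref{sec32:thm:2} by exactly your substitutions ($\mathbf{X}_1\mapsto\mathbf{X}_{2a}$, $\sum_i\beta_iV^i_{c,\gamma}\mapsto\beta_2V^{2a}_{c,\gamma}+\beta_3V^3_{c,\gamma}+\beta_4V^4_{c,\gamma}$), resting on the same ingredients you invoke — Proposition \ref{sec32:prop}, the right inverse $W^{c,\gamma,2a}_\theta$ from Lemma \ref{sec32:lem:W}, the kernel and isomorphism Lemmas \ref{sec32:lem:ker} and \ref{sec32:lem:iso}, the smoothness Lemma \ref{sec32:lem:V:smooth}, and the implicit function theorem. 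The only minor deviation is in the uniqueness step, where you recover $(\beta_2,\beta_3,\beta_4)$ by applying the bounded functionals $l_{2a},l_3,l_4$ and inverting the $3\times3$ matrix, whereas the paper uses the compactness estimate of Lemma \ref{sec32:lem:combine:bdd}(ii); both give the needed bound $|\beta|+\|V^*\|_{\mathbf{X}}\le C\|U-U^{c,\gamma}\|_{\mathbf{X}}$ uniformly on $K$ (and note the uniformity worry you raise as $c_1\to-1$ does not actually arise, since a compact $K\subset I_{1,2}$ stays away from $c_1=-1$, while $K\subset I_{2,2}$ has $c_1\equiv-1$).
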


\addtocounter{thm}{-1}
\renewcommand{\thethm}{\thesection.\arabic{thm}'}%
\begin{thm}\label{sec32:thm:2'}
	For every compact subset $K$ of $I_{1,3}$ or $I_{3,3}$,  for every $(c,\gamma)\in K$, there exist $\delta = \delta(K) > 0$, and $V\in C^{\infty}( K\times B_{\delta}(0), \mathbf{X}_{2b})$ satisfying $V(c,\gamma,0)=0$ and $\displaystyle \frac{\partial V}{\partial \beta_i}\big |_{\beta=0}=0$, $i=2,3,4$, $\beta=(\beta_2,\beta_3,\beta_4)$, such that 
	\begin{equation}\label{sec32:eq:U2'}
		U = U^{c,\gamma}  + \beta_2 V_{c,\gamma}^{2b} + \beta_3 V_{c,\gamma}^3+\beta_4 V_{c,\gamma}^4 + V(c,\gamma,\beta)
	\end{equation}
	satisfies equation (\ref{sec31:eq:NSE}) with $\hat{c}_1= c_1 - \frac{1}{2}\psi[\tilde{U}_{\phi}](-1)$, $\hat{c}_2 = c_2 - \frac{1}{2}\psi[\tilde{U}_{\phi}](1)$, $\hat{c}_3 = c_3 +\frac{1}{2}(\varphi_{c,\gamma}[\tilde{U}_{\theta}])''(0)$.

	Moreover, there exists some $\delta'=\delta'(K)>0$, such that if $\| U - U^{c,\gamma } \|_{\mathbf{X}} < \delta'$, $(c,\gamma) \in K$,  and $U$ satisfies  equation (\ref{sec31:eq:NSE}) with some constants $\hat{c}_1, \hat{c}_2, \hat{c}_3$, then (\ref{sec32:eq:U2'}) holds for some $|(\beta_2,\beta_3,\beta_4)|< \delta$. 
\end{thm}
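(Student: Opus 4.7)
The plan is to prove Theorem 3.2' exactly in parallel with Theorem 3.2, interchanging the roles of the endpoints $x=-1$ and $x=+1$ throughout: wherever Theorem 3.2 uses the basis element $V_{c,\gamma}^{2a}$, the right inverse $W_\theta^{c,\gamma,2a}$, the linear functional $l_{2a}$, and the constraint subspace $\mathbf{X}_{2a}$, we instead use $V_{c,\gamma}^{2b}$, $W_\theta^{c,\gamma,2b}$, $l_{2b}$, and $\mathbf{X}_{2b}$. The sets $I_{1,3}$ and $I_{3,3}$ correspond to $\gamma=\gamma^-(c)$, so by Proposition \ref{prop2_1} the reference solution has $\bar U_\theta(1)\in(-3,-2)$ when $(c,\gamma)\in I_{1,3}$ and $\bar U_\theta(1)=-2$ with $\eta_2=0$ when $(c,\gamma)\in I_{3,3}$, both of which fall under Case 1. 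This is what justifies the choice of $\epsilon$ fixed at the beginning of Section \ref{sec_3} and ensures that the weights defining $\mathbf{M}_1, \mathbf{M}_2, \mathbf{N}_1, \mathbf{N}_2$ are compatible with the singular behavior of $\bar U_\theta$ near $x=1$ coming from $\gamma=\gamma^-(c)$.

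The key steps are as follows. First, define the modified operator
\[
  \tilde G(c,\gamma,\beta,V):=G\bigl(c,\gamma,\beta_2 V_{c,\gamma}^{2b}+\beta_3 V_{c,\gamma}^{3}+\beta_4 V_{c,\gamma}^{4}+V\bigr)
\]
on $K\times B_\delta(0)\times \mathbf{X}_{2b}$, using $G$ from \eqref{sec31:eq:G}; every zero of $\tilde G$ yields a $U$ of the form \eqref{sec32:eq:U2'} solving \eqref{sec31:eq:NSE} with the stated $(\hat c_1,\hat c_2,\hat c_3)$. Second, verify using the weighted estimates in $\mathbf{M}_1,\mathbf{M}_2$ (with $\epsilon<1/2$) and the compact-$K$ bounds on $\bar U_\theta$ from Proposition \ref{propA_1}(1) that $\tilde G$ is a well-defined $C^\infty$ map into $\mathbf{Y}$ on a neighborhood of $V=0,\beta=0$, and that $\psi[\tilde U_\phi](\pm 1)$, $(\varphi_{c,\gamma}[\tilde U_\theta])''(0)$ are finite. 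Third, compute $\partial_V\tilde G$ at $(c,\gamma,0,0)$; this is $L_0^{c,\gamma}$ from \eqref{sec31:eq:Linear0}, restricted to $\mathbf{X}_{2b}$ and mapping into $\mathbf{Y}$. Invertibility is established by constructing an explicit two-sided inverse: given $\xi\in\mathbf{Y}$, the candidate
\[
  V_\theta := W_\theta^{c,\gamma,2b}(\xi), \qquad V_\phi := W_\phi^{c,\gamma}(\xi),
\]
adjusted by subtracting appropriate multiples of $V_{c,\gamma}^{2b},V_{c,\gamma}^{3},V_{c,\gamma}^{4}$ so as to satisfy $l_{2b}(V)=l_3(V)=l_4(V)=0$, lies in $\mathbf{X}_{2b}$ by the $\mathbf{M}_i$-estimates on the $W$-operators, and by \eqref{sec31:eq:diff:Wthe}--\eqref{sec31:eq:diff2:Wphi} it satisfies $L_0^{c,\gamma}V=\xi$. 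The kernel on $\mathbf{X}_{2b}$ is trivial by \eqref{sec31:eq:ker}, \eqref{sec31:eq:fcnal:l}--\eqref{eq_l2ab} and the invertibility of the matrix $(l_i(V_{c,\gamma}^j))$ with $l_2,V_{c,\gamma}^2$ replaced by $l_{2b},V_{c,\gamma}^{2b}$ noted at the end of Section \ref{sec_2}. Fourth, apply Theorem B to obtain a $C^\infty$ solution $V(c,\gamma,\beta)\in\mathbf{X}_{2b}$; the vanishing of $\partial_{\beta_i}V|_{\beta=0}$ follows because each $V_{c,\gamma}^{2b},V_{c,\gamma}^{3},V_{c,\gamma}^{4}$ already satisfies $L_0^{c,\gamma}\cdot =0$ so the $\beta$-derivative lies in the kernel of $L_0^{c,\gamma}$ inside $\mathbf{X}_{2b}$, which is zero. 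Finally, for uniqueness, given $U$ with $\|U-U^{c,\gamma}\|_\mathbf{X}<\delta'$, write $\tilde U=U-U^{c,\gamma}$ and decompose it as $\beta_2 V_{c,\gamma}^{2b}+\beta_3 V_{c,\gamma}^{3}+\beta_4 V_{c,\gamma}^{4}+V$ with $V\in\mathbf{X}_{2b}$ using the invertibility of the functional matrix; $V$ solves $\tilde G(c,\gamma,\beta,\cdot)=0$ and so equals the IFT solution.

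The main obstacle lies in step three, specifically in showing that $W_\theta^{c,\gamma,2b}$ actually maps $\mathbf{N}_1$ into $\mathbf{M}_1$ with constants uniform over the compact set $K$. When $(c,\gamma)\in I_{3,3}$, one has $c_2=-1$ and $\bar U_\theta(1)=-2$ with only a logarithmic correction, so $a_{c,\gamma}(x)$ blows up like $-2\log(1-x)$ modified by a $\log\log$-type term near $x=1$; obtaining the required $(1-x^2)^{-1+2\epsilon}$-type decay for the $\theta$-component demands careful estimates of $e^{\pm a_{c,\gamma}(x)}$ near $x=1$ using the singular asymptotics from \cite{LLY1,LLY2}. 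The choice of $\epsilon$ is precisely calibrated for this. Uniformity over $K$ follows from the $K$-uniform derivative bounds in Proposition \ref{propA_1}(1),(3), but the bookkeeping at $x=1$ is the delicate point, analogous to the $x=-1$ bookkeeping done in the proof of Theorem 3.2.
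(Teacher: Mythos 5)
Your proposal is correct and is essentially the paper's own proof: the paper establishes Theorem \ref{sec32:thm:2'} by running the argument of Theorem \ref{sec32:thm:1} verbatim with $\mathbf{X}_1$ and $\sum_{i=1}^4\beta_i V^i_{c,\gamma}$ replaced by $\mathbf{X}_{2b}$ and $\beta_2 V^{2b}_{c,\gamma}+\beta_3 V^3_{c,\gamma}+\beta_4 V^4_{c,\gamma}$, the supporting lemmas (the right inverse $W^{c,\gamma,2b}_{\theta}$ in Lemma \ref{sec32:lem:W}, Lemmas \ref{sec32:lem:ker} and \ref{sec32:lem:iso}, Lemma \ref{sec32:lem:V:smooth}, and Lemma \ref{sec32:lem:combine:bdd}(iii)) already covering $I_{1,3}$ and $I_{3,3}$, which fall under Case 1 exactly as you note. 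The one point to make explicit in your third step is that triviality of $\ker L^{c,\gamma}_{0}\cap\mathbf{X}_{2b}$ requires, in addition to the invertibility of the functional matrix, the fact that $V^1_{c,\gamma}\notin\mathbf{X}$ for these parameters (so that the kernel inside $\mathbf{X}$ is only $\mathrm{span}\{V^{2b}_{c,\gamma},V^3_{c,\gamma},V^4_{c,\gamma}\}$), since $l_1$ is not among the functionals defining $\mathbf{X}_{2b}$; this is precisely the content of Lemma \ref{sec32:lem:ker}, and your uniqueness step should also invoke the uniform bound of Lemma \ref{sec32:lem:combine:bdd}(iii) to ensure $|(\beta_2,\beta_3,\beta_4)|$ and $\|V\|_{\mathbf{X}}$ are small enough to land in the ball where the implicit function theorem's uniqueness applies.
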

\renewcommand{\thethm}{\thesection.\arabic{thm}}%

Define
\begin{equation}\label{sec32:eq:X3}
  \mathbf{X}_3:=  \ker l_3 \cap \ker l_4.
\end{equation}
The spaces $\mathbf{X}_3$ is also independent of $(c,\gamma)$. We have
\begin{thm}\label{sec32:thm:3}
	Let $K$ be a compact set contained in one of $I_{k,l}$ with $5\le k\le 8$ and $1\le l\le 3$, there exist $\delta = \delta(K) > 0$, and $V\in C^{\infty}( K\times B_{\delta}(0), \mathbf{X}_3)$ satisfying $V(c,\gamma,0)=0$ and $\displaystyle \frac{\partial V}{\partial \beta_i}\big |_{\beta=0}=0$, $i=3,4$, $\beta=(\beta_3,\beta_4)$, such that 
	\begin{equation}\label{sec32:eq:U3}
		U = U^{c,\gamma} + \beta_3 V_{c,\gamma}^3+\beta_4 V_{c,\gamma}^4 + V(c,\gamma,\beta)
	\end{equation}
	satisfies equation (\ref{sec31:eq:NSE}) with $\hat{c}_1= c_1 - \frac{1}{2}\psi[\tilde{U}_{\phi}](-1)$, $\hat{c}_2 = c_2 - \frac{1}{2}\psi[\tilde{U}_{\phi}](1)$, $\hat{c}_3 = c_3 +\frac{1}{2}(\varphi_{c,\gamma}[\tilde{U}_{\theta}])''(0)$.  
	
	Moreover, there exists some $\delta'=\delta'(K)>0$, such that if $\| U - U^{c,\gamma } \|_{\mathbf{X}} < \delta'$, $(c,\gamma) \in K$,  and $U$ satisfies  equation (\ref{sec31:eq:NSE}) with some constants $\hat{c}_1, \hat{c}_2, \hat{c}_3$, then (\ref{sec32:eq:U3}) holds for some $|\beta|< \delta$. 
\end{thm}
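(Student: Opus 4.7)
The plan is to follow the implicit-function-theorem scheme used for Theorems \ref{sec32:thm:1}, \ref{sec32:thm:2} and \ref{sec32:thm:2'}, with the simplification that in the present regime fewer kernel directions of $L_0^{c,\gamma}$ lie in the ambient space $\mathbf{X}$, which is why only two parameters $\beta_3,\beta_4$ enter the ansatz. Set $\tilde U := U - U^{c,\gamma}$ and define
\[
F(c,\gamma,\beta,V) := G\!\left(c,\gamma,\,\beta_3 V_{c,\gamma}^{3} + \beta_4 V_{c,\gamma}^{4} + V\right), \qquad F: K \times B_\delta(0) \times \mathbf{X}_3 \to \mathbf{Y},
\]
with $G$ as in \eqref{sec31:eq:G}. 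Then $F(c,\gamma,0,0)=0$. Smoothness of $F$ in $(c,\gamma,\beta,V)$ follows from Proposition \ref{propA_1}: for $5\le k\le 8$ the profile $\bar{U}_\theta$ is the explicit linear function $(1+\sqrt{1+c_1})(1-x)+(-1-\sqrt{1+c_2})(1+x)$, which is manifestly $C^\infty$ in $(c_1,c_2)\in K$, and the weighted Banach structure renders $\psi$ and $\varphi_{c,\gamma}$ smooth. The goal is to invoke Theorem B in the variable $V$, which reduces to showing that $\partial_V F(c,\gamma,0,0) = L_0^{c,\gamma}\big|_{\mathbf{X}_3}: \mathbf{X}_3 \to \mathbf{Y}$ is an isomorphism, with bounds uniform over $K$.

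The heart of the argument is this isomorphism. The ODE system $L_0^{c,\gamma}V=0$ has the four-dimensional solution space $\mathrm{span}\{V_{c,\gamma}^1, V_{c,\gamma}^2, V_{c,\gamma}^3, V_{c,\gamma}^4\}$, and the key observation is that only $V_{c,\gamma}^{3}, V_{c,\gamma}^{4}$ actually belong to $\mathbf{X}$. Indeed, since $c_3=\bar{c}_3$ and $c_1, c_2<-\tfrac{3}{4}$, the explicit form of $\bar{U}_\theta$ gives $e^{-a_{c,\gamma}(x)}\sim(1+x)^{-\sqrt{1+c_1}}$ as $x\to -1$ and $e^{-a_{c,\gamma}(x)}\sim(1-x)^{-\sqrt{1+c_2}}$ as $x\to 1$; together with $\epsilon<\tfrac{1}{2}$ the weight $(1-x^2)^{-1+2\epsilon}$ in the definition of $\mathbf{M}_1$ cannot absorb these singularities, so $V_{c,\gamma}^{1}, V_{c,\gamma}^{2}\notin\mathbf{X}$. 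Since the functionals $l_3, l_4$ detect precisely the directions $V_{c,\gamma}^{3}, V_{c,\gamma}^{4}$, injectivity of $L_0^{c,\gamma}|_{\mathbf{X}_3}$ follows. For surjectivity, given $\xi=(\xi_\theta,\xi_\phi)\in\mathbf{Y}$ one takes $\tilde V_\phi:=W_\phi^{c,\gamma}(\xi)$, which by \eqref{sec31:eq:Wphi} and \eqref{sec31:eq:diff1:Wphi} satisfies $\tilde V_\phi(0)=\tilde V_\phi'(0)=0$, and assembles $\tilde V_\theta$ from the operators $W_\theta^{c,\gamma,i}$ of \eqref{sec31:eq:Wthei}, choosing the constant $C^{c,\gamma}_W$ and the integration endpoints so that $\tilde V_\theta(\pm 1)=0$; uniform bounds come from the explicit asymptotics of $e^{\pm a_{c,\gamma}}$, $e^{\pm b_{c,\gamma}}$.

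Conclusion and uniqueness. Once the isomorphism is established, Theorem B produces $V \in C^\infty(K\times B_\delta(0),\mathbf{X}_3)$ with $V(c,\gamma,0)=0$. The vanishing $\partial_{\beta_i} V|_{\beta=0} = 0$ for $i=3,4$ is obtained by differentiating $F(c,\gamma,\beta,V(c,\gamma,\beta))\equiv 0$ at $\beta=0$ and using $L_0^{c,\gamma}V_{c,\gamma}^i=0$ together with the established injectivity. That the resulting $U$ solves \eqref{sec31:eq:NSE} with the indicated $\hat c_i$ is the computation recorded between \eqref{sec31:eq:G} and \eqref{sec31:eq:l}. For the uniqueness clause, given $U$ with $\|U-U^{c,\gamma}\|_{\mathbf{X}}<\delta'$ solving \eqref{sec31:eq:NSE}, set $\beta_3:=\tilde U_\phi'(0)$, $\beta_4:=\tilde U_\phi(0)$ and $V:=\tilde U - \beta_3 V_{c,\gamma}^{3} - \beta_4 V_{c,\gamma}^{4} \in \mathbf{X}_3$; then $F(c,\gamma,\beta,V)=0$ with $|\beta|$ and $\|V\|_{\mathbf{X}}$ small, and the local uniqueness clause of Theorem B identifies $(\beta,V)$ with the constructed ones. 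The main obstacle is proving the uniform isomorphism of $L_0^{c,\gamma}|_{\mathbf{X}_3}$; the threshold $c_1, c_2 < -\tfrac{3}{4}$ built into the definition of $I_{k,l}$ for $5\le k\le 8$ is precisely the hypothesis that makes this possible within the given weighted spaces.
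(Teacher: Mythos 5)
Your proposal follows essentially the same route as the paper: the ansatz $F(c,\gamma,\beta,V)=G(c,\gamma,\beta_3V^3_{c,\gamma}+\beta_4V^4_{c,\gamma}+V)$ on $K\times\mathbb{R}^2\times\mathbf{X}_3$, the isomorphism $L^{c,\gamma}_0:\mathbf{X}_3\to\mathbf{Y}$ built from the right inverse $(W^{c,\gamma,3}_{\theta},W^{c,\gamma}_{\phi})$ with the constant $C^{c,\gamma}_W(\xi)$ of \eqref{eq:CW}, smooth dependence of $V^3_{c,\gamma},V^4_{c,\gamma}$ on $(c,\gamma)$, and the implicit function theorem; your uniqueness step via $\beta_3=\tilde{U}_{\phi}'(0)$, $\beta_4=\tilde{U}_{\phi}(0)$ is a mild (and valid) simplification of the paper's use of Lemma \ref{sec32:lem:combine:bdd}, since $l_3,l_4$ are bounded on $\mathbf{X}$ and $V^3,V^4$ depend continuously on $(c,\gamma)$ over the compact $K$.

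There is, however, a genuine gap in your injectivity argument. From $V^1_{c,\gamma}\notin\mathbf{X}$ and $V^2_{c,\gamma}\notin\mathbf{X}$ alone one cannot conclude $\ker L^{c,\gamma}_0\cap\mathbf{X}=\mathrm{span}\{V^3_{c,\gamma},V^4_{c,\gamma}\}$: a nontrivial combination $d_1V^1_{c,\gamma}+d_2V^2_{c,\gamma}$ could still lie in $\mathbf{X}$, and in the adjacent regimes this is exactly what happens ($V^{2a}_{c,\gamma}\in\mathbf{X}$ on $I_{1,2}$, $V^{2b}_{c,\gamma}\in\mathbf{X}$ on $I_{1,3}$, although $V^1_{c,\gamma},V^2_{c,\gamma}\notin\mathbf{X}$ there). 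The point special to $5\le k\le 8$ is that no such combination survives, and this must be argued; the paper does it in Lemma \ref{sec32:lem:ker}: since $\bar{U}_{\theta}(-1)\ge2$ and $\bar{U}_{\theta}(1)\le-2$, $e^{-a_{c,\gamma}}$ is bounded below near both poles, so $d_1V^1_{\theta}+d_2V^2_{\theta}\in\mathbf{M}_1$ (hence vanishing at $x=\pm1$) forces $d_1=-d_2\int_0^{-1}e^{a_{c,\gamma}}=-d_2\int_0^{1}e^{a_{c,\gamma}}$, whence $d_2\int_{-1}^{1}e^{a_{c,\gamma}}=0$ and $d_1=d_2=0$. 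Relatedly, in your surjectivity sketch you should verify that subtracting $C^{c,\gamma}_W(\xi)$ still yields $L^{c,\gamma}_0W(\xi)=\xi$: one has $l_{c,\gamma}[W^{c,\gamma,3}_{\theta}(\xi)]=\xi_{\theta}-C^{c,\gamma}_W(\xi)(1-x^2)$, and since $\xi_{\theta}''(0)=0$ the term $\frac12(l_{c,\gamma}[\,\cdot\,])''(0)(1-x^2)$ in \eqref{sec31:eq:Linear0} contributes exactly $+C^{c,\gamma}_W(\xi)(1-x^2)$, restoring $\xi_{\theta}$; this compensation is precisely why surjectivity holds even though no element of $\mathrm{span}\{V^1_{c,\gamma},V^2_{c,\gamma}\}$ is available in $\mathbf{X}$. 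With these two points supplied, your argument matches the paper's proof.
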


For $\tilde{U}_{\phi}\in \mathbf{M}_2$, let $\psi[\tilde{U}_{\phi}](x)$ be defined by (\ref{sec31:eq:psi}). Let $K$ be a compact set contained in one of $I_{k,l}$ with $k=1$ or $5\le k\le 8$ or $(k,l)=(2,2)$ or $(3,3)$. Define a map $G = G(c,\gamma,\tilde{U})$ on $K\times \mathbf{X}$ by (\ref{sec31:eq:G}). 

\begin{prop}\label{sec32:prop}
	The map $G$ is in $C^{\infty}(K\times \mathbf{X}, \mathbf{Y})$ in the sense that $G$ has  continuous Fr\'{e}chet derivatives of every order. Moreover, the Fr\'{e}chet derivative of $G$ with respect to $\tilde{U}$ at $(c,\gamma,\tilde{U})\in K\times \mathbf{X}$ is given by the  bounded linear operator $L^{c,\gamma}_{\tilde{U}}: \mathbf{X}\rightarrow \mathbf{Y}$ defined as  in (\ref{sec31:eq:Linear}).
\end{prop}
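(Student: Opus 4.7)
The core observation is that $G=A+Q$ with $A(c,\gamma,\cdot)$ linear in $\tilde U$ and $Q$ bilinear and independent of $(c,\gamma)$; the only $(c,\gamma)$-dependence in $G$ sits in the single coefficient $\bar U_\theta=U^{c,\gamma}_\theta$ appearing in $A$. Once I verify that $A(c,\gamma,\cdot):\mathbf{X}\to\mathbf{Y}$ is bounded linear and $Q:\mathbf{X}\times\mathbf{X}\to\mathbf{Y}$ is bounded bilinear, the map $\tilde U\mapsto G(c,\gamma,\tilde U)$ is a degree-two polynomial between Banach spaces, hence automatically $C^\infty$ in $\tilde U$ with
\[
   D_{\tilde U}G(c,\gamma,\tilde U)[\tilde V]=A(c,\gamma,\tilde V)+Q(\tilde U,\tilde V)+Q(\tilde V,\tilde U),
\]
second derivative equal to the symmetric bilinear form $(\tilde V_1,\tilde V_2)\mapsto Q(\tilde V_1,\tilde V_2)+Q(\tilde V_2,\tilde V_1)$, and all higher derivatives vanishing. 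A term-by-term comparison with \eqref{sec31:eq:Linear}, using $\psi[\tilde U_\phi,\tilde V_\phi]+\psi[\tilde V_\phi,\tilde U_\phi]=\Psi_{\tilde U_\phi}[\tilde V_\phi]$ and the trivial polarization of $\tfrac12\tilde U_\theta^2$, identifies the above derivative with $L^{c,\gamma}_{\tilde U}\tilde V$.

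To verify that $A(c,\gamma,\cdot)$ is bounded from $\mathbf{X}$ into $\mathbf{Y}$, I would extract from the norms on $\mathbf{X}$ the pointwise estimates $|\tilde U_\theta|\le C\|\tilde U\|_{\mathbf{X}}(1-x^2)^{1-2\epsilon}$, $|\tilde U_\theta'|\le C\|\tilde U\|_{\mathbf{X}}(1-x^2)^{-2\epsilon}$, $|\tilde U_\phi'|\le C\|\tilde U\|_{\mathbf{X}}(1-x^2)^{-1-\epsilon}$, $|\tilde U_\phi''|\le C\|\tilde U\|_{\mathbf{X}}(1-x^2)^{-2-\epsilon}$, and combine them with the uniform bound $\|\bar U_\theta\|_{L^\infty(-1,1)}\le C(K)$ from Proposition \ref{propA_1}(1), which applies because the hypothesis on $K$ places $\bar U_\theta$ in the cases with no logarithmic blow-up at $\pm 1$. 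This yields $|A_\theta|\le C\|\tilde U\|_{\mathbf{X}}(1-x^2)^{1-2\epsilon}$ and $|A_\phi|\le C\|\tilde U\|_{\mathbf{X}}(1-x^2)^{-1-\epsilon}$, matching the weights of $\mathbf{N}_1$ and $\mathbf{N}_2$; the conditions $A_\theta(\pm 1)=0$ follow from $\tilde U_\theta(\pm 1)=0$ and $(1-x^2)\tilde U_\theta'(\pm 1)=0$ (valid because $\epsilon<\tfrac12$), while $A_\theta''(0)=0$ is exactly what the correction term $\tfrac12(l_{c,\gamma}[\tilde U_\theta])''(0)(1-x^2)$ achieves.

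For $Q$ the only delicate term is the triple integral $\psi$. With $|\tilde U_\phi\tilde V_\phi'|/(1-s^2)\le C\|\tilde U\|_{\mathbf{X}}\|\tilde V\|_{\mathbf{X}}(1-s^2)^{-2-2\epsilon}$, three successive integrations raise the exponent at each step so that the final triple integral is $O((1-x^2)^{1-2\epsilon})$ and in particular $\psi[\tilde U_\phi,\tilde V_\phi](\pm 1)$ exist as bounded bilinear functionals of $(\tilde U,\tilde V)$. The subtraction $\tfrac{1-x}{2}\psi(-1)+\tfrac{1+x}{2}\psi(1)$ then forces $Q_\theta(\pm 1)=0$, and the explicit $(1-x^2)$-correction normalizes $Q_\theta''(0)=0$, placing $Q_\theta$ in $\mathbf{N}_1$ with the required decay. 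The remaining component $Q_\phi=\tilde U_\theta\tilde V_\phi'$ satisfies $|Q_\phi|\le C\|\tilde U\|_{\mathbf{X}}\|\tilde V\|_{\mathbf{X}}(1-x^2)^{-3\epsilon}$, which is dominated by the $\mathbf{N}_2$ weight since $-3\epsilon\ge-1-\epsilon$ for $\epsilon\le\tfrac12$.

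The $C^\infty$ dependence of $G$ on $(c,\gamma)$ reduces to the same property of $A$: its only $(c,\gamma)$-dependence comes through $\bar U_\theta$, whose uniform $C^m$ bounds on $K\times(-1,1)$ are furnished by Proposition \ref{propA_1}(1) for every $m$, and substituting them into the weighted estimates above gives the required joint smoothness of $A$ as an operator-valued map from $K$ into the bounded linear maps $\mathbf{X}\to\mathbf{Y}$. The main obstacle I expect is the careful bookkeeping needed to establish the endpoint behavior of $\psi$ and to verify that every component of $A$ and $Q$ satisfies the boundary-vanishing and second-derivative-at-origin conditions built into $\mathbf{N}_1$.
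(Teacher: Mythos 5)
Your proposal is correct and follows essentially the same route as the paper: decompose $G=A+Q$, verify that $A(c,\gamma,\cdot)$ is bounded linear and $Q$ is bounded bilinear on the weighted spaces (the paper's Lemmas \ref{sec32:lem:A:well-def} and \ref{sec32:lem:Q:well-def}), conclude smoothness in $\tilde{U}$ since $G$ is quadratic with the derivative identified by polarization as $L^{c,\gamma}_{\tilde U}$, and obtain joint smoothness in $(c,\gamma)$ from the uniform bounds on $\partial_c^{\alpha}\partial_\gamma^j U^{c,\gamma}_{\theta}$ in Proposition \ref{propA_1}(1). One minor imprecision: $\psi$ itself is only $O(1)$ near $x=\pm 1$ (its endpoint values are finite and generally nonzero); the $O\left((1-x^2)^{1-2\epsilon}\right)$ bound holds for $\psi(x)-\frac{1-x}{2}\psi(-1)-\frac{1+x}{2}\psi(1)$, which is the quantity actually appearing in $Q_\theta$, as your subsequent use of the subtraction indicates.
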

To prove Proposition \ref{sec32:prop}, we first prove the following lemmas:\\

\begin{lem}\label{sec32:lem:A:well-def}
	For every $(c,\gamma)\in K$, $A(c,\gamma,\cdot): \mathbf{X}\to \mathbf{Y}$ defined by (\ref{sec31:eq:A}) is a well-defined bounded linear operator.
\end{lem}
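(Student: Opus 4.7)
The plan is to verify in turn the three ingredients of the statement: linearity, that $A(c,\gamma,\tilde U)$ actually lies in $\mathbf{Y}$ (the pointwise boundary/interior conditions built into $\mathbf{N}_1$), and the quantitative bound $\|A(c,\gamma,\tilde U)\|_{\mathbf{Y}}\le C(K)\|\tilde U\|_{\mathbf{X}}$. Linearity in $\tilde U$ is immediate from the explicit formula \eqref{sec31:eq:A}. For the membership in $\mathbf{Y}$, I would first write
\[
A_\theta=(1-x^2)\tilde U_\theta'+(2x+\bar U_\theta)\tilde U_\theta+\tfrac12(l_{c,\gamma}[\tilde U_\theta])''(0)(1-x^2),
\qquad A_\phi=(1-x^2)\tilde U_\phi''+\bar U_\theta\tilde U_\phi'.
\]
The side conditions $A_\theta(\pm1)=0$ follow because $\tilde U_\theta(\pm1)=0$ by the $\mathbf{M}_1$ condition and because $|(1-x^2)\tilde U_\theta'|\le \|\tilde U_\theta\|_{\mathbf{M}_1}(1-x^2)^{1-2\epsilon}\to 0$ as $x\to\pm1$, since $\epsilon<1/2$. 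The condition $A_\theta''(0)=0$ is a one-line algebraic check: $A_\theta=l_{c,\gamma}[\tilde U_\theta]+\tfrac12(l_{c,\gamma}[\tilde U_\theta])''(0)(1-x^2)$, so $A_\theta''(0)=(l_{c,\gamma}[\tilde U_\theta])''(0)-(l_{c,\gamma}[\tilde U_\theta])''(0)=0$.

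For the weighted-$L^\infty$ part of the $\mathbf{N}_1$ bound I would split
\[
(1-x^2)^{-1+2\epsilon}A_\theta = (1-x^2)^{2\epsilon}\tilde U_\theta'
+(2x+\bar U_\theta)(1-x^2)^{-1+2\epsilon}\tilde U_\theta
+\tfrac12(l_{c,\gamma}[\tilde U_\theta])''(0)(1-x^2)^{2\epsilon}.
\]
The first and second summands are bounded directly by $\|\tilde U_\theta\|_{\mathbf{M}_1}$, using that $2x+\bar U_\theta$ is uniformly bounded on $[-1,1]$ for $(c,\gamma)\in K$: the endpoint values $\bar U_\theta(\pm1)$ are finite by Proposition~\ref{prop2_1}(i) (this is where the restriction $(k,l)\in A_1$ enters), and interior bounds come from Proposition~\ref{propA_1}. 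For the constant $(l_{c,\gamma}[\tilde U_\theta])''(0)$, I would differentiate twice at $0$ and obtain a linear combination of $\tilde U_\theta^{(j)}(0)$ for $0\le j\le 3$ with coefficients in $\{\bar U_\theta(0),\bar U_\theta'(0),\bar U_\theta''(0)\}$; the former are bounded by $\|\tilde U_\theta\|_{\mathbf{M}_1}$ and the latter by $C(K)$ via Proposition~\ref{propA_1}. The interior bounds $\|A_\theta'\|_{L^\infty(-1/2,1/2)}$ and $\|A_\theta''\|_{L^\infty(-1/2,1/2)}$ are handled in the same way, differentiating the explicit expression for $A_\theta$ once and twice and using the $C^3$ part of the $\mathbf{M}_1$ norm together with Proposition~\ref{propA_1}.

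For the $\mathbf{N}_2$ bound on $A_\phi$, the decomposition
\[
(1-x^2)^{1+\epsilon}A_\phi=(1-x^2)^{2+\epsilon}\tilde U_\phi''+\bar U_\theta\,(1-x^2)^{1+\epsilon}\tilde U_\phi'
\]
estimates each piece directly by $\|\tilde U_\phi\|_{\mathbf{M}_2}$, with $\bar U_\theta$ again controlled uniformly on $K$ by Propositions~\ref{prop2_1}(i) and~\ref{propA_1}. Collecting these estimates gives the operator bound $\|A(c,\gamma,\tilde U)\|_{\mathbf{Y}}\le C(K)\|\tilde U\|_{\mathbf{X}}$, completing the proof.

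I do not expect a genuine obstacle; the only substantive point is that the weights chosen for $\mathbf{M}_1$ and $\mathbf{N}_1$ must interact correctly with the behaviour of $\bar U_\theta$ near $\pm1$, and the restriction to the index set $A_1$ (equivalently, Case~1) is precisely what ensures this: $\bar U_\theta(\pm 1)$ are finite and no logarithmic blow-up of $\bar U_\theta$ enters, so that $2x+\bar U_\theta$ is a bounded multiplier on $[-1,1]$ uniformly in $(c,\gamma)\in K$. The rest is routine weighted-norm bookkeeping.
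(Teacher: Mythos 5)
Your proposal is correct and follows essentially the same route as the paper: the same decomposition of $(1-x^2)^{-1+2\epsilon}A_\theta$ into the $l_{c,\gamma}[\tilde U_\theta]$ part plus the $\tfrac12 l''(0)(1-x^2)$ correction, the same bound $|l''(0)|\le C\|\tilde U_\theta\|_{\mathbf{M}_1}$ via derivatives at $0$, the same interior estimates for $A_\theta'$, $A_\theta''$ on $(-\tfrac12,\tfrac12)$, and the same direct estimate for $A_\phi$. One small remark: the boundedness of $2x+\bar U_\theta$ (all the paper uses here is $\bar U_\theta\in C^2(-1,1)\cap L^\infty(-1,1)$) in fact holds for all $(c,\gamma)\in I$, not only in Case 1; the restriction to $A_1$ is what dictates the choice of $\epsilon$ and the power-type (non-logarithmic) weights, rather than being needed for this multiplier bound.
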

\begin{proof}
	In the following, $C$ denotes a universal constant which may change from line to line. For convenience we denote $l=l_{c,\gamma}[\tilde{U}_{\theta}]$ defined by (\ref{sec31:eq:l}), and $A=A(c,\gamma,\cdot)$ for some fixed $(c,\gamma)\in K$. We make use of the property of $\bar{U}_{\theta}$ that $\bar{U}_{\theta}\in C^2(-1,1)\cap L^{\infty}(-1,1)$.

	$A$ is clearly linear. For every $\tilde{U}\in\mathbf{X}$, we prove that $A\tilde{U}$ defined by (\ref{sec31:eq:A}) is in $\mathbf{Y}$ and there exists some constant $C$ such that $\|A\tilde{U}\|_{\mathbf{Y}}\le C\|\tilde{U}\|_{\mathbf{X}}$ for all $\tilde{U}\in \mathbf{X}$.

	By computation,
	\[
	    l'(x)=(1-x^2)\tilde{U}''_{\theta}+\bar{U}_{\theta}\tilde{U}'_{\theta}+(2+\bar{U}'_{\theta})\tilde{U}_{\theta},
	\]
	\begin{equation*}
		l''(x)=(1-x^2)\tilde{U}'''_{\theta}+(\bar{U}_{\theta}-2x)\tilde{U}''_{\theta}+2(\bar{U}'_{\theta}+1)\tilde{U}'_{\theta}+\bar{U}''_{\theta}\tilde{U}_{\theta}.
	\end{equation*}
	
	By the fact that $\tilde{U}_{\theta}\in \mathbf{M}_1$, we have 
	\[
	  |l''(0)|\le |\tilde{U}'''_{\theta}(0)|
		+(|\bar{U}_{\theta}(0)|+2)|\tilde{U}''_{\theta}(0)|+2(|\bar{U}'_{\theta}(0)|+1)|\tilde{U}'_{\theta}(0)|+|\bar{U}''_{\theta}(0)| |\tilde{U}_{\theta}(0)|\le C\|\tilde{U}_{\theta}\|_{\mathbf{M}_1}.
	\]
	For $-1<x<1$,
	\[
	\begin{split}
		|(1-x^2)^{-1+2\epsilon}A_{\theta}|& \le |(1-x^2)^{-1+2\epsilon}l(x)|+\frac{1}{2}|l''(0)|(1-x^2)^{2\epsilon}\\
		& \le |(1-x^2)^{2\epsilon}\tilde{U}'_{\theta}|+(2+|\bar{U}_{\theta}|)(1-x^2)^{-1+2\epsilon}|\tilde{U}_{\theta}|+\frac{1}{2}(1-x^2)^{2\epsilon}|l''(0)|\\
		& \le C\|\tilde{U}_{\theta}\|_{\mathbf{M}_1}.
	\end{split}
	\]	
	For $-\frac{1}{2}<x<\frac{1}{2}$, 
	\[
	  \begin{split}
	    |A'_{\theta}|& =|l'(x)-l''(0)x|\\
	       & \le |\tilde{U}''_{\theta}|+|\bar{U}_{\theta}||\tilde{U}'_{\theta}|+(2+|\bar{U}'_{\theta}|)|\tilde{U}_{\theta}|+|l''(0)|\\
	       & \le C\|\tilde{U}_{\theta}\|_{\mathbf{M}_1},
	    \end{split}
	\]
	and
	\[
	\begin{split}
	   |A''_{\theta}|& =|l''(x)-l''(0)|\\
	   & \le |\tilde{U}'''_{\theta}|+(|\bar{U}_{\theta}|+2)|\tilde{U}''_{\theta}|+2(|\bar{U}'_{\theta}|+1)|\tilde{U}'_{\theta}|+|\bar{U}''_{\theta}||\tilde{U}_{\theta}|+|l''(0)|\\
	   & \le C\|\tilde{U}_{\theta}\|_{\mathbf{M}_1}.
	   \end{split} 
	\]

	We also see from the above that $\displaystyle{\lim_{x\to \pm 1}A_{\theta}(x)=0}$. By computation $A''_{\theta}(0)=0$. So we have $A_{\theta}\in \mathbf{N}_1$ and $\|A_{\theta}\|_{\mathbf{N}_1}\le C\|\tilde{U}_{\theta}\|_{\mathbf{M}_1}$.
	
	Next, since $A_{\phi}=(1-x^2)\tilde{U}''_{\phi}+\bar{U}_{\theta}\tilde{U}'_{\phi}$, by the fact that $\tilde{U}_{\phi}\in \mathbf{M}_2$  we have that
	\[
		\left|(1-x^2)^{1+\epsilon}A_{\phi}\right| \le (1-x^2)^{2+\epsilon}|\tilde{U}''_{\phi}|+(1-x^2)^{1+\epsilon}|\bar{U}_{\theta}\|\tilde{U}'_{\phi}|\le C\|\tilde{U}_{\phi}\|_{\mathbf{M}_2}.
	\]
	So $A_{\phi}\in \mathbf{N}_1$, and $\|A_{\phi}\|_{\mathbf{N}_1}\le C\|\tilde{U}_{\phi}\|_{\mathbf{M}_2}$. We have proved that $A\tilde{U}\in \mathbf{Y}$ and $\|A\tilde{U}\|_{\mathbf{Y}}\le C\|\tilde{U}\|_{\mathbf{X}}$ for every $\tilde{U}\in \mathbf{X}$. The proof is finished.
\end{proof}

\begin{lem}\label{sec32:lem:Q:well-def}
	The map $Q:\mathbf{X}\times\mathbf{X}\to \mathbf{Y}$ defined by (\ref{sec31:eq:Q}) is a well-defined bounded bilinear operator.
\end{lem}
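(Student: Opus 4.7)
Bilinearity of $Q$ is clear from the definition (\ref{sec31:eq:Q}). What needs work is showing $Q(\tilde U,\tilde V)\in\mathbf Y$ together with the estimate $\|Q(\tilde U,\tilde V)\|_{\mathbf Y}\le C\|\tilde U\|_{\mathbf X}\|\tilde V\|_{\mathbf X}$. My plan is to verify the two endpoint/interior boundary conditions defining $\mathbf N_1$ at the points $x=\pm1$ and $x=0$, then to derive pointwise bounds for $Q_\theta$ on $(-1,1)$, for $Q'_\theta,Q''_\theta$ on $(-\tfrac12,\tfrac12)$, and for $Q_\phi$ on $(-1,1)$, with the right weights in each case.

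The first step is to control the triple integral $\psi[\tilde U_\phi,\tilde V_\phi](x)$. From the definitions of $\mathbf M_2$ one has the pointwise bounds $|\tilde U_\phi(s)|\le\|\tilde U_\phi\|_{\mathbf M_2}(1-s^2)^{-\epsilon}$ and $|\tilde V_\phi'(s)|\le\|\tilde V_\phi\|_{\mathbf M_2}(1-s^2)^{-1-\epsilon}$, so the innermost integrand satisfies $\bigl|\tilde U_\phi\tilde V_\phi'/(1-s^2)\bigr|\le C(1-s^2)^{-2-2\epsilon}\|\tilde U_\phi\|_{\mathbf M_2}\|\tilde V_\phi\|_{\mathbf M_2}$. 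Rewriting the triple iterated integral as the single integral $\int_0^x(x-s)^2\,[\tilde U_\phi\tilde V_\phi'/(1-s^2)]\,ds$ (valid by a Fubini swap for $x\in(-1,1)$) inserts the weight $(x-s)^2\le(1-|s|)^2$, which compensates the singular factor to yield an exponent $-2\epsilon>-1$. This gives uniform boundedness: $\sup_{x\in[-1,1]}|\psi[\tilde U_\phi,\tilde V_\phi](x)|\le C\|\tilde U_\phi\|_{\mathbf M_2}\|\tilde V_\phi\|_{\mathbf M_2}$. In particular $\psi[\tilde U_\phi,\tilde V_\phi](\pm1)$ are finite limits, so the linear interpolation correction in the definition of $Q_\theta$ makes sense.

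The second step is to check the three conditions defining $\mathbf N_1$ for $Q_\theta$. At $x=\pm1$, the condition $\tilde U_\theta(\pm1)=\tilde V_\theta(\pm1)=0$ kills the quadratic term, the factor $(1\pm x)$ kills the opposite boundary linear term, and the remaining two $\psi$ contributions cancel by construction, yielding $Q_\theta(\pm1)=0$. At $x=0$, differentiating twice under the integral gives $\psi[\tilde U_\phi,\tilde V_\phi]''(0)=0$ because $\psi''(t)=\int_0^t 2\tilde U_\phi\tilde V_\phi'/(1-s^2)\,ds$ vanishes at $t=0$. The $\tfrac14(\tilde U_\theta\tilde V_\theta)''(0)(1-x^2)$ term is designed to precisely cancel the $x=0$ second derivative contribution of $\tfrac12\tilde U_\theta\tilde V_\theta$, so $Q_\theta''(0)=0$ follows.

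The third step is the norm estimates. For $Q_\theta$, using $|\tilde U_\theta(x)|\le\|\tilde U_\theta\|_{\mathbf M_1}(1-x^2)^{1-2\epsilon}$ the product $\tilde U_\theta\tilde V_\theta$ carries weight $(1-x^2)^{2-4\epsilon}$, dominating the target weight $(1-x^2)^{1-2\epsilon}$, while the $\psi$ contributions and the linear/quadratic corrections in $x$ are uniformly bounded on $[-1,1]$ by the first step, so $\|(1-x^2)^{-1+2\epsilon}Q_\theta\|_{L^\infty}\le C\|\tilde U\|_{\mathbf X}\|\tilde V\|_{\mathbf X}$; on $(-\tfrac12,\tfrac12)$ the weights are harmless and the $L^\infty$ bounds on $Q_\theta',Q_\theta''$ follow from the $L^\infty$ bounds on $\tilde U_\theta,\tilde U_\theta',\tilde U_\theta''$ (and analogously for $\tilde V$) together with the uniform boundedness of $\psi',\psi''$ on $(-\tfrac12,\tfrac12)$, which follow from the same $(1-s^2)^{-2-2\epsilon}$ estimate since $s$ stays away from $\pm1$ when we differentiate once or twice and evaluate near $0$. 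For $Q_\phi=\tilde U_\theta\tilde V_\phi'$ the bound $|(1-x^2)^{1+\epsilon}Q_\phi|\le C\|\tilde U_\theta\|_{\mathbf M_1}\|\tilde V_\phi\|_{\mathbf M_2}(1-x^2)^{1-2\epsilon}$ is immediate.

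The main obstacle is the first step: the raw pointwise bound on the integrand $\tilde U_\phi\tilde V_\phi'/(1-s^2)$ is not integrable near $\pm1$, so one genuinely needs the triple-iteration (equivalently the $(x-s)^2$ weight) to close the estimate. Once that is in hand, the remaining work is bookkeeping of weights.
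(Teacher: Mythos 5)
Your first two steps are fine (the Cauchy repeated--integration trick $\int_0^x\int_0^l\int_0^t = \tfrac12\int_0^x(x-s)^2\,ds$ is a clean way to see that $\psi$ is bounded on $[-1,1]$ and that $\psi(\pm1)$ exist, and your verification of $Q_\theta(\pm1)=0$ and $Q_\theta''(0)=0$ matches the paper's), but the third step has a genuine gap precisely where the lemma's content lies. You claim that, since the combination $\psi(x)-\tfrac{1-x}{2}\psi(-1)-\tfrac{1+x}{2}\psi(1)$ is \emph{uniformly bounded}, the weighted estimate $\|(1-x^2)^{-1+2\epsilon}Q_\theta\|_{L^\infty(-1,1)}\le C\|\tilde U\|_{\mathbf X}\|\tilde V\|_{\mathbf X}$ follows. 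It does not: the weight $(1-x^2)^{-1+2\epsilon}$ blows up as $x\to\pm1$ (recall $\epsilon<\tfrac12$), so boundedness of the $\psi$-part gives nothing at the endpoints. What is actually needed, and what the paper proves, is a \emph{decay rate}: writing
\begin{equation*}
\psi(x)-\tfrac{1-x}{2}\psi(-1)-\tfrac{1+x}{2}\psi(1)
=\tfrac{1-x}{2}\bigl(\psi(x)-\psi(-1)\bigr)+\tfrac{1+x}{2}\bigl(\psi(x)-\psi(1)\bigr),
\end{equation*}
one estimates $\psi(x)-\psi(\mp1)=\int_{\mp1}^{x}\bigl[\int_0^l\int_0^t 2\tilde U_\phi\tilde V_\phi'/(1-s^2)\,ds\,dt\bigr]dl$, where the inner double integral is $O\bigl((1-l^2)^{-2\epsilon}\bigr)$ by your own pointwise bound, so that $|\psi(x)-\psi(-1)|\le C(1+x)^{1-2\epsilon}$ and $|\psi(x)-\psi(1)|\le C(1-x)^{1-2\epsilon}$, giving the combination the size $C(1-x^2)^{1-2\epsilon}\|\tilde U_\phi\|_{\mathbf M_2}\|\tilde V_\phi\|_{\mathbf M_2}$, which exactly cancels the $\mathbf N_1$ weight. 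Without this (or an equivalent refinement of your $(x-s)^2$ argument based at the endpoints rather than at $0$), the membership $Q_\theta\in\mathbf N_1$ and the bilinear bound are not established; so the ``bookkeeping'' you defer is in fact the heart of the proof. The remaining pieces of your argument (the $(1-x^2)^{2-4\epsilon}$ bound for $\tilde U_\theta\tilde V_\theta$, the interior bounds on $Q_\theta',Q_\theta''$, and the $Q_\phi$ estimate) are correct and coincide with the paper's.
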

\begin{proof}
	In the following, $C$ denotes a universal constant which may change from line to line. For convenience we denote $\psi=\psi[\tilde{U}_{\phi}, \tilde{V}_{\phi}]$ defined by (\ref{sec31:eq:psi}).  

	It is clear that $Q$ is a bilinear operator. For every $\tilde{U},\tilde{V}\in\mathbf{X}$, we will prove that $Q(\tilde{U},\tilde{V})$ is in $\mathbf{Y}$ and there exists some constant $C$ independent of $\tilde{U}$ and $\tilde{V}$ such that $\|Q(\tilde{U},\tilde{V})\|_{\mathbf{Y}}\le C\|\tilde{U}\|_{\mathbf{X}}\|\tilde{V}\|_{\mathbf{X}}$.
    
	For $\tilde{U},\tilde{V}\in \mathbf{X}$, we have, using the fact that $\tilde{U}_{\phi}, \tilde{V}_{\phi}\in \mathbf{M}_2$, that
	\begin{equation}\label{eq3_2_0}
		\left|\frac{ \tilde{U}_\phi(s)\tilde{V} _\phi'(s)}{1-s^2}\right| \le (1-s^2)^{-2-2\epsilon}\|\tilde{U}_{\phi}\|_{\mathbf{M}_2}\|\tilde{V}_{\phi}\|_{\mathbf{M}_2}, \quad  -1<s<1.
	\end{equation}
	It follows that $\psi(\tilde{U},\tilde{V})(x)$ is well-defined and
	\begin{equation}\label{eq3_2_1}
	    |\psi(-1)|\le C\|\tilde{U}_{\phi}\|_{\mathbf{M}_2}\|\tilde{V}_{\phi}\|_{\mathbf{M}_2}, \quad |\psi(1)|\le C\|\tilde{U}_{\phi}\|_{\mathbf{M}_2}\|\tilde{V}_{\phi}\|_{\mathbf{M}_2}.  
	\end{equation}
	Moreover,
	\begin{equation}\label{sec32:eq:temp1}
		\begin{split}
		& \left|\psi(x)-\frac{1}{2}\psi(-1)(1-x)-\frac{1}{2}\psi(1)(1+x)\right|\\
		& =\left|\frac{1}{2}\psi(x)(1-x)+\frac{1}{2}\psi(x)(1+x)-\frac{1}{2}\psi(-1)(1-x)-\frac{1}{2}\psi(1)(1+x)\right|\\
		& \le \frac{1}{2}(1-x)|\psi(x)-\psi(-1)|+\frac{1}{2}(1+x)|\psi(x)-\psi(1)|\\
		& =\frac{1}{2}(1-x)\left|\int_{-1}^{x} \int_{0}^{l} \int_{0}^{t} \frac{2 \tilde{U}_\phi(s)\tilde{V} _\phi'(s)}{1-s^2} ds dt dl\right|+\frac{1}{2}(1+x)\left|\int_{1}^{x} \int_{0}^{l} \int_{0}^{t} \frac{2 \tilde{U}_\phi(s)\tilde{V} _\phi'(s)}{1-s^2} ds dt dl\right|\\
		& \le C(1-x)(1+x)^{1-2\epsilon}\|\tilde{U}_\phi\|_{\mathbf{M}_2}\|\tilde{V} _{\phi}\|_{\mathbf{M}_2}+C(1+x)(1-x)^{1-2\epsilon}\|\tilde{U}_\phi\|_{\mathbf{M}_2}\|\tilde{V} _{\phi}\|_{\mathbf{M}_2}\\
		& \le C(1-x^2)^{1-2\epsilon}\|\tilde{U}_\phi\|_{\mathbf{M}_2}\|\tilde{V} _{\phi}\|_{\mathbf{M}_2}.
		\end{split}
	\end{equation}
	By (\ref{eq3_2_0}), we also have
	\begin{equation}\label{eq3_2_2}
	   |\psi'(x)|=\left|\int_{0}^{x} \int_{0}^{t} \frac{2 \tilde{U}_\phi(s)\tilde{V} _\phi'(s)}{1-s^2} ds dt\right|\le C \|\tilde{U}_\phi\|_{\mathbf{M}_2}\|\tilde{V} _{\phi}\|_{\mathbf{M}_2}, \quad -\frac{1}{2}<x<\frac{1}{2},
	\end{equation}
	and 
	\begin{equation}\label{eq3_2_3}
	   |\psi''(x)|=\left|\int_{0}^{x} \frac{2 \tilde{U}_\phi(s)\tilde{V} _\phi'(s)}{1-s^2} ds\right|\le C \|\tilde{U}_\phi\|_{\mathbf{M}_2}\|\tilde{V} _{\phi}\|_{\mathbf{M}_2}, \quad -\frac{1}{2}<x<\frac{1}{2}.
	\end{equation}
	Using the fact that $\tilde{U}_{\theta},\tilde{V}_{\theta}\in \mathbf{M}_1$, we have
	\begin{equation}\label{sec32:eq:temp2}
		\begin{split}
		|(\tilde{U}_{\theta}\tilde{V}_{\theta})''(0)| & \le |\tilde{U}''_{\theta}(0)\|\tilde{V}_{\theta}(0)|+2|\tilde{U}'_{\theta}(0)\|\tilde{V}'_{\theta}(0)|+|\tilde{U}_{\theta}(0)\|\tilde{V}''_{\theta}(0)|\\
		& \le C\|\tilde{U}_{\theta}\|_{\mathbf{M}_1}\|\tilde{V}_{\theta}\|_{\mathbf{M}_1}.
	\end{split}
	\end{equation}
	So by (\ref{sec32:eq:temp1}),  (\ref{sec32:eq:temp2}), and the fact that $\tilde{U}_{\theta}, \tilde{V}_{\theta}\in \mathbf{M}_1$, we have that for $-1<x<1$,
	\[
	\begin{split}
		& |(1-x^2)^{-1+2\epsilon}Q_{\theta}(x)|\\
		& \le \frac{1}{2}|(1-x^2)^{-1+2\epsilon}\tilde{U}_{\theta}(x)\|\tilde{V}_{\theta}(x)|+(1-x^2)^{-1+2\epsilon}\left|\psi(x)-\frac{1}{2}\psi(-1)(1-x)-\frac{1}{2}\psi(1)(1+x)\right|\\
		& +\frac{1}{4}(1-x^2)^{2\epsilon}|(\tilde{U}_{\theta}\tilde{V}_{\theta})''(0)|\\
		& \le \frac{1}{2}\|\tilde{U}_{\theta}\|_{\mathbf{M}_1}\|\tilde{V}_{\theta}\|_{\mathbf{M}_1}+C\|\tilde{U}_\phi\|_{\mathbf{M}_2}\|\tilde{V} _{\phi}\|_{\mathbf{M}_2}+C(1-x^2)^{2\epsilon}\|\tilde{U}_{\theta}\|_{\mathbf{M}_1}\|\tilde{V}_{\theta}\|_{\mathbf{M}_1}\\
		& \le C\|\tilde{U}\|_{\mathbf{X}}\|\tilde{V}\|_{\mathbf{X}}.
	\end{split}
	\]
	From this we also have $\displaystyle \lim_{x\to \pm 1}Q_{\theta}(x)=0$. 
	
	By (\ref{eq3_2_1}), (\ref{eq3_2_2}), (\ref{eq3_2_3}),  (\ref{sec32:eq:temp2}) and the fact that $\tilde{U}_{\theta}, \tilde{V}_{\theta}\in \mathbf{M}_1$, we have that for $-\frac{1}{2}<x<\frac{1}{2}$,
	\[
	  \begin{split}
	    |Q'_{\theta}(x)| & =\left|\frac{1}{2}(\tilde{U}'_{\theta}\tilde{V}_{\theta}+\tilde{U}_{\theta}\tilde{V}'_{\theta})+\psi'(x)+\frac{1}{2}(\psi(-1)-\psi(1))-\frac{1}{2}(\tilde{U}_{\theta}\tilde{V}_{\theta})''(0)x\right|\\
	    & \le C\|\tilde{U}\|_{\mathbf{X}}\|\tilde{V}\|_{\mathbf{X}},
	    \end{split}
	\]
	and 
	\[
	    \begin{split}
	    |Q''_{\theta}(x)| 
	    & =\left|\frac{1}{2}(\tilde{U}''_{\theta}\tilde{V}_{\theta}+2\tilde{U}'_{\theta}\tilde{V}'_{\theta}+\tilde{U}_{\theta}\tilde{V}''_{\theta})+\psi''(x)-\frac{1}{2}(\tilde{U}_{\theta}\tilde{V}_{\theta})''(0)\right|\\
	    & \le C\|\tilde{U}\|_{\mathbf{X}}\|\tilde{V}\|_{\mathbf{X}}.
	    \end{split}
	\]
	It is obvious that $\psi''(0)=0$, so we have $Q''_{\theta}(0)=0$. Hence $Q_{\theta}\in\mathbf{N}_1$ and $\|Q_{\theta}\|_{\mathbf{N}_1}\le C(\epsilon)\|\tilde{U}\|_{\mathbf{X}}\|\tilde{V}\|_{\mathbf{X}}$.
	
	Next, since $Q_{\phi}(x)=\tilde{U}_{\theta}(x)\tilde{V}'_{\phi}(x)$, for $-1<x<1$, 
	\[
		\left|(1-x^2)^{1+\epsilon}Q_{\phi}(x)\right|  \le |\tilde{U}_{\theta}(x)|(1-x^2)^{1+\epsilon}|\tilde{V}'_{\phi}|
		\le \|\tilde{U}_{\theta}\|_{\mathbf{M}_1}\|\tilde{V}_{\phi}\|_{\mathbf{M}_2}.
	\]
	So $Q_{\phi}\in \mathbf{N}_2$ and 
	$
		\|Q_{\phi}\|_{\mathbf{N}_2}\le \|\tilde{U}_{\theta}\|_{\mathbf{M}_1}\|\tilde{V}_{\phi}\|_{\mathbf{M}_2}.
	$
	Thus we have proved that $Q(\tilde{U}, \tilde{V})\in \mathbf{Y}$ and $\|Q(\tilde{U},\tilde{V})\|_{\mathbf{Y}}\le C\|\tilde{U}\|_{\mathbf{X}}\|\tilde{V}\|_{\mathbf{X}}$ for all $\tilde{U}, \tilde{V}\in \mathbf{X}$. Lemma \ref{sec32:lem:Q:well-def} is proved.
\end{proof}

\noindent {\bf Proof of Proposition \ref{sec32:prop}.}
By definition, $G(c,\gamma,\tilde{U})=A(c,\gamma,\tilde{U})+Q(\tilde{U},\tilde{U})$ for $(c,\gamma,\tilde{U})\in K \times \mathbf{X}$.  
Using standard theories in functional analysis, by Lemma \ref{sec32:lem:Q:well-def} it is clear that $Q$ is $C^{\infty}$ on $\mathbf{X}$.  
By Lemma \ref{sec32:lem:A:well-def}, $A(c,\gamma; \cdot): \mathbf{X}\to \mathbf{Y}$ is $C^\infty$ for each $(c,\gamma)\in K$. Let $\alpha=(\alpha_1,\alpha_2,\alpha_3)$ be a multi-index where $\alpha_i\ge 0$, $i=1,2,3$, and $j\ge 0$. For all $|\alpha|+j\ge 1$,  we have
\begin{equation}\label{sec32:eq:paradiff:A}
	\pt_{c}^\alpha \pt_{\gamma}^j A(c,\gamma,\tilde{U}) = \pt_{c}^\alpha \pt_{\gamma}^j U^{c,\gamma}_{\theta}\left(
	\begin{matrix}
		\tilde{U}_{\theta}  \\  \tilde{U}'_{\phi} 
	\end{matrix}\right) + \frac{1}{2} (\pt_{c}^\alpha \pt_{\gamma}^j U^{c,\gamma}_{\theta} \cdot \tilde{U}_\theta)'' (0) 
	\begin{pmatrix}
	1-x^2   \\   0  
	\end{pmatrix}.  
\end{equation}
By Proposition \ref{propA_1} (1), we have
\[
	|(1-x^2)^{-1+2\epsilon} \pt_{c}^\alpha \pt_{\gamma}^j A_{\theta}(c,\gamma,\tilde{U}) | \leq C(\alpha,j,K) \| \tilde{U}_{\theta} \|_{\mathbf{M}_1}, \quad -1<x<1,
\]
and for $-\frac{1}{2}<x<\frac{1}{2}.$
\[
     | \pt_{c}^\alpha \pt_{\gamma}^j A'_{\theta}(c,\gamma,\tilde{U}) | \leq C(\alpha,j,K) \| \tilde{U}_{\theta} \|_{\mathbf{M}_1}, \quad | \pt_{c}^\alpha \pt_{\gamma}^j A''_{\theta}(c,\gamma,\tilde{U}) | \leq C(\alpha,j,K) \| \tilde{U}_{\theta} \|_{\mathbf{M}_1}.
\]
The above estimates and (\ref{sec32:eq:paradiff:A}) also implies that 
$$
	\pt_{c}^\alpha \pt_{\gamma}^j A_{\theta}(c,\gamma,\tilde{U}) (-1) = \pt_{c}^\alpha \pt_{\gamma}^jA_{\theta}(c,\gamma,\tilde{U})(1) = \pt_{c}^\alpha \pt_{\gamma}^j A_{\theta}(c,\gamma,\tilde{U})'' (0) = 0. 
$$
So $\pt_{c}^\alpha \pt_{\gamma}^j A_{\theta}(c,\gamma,\tilde{U}) \in \mathbf{N}_1$, with $\|\pt_{c}^\alpha \pt_{\gamma}^j A_{\theta}(c,\gamma,\tilde{U}) \|_{\mathbf{N}_1} \leq C(\alpha,j,K) \|\tilde{U}_{\theta} \|_{\mathbf{M}_1}$ for all $(c,\gamma,\tilde{U}) \in K\times \mathbf{X}$. 

Next, by Proposition \ref{propA_1} (1) and the fact that $\tilde{U}_{\phi}\in \mathbf{M}_1$, we have
\begin{equation}\label{sec32:eq:paradiff:temp}
	(1-x^2)^{1+\epsilon} | \pt_{c}^\alpha \pt_{\gamma}^j A_{\phi}(c,\gamma,\tilde{U})(x)| 
	= |\pt_{c}^\alpha \pt_{\gamma}^j  U^{c,\gamma}_{\theta}(x)| \cdot |(1-x^2)^{1+\epsilon}U'_{\phi}|
	\leq C(\alpha,j,K) \| \tilde{U}_{\phi} \|_{\mathbf{M}_2}. 
\end{equation}
So $\pt_{c}^\alpha \pt_{\gamma}^j A_{\phi}(c,\gamma,\tilde{U})\in \mathbf{N}_2$ with $ \|\pt_{c}^\alpha \pt_{\gamma}^j A_{\phi}(c,\gamma,\tilde{U}) \|_{\mathbf{N}_2} \leq C(\alpha,j,K) \| \tilde{U}_{\phi} \|_{\mathbf{M}_2}$ for all $(c,\gamma,\tilde{U})\in K\times \mathbf{X}$. Thus $\pt_{c}^\alpha \pt_{\gamma}^j A(c,\gamma,\tilde{U})\in \mathbf{Y}$, with $\| \pt_{c}^\alpha \pt_{\gamma}^j A(c,\gamma,\tilde{U}) \|_{\mathbf{Y}} \leq C(\alpha,j,K) \| \tilde{U} \|_{\mathbf{X}}$ for all $(c,\gamma,\tilde{U})\in K\times \mathbf{X}$, $|\alpha|+j\ge 1$. 

So for each $(c,\gamma)\in K$, $\pt_{c}^\alpha \pt_{\gamma}^jA(c,\gamma; \cdot): \mathbf{X}\to \mathbf{Y}$ is a bounded linear map with uniform bounded norm on $K$. Then by standard theories in functional analysis, $A: K\times\mathbf{X}\to \mathbf{Y}$ is $C^{\infty}$. So $G$ is a $C^{\infty}$ map from $K  \times \mathbf{X}$ to $\mathbf{Y}$. By direct calculation we get its Fr\'{e}chet derivative with respect to $\mathbf{X}$ is given by  the linear bounded operator $L^{c,\gamma}_{\tilde{U}}: \mathbf{X}\rightarrow \mathbf{Y}$ defined as  (\ref{sec31:eq:Linear}). The proof is finished.     \qed 
\\



By Proposition \ref{sec32:prop}, $L_0^{c,\gamma}: \mathbf{X}\to\mathbf{Y}$, the Fr\'{e}chet derivative of $G$ with respect to $\tilde{U}$ at $\tilde{U}=0$, is given by (\ref{sec31:eq:Linear0}). 

Let $a_{c,\gamma}(x), b_{c,\gamma}(x)$ be the functions defined by (\ref{sec31:eq:ab}). For convenience we denote $a(x)=a_{c,\gamma}(x)$, $b(x)=b_{c,\gamma}(x)$, we have
\begin{lem}\label{lem3_2_ab}
  For $(c,\gamma)\in I_{k,l}$ with $(k,l)\in A_1$, there exists some constant $C>0$, depending only on $(c,\gamma)$, such that  for any $-1<x<1$, 
  \begin{equation}\label{sec32:eq:eb}
		e^{b(x)}\le C(1+x)^{\frac{U^{c,\gamma}_{\theta}(-1)}{2}}(1-x)^{-\frac{U^{c,\gamma}_{\theta}(1)}{2}}, \quad e^{-b(x)}\le C(1+x)^{-\frac{U^{c,\gamma}_{\theta}(-1)}{2}}(1-x)^{\frac{U^{c,\gamma}_{\theta}(1)}{2}},
	\end{equation}
	and 
	\begin{equation}\label{sec32:eq:ea}
		e^{a(x)}\le C(1+x)^{\frac{U^{c,\gamma}_{\theta}(-1)}{2}-1}(1-x)^{-1-\frac{U^{c,\gamma}_{\theta}(1)}{2}}, \quad e^{-a(x)}\le C(1+x)^{1-\frac{U^{c,\gamma}_{\theta}(-1)}{2}}(1-x)^{1+\frac{U^{c,\gamma}_{\theta}(1)}{2}}. 
	\end{equation}
\end{lem}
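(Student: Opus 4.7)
The plan is to reduce everything to a single bound on $b=b_{c,\gamma}$, since the identity $a_{c,\gamma}(x)=-\ln(1-x^2)+b_{c,\gamma}(x)$ recorded right after \eqref{sec31:eq:ab} gives $e^{\pm a(x)}=(1-x)^{\mp 1}(1+x)^{\mp 1}e^{\pm b(x)}$, so both inequalities in \eqref{sec32:eq:ea} follow from \eqref{sec32:eq:eb} with the two exponents each shifted by $1$. For $b$ itself, I will isolate the singular parts at the two poles by partial fractions. Writing $\frac{1}{1-s^2}=\frac{1}{2(1+s)}+\frac{1}{2(1-s)}$, I decompose
$$\frac{\bar U_\theta(s)}{1-s^2}=\frac{\bar U_\theta(-1)}{2(1+s)}+\frac{\bar U_\theta(1)}{2(1-s)}+\rho(s),\qquad \rho(s):=\frac{\bar U_\theta(s)-\bar U_\theta(-1)}{2(1+s)}+\frac{\bar U_\theta(s)-\bar U_\theta(1)}{2(1-s)},$$
and integrate from $0$ to $x$. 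The first two pieces produce $\tfrac{\bar U_\theta(-1)}{2}\ln(1+x)-\tfrac{\bar U_\theta(1)}{2}\ln(1-x)$, so $b(x)$ differs from this explicit expression by $R(x):=\int_0^x\rho(s)\,ds$. Both halves of \eqref{sec32:eq:eb} then collapse to the single claim $\|R\|_{L^\infty(-1,1)}<\infty$, with the constant $C$ in the lemma taken to be $e^{\|R\|_\infty}$.

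The crux is the boundedness of $R$, which is a purely local question at $s=\pm 1$ since $\rho\in C^\infty(-1,1)$. Setting $w(s):=\bar U_\theta(s)-\bar U_\theta(-1)$, substituting into \eqref{eq:UthP}, and using $\bar U_\theta(-1)^2-4\bar U_\theta(-1)=4c_1$ from Proposition \ref{prop2_1}(i) reduces the equation near $s=-1$ to $2(1+s)w'+(\bar U_\theta(-1)-2)w+\tfrac12 w^2=(1+s)\,g(s)$ with $g$ bounded. For $(k,l)\in A_1$ there are three alternatives: when $\bar U_\theta(-1)\ne 2$ the non-zero linear coefficient immediately forces $w(s)=O(1+s)$; on the subfamily $c_3=\bar c_3$ (namely $5\le k\le 8$) $\bar U_\theta$ is the explicit linear function listed just above Proposition \ref{propA_1}, so $w$ is literally linear in $s$; and in the remaining borderline case $\bar U_\theta(-1)=2$ with $\eta_1=0$, which for $(k,l)\in A_1$ occurs only when $c_1=-1$, I will appeal to the sharper asymptotic expansion of $\bar U_\theta$ obtained in \cite{LLY2}, which yields decay of $w$ strong enough to make $w(s)/(1+s)$ absolutely integrable near $s=-1$. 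The symmetric analysis at $s=1$ controls the second summand of $\rho$, so $R$ is bounded on $(-1,1)$ and the proof is complete.

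The main obstacle is precisely that borderline case. When $\bar U_\theta(\pm 1)=\pm 2$ the linearization of the Riccati-type equation \eqref{eq:UthP} has a vanishing eigenvalue, so the leading-order ODE balance supplies no rate of decay for $w$, and the mere existence of $\eta_i$ (or even its value $0$) recorded in Proposition \ref{prop2_1}(ii) is not obviously enough; one must invoke the finer asymptotic expansions established in \cite{LLY2} to guarantee integrability of $\rho$ at the corresponding pole, after which the rest of the argument is routine exponentiation.
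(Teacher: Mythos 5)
Your proposal follows essentially the same route as the paper: the paper also reduces everything to the expansions $b(x)=\frac{\bar{U}_{\theta}(-1)}{2}\ln(1+x)-\frac{\bar{U}_{\theta}(1)}{2}\ln(1-x)+O(1)$ and $a(x)=b(x)-\ln(1-x^2)$, which is exactly your partial-fraction decomposition plus the claim that the remainder $R$ is bounded, and it obtains the boundedness of the remainder by quoting the pole asymptotics $\bar{U}_{\theta}=\bar{U}_{\theta}(\pm1)+O((1\mp x)^{\alpha_0})$ (and $O((1\mp x)\ln(1\mp x))$ when $\bar{U}_{\theta}(\pm1)=0$) from Theorem 1.3 of \cite{LLY1} and Lemma 2.3 of \cite{LLY2}, then integrating. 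The only substantive deviation is that you try to re-derive these asymptotics from the Riccati equation \eqref{eq:UthP}, and there your intermediate claim is too strong: when $\bar{U}_{\theta}(-1)\neq 2$ the equation does \emph{not} ``immediately force'' $w=O(1+s)$. For $0<\bar{U}_{\theta}(-1)<2$ the homogeneous solutions of $2(1+s)w'+(\bar{U}_{\theta}(-1)-2)w=0$ decay only like $(1+s)^{1-\bar{U}_{\theta}(-1)/2}$, so that is the generic rate, and at $\bar{U}_{\theta}(-1)=0$ the resonance produces $O((1+s)\ln(1+s))$ — both cases occur for $(k,l)\in A_1$ with $\gamma<\gamma^+$. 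This does not endanger the lemma, since any positive power (or the $(1+s)\ln(1+s)$ rate) still makes $w(s)/(1+s)$ integrable and hence $R$ bounded, but the claimed rate should be weakened, and in any case the cleanest repair is to do what the paper does and simply cite the known expansions for all cases, including the borderline one $\bar{U}_{\theta}(\pm1)=\pm2$, $\eta=0$, for which you already fall back on \cite{LLY2}.
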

\begin{proof}
   Denote $\bar{U}_{\theta}:=U^{c,\gamma}_{\theta}$, let 
   \[
		\alpha_0=\left\{
		\begin{split}
		  & \min\left\{1,1-\frac{\bar{U}_{\theta}(-1)}{2},   1+\frac{\bar{U}_{\theta}(1)}{2} \right\}, \textrm{ if } \bar{U}_{\theta}(-1)<2\textrm{ and }\bar{U}_{\theta}(1)>-2,\\
		  & \min\{1, 1-\frac{\bar{U}_{\theta}(-1)}{2}\}, \textrm{ if } \bar{U}_{\theta}(-1)<2\textrm{ and }\bar{U}_{\theta}(1)\le -2,\\
		  & \min\{1, 1+\frac{\bar{U}_{\theta}(1)}{2}\}, \textrm{ if } \bar{U}_{\theta}(-1)\ge 2\textrm{ and }\bar{U}_{\theta}(1)> -2,\\
		  & 1, \textrm{ if }\bar{U}_{\theta}(-1)\ge 2\textrm{ and }\bar{U}_{\theta}(1)\le -2.
		  \end{split}
		  \right.
	 \]
	Since $(c,\gamma)\in I_{k,l}$ with $(k,l)\in A_1$, we have $\bar{U}_{\theta}(-1)<3$ and $\bar{U}_{\theta}(-1)\ne 2$, or $\bar{U}_{\theta}(-1)=2$ with $\eta_1=0$, and $\bar{U}_{\theta}(1)>-3$ and $\bar{U}_{\theta}(1)\ne -2$, or $\bar{U}_{\theta}(1)=-2$ with $\eta_2=0$. According to Theorem 1.3 in \cite{LLY1} and Lemma 2.3 in \cite{LLY2}, we then have
	\[
	   \bar{U}_{\theta}=\left\{
		\begin{split} 
		  & \bar{U}_{\theta}(-1)+O((1+x)^{\alpha_0}), \textrm{ if }\bar{U}_{\theta}(-1)\ne 0,\\
		  & O((1+x)\ln(1+x)), \textrm{ if }\bar{U}_{\theta}(-1)=0.
		  \end{split}
		\right.
	\]
	and 
	\[
	   \bar{U}_{\theta}=\left\{
		\begin{split} 
		  & \bar{U}_{\theta}(1)+O((1-x)^{\alpha_0}), \textrm{ if }\bar{U}_{\theta}(1)\ne 0,\\
		  & O((1-x)\ln(1-x)), \textrm{ if }\bar{U}_{\theta}(1)=0.
		  \end{split}
		\right.
	\]
	Thus, by definition of $a(x)$ and $b(x)$ in (\ref{sec31:eq:ab}),  for $-1<x<1$, we have
	\begin{equation*}
	\begin{split}
		& b(x)=\frac{\bar{U}_{\theta}(-1)}{2}\ln(1+x)-\frac{\bar{U}_{\theta}(1)}{2}\ln(1-x)+O(1), \\
		& a(x)=\left(\frac{\bar{U}_{\theta}(-1)}{2}-1\right)\ln(1+x)-\left(\frac{\bar{U}_{\theta}(1)}{2}+1\right)\ln(1-x)+O(1).
	\end{split}
	\end{equation*}
	The lemma then follows from the above estimates.
\end{proof}

For $\xi = (\xi_\theta, \xi_\phi) \in \mathbf{Y}$, let the map $W^{c,\gamma}$ be defined as 
$$
	W^{c,\gamma}(\xi) := (W^{c,\gamma}_\theta(\xi), W^{c,\gamma}_\phi(\xi)), 
$$
where 
\[
	W^{c,\gamma}_{\theta}(\xi)=\left\{ 
	\begin{array}{ll}
		W^{c,\gamma,1}_{\theta}(\xi), & \textrm{ if } (c,\gamma)\in I_{1,1}, \\
		W^{c,\gamma,2a}_{\theta}(\xi), & \textrm{ if }(c,\gamma)\in I_{1,2} \textrm{ or }I_{2,2}, \\
		W^{c,\gamma,2b}_{\theta}(\xi), & \textrm{ if }(c,\gamma)\in I_{1,3} \textrm{ or }I_{3,3}, \\
		W^{c,\gamma,3}_{\theta}(\xi), & \textrm{ if }  (c,\gamma)\in I_{k,l} \textrm{ for } 5\le k \le 8, \textrm{ and }1\le l\le 3. 
	\end{array}
	\right.
\]
$W^{c,\gamma,i}_{\theta}$, $i=1,2a,2b,3$, are defined by (\ref{sec31:eq:Wthei}), and $W^{c,\gamma}_\phi(\xi)$ is defined by (\ref{sec31:eq:Wphi}).

\begin{lem}\label{sec32:lem:W}
	For every $(c,\gamma)\in K$, $W^{c,\gamma}: \mathbf{Y}\rightarrow\mathbf{X}$ is continuous, and is a right inverse of $L^{c,\gamma}_{0}$. 
\end{lem}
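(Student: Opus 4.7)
Since $W^{c,\gamma}$ is linear in $\xi$, continuity is equivalent to the bound $\|W^{c,\gamma}(\xi)\|_{\mathbf{X}}\leq C\|\xi\|_{\mathbf{Y}}$, which I would obtain by explicit estimation of the defining integrals. The plan breaks into two essentially independent pieces: the algebraic identity $L_0^{c,\gamma}W^{c,\gamma}=\mathrm{id}$, and the weighted analytical estimates placing $W^{c,\gamma}(\xi)$ into $\mathbf{X}$.

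For the right-inverse identity, I would substitute the derivative formulas \eqref{sec31:eq:diff:Wthe}--\eqref{sec31:eq:diff2:Wphi} into $L_0^{c,\gamma}$ and use \eqref{sec31:eq:diff:a}, which gives $(1-x^2)a_{c,\gamma}'(x)=2x+\bar{U}_\theta$ and $(1-x^2)b_{c,\gamma}'(x)=\bar{U}_\theta$. A direct computation yields $l_{c,\gamma}[W_\theta^{c,\gamma,i}(\xi)](x)=\xi_\theta(x)$ for $i\in\{1,2a,2b\}$ and $l_{c,\gamma}[W_\theta^{c,\gamma,3}(\xi)](x)=\xi_\theta(x)-C_W^{c,\gamma}(\xi)(1-x^2)$. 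Because $\xi_\theta''(0)=0$ by the definition of $\mathbf{N}_1$, the correction term $\tfrac12(l_{c,\gamma}[W_\theta^{c,\gamma,i}(\xi)])''(0)(1-x^2)$ in $L_0^{c,\gamma}$ vanishes in the first three cases and exactly cancels the extra $C_W^{c,\gamma}(\xi)(1-x^2)$ in case $i=3$. The $\phi$-component $(1-x^2)(W_\phi^{c,\gamma}(\xi))''+\bar{U}_\theta(W_\phi^{c,\gamma}(\xi))'=\xi_\phi$ is immediate from \eqref{sec31:eq:diff2:Wphi}.

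For the weighted estimates, the pointwise bounds $|\xi_\theta(s)|\le C(1-s^2)^{1-2\epsilon}\|\xi\|_{\mathbf{Y}}$ and $|\xi_\phi(s)|\le C(1-s^2)^{-1-\epsilon}\|\xi\|_{\mathbf{Y}}$ combine with Lemma \ref{lem3_2_ab} to give
\[
\left|e^{a(s)}\frac{\xi_\theta(s)}{1-s^2}\right|\le C(1+s)^{\bar{U}_\theta(-1)/2-1-2\epsilon}(1-s)^{-1-\bar{U}_\theta(1)/2-2\epsilon}\|\xi\|_{\mathbf{Y}}.
\]
The lower bounds on $\epsilon$ together with the hypothesis $\bar{U}_\theta(-1)<3$, $\bar{U}_\theta(1)>-3$ ensure, case by case, integrability from the selected endpoint: $s=0$ for $W_\theta^{c,\gamma,1}$ (no endpoint integrability required), $s=-1$ for $W_\theta^{c,\gamma,2a}$, $s=1$ for $W_\theta^{c,\gamma,2b}$, and both endpoints after the $C_W^{c,\gamma}(\xi)$ subtraction for $W_\theta^{c,\gamma,3}$. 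The rate of growth of the integral as $x\to\pm 1$ is then cancelled exactly by the factor $e^{-a(x)}$ from Lemma \ref{lem3_2_ab}, producing $|W_\theta^{c,\gamma}(\xi)(x)|\le C(1-x^2)^{1-2\epsilon}\|\xi\|_{\mathbf{Y}}$; applying \eqref{sec31:eq:diff:Wthe} similarly yields $|(W_\theta^{c,\gamma}(\xi))'(x)|\le C(1-x^2)^{-2\epsilon}\|\xi\|_{\mathbf{Y}}$. The condition $W_\theta^{c,\gamma}(\xi)(\pm 1)=0$ follows since $1-2\epsilon>0$. On $[-1/2,1/2]$ the functions $\bar{U}_\theta,a,b$ are smooth with bounded derivatives by Proposition \ref{propA_1}, so the same derivative formulas deliver uniform bounds on the second and third derivatives of $W_\theta^{c,\gamma}(\xi)$.

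The $\phi$-component is handled analogously. The integrand $e^{b(s)}\xi_\phi(s)/(1-s^2)$ behaves like $(1+s)^{\bar{U}_\theta(-1)/2-2-\epsilon}(1-s)^{-\bar{U}_\theta(1)/2-2-\epsilon}$, and its integral over $(0,x)$ blows up at rates exactly compensated by $e^{-b(x)}$, yielding $|(W_\phi^{c,\gamma}(\xi))'(x)|\le C(1-x^2)^{-1-\epsilon}\|\xi\|_{\mathbf{Y}}$ and, after another integration, $|W_\phi^{c,\gamma}(\xi)(x)|\le C(1-x^2)^{-\epsilon}\|\xi\|_{\mathbf{Y}}$. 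The weighted second-derivative bound then comes from \eqref{sec31:eq:diff2:Wphi} and $|b'(x)|\sim(1-x^2)^{-1}$. The main obstacle is the case-by-case bookkeeping: one has to verify in each regime that the choice of $\epsilon$ and the choice of integration endpoint are mutually compatible, ensuring both integrability of the weighted integrand and the precise order of vanishing of $W_\theta^{c,\gamma}(\xi)$ at $x=\pm 1$. Once this bookkeeping is written out, each individual estimate is routine.
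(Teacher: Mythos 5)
Your plan is correct and follows essentially the same route as the paper: the right-inverse identity via $l_{c,\gamma}[W_\theta^{c,\gamma,i}(\xi)]=\xi_\theta$ (with the $\xi''_\theta(0)=0$ and $C_W^{c,\gamma}$ cancellations) plus case-by-case weighted estimates from Lemma \ref{lem3_2_ab}, derivative formulas \eqref{sec31:eq:diff:Wthe}--\eqref{sec31:eq:diff2:Wphi}, and interior bounds for higher derivatives. The only step worth spelling out explicitly is the $W_\theta^{c,\gamma,3}$ case near $x=1$, where one uses the defining property of $C_W^{c,\gamma}(\xi)$ to rewrite the integral from $-1$ as an integral from $1$ so that the blow-up of $e^{-a(x)}$ is compensated and the required vanishing rate $(1-x)^{1-2\epsilon}$ is obtained — this is exactly the "endpoint bookkeeping" you flag, and the paper carries it out the same way.
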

\begin{proof}
	In the following, $C$ denotes a universal constant which may change from line to line.  We make use of the property that $\bar{U}_{\theta}\in C^2(-1,1)\cap L^{\infty}(-1,1)$ and the range of $\epsilon$.  For convenience let us write  $W:=W^{c,\gamma}(\xi)$ and $W^i_{\theta}:=W_{\theta}^{c,\gamma,i}(\xi)$ for $\xi\in \mathbf{Y}$. 
	By Lemma \ref{lem3_2_ab}, we have estimates (\ref{sec32:eq:eb}) and (\ref{sec32:eq:ea}).
	
	We first prove $W_{\theta}$ is well-defined. 

	\noindent
	\textbf{Claim.} There exists $C>0$, such that 
	\begin{equation}\label{sec32:eq:Wthe:bdd}
		|(1-x^2)^{-1+2\epsilon}W_{\theta}(x)|\le C \|\xi_{\theta}\|_{\mathbf{N}_1}.
	\end{equation}

	\noindent
	\textbf{Proof of the Claim.} We prove the claim for each $W^i$, $i=1,2a,2b,3$.

	\textbf{Case 1.} $(c,\gamma)\in I_{1,1}$, then $\bar{U}_{\theta}(-1)<2$ and $\bar{U}_{\theta}(1)>-2$. 

	In this case $W_{\theta}=W^1_{\theta}$. Using the fact that $\xi_{\theta}\in \mathbf{N}_1$, in the expression of $W^1_{\theta}$ in (\ref{sec31:eq:Wthei}), 
	\[
		|(1-x^2)^{-1+2\epsilon}W^1_{\theta}(x)|\le (1-x^2)^{-1+2\epsilon}\|\xi_{\theta}\|_{\mathbf{N}_1}e^{-a(x)}\int_{0}^{x}e^{a(s)}(1-s^2)^{-2\epsilon}ds, \quad -1<x<1.
	\]
	Applying (\ref{sec32:eq:ea}) in the above, using the fact that $4\epsilon>\max\{\bar{U}_{\theta}(-1),-\bar{U}_{\theta}(1)\}$, we have
	\begin{equation}\label{sec32:eq:Wthe:bdd1}
	\begin{split}
		& \quad |(1-x^2)^{-1+2\epsilon}W^1_{\theta}(x)| \\
		& \le \|\xi_{\theta}\|_{\mathbf{N}_1}(1+x)^{-\frac{\bar{U}_{\theta}(-1)}{2}+2\epsilon}(1-x)^{\frac{\bar{U}_{\theta}(1)}{2}+2\epsilon}\int_{0}^{x}(1+s)^{\frac{\bar{U}_{\theta}(-1)}{2}-1-2\epsilon}(1-s)^{-1-\frac{\bar{U}_{\theta}(1)}{2}-2\epsilon}ds\\
		& \le C \|\xi_{\theta}\|_{\mathbf{N}_1}\left(1+(1+x)^{-\frac{\bar{U}_{\theta}(-1)}{2}+2\epsilon}\right)\left(1+(1-x)^{\frac{\bar{U}_{\theta}(1)}{2}+2\epsilon}\right)\\
		& \le C \|\xi_{\theta}\|_{\mathbf{N}_1}.
	\end{split}
	\end{equation}

	\textbf{Case 2.} $(c,\gamma)\in I_{1,2} \textrm{ or }I_{2,2}$, then $2<\bar{U}_{\theta}(-1)<3 \textrm{ or } \bar{U}_{\theta}(-1)=2 \textrm{ with } \eta_1=0, \textrm{ and } \bar{U}_{\theta}(1)>-2$. 

	In this case $W_{\theta}=W^{2a}_{\theta}$. Using the fact that $\xi_{\theta}\in \mathbf{N}_1$, and (\ref{sec32:eq:ea}) we first have
	\[
		\int_{-1}^{0}e^{a(s)}\frac{|\xi_{\theta}(s)|}{1-s^2}ds\le C\|\xi_{\theta}\|_{\mathbf{N}_1}\int_{-1}^{0}(1+s)^{\frac{\bar{U}_{\theta}(-1)}{2}-1-2\epsilon}ds\le C\|\xi_{\theta}\|_{\mathbf{N}_1}. 
	\]
	So the definition of $W^{2a}_{\theta}$ makes sense.
	
	In the expression of $W^{2a}_{\theta}$ in (\ref{sec31:eq:Wthei}),
	\[
		|(1-x^2)^{-1+2\epsilon}W^{2a}_{\theta}(x)|\le (1-x^2)^{-1+2\epsilon}\|\xi_{\theta}\|_{\mathbf{N}_1}e^{-a(x)}\int_{-1}^{x}e^{a(s)}(1-s^2)^{-2\epsilon}ds, \quad -1<x<1.
	\]
	Applying (\ref{sec32:eq:ea}) in the above, using $-\frac{\bar{U}_{\theta}(1)}{4}<\epsilon<\frac{1}{2}$ and $\bar{U}_{\theta}(-1)>2$, we have
	\begin{equation}\label{sec32:eq:Wthe:bdd2}
	\begin{split}
		& \quad |(1-x^2)^{-1+2\epsilon}W^{2a}_{\theta}(x)|  \\
		& \le \|\xi_{\theta}\|_{\mathbf{N}_1}(1+x)^{-\frac{\bar{U}_{\theta}(-1)}{2}+2\epsilon}(1-x)^{\frac{\bar{U}_{\theta}(1)}{2}+2\epsilon}\int_{-1}^{x}(1+s)^{\frac{\bar{U}_{\theta}(-1)}{2}-1-2\epsilon}(1-s)^{-1-\frac{\bar{U}_{\theta}(1)}{2}-2\epsilon}ds\\
		& \le C \|\xi_{\theta}\|_{\mathbf{N}_1}\left(1+(1-x)^{\frac{\bar{U}_{\theta}(1)}{2}+2\epsilon}\right)\\
		& \le C \|\xi_{\theta}\|_{\mathbf{N}_1}. 
	\end{split}
	\end{equation}

	\textbf{Case} 3. $(c,\gamma)\in I_{1,3} \textrm{ or }I_{3,3}$. 
	Similar as in Case 2, we can prove
	\begin{equation}\label{sec32:eq:Wthe:bdd3}
		|(1-x^2)^{-1+2\epsilon}W^{2b}_{\theta}(x)|  \le C \|\xi_{\theta}\|_{\mathbf{N}_1}.
	\end{equation}
	
	\textbf{Case} 4. $(c,\gamma)\in I_{k,l} \textrm{ for } 5\le k \le 8, \textrm{ and }1\le l\le 3$, then 
	$2<\bar{U}_{\theta}(-1)<3$ or $\bar{U}_{\theta}(-1)=2 $ with $ \eta_1=0$,  and $-3<\bar{U}_{\theta}(1)<-2$ or $ \bar{U}_{\theta}(1)=-2$ with $\eta_2=0$.
	
	In this case, $W_{\theta}=W^{3}_{\theta}$. For convenience write $C_W:=C^{c,\gamma}_W(\xi)$, defined in \eqref{eq:CW}. Using the fact that $\xi_{\theta}\in \mathbf{N}_1$, and (\ref{sec32:eq:ea}) we have
	\[
		\int_{-1}^{1}e^{a(s)}\frac{|\xi_{\theta}(s)|}{1-s^2}ds\le C\|\xi_{\theta}\|_{\mathbf{N}_1}\int_{-1}^{1}(1+s)^{\frac{\bar{U}_{\theta}(-1)}{2}-1-2\epsilon}(1-s)^{-1-\frac{\bar{U}_{\theta}(1)}{2}-2\epsilon}ds\le C\|\xi_{\theta}\|_{\mathbf{N}_1},
	\]
	and 
	\[
		\int_{-1}^{1}e^{a(s)}ds \ge C\int_{-1}^{1}(1+s)^{\frac{\bar{U}_{\theta}(-1)}{2}-1}(1-s)^{-1-\frac{\bar{U}_{\theta}(1)}{2}}ds\ge C>0. 
	\]
	So $C_W$ is finite, and 
	\begin{equation}\label{eq_cw}
	  |C_W|\le C\|\xi_{\theta}\|_{\mathbf{N}_1}.
	\end{equation}
	The definition of $W^3_{\theta}$ makes sense.
	
	For $-1<x<0$, using (\ref{sec32:eq:ea}), the fact that $\xi_{\theta}\in \mathbf{N}_1$, and $0<\epsilon<\frac{1}{2}$, $\bar{U}_{\theta}(-1)>2$, we have
	\[
	\begin{split}
		|(1-x^2)^{-1+2\epsilon}W^3_{\theta}(x)|& \le (1-x^2)^{-1+2\epsilon}\|\xi_{\theta}\|_{\mathbf{N}_1}e^{-a(x)}\int_{-1}^{x}e^{a(s)}\left((1-s^2)^{-2\epsilon}-C_W\right)ds, \\
		&  \le C \|\xi_{\theta}\|_{\mathbf{N}_1}(1+x)^{-\frac{\bar{U}_{\theta}(-1)}{2}+2\epsilon}\int_{-1}^{x}(1+s)^{\frac{\bar{U}_{\theta}(-1)}{2}-1-2\epsilon}ds\\
		& \le C \|\xi_{\theta}\|_{\mathbf{N}_1}. 
	\end{split}
	\]
	
	For $0\le x<1$, by computation,
	\[
	\begin{split}
		& \quad W^3_{\theta}(x)\\
		& =e^{-a_{c,\gamma}(x)}\int_{-1}^{x}e^{a_{c,\gamma}(s)}\left(\frac{\xi_{\theta}(s)}{1-s^2}-C_W\right)ds\\
		& =e^{-a_{c,\gamma}(x)}\int_{-1}^{x}e^{a_{c,\gamma}(s)}\frac{\xi_{\theta}(s)}{1-s^2}ds-e^{-a(x)}\frac{\int_{-1}^{1}e^{a_{c,\gamma}(s)}\frac{\xi_{\theta}(s)}{1-s^2}ds}{\int_{-1}^{1}e^{a_{c,\gamma}(s)}ds}\left(\int_{-1}^{1}e^{a(s)}ds+\int_{1}^{x}e^{a(s)}ds\right)\\
		& =e^{-a_{c,\gamma}(x)}\int_{1}^{x}e^{a_{c,\gamma}(s)}\frac{\xi_{\theta}(s)}{1-s^2}ds-C_We^{-a_{c,\gamma}(x)}\int_{1}^{x}e^{a_{c,\gamma}(s)}ds.
	\end{split}
	\]
	Then using (\ref{sec32:eq:ea}), the fact that $\xi_{\theta}\in \mathbf{N}_1$, and $0<\epsilon<\frac{1}{2}$, $\bar{U}_{\theta}(1)<-2$, we have
	\[
	\begin{split}
		|(1-x^2)^{-1+2\epsilon}W^3_{\theta}(x)| 
		& \le C\|\xi_{\theta}\|_{\mathbf{N}_1}(1-x)^{\frac{\bar{U}_{\theta}(1)}{2}+2\epsilon}\int_{1}^{x}(1-s)^{-1-\frac{\bar{U}_{\theta}(1)}{2}-2\epsilon}ds\\
		& +C\|\xi_{\theta}\|_{\mathbf{N}_1}(1-x)^{\frac{\bar{U}_{\theta}(1)}{2}+2\epsilon}\int_{1}^{x}(1-s)^{-1-\frac{\bar{U}_{\theta}(1)}{2}}ds\\
		& \le C \|\xi_{\theta}\|_{\mathbf{N}_1}. 
	\end{split}
	\]
	Thus for all $-1<x<1$, 
	\begin{equation}\label{sec32:eq:Wthe:bdd4}
		(1-x^2)^{-1+2\epsilon} |W^3_{\theta}(x)| \leq C \|\xi_{\theta}\|_{\mathbf{N}_1}. 
	\end{equation}
	So (\ref{sec32:eq:Wthe:bdd}) can be obtained from (\ref{sec32:eq:Wthe:bdd1}), (\ref{sec32:eq:Wthe:bdd2}), (\ref{sec32:eq:Wthe:bdd3}) and (\ref{sec32:eq:Wthe:bdd4}). The claim is proved. From this we also have $\lim_{x\to -1^+}W_{\theta}(x)=\lim_{x\to 1^-}W_{\theta}(x)=0$.
	
	By the first line of (\ref{sec31:eq:diff:Wthe}), (\ref{sec31:eq:diff:a}) and (\ref{sec32:eq:Wthe:bdd}), we have that for $i=1,2a,2b$,
	\[
	\begin{split}
		|(1-x^2)^{2\epsilon}(W_{\theta}^i)'| 
		& \le |(2+|\bar{U}_{\theta}|)(1-x^2)^{-1+2\epsilon}W^i_{\theta}|+(1-x^2)^{-1+2\epsilon}|\xi_{\theta}(x)|  \\
		& \le  C \|\xi_{\theta}\|_{\mathbf{N}_1},\quad -1<x<1.
	\end{split}
	\]
	By the second line of (\ref{sec31:eq:diff:Wthe}), (\ref{sec31:eq:diff:a}), (\ref{sec32:eq:Wthe:bdd}), and (\ref{eq_cw}), we have
	\[
	    \begin{split}
		& \quad |(1-x^2)^{2\epsilon}(W_{\theta}^3)'| \\
		& \le |(2+|\bar{U}_{\theta}|)(1-x^2)^{-1+2\epsilon}W^3_{\theta}|+(1-x^2)^{-1+2\epsilon}|\xi_{\theta}(x)| + |C_W|(1-x^2)^{2\epsilon}  \\
		& \le  C \|\xi_{\theta}\|_{\mathbf{N}_1},\quad -1<x<1.
	\end{split}
	\] 
	Thus,
	\begin{equation}\label{eq3_2_W_1}
	   |(1-x^2)^{2\epsilon}W'_{\theta}|\le  C \|\xi_{\theta}\|_{\mathbf{N}_1},\quad -1<x<1.
	\end{equation}
	
	By (\ref{sec31:eq:diff:a}), it can be seen that $|a''(x)|,|a'''(x)|\le C$ for $-\frac{1}{2}<x<\frac{1}{2}$. Then using this fact, (\ref{sec32:eq:Wthe:bdd}) and (\ref{eq3_2_W_1}), we have, for $-\frac{1}{2}<x<\frac{1}{2}$,
	\[
	    |W''(x)|=\left|a''(x)W_{\theta}(x)+a'(x)W'_{\theta}(x)+\left(\frac{\xi_{\theta}}{1-x^2}\right)'\right|\le C \|\xi_{\theta}\|_{\mathbf{N}_1},
	\]
	and 
	\[
	     |W'''(x)|=\left|a'''(x)W_{\theta}(x)+2a''(x)W'_{\theta}(x)+a'(x)W''_{\theta}(x)+\left(\frac{\xi_{\theta}}{1-x^2}\right)''\right|\le C \|\xi_{\theta}\|_{\mathbf{N}_1}. 
	\]
	So we have shown that $W_{\theta}\in \mathbf{M}_1$, and $\|W_{\theta}\|_{\mathbf{M}_1}\le C\|\xi_{\theta}\|_{\mathbf{N}_1}$ for some constant $C$. 
	
	By the definition of $W_{\phi}(\xi)$ in (\ref{sec31:eq:Wphi}) , using (\ref{sec32:eq:eb}) and the fact that $\xi_{\phi}\in \mathbf{N}_2$, we have, for every $-1<x<1$, 
	\[
	\begin{split}
		& \quad (1-x^2)^{\epsilon}|W_{\phi}(x)|  \le  (1-x^2)^{\epsilon}\int_{0}^{x}e^{-b(t)}\int_{0}^{t}e^{b(s)}\frac{|\xi_{\phi}(s)|}{1-s^2}dsdt\\
		& \le \|\xi_{\phi}\|_{\mathbf{N}_2}  (1-x^2)^{\epsilon}\int_{0}^{x}e^{-b(t)}\int_{0}^{t}e^{b(s)}(1-s^2)^{-2-\epsilon}dsdt \\
		& \le  C\|\xi_{\phi}\|_{\mathbf{N}_2} (1-x^2)^{\epsilon} \int_{0}^{x}(1+t)^{-\frac{\bar{U}_{\theta}(-1)}{2}}(1-t)^{\frac{\bar{U}_{\theta}(1)}{2}} \\ & \quad \cdot \int_{0}^{t}(1+s)^{\frac{\bar{U}_{\theta}(-1)}{2}-2-\epsilon}(1-s)^{-\frac{\bar{U}_{\theta}(1)}{2}-2-\epsilon}dsdt\\
		& \le C (1-x^2)^{\epsilon}\|\xi_{\phi}\|_{\mathbf{N}_2}\int_{0}^{x}(1+t)^{-1-\epsilon}(1-t)^{-1-\epsilon}dt\\
		& \le C\|\xi_{\phi}\|_{\mathbf{N}_2}. 
		\end{split}
	\]
	
	Using (\ref{sec31:eq:diff1:Wphi}), (\ref{sec32:eq:eb}) and the fact that $\xi_{\phi}\in \mathbf{N}_2$, we have, for every $-1<x<1$, 
	\begin{equation}\label{sec32:eq:Wphi:bdd:temp}
	\begin{split}
		& \quad |(1-x^2)^{1+\epsilon}W'_{\phi}(x)| \\
		& \le \|\xi_{\phi}\|_{\mathbf{N}_2}(1+x)^{-\frac{\bar{U}_{\theta}(-1)}{2}+1+\epsilon}(1-x)^{\frac{\bar{U}_{\theta}(1)}{2}+1+\epsilon}\int_{0}^{x} (1+s)^{\frac{\bar{U}_{\theta}(-1)}{2}-2-\epsilon}(1-s)^{-\frac{\bar{U}_{\theta}(1)}{2}-2-\epsilon}ds\\
		& \le C\|\xi_{\phi}\|_{\mathbf{N}_2}.
	\end{split}
	\end{equation}
	Similarly, since $|b'(x)|=\frac{|\bar{U}_{\theta}|}{1-x^2}$, using (\ref{sec31:eq:diff2:Wphi}),  (\ref{sec32:eq:Wphi:bdd:temp}) and the fact that $\xi_{\phi}\in \mathbf{N}_2$, we have
	\begin{equation*}
		|(1-x^2)^{2+\epsilon}W''_{\phi}(x)|\le C(1-x^2)^{1+\epsilon}|W'_{\phi}|+(1-x^2)^{1+\epsilon}|\xi_{\phi}|\le C\|\xi_{\phi}\|_{\mathbf{N}_2}.
	\end{equation*}
	Then $W(\xi)\in\mathbf{X}$ for all $\xi\in\mathbf{Y}$, and $\|W(\xi)\|_{\mathbf{X}}\le C\|\xi\|_{\mathbf{Y}}$ for some constant $C$. So $W:\mathbf{Y}\rightarrow\mathbf{X}$ is well-defined and continuous. 
	 
	By definition of $W^i$, $i=1,2a,2b$, we have $l_{c,\gamma}[W^i_{\theta}](x)=\xi_{\theta}$. So $(l_{c,\gamma}[W^i_{\theta}])''(0)=\xi''_{\theta}(0)=0$, then $l_{c,\gamma}[W^i_{\theta}](x)+\frac{1}{2}(l_{c,\gamma}[W^i_{\theta}])''(0)(1-x^2)=\xi_{\theta}$. 
	
	By definition of $W^3$, we have $l_{c,\gamma}[W^3_{\theta}](x)=\xi_{\theta} - C_W^{c,\gamma}(1-x^2)$. So $(l_{c,\gamma}[W^3_{\theta}])''(0)=\xi''_{\theta}(0)  + 2 C_W^{c,\gamma} =2 C_W^{c,\gamma}$, then $l_{c,\gamma}[W^3_{\theta}](x)+\frac{1}{2}(l_{c,\gamma}[W^3_{\theta}])''(0)(1-x^2)=\xi_{\theta}$.  Thus $L^{c,\gamma}_0W(\xi)=\xi$, $W$ is a right inverse of $L^{c,\gamma}_0$.
\end{proof}

Let $V_{c,\gamma}^i$, $1\le i\le 4$, $V_{c,\gamma}^{2a}$, $V_{c,\gamma}^{2b}$ be vectors defined by (\ref{eq_basis}), (\ref{sec32:eq:V2a}), (\ref{sec32:eq:V2b}), we have
\begin{lem}\label{sec32:lem:ker}
	\[
		\ker L^{c,\gamma}_{0}=\left\{
		\begin{array}{ll}
		\mathrm{span}\{V_{c,\gamma}^1, V_{c,\gamma}^2, V_{c,\gamma}^3, V_{c,\gamma}^4\}, & \textrm{if } (c,\gamma)\in I_{1,1},\\
		\mathrm{span}\{ V_{c,\gamma}^{2a}, V_{c,\gamma}^3, V_{c,\gamma}^4\}, & \textrm{if } (c,\gamma)\in I_{1,2} \textrm{ or }I_{2,2},\\
		\mathrm{span}\{ V_{c,\gamma}^{2b}, V_{c,\gamma}^3, V_{c,\gamma}^4\}, & \textrm{if } (c,\gamma)\in I_{1,3} \textrm{ or }I_{3,3},\\
		\mathrm{span}\{ V_{c,\gamma}^3, V_{c,\gamma}^4\}, & \textrm{if } (c,\gamma)\in I_{k,l} \textrm{ for } 5\le k \le 8, 1\le l\le 3.
		\end{array}
		\right.
	 \]
\end{lem}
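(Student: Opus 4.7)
The argument decouples because $L_0^{c,\gamma}$ acts diagonally in the $\theta$ and $\phi$ components of $\tilde V$. For the $\phi$-equation $(1-x^2)\tilde V_\phi''+\bar U_\theta \tilde V_\phi'=0$, a first integration gives $\tilde V_\phi'=Ce^{-b_{c,\gamma}(x)}$ and a second gives $\tilde V_\phi=d_3\int_0^x e^{-b(t)}\,dt+d_4$, so every $C^1(-1,1)$ solution on the $\phi$ side equals $d_3 V_{c,\gamma}^3+d_4 V_{c,\gamma}^4$. Using the $e^{\pm b}$ estimates of Lemma \ref{lem3_2_ab} and the lower bounds on $\epsilon$ fixed at the start of this section, $V_{c,\gamma}^3$ and $V_{c,\gamma}^4$ both lie in $\mathbf{M}_2$ in all four cases, so the $\phi$ side always contributes a two-dimensional summand to $\ker L_0^{c,\gamma}$.

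For the $\theta$-equation, a direct calculation using $a_{c,\gamma}'=(2x+\bar U_\theta)/(1-x^2)$ shows that $l_{c,\gamma}$ applied to the $\theta$-component of $V_{c,\gamma}^1$ is identically zero, and applied to the $\theta$-component of $V_{c,\gamma}^2$ equals $1-x^2$. Hence any $\tilde V_\theta = \alpha\, e^{-a(x)}+\beta\, e^{-a(x)}\int_0^x e^{a(s)}\,ds$ satisfies $l_{c,\gamma}[\tilde V_\theta]=\beta(1-x^2)$, so $(l_{c,\gamma}[\tilde V_\theta])''(0)=-2\beta$ and the first row of $L_0^{c,\gamma}\tilde V=0$ holds automatically. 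Since the underlying first-order linear ODE has a two-dimensional $C^1(-1,1)$ solution space, these are all $C^1$ solutions, and the remaining task is to determine which combinations lie in $\mathbf{M}_1$.

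Lemma \ref{lem3_2_ab} yields $e^{-a(x)}\asymp C(1+x)^{1-\bar U_\theta(-1)/2}(1-x)^{1+\bar U_\theta(1)/2}$, so the $\theta$-component of $V_{c,\gamma}^1$ satisfies the boundary vanishing and weighted-$L^\infty$ requirements of $\mathbf{M}_1$ at $-1$ iff $\bar U_\theta(-1)<2$; when $\bar U_\theta(-1)\ge 2$ it either blows up or tends to a nonzero finite value. The identity $V_{c,\gamma}^{2a}=V_{c,\gamma}^2+V_{c,\gamma}^1\int_{-1}^0 e^{a(s)}\,ds$ isolates the unique (up to scalar) combination in which the obstructing contribution at $-1$ cancels, and the closed form of $V_{c,\gamma}^{2a}$ together with Lemma \ref{lem3_2_ab} shows that its $\theta$-component is $O(1+x)$ near $-1$, hence admissible. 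The mirror analysis at $x=1$ produces $V_{c,\gamma}^{2b}$. Combining the two endpoint analyses case by case now proves the lemma: in $I_{1,1}$ no constraint is imposed and $\mathrm{span}\{V_{c,\gamma}^1,V_{c,\gamma}^2\}$ survives; in $I_{1,2}\cup I_{2,2}$ the $-1$ test forces the $\theta$-part to be a scalar multiple of $V_{c,\gamma}^{2a}$ while the $1$ test is automatic since $\bar U_\theta(1)>-2$; symmetrically with $V_{c,\gamma}^{2b}$ in $I_{1,3}\cup I_{3,3}$; and for $5\le k\le 8$, the $-1$ test picks out $V_{c,\gamma}^{2a}$ and the $1$ test picks out $V_{c,\gamma}^{2b}$, but $V_{c,\gamma}^{2a}-V_{c,\gamma}^{2b}$ is a nonzero multiple of $V_{c,\gamma}^1$, so the two selections are linearly independent and the only common admissible element is $0$, eliminating the $\theta$-part of the kernel.

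The main obstacle lies in the borderline situation $\bar U_\theta(-1)=2$ with $\eta_1=0$ (which occurs when $c_1=-1$ and $\gamma=\gamma^+(c)$), and its symmetric analogue at $+1$, because the exponent in Lemma \ref{lem3_2_ab} is then exactly $0$ and that lemma alone does not decide whether $e^{-a(x)}$ vanishes at $-1$. Here Proposition \ref{prop2_1}(ii) combined with a refined ODE analysis obtained by substituting $\bar U_\theta=2+v$ into \eqref{eq:UthP} and using $c_1=-1$, $\eta_1=0$ yields $\bar U_\theta(x)-2=O((1+x)^\alpha)$ for some $\alpha>0$; this makes $(2s+\bar U_\theta(s))/(1-s^2)$ integrable near $s=-1$, so $a_{c,\gamma}$ stays bounded and the $\theta$-component of $V_{c,\gamma}^1$ tends to a nonzero finite value at $-1$, violating the $\mathbf{M}_1$ boundary condition and removing $V_{c,\gamma}^1$ exactly as claimed. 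Once this is in place, the derivative and weighted bounds in the definitions of $\mathbf{M}_1$ and $\mathbf{M}_2$ are verified by differentiating the closed-form expressions for $V^i_{c,\gamma}$, $V_{c,\gamma}^{2a}$, $V_{c,\gamma}^{2b}$ and applying the same Lemma \ref{lem3_2_ab} estimates together with $a'=(2x+\bar U_\theta)/(1-x^2)$ and $b'=\bar U_\theta/(1-x^2)$.
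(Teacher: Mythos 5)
Your argument is correct and follows essentially the same route as the paper: reduce to the explicit solution basis $V^1,\dots,V^4$, use the $e^{\pm a},e^{\pm b}$ asymptotics of Lemma \ref{lem3_2_ab} (supplemented, in the borderline case $\bar U_\theta(\mp 1)=\pm 2$ with $\eta=0$, by the refined endpoint asymptotics of $\bar U_\theta$), and decide membership in $\mathbf{X}$ case by case. Your treatment of $5\le k\le 8$ via $\mathrm{span}\{V^{2a}\}\cap\mathrm{span}\{V^{2b}\}=\{0\}$ is just an equivalent rephrasing of the paper's computation using the vanishing conditions at $x=\pm 1$.
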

\begin{proof}
	Let $V\in\mathbf{X}$ satisfy $L^{c,\gamma}_{0}V=0$. We know that $V$ is given by (\ref{sec31:eq:ker}) for some $d_1,d_2,d_3,d_4\in\mathbb{R}$.
	For convenience we denote $a(x)=a_{c,\gamma}(x)$, $b(x)=b_{c,\gamma}(x)$ and $V^i=V^i_{c,\gamma}$, $i=1,2,2a,2b,3,4$.
	
	By Lemma \ref{lem3_2_ab} and the expressions of $V^1,V^2$ in (\ref{eq_basis}), we have that
	\begin{equation}\label{eq_ker_1}
		V^1_{\theta}(x) = e^{-a(x)}= O(1)(1+x)^{1-\frac{\bar{U}_{\theta}(-1)}{2}}(1-x)^{1+\frac{\bar{U}_{\theta}(1)}{2}}, 
	\end{equation}
	and 
	\begin{equation}\label{eq_ker_2}
		V^2_{\theta}(x) =e^{-a(x)}\int_{0}^{x}e^{a(s)}ds
		= O(1) (1-x^2)\left((1+x)^{-\frac{\bar{U}_{\theta}(-1)}{2}}+1\right)\left((1-x)^{\frac{\bar{U}_{\theta}(1)}{2}}+1\right). 
	\end{equation}
	
Then by (\ref{sec31:eq:diff:a}), we also have
	\begin{equation}\label{eq_ker_3}
		 \left|\frac{d}{dx}V^1_{\theta}(x)\right|=\left|e^{-a(x)}a'(x)\right|
		=  O(1)(1+x)^{-\frac{\bar{U}_{\theta}(-1)}{2}}(1-x)^{\frac{\bar{U}_{\theta}(1)}{2}},
	\end{equation}
	\begin{equation}\label{eq_ker_4}
		 \left|\frac{d}{dx}V^2_{\theta}(x)\right|=\left| -V^2_{\theta}(x)a'(x) + 1\right| 
		=  O(1)\left((1+x)^{-\frac{\bar{U}_{\theta}(-1)}{2}}+1\right)\left((1-x)^{\frac{\bar{U}_{\theta}(1)}{2}}+1\right).
	\end{equation}
	
	If $\bar{U}_{\theta}(-1)> 2$ or $\bar{U}_{\theta}(-1)=2$ with $\eta_1=0$, by (\ref{sec32:eq:V2a}) and Lemma \ref{lem3_2_ab}, we have that 
	\begin{equation}\label{eq_ker_5}
		 \left|V^{2a}_{\theta}(x)\right|=\left|e^{-a(x)}\int_{-1}^{x}e^{a(s)}ds\right| 
		=  O(1) (1-x^2)(1+(1-x)^{\frac{\bar{U}_{\theta}(1)}{2}}),
	\end{equation}
	and 
	\begin{equation}\label{eq_ker_6}
	   \left|\frac{d}{dx}V^{2a}_{\theta}(x)\right|=\left| -V^{2a}_{\theta}(x)a'(x) + 1\right| 
				= O(1) (1+(1-x)^{\frac{\bar{U}_{\theta}(1)}{2}}).
	\end{equation}
	
	Similarly, we have if $\bar{U}_{\theta}(1)<- 2$ or $\bar{U}_{\theta}(1)=-2$ with $\eta_2=0$, 
	\begin{equation*}
	   \left|V^{2b}_{\theta}(x)\right|=  O(1) (1-x^2)(1+(1+x)^{-\frac{\bar{U}_{\theta}(-1)}{2}}),\quad  \left|\frac{d}{dx}V^{2b}_{\theta}(x)\right|
				= O(1) (1+(1+x)^{-\frac{\bar{U}_{\theta}(-1)}{2}}).
	\end{equation*}
	Next, by computation we have for $i=1,2,2a,2b$
	\[
	 \frac{d^2}{dx^2}V^i_{\theta}=(V^i_{\theta})'a'(x)+V^i_{\theta}a''(x),\quad  \frac{d^3}{dx^3}V^i_{\theta}=(V^i_{\theta})''a'(x)+2(V^i_{\theta})'a''(x)+V^i_{\theta}a'''(x).
	\]
	Using the definition of $a(x)$ in (\ref{sec31:eq:ab}), there exists some constant $C$, depending on $c,\gamma$, such that
	\begin{equation}\label{eq_ker_8}
	   \left |\frac{d^2}{dx^2}V^i_{\theta}\right|\le C, \quad \left|\frac{d^3}{dx^3}V^i_{\theta}\right|\le C, \quad -\frac{1}{2}<x<\frac{1}{2},\; i=1,2,2a,2b.
	\end{equation}
	
	Moreover, by Lemma \ref{lem3_2_ab}, and the expressions of $V^3$ in (\ref{eq_basis}), we have
	\begin{equation}\label{eq_ker_9}
		V^3_{\phi}(x)=\int_{0}^{x}e^{-b(t)}dt=O(1)(1+x)^{1-\frac{\bar{U}_{\theta}(-1)}{2}}(1-x)^{1+\frac{U^{c,\gamma}(1)}{2}}+O(1),
	\end{equation}
	and 
	\begin{equation}\label{eq_ker_10}
	\begin{split}
		&\left|\frac{d}{dx}V^3_{\phi}(x)\right|=e^{-b(x)}= O(1)(1+x)^{-\frac{\bar{U}_{\theta}(-1)}{2}} (1-x)^{\frac{\bar{U}_{\theta}(1)}{2}},\\
			&\left|\frac{d^2}{dx^2}V^3_{\phi}(x)\right|=e^{-b(x)}|b'(x)| = O(1)(1+x)^{-1-\frac{\bar{U}_{\theta}(-1)}{2}} (1-x)^{-1+\frac{\bar{U}_{\theta}(1)}{2}}. 
			\end{split}
	\end{equation}

	When $(c,\gamma)\in I_{1,1}$, $\bar{U}(-1)<2$ and $\bar{U}(1)>-2$, using estimates (\ref{eq_ker_1})-(\ref{eq_ker_4}),  (\ref{eq_ker_8}), (\ref{eq_ker_9}),  (\ref{eq_ker_10}), and the definition of $V_{c,\gamma}^4$, it is not hard to verify that $V_{c,\gamma}^i\in \mathbf{X}$, $1\le i\le 4$.  It is clear that $\{V_{c,\gamma}^i , 1\le i\le 4\}$ are independent. So $\{V_{c,\gamma}^i , 1\le i\le 4\}$ is a basis of the kernel.
	
	Similarly, when $(c,\gamma)\in I_{1,2} \textrm{ or }I_{2,2}$, it can be checked that $\mathrm{span}\{V_{c,\gamma}^1, V_{c,\gamma}^2\}=\mathrm{span}\{V_{c,\gamma}^1, V_{c,\gamma}^{2a}\}$, where $V_{c,\gamma}^{2a}$, given by (\ref{sec32:eq:V2a}), is a linear combination of $V_{c,\gamma}^1, V_{c,\gamma}^2$.  So $L^{c,\gamma}_{0}V=0$ implies 
	$$
		V = d_1 V_{c,\gamma}^1 + d^a_2 V_{c,\gamma}^{2a} + d_3 V_{c,\gamma}^3 + d_4 V_{c,\gamma}^4,
	$$
	for some constants $d_1,d_2^a, d_3$ and $d_4$. 
	It can be checked by estimates (\ref{eq_ker_1}), (\ref{eq_ker_3}), (\ref{eq_ker_5}), (\ref{eq_ker_6}),  (\ref{eq_ker_8}), (\ref{eq_ker_9}),  (\ref{eq_ker_10}) that in this case $V_{c,\gamma}^{2a}, V_{c,\gamma}^3, V_{c,\gamma}^4 \in \mathbf{X}$, and  $  V_{c,\gamma}^1 \notin \mathbf{X}$.  So $d_1 V_{c,\gamma}^1\in \mathbf{X}$. This means $d_1 (V_{c,\gamma}^1)_{\theta}\in \mathbf{M}_1$ Thus $d_1=0$.

	When $(c,\gamma)\in I_{1,3} \textrm{ or }I_{3,3}$, similarly as the proof of the previous case, we have that 
	$$
		V =  d^b_2 V_{c,\gamma}^{2b} + d_3 V_{c,\gamma}^3+d_4 V_{c,\gamma}^4,
	$$
	for some constants $d_2^b,d_3,d_4$, and $V_{c,\gamma}^{2b}$ is given by (\ref{sec32:eq:V2b}).
	
	When $(c,\gamma)\in I_{k,l} \textrm{ for } 5\le k \le 8, \textrm{ and }1\le l\le 3$, by (\ref{eq_ker_1})-(\ref{eq_ker_2}),  (\ref{eq_ker_8}), (\ref{eq_ker_9}),  and (\ref{eq_ker_10}), we have $ V_{c,\gamma}^3, V_{c,\gamma}^4 \in \mathbf{X}$, and  $  V_{c,\gamma}^1, V_{c,\gamma}^2 \notin \mathbf{X}$. If there exists some $d_1,d_2\in \mathbb{R}$, such that $\hat{V}_{\theta}:=d_1 V_{c,\gamma}^1+d_2 V_{c,\gamma}^{2}\in \mathbf{X}$, then by $\hat{V}_{\theta}(-1)=\hat{V}_{\theta}(1)=0$, we have
	\[
	    d_1=-d_2\int_{0}^{1}e^{a_{c,\gamma}(s)}ds=-d_2\int_{0}^{-1}e^{a_{c,\gamma}(s)}ds.
	\]
	This means $d_2\int_{-1}^{1}e^{a_{c,\gamma}(s)}ds=0$, thus $d_2=0$, and $d_1=0$. 	
	%
	%
	%
	%
	The lemma is proved.
\end{proof}
\begin{cor}\label{sec32:cor:all:sol}
	For any $\xi\in\mathbf{Y}$, all solutions of $L^{c,\gamma}_{0}V=\xi$, $V\in\mathbf{X}$, are given by
	\[
		V=W^{c,\gamma}(\xi)+\left\{
		\begin{array}{ll}
			d_1V_{c,\gamma}^1+d_2V_{c,\gamma}^2+d_3V_{c,\gamma}^3+d_4 V_{c,\gamma}^4, & \textrm{if }(c,\gamma)\in I_{1,1},\\
			d_2V_{c,\gamma}^{2a}+d_3V_{c,\gamma}^3+d_4 V_{c,\gamma}^4, & \textrm{if }(c,\gamma)\in I_{1,2} \textrm{ or }I_{2,2},\\
			d_2V_{c,\gamma}^{2b}+d_3V_{c,\gamma}^3+d_4 V_{c,\gamma}^4, & \textrm{if }(c,\gamma)\in I_{1,3} \textrm{ or }I_{3,3},\\
			d_3V_{c,\gamma}^3+d_4 V_{c,\gamma}^4, & \textrm{if }(c,\gamma)\in I_{k,l}, 5\le k \le 8, 1\le l\le 3.
		\end{array}
		\right.
	\]
\end{cor}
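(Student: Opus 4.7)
The plan is to derive the corollary as a straightforward consequence of the two preceding lemmas, namely Lemma \ref{sec32:lem:W} (that $W^{c,\gamma}:\mathbf{Y}\to\mathbf{X}$ is a continuous right inverse of $L_0^{c,\gamma}$) and Lemma \ref{sec32:lem:ker} (which identifies $\ker L_0^{c,\gamma}$ in each of the four regimes of $(c,\gamma)$). No new analysis should be required.

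The argument proceeds by the standard affine-space description of solutions to an inhomogeneous linear equation. First I would note that if $V\in\mathbf{X}$ is any solution of $L_0^{c,\gamma}V=\xi$, then by Lemma \ref{sec32:lem:W} the element $W^{c,\gamma}(\xi)\in\mathbf{X}$ also satisfies $L_0^{c,\gamma}W^{c,\gamma}(\xi)=\xi$. Subtracting and using the linearity of $L_0^{c,\gamma}$ (evident from the expression \eqref{sec31:eq:Linear0}) yields
\[
 L_0^{c,\gamma}\bigl(V-W^{c,\gamma}(\xi)\bigr)=0,
\]
so $V-W^{c,\gamma}(\xi)\in\ker L_0^{c,\gamma}$. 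Applying Lemma \ref{sec32:lem:ker} case by case then gives the claimed four expressions for $V$, depending on whether $(c,\gamma)\in I_{1,1}$, $I_{1,2}\cup I_{2,2}$, $I_{1,3}\cup I_{3,3}$, or $I_{k,l}$ with $5\le k\le 8$, $1\le l\le 3$.

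Conversely, I would check that every $V$ of the form displayed in the corollary does solve $L_0^{c,\gamma}V=\xi$: since $W^{c,\gamma}(\xi)\in\mathbf{X}$ and each of $V_{c,\gamma}^1,V_{c,\gamma}^2,V_{c,\gamma}^{2a},V_{c,\gamma}^{2b},V_{c,\gamma}^3,V_{c,\gamma}^4$ that appears in the respective regime lies in $\mathbf{X}$ (as verified in the proof of Lemma \ref{sec32:lem:ker}), such $V$ is in $\mathbf{X}$; by linearity of $L_0^{c,\gamma}$ and the facts $L_0^{c,\gamma}W^{c,\gamma}(\xi)=\xi$ and $L_0^{c,\gamma}V_{c,\gamma}^j=0$ for the relevant $j$, we obtain $L_0^{c,\gamma}V=\xi$.

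Since both Lemma \ref{sec32:lem:W} and Lemma \ref{sec32:lem:ker} are already established, there is essentially no obstacle here; the only thing that needs a small amount of care is bookkeeping the four separate regimes so that the right ansatz for $W^{c,\gamma}$ (namely $W_\theta^{c,\gamma,1}$, $W_\theta^{c,\gamma,2a}$, $W_\theta^{c,\gamma,2b}$, or $W_\theta^{c,\gamma,3}$) is paired with the matching kernel basis in each case. The proof is therefore a short two-line deduction organized into four parallel cases.
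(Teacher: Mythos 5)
Your proposal is correct and is exactly the argument the paper intends: the corollary is the standard affine description of the solution set, combining Lemma \ref{sec32:lem:W} (that $W^{c,\gamma}$ is a right inverse, so $W^{c,\gamma}(\xi)$ is a particular solution in $\mathbf{X}$) with the case-by-case kernel characterization of Lemma \ref{sec32:lem:ker}. No further comment is needed.
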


Let $l_1,l_2,l_3,l_4$ be the functionals on $\mathbf{X}$ defined by (\ref{sec31:eq:fcnal:l}), and $\mathbf{X}_i$, $i=1,2a,2b,3$, be the subspaces of $\mathbf{X}$ defined by (\ref{sec32:eq:X}), (\ref{sec32:eq:Xab}) and (\ref{sec32:eq:X3}). As shown in Section \ref{sec_2}, the matrix $(l_i(V^{j}_{c,\gamma}))$ is invertible, for every $(c,\gamma)\in K$. So $\mathbf{X}_i$ is a closed subspace of $\mathbf{X}$, and
\begin{equation}\label{sec32:eq:decomp:X}
	\mathbf{X} =\left\{
	\begin{array}{ll}
		\mbox{span} \{ V_{c,\gamma}^{1}, V_{c,\gamma}^{2}, V_{c,\gamma}^{3},V_{c,\gamma}^4 \} \oplus\mathbf{X}_1,& \quad (c,\gamma)\in I_{1,1},\\
		\mbox{span} \{ V_{c,\gamma}^{2a}, V_{c,\gamma}^{3}, V_{c,\gamma}^4 \} \oplus\mathbf{X}_{2a},& \quad (c,\gamma)\in I_{1,2} \textrm{ or }I_{2,2},\\
		\mbox{span} \{ V_{c,\gamma}^{2b}, V_{c,\gamma}^{3}, V_{c,\gamma}^4\} \oplus\mathbf{X}_{2b},& \quad (c,\gamma)\in I_{1,3} \textrm{ or }I_{3,3},\\
		\mbox{span} \{ V_{c,\gamma}^{3}, V_{c,\gamma}^4 \} \oplus\mathbf{X}_3,& \quad (c,\gamma)\in I_{k,l}, 5\le k \le 8, 1\le l\le 3,\\
	\end{array}
	\right. 
\end{equation}
with the projection operator $P_i: \mathbf{X}\rightarrow\mathbf{X}_i$  for $i=1,2a,2b,3$ given by 
\begin{equation}\label{sec32:eq:proj}
\begin{split}
	&P_1V = V -  l_1(V)V_{c,\gamma}^{1} -\left(\frac{l_2(V) - l_1(V) l_2(V_{c,\gamma}^{1})}{l_2 (V_{c,\gamma}^{2})} \right) V_{c,\gamma}^{2} - l_3(V)V_{c,\gamma}^{3} - l_4(V)V_{c,\gamma}^{4}, \\
	& P_{2a}V = V  - \frac{l_1(V)}{l_1(V_{c,\gamma}^{2a})}V_{c,\gamma}^{2a} - l_3(V)V_{c,\gamma}^{3}- l_4(V)V_{c,\gamma}^{4}, \\
	& P_{2b}V = V  - \frac{l_1(V)}{l_1(V_{c,\gamma}^{2b})}V_{c,\gamma}^{2b} - l_3(V)V_{c,\gamma}^{3}- l_4(V)V_{c,\gamma}^{4}, \\
	&P_3V = V - l_3(V)V_{c,\gamma}^{3}- l_4(V)V_{c,\gamma}^{4},
\end{split}
\end{equation}
for all $V\in \mathbf{X}$.
\begin{lem}\label{sec32:lem:iso}
	If $(c,\gamma)\in I_{1,1}$, the operator $ L^{c,\gamma}_{0}: \mathbf{X}_1\rightarrow\mathbf{Y}$ is an isomorphism.  
	
	If $(c,\gamma)\in I_{1,2} \textrm{ or }I_{2,2}$, the operator $ L^{c,\gamma}_{0}: \mathbf{X}_{2a}\rightarrow\mathbf{Y}$ is an isomorphism. 
	
	If $(c,\gamma)\in I_{1,3} \textrm{ or }I_{3,3}$, the operator $ L^{c,\gamma}_{0}: \mathbf{X}_{2b}\rightarrow\mathbf{Y}$ is an isomorphism. 
	
	If $(c,\gamma)\in I_{k,l} \textrm{ for } 5\le k \le 8 \textrm{ and }1\le l\le 3$, the operator $ L^{c,\gamma}_{0}: \mathbf{X}_{3}\rightarrow\mathbf{Y}$ is an isomorphism. 
\end{lem}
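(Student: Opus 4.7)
The plan is to assemble the three preceding results into a routine Banach-space argument. Proposition~\ref{sec32:prop} gives that $L^{c,\gamma}_0:\mathbf{X}\to\mathbf{Y}$ is bounded, so its restriction to each closed subspace $\mathbf{X}_i$ (closed because each $l_j$ is a bounded linear functional on $\mathbf{X}$) is bounded. By the bounded inverse theorem it suffices to establish bijectivity, which I will do by separately verifying surjectivity (via the right inverse $W^{c,\gamma}$ and the projection $P_i$) and injectivity (via the explicit kernel description).

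For surjectivity, fix $\xi\in\mathbf{Y}$ and set $V_0:=W^{c,\gamma}(\xi)\in\mathbf{X}$. By Lemma~\ref{sec32:lem:W}, $L^{c,\gamma}_0 V_0=\xi$. Let $P_i$ be the projection onto $\mathbf{X}_i$ given by \eqref{sec32:eq:proj}; by construction $V_0-P_iV_0$ is a linear combination of elements listed in Lemma~\ref{sec32:lem:ker}, hence lies in $\ker L^{c,\gamma}_0$. Therefore $L^{c,\gamma}_0(P_iV_0)=L^{c,\gamma}_0 V_0=\xi$, with $P_iV_0\in\mathbf{X}_i$, proving surjectivity in each of the four cases.

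For injectivity, suppose $V\in\mathbf{X}_i$ and $L^{c,\gamma}_0 V=0$. By Lemma~\ref{sec32:lem:ker}, $V$ is a linear combination of the basis vectors spanning $\ker L^{c,\gamma}_0$ in the relevant case, e.g., $V=d_1V^1_{c,\gamma}+d_2V^2_{c,\gamma}+d_3V^3_{c,\gamma}+d_4V^4_{c,\gamma}$ when $(c,\gamma)\in I_{1,1}$. The defining condition of $\mathbf{X}_i$ requires the corresponding functionals $l_j$ to vanish on $V$. Applying these functionals and using the invertibility of the matrix $(l_j(V^k_{c,\gamma}))$ recorded at the end of Section~\ref{sec_2} (with $V^2_{c,\gamma}$ replaced by $V^{2a}_{c,\gamma}$ or $V^{2b}_{c,\gamma}$ and $l_2$ by $l_{2a}$ or $l_{2b}$ as appropriate) forces all coefficients to vanish, so $V=0$.

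Combining surjectivity and injectivity, $L^{c,\gamma}_0:\mathbf{X}_i\to\mathbf{Y}$ is a bounded linear bijection between Banach spaces, hence an isomorphism by the open mapping theorem. I expect no substantive obstacle, since the heavy lifting is already done in Lemmas~\ref{sec32:lem:W} and~\ref{sec32:lem:ker}; the only point requiring minor care is to verify that the projection formulas in \eqref{sec32:eq:proj} indeed land in $\mathbf{X}_i$, which is immediate from the form of the matrix $(l_j(V^k_{c,\gamma}))$ and the explicit coefficients chosen there.
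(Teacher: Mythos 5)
Your proposal is correct and is essentially the paper's own argument: the paper likewise gets surjectivity from the right inverse $W^{c,\gamma}$ (via Corollary \ref{sec32:cor:all:sol}), uses the kernel description of Lemma \ref{sec32:lem:ker}, and concludes from the direct sum decomposition \eqref{sec32:eq:decomp:X}, which is exactly what your projection/invertible-matrix argument spells out. The only cosmetic difference is that you close with the bounded inverse theorem, whereas continuity of the inverse is also immediate here since $P_i\circ W^{c,\gamma}$ is an explicit bounded inverse of $L^{c,\gamma}_0$ on $\mathbf{X}_i$.
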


\begin{proof}
	By Corollary \ref{sec32:cor:all:sol} and Lemma \ref{sec32:lem:ker}, $L^{c,\gamma}_{0}:\mathbf{X}\rightarrow\mathbf{Y}$ is surjective and $\ker L^{c,\gamma}_{0}$  is given by Lemma \ref{sec32:lem:ker}. The conclusion of the lemma then follows in view of the direct sum property (\ref{sec32:eq:decomp:X}).	
\end{proof}

\begin{lem}\label{sec32:lem:V:smooth}
	$V_{c,\gamma}^1, V_{c,\gamma}^2\in C^{\infty}(K,\mathbf{X})$ for compact $K\subset I_{1,1}$.
	
	$V_{c,\gamma}^{2a} \in C^{\infty}(K,\mathbf{X})$  for compact $K\subset I_{1,2}$ or $I_{2,2}$. 

	$V_{c,\gamma}^{2b} \in C^{\infty}(K,\mathbf{X})$for compact $K\subset I_{1,3}$ or $I_{3,3}$. 
	
	$V_{c,\gamma}^3, V_{c,\gamma}^4 \in C^{\infty}(K,\mathbf{X})$ for compact $K\subset I_{k,l}$ with $k=1$ or $5\le k\le 8$ or $(k,l)=(2,2), (3,3)$. 
\end{lem}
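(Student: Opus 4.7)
The plan is to reduce smoothness of each $V_{c,\gamma}^i$ into $\mathbf{X}$ to bounds on the $(c,\gamma)$-derivatives of the exponentials $e^{\pm a_{c,\gamma}(x)}, e^{\pm b_{c,\gamma}(x)}$ and of the relevant primitives appearing in \eqref{eq_basis}, \eqref{sec32:eq:V2a}, \eqref{sec32:eq:V2b}. The case $V_{c,\gamma}^4 = (0,1)^\top$ is immediate since it is constant in $(c,\gamma)$, so the work is for the other basis elements.

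First I would differentiate formally in $(c,\gamma)$ and, by the chain/Leibniz rule, express $\partial_c^\alpha \partial_\gamma^j V_{c,\gamma}^i$ as a polynomial expression in the lower-order partials $\partial_c^{\alpha'}\partial_\gamma^{j'} a_{c,\gamma}$ (or $b_{c,\gamma}$) times $e^{\pm a_{c,\gamma}}$ (or $e^{\pm b_{c,\gamma}}$) and, for $V^2, V^{2a}, V^{2b}, V^3$, primitives of such. By the definition \eqref{sec31:eq:ab}, for $|\alpha|+j \ge 1$,
\[
   \partial_c^\alpha \partial_\gamma^j a_{c,\gamma}(x) = \partial_c^\alpha \partial_\gamma^j b_{c,\gamma}(x) = \int_0^x \frac{\partial_c^\alpha \partial_\gamma^j \bar{U}_\theta(s)}{1-s^2}\,ds.
\]
Every compact set $K$ listed in the lemma falls under case (1) of Proposition \ref{propA_1} (with the appropriate restrictions $\alpha_1=0$ when $c_1=-1$, $\alpha_2=0$ when $c_2=-1$, $j=0$ when $\gamma=\gamma^\pm(c)$, $\alpha_3=0$ when $5\le k\le 8$), so $|\partial_c^\alpha \partial_\gamma^j \bar{U}_\theta| \le C(m,K)$ uniformly on $K\times(-1,1)$. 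Consequently $|\partial_c^\alpha \partial_\gamma^j a_{c,\gamma}(x)|\le C(m,K)\,|\log(1-x^2)|+C(m,K)$.

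Next I would verify the weighted $L^\infty$-bounds defining the $\mathbf{M}_1, \mathbf{M}_2$ norms for these derivatives. Lemma \ref{lem3_2_ab} provides power-type bounds on $e^{\pm a_{c,\gamma}}, e^{\pm b_{c,\gamma}}$ with exponents depending only on $\bar{U}_\theta(\pm 1)$, and the choice of $\epsilon$ at the start of Section \ref{sec_3} was made precisely so that every exponent of $(1\pm x)$ appearing in the definitions of $\|\cdot\|_{\mathbf{M}_1}, \|\cdot\|_{\mathbf{M}_2}$ exceeds the corresponding critical exponent by a strictly positive margin. This positive margin absorbs the logarithmic factor $|\log(1-x^2)|$ coming from the estimate on $\partial_c^\alpha \partial_\gamma^j a_{c,\gamma}$, so each $\partial_c^\alpha\partial_\gamma^j V_{c,\gamma}^i$ lies in $\mathbf{X}$ with norm bounded uniformly on $K$. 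The only mildly delicate sub-case concerns $V^{2a}$ (resp.\ $V^{2b}$), where differentiating under the $\int_{-1}^x$ (resp.\ $\int_1^x$) requires integrability of $\partial_c^{\alpha'}\partial_\gamma^{j'} a_{c,\gamma}(s)\, e^{a_{c,\gamma}(s)}$ at the lower endpoint; this is fine because on $I_{1,2}, I_{2,2}$ (resp.\ $I_{1,3}, I_{3,3}$) one has $\bar{U}_\theta(-1)>2$ (resp.\ $\bar{U}_\theta(1)<-2$), so $e^{a_{c,\gamma}(s)}$ vanishes to positive power at the endpoint, beating any logarithmic factor.

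With these uniform norm bounds, the proof concludes by invoking the standard Banach-valued calculus criterion: a map between Banach spaces whose Fréchet derivatives of every order exist, are continuous, and are uniformly bounded on compact $K$ is $C^\infty$ on $K$. The main technical nuisance, rather than a genuine obstacle, will be bookkeeping: for each of the seven sub-cases listed in the lemma, one must identify the admissible multi-indices $\alpha$, verify the sign of the exponent of $(1\pm x)$ that ends up in the weighted estimate, and check that the formally differentiated expression remains a well-defined element of $\mathbf{X}$. Once this is done systematically using Proposition \ref{propA_1}(1), Proposition \ref{prop2_1}, and Lemma \ref{lem3_2_ab}, no new analytic ideas beyond those already displayed in the proof of Lemma \ref{sec32:lem:W} are needed.
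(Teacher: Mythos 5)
Your proposal is correct and takes essentially the same route as the paper: differentiate the explicit formulas for $V^i_{c,\gamma}$ by Leibniz, use Proposition \ref{propA_1}(1) to get $\partial_c^\alpha\partial_\gamma^j a_{c,\gamma}=\partial_c^\alpha\partial_\gamma^j b_{c,\gamma}=O(|\ln(1-x^2)|)$ with the admissible multi-indices, combine with the exponential bounds of Lemma \ref{lem3_2_ab}, and absorb the logarithmic losses by the strict margin between $\epsilon$ and the critical exponents (the paper's $\bar\epsilon<\epsilon$). One small correction: on $I_{2,2}$ (resp. $I_{3,3}$) one has $\bar U_\theta(-1)=2$ (resp. $\bar U_\theta(1)=-2$), so $e^{a_{c,\gamma}(s)}$ is merely bounded, not vanishing to a positive power, at the endpoint in question; the endpoint integrability you invoke for $V^{2a}$ (resp. $V^{2b}$) still holds since the logarithmic factor is integrable, so this does not affect the argument.
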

\begin{proof}
    It is clear that $V^4_{c,\gamma}\in  C^{\infty}(K,\mathbf{X})$ for all compact set $K$ described as in the lemma.
    
    Let $\alpha=(\alpha_1,\alpha_2,\alpha_3)$ be a multi-index where $\alpha_i\ge 0$, $i=1,2,3$, and $j\ge 0$. For convenience we denote $a(x)=a_{c,\gamma}(x)$, $b(x)=b_{c,\gamma}(x)$ and $V^i=V^i_{c,\gamma}$, $i=1,2,2a,2b,3$.
    
		 
		 Using Proposition \ref{propA_1} part (1), we have that for all $|\alpha|+j\ge 1$ and $(c,\gamma)\in K$,
		 \begin{equation}\label{eq3_6_3}
		    \partial_c^{\alpha}\partial_{\gamma}^j a(x)=\partial_c^{\alpha}\partial_{\gamma}^j b(x)=\int_{0}^{x}\frac{1}{1-s^2} \pt_c^\alpha \pt_\gamma^j U^{c,\gamma}(s)ds = O(1)|\ln(1-x^2)|.
		 \end{equation}
		 
	(1) If $K\subset I_{1,1}$, we have $U_{\theta}^{c,\gamma}(-1)<2$ and $U_{\theta}^{c,\gamma}(1)>-2$. 

	Let $2\bar{\epsilon}:=\max\{0, \frac{1}{2}U^{c,\gamma}_{\theta}(-1), -\frac{1}{2}U^{c,\gamma}_{\theta}(1) \mid (c,\gamma)\in K\}$, then $\bar{\epsilon}<\epsilon$.

	Using the expressions of $V^1,V^2$ in (\ref{eq_basis}), Lemma \ref{lem3_2_ab}, estimates (\ref{eq_ker_1}), (\ref{eq_ker_2}), (\ref{eq3_6_3}) and Proposition \ref{propA_1},  we have that for all $|\alpha|+j\ge 1$ and $(c,\gamma)\in K$, 
	\[
	\begin{split}
		& \left| \pt_c^\alpha \pt_\gamma^j V^1_{\theta}(x) \right| = e^{-a(x)}O\left( \left|\ln(1-x^2)\right|^{|\alpha|+j} \right)=O(1)(1-x^2)^{1-2\bar{\epsilon}}\left|\ln(1-x^2)\right|^{|\alpha|+j},  \\
		& \left| \pt_c^\alpha \pt_\gamma^j V^2_{\theta}(x) \right| = \left|e^{-a(x)}\int_{0}^{x}e^{a(s)}ds\right| O\left( \left|\ln(1-x^2)\right|^{|\alpha|+j} \right)=O(1)(1-x^2)^{1-2\bar{\epsilon}}\left|\ln(1-x^2)\right|^{|\alpha|+j},
		\end{split}
	\]
	and 
	\[
	\begin{split}
	  &  \left| \frac{d}{dx}\pt_c^\alpha \pt_\gamma^j V^1_{\theta}(x) \right|=e^{-a(x)}|a'(x)|O\left( \left|\ln(1-x^2)\right|^{|\alpha|+j} \right)=O(1)(1-x^2)^{-2\bar{\epsilon}}\left|\ln(1-x^2)\right|^{|\alpha|+j},\\
	  & \left| \frac{d}{dx}\pt_c^\alpha \pt_\gamma^j V^2_{\theta}(x) \right|=|-V^2_{\theta}(x)a'(x)+1|O\left( \left|\ln(1-x^2)\right|^{|\alpha|+j} \right)=O(1)(1-x^2)^{-2\bar{\epsilon}}\left|\ln(1-x^2)\right|^{|\alpha|+j}.
	    \end{split}
	\]
	From the above we can see that for all $|\alpha|+j\ge 1$, there exists some constant $C=C(\alpha,j,K)$, such that $(c,\gamma)\in K$, 
	\[
		\left|(1-x^2)^{-1+2\epsilon}\pt_c^\alpha \pt_\gamma^j  V^i_{\theta}(x)\right|\le C,\quad \left|(1-x^2)^{2\epsilon}\frac{d}{dx}\pt_c^\alpha \pt_\gamma^j V^i_{\theta}(x)\right|\le C, \quad i=1,2.
	\]
	From the above we also have that for $|\alpha|+j \ge 1$, 
	\[
		\pt_c^\alpha \pt_\gamma^j V^i_{\theta}(1) = \pt_c^\alpha \pt_\gamma^j V^i_{\theta}(-1) = 0, \quad i=1,2.
	\]
	Next, 
	using the definition of $a(x)$ in (\ref{sec31:eq:ab}), there exists some constant $C=C(K)$, such that  
	\[
	    \left| \frac{d^2}{dx^2}\pt_c^\alpha \pt_\gamma^jV^i_{\theta}\right|\le C, \quad \left| \frac{d^3}{dx^3}\pt_c^\alpha \pt_\gamma^jV^i_{\theta}\right|\le C, \quad -\frac{1}{2}<x<\frac{1}{2},\quad i=1,2.
	\]
	The above implies that  for all $|\alpha|+j \ge 1$, $\pt_c^\alpha \pt_\gamma^j  V^i_{\theta}\in \mathbf{M}_1$, $i=1,2$, so $V^1,V^2\in C^{\infty}(K, \mathbf{X})$.	
	
	(2) If $K\subset I_{1,2}$ or $I_{2,2}$, we have $2\le U_{\theta}^{c,\gamma}(-1)<3$ with $U^{c,\gamma}(1)>-2$. 
	
	Let $2\bar{\epsilon} := \max\{U^{c,\gamma}_{\theta}(-1)-2, -\frac{1}{2}U^{c,\gamma}_{\theta}(1) \mid (c,\gamma)\in K\}$, then $\bar{\epsilon} <\epsilon$. 
	
	
	 In this case $\gamma = \gamma^+(c_1,c_2,c_3)$. Using the expressions of $V^{2a}$ in (\ref{sec32:eq:V2a}), Lemma \ref{lem3_2_ab},  the estimates (\ref{eq_ker_5}), (\ref{eq_ker_6}), (\ref{eq3_6_3}) and Proposition \ref{propA_1}, we have that for all $|\alpha|\ge 1$, 
	\[
		\left| \pt_c^\alpha V^{2a}_{\theta}(x) \right| 
		 = O(1)(1+x)(1-x)^{1-2\bar{\epsilon}}\left|\ln(1-x^2)\right|^{|\alpha|}, 
	\]
	and 
	\[
	    \left| \frac{d}{dx}\pt_c^\alpha V^{2a}_{\theta}(x) \right|= O(1)(1-x)^{-2\bar{\epsilon}}\left|\ln(1-x^2)\right|^{|\alpha|}.
	\]
	From the above we can see that  for any $|\alpha|\ge 1$, there exists some constant $C=C(\alpha,K)$, such that for all $(c,\gamma)\in K$
	\[
		\left|(1-x^2)^{-1+2\epsilon}\pt^\alpha_{c} V^{2a}_{\theta}(x)\right|\le C,\quad \left|(1-x^2)^{2\epsilon}\frac{d}{dx}\pt^\alpha_c V^{2a}_{\theta}(x)\right|\le C. 
	\]
	We also have that for $|\alpha| \ge 1$, 
	\[
		\pt^\alpha_{c} V^{2a}_{\theta}(1) = \pt^\alpha_{c} V^{2a}_{\theta}(-1) = 0. 
	\]
	Similarly as part (1), we have
	\[
	    \left| \frac{d^2}{dx^2}\pt_c^\alpha \pt_\gamma^jV^{2a}_{\theta}\right|\le C, \quad \left| \frac{d^3}{dx^3}\pt_c^\alpha \pt_\gamma^jV^{2b}_{\theta}\right|\le C, \quad -\frac{1}{2}<x<\frac{1}{2}.
	\]
	The above implies that  for all $|\alpha|\ge 1$, $\pt^\alpha_{c} V^{2a}_{\theta}\in \mathbf{M}_1$, so $V^{2a}\in C^{\infty}(K,\mathbf{X})$. 
	
	(3) If $K\subset I_{1,3}$ or $I_{3,3}$, then by similar argument as part (2), we have that $V^{2b}\in C^{\infty}(K,\mathbf{X})$.
	
	(4) Let $K$ be a subset of $I_{k,l}$ with $k=1$ or $5\le k\le 8$ or $(k,l)=(2,2), (3,3)$. 
	Using the expressions of $V^3$ in (\ref{eq_basis}), Lemma \ref{lem3_2_ab},  the estimates (\ref{eq_ker_9}), (\ref{eq_ker_10}), (\ref{eq3_6_3}) and Proposition \ref{propA_1}, we have that for all $|\alpha|+j\ge 1$,
	\[
	   \left|\partial_{c}^{\alpha}\partial_{\gamma}^{j}V^3_{\phi}\right|=O(1)\left((1+x)^{1-\frac{U^{c,\gamma}_{\theta}(-1)}{2}}(1-x)^{1+\frac{U^{c,\gamma}(1)}{2}}+1\right)|\ln(1-x^2)|^{|\alpha|+j},
	\]
	\[
	    \left|\frac{d}{dx}\partial_{c}^{\alpha}\partial_{\gamma}^{j}V^3_{\phi}(x)\right|= O(1)(1+x)^{-\frac{U^{c,\gamma}_{\theta}(-1)}{2}} (1-x)^{\frac{U^{c,\gamma}_{\theta}(1)}{2}}|\ln(1-x^2)|^{|\alpha|+j},
	\]
	\[
	    \left|\frac{d^2}{dx^2}\partial_{c}^{\alpha}\partial_{\gamma}^{j}V^3_{\phi}(x)\right|=e^{-b(x)}|b'(x)| = O(1)(1+x)^{-1-\frac{U^{c,\gamma}_{\theta}(-1)}{2}} (1-x)^{-1+\frac{U^{c,\gamma}_{\theta}(1)}{2}}|\ln(1-x^2)|^{|\alpha|+j}.
	\]
	Since $\epsilon>\max\{0,\frac{U^{c,\gamma}_{\theta}(-1)}{2}-1,-1-\frac{U^{c,\gamma}(1)}{2}\}$, there exists some $C=C(\alpha, j, K)$ such that for all  $(c,\gamma)\in K$,
	\[
	   | (1-x^2)^{\epsilon}\partial_{c}^{\alpha}\partial_{\gamma}^{j}V^3_{\phi}|\le C, \quad \left|(1-x^2)^{1+\epsilon}\frac{d}{dx}\partial_{c}^{\alpha}\partial_{\gamma}^{j}V^3_{\phi}\right|\le C, \quad \quad \left|(1-x^2)^{2+\epsilon}\frac{d^2}{dx^2}\partial_{c}^{\alpha}\partial_{\gamma}^{j}V^3_{\phi}\right|\le C.
	\]
	The above implies that for any $|\alpha|+j\ge 1$, $\partial_{c}^{\alpha}\partial_{\gamma}^{j}V^3_{\phi}\in \mathbf{M}_2$, so $V^3\in C^{\infty}(K,\mathbf{X})$. 
\end{proof}

\begin{lem}\label{sec32:lem:combine:bdd}
	(i) If $ K\subset \subset I_{1,1}$,  then there exists $C=C(K)>0$ such that for all $(c,\gamma)\in K$, $\beta:=(\beta_1,\beta_2,\beta_3,\beta_4)\in \mathbb{R}^4$, and $V\in \mathbf{X}_1$, 
	\[
		\| V \|_{\mathbf{X}} + | \beta | 
		\leq C \| \sum_{i=1}^{4}\beta_i V^i_{c,\gamma} + V \|_{\mathbf{X}}. 
	\] 

	(ii) If $K\subset \subset I_{1,2}$ or $I_{2,2}$, then there exists $C=C(K)>0$ such that for all $(c,\gamma)\in K$,  $(\beta_2,\beta_3,\beta_4)\in \mathbb{R}^3$, and $V\in \mathbf{X}_{2a}$, 
	\[
		\| V \|_{\mathbf{X}} + | (\beta_2, \beta_3,\beta_4) | 
		\leq C \| \beta_2 V^{2a}_{c,\gamma} + \beta_3 V^3_{c,\gamma} +\beta_4V^4_{c,\gamma}+ V \|_{\mathbf{X}}. 
	\] 

	(iii) If $K\subset \subset I_{1,3}$ or $I_{3,3}$, then there exists $C=C(K)>0$ such that for all $(c,\gamma)\in K$,  $(\beta_2,\beta_3,\beta_4)\in \mathbb{R}^3$, and $V\in \mathbf{X}_{2b}$, 
	\[
		\| V \|_{\mathbf{X}} + | (\beta_2, \beta_3,\beta_4) | 
		\leq C \| \beta_2 V^{2b}_{c,\gamma} + \beta_3 V^3_{c,\gamma} +\beta_4V^4_{c,\gamma}+ V \|_{\mathbf{X}}. 
	\] 

	(iv)If $ K\subset \subset I_{k,l}$ with $5\le k\le 8$, $1\le l\le 3$,  then there exists $C=C(K)>0$ such that for all $(c,\gamma)\in K$,  $(\beta_3,\beta_4)\in \mathbb{R}^2$, and $V\in \mathbf{X}_3$, 
	\[
		\| V \|_{\mathbf{X}} + | (\beta_3,\beta_4) | 
		\leq C \| \beta_3 V^3_{c,\gamma} +\beta_4V^4_{c,\gamma}+ V \|_{\mathbf{X}}. 
	\] 
\end{lem}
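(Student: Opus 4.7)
The plan is to recover the coefficients $\beta_j$ from the decomposition by applying the bounded linear functionals $l_1,l_2,l_3,l_4$ (or $l_{2a},l_{2b}$ as appropriate) introduced in Section~\ref{sec_2}, and then to bound $\|V\|_{\mathbf{X}}$ by the triangle inequality, with uniformity across compact $K$ supplied by Lemma~\ref{sec32:lem:V:smooth}.

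Write $W:=\sum_j\beta_jV^j_{c,\gamma}+V$. In each of the four cases the subspace $\mathbf{X}_1,\mathbf{X}_{2a},\mathbf{X}_{2b},\mathbf{X}_3$ is, by definition \eqref{sec32:eq:X}, \eqref{sec32:eq:Xab}, \eqref{sec32:eq:X3}, the common kernel of the functionals appearing in the corresponding matrix identity from Section~\ref{sec_2}. Applying those functionals to $W$ therefore yields
\[
l_i(W)=\sum_{j}l_i(V^j_{c,\gamma})\,\beta_j,
\]
where in case (i) the coefficient matrix is the lower-triangular unit-diagonal matrix displayed at the end of Section~\ref{sec_2}, and in cases (ii)--(iv) it is its obvious analog (diagonal in cases (ii)--(iv), with the remark just after that display accounting for the substitution of $l_2,V^2$ by $l_{2a},V^{2a}$ or $l_{2b},V^{2b}$). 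In every case the matrix is invertible, and the inverse can be written down explicitly as a rational function of its entries $l_i(V^k_{c,\gamma})$.

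To pass from pointwise to uniform invertibility on compact $K$, I would invoke Lemma~\ref{sec32:lem:V:smooth}: each $V^k_{c,\gamma}$ that appears lies in $C^\infty(K,\mathbf{X})$, so $\|V^k_{c,\gamma}\|_{\mathbf{X}}\le C(K)$ on $K$. Since each $l_i$ is bounded on $\mathbf{X}$, this yields continuous dependence of the matrix entries on $(c,\gamma)$ and uniform bounds $|l_i(V^k_{c,\gamma})|\le C(K)$. Non-vanishing of the diagonal entries on $K$ (known from Section~\ref{sec_2}) then upgrades by continuity and compactness to a uniform lower bound, so the inverse matrix is uniformly bounded on $K$. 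This gives $|\beta_j|\le C(K)\|W\|_{\mathbf{X}}$, after which the triangle inequality
\[
\|V\|_{\mathbf{X}}\le\|W\|_{\mathbf{X}}+\sum_j|\beta_j|\,\|V^j_{c,\gamma}\|_{\mathbf{X}}
\]
combined with the same uniform bound on $\|V^j_{c,\gamma}\|_{\mathbf{X}}$ closes the estimate.

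The only step that is not purely formal is the uniform lower bound on the diagonal entries in cases (ii) and (iii), specifically on $l_{2a}(V^{2a}_{c,\gamma})$ and $l_{2b}(V^{2b}_{c,\gamma})$. I would verify this by computing the limit $(V^{2a}_\theta)'(-1)=\lim_{x\to -1^+}\bigl(-a_{c,\gamma}'(x)V^{2a}_\theta(x)+1\bigr)$ using the asymptotics of $e^{\pm a_{c,\gamma}}$ from Lemma~\ref{lem3_2_ab} (which gives $V^{2a}_\theta(x)\sim \tfrac{2}{\bar{U}_\theta(-1)}(1+x)$ in the regime $\bar{U}_\theta(-1)\ge 2$ relevant to case (ii)), and symmetrically for $l_{2b}(V^{2b}_{c,\gamma})$; continuity of $\bar{U}_\theta(\pm 1)$ in $(c,\gamma)$ together with compactness of $K$ then produces the uniform lower bound. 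Apart from this explicit computation, the argument is a standard quantitative direct-sum decomposition.
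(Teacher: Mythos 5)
Your proof is correct, but it takes a genuinely different route from the paper's. The paper proves the lemma by a compactness--contradiction argument: it normalizes a putative counterexample sequence so that $\|V^i\|_{\mathbf{X}}+|\beta^i|=1$, extracts limits $(c^i,\gamma^i)\to(c,\gamma)\in K$ and $\beta^i\to\beta$, uses Lemma~\ref{sec32:lem:V:smooth} to get $V^j_{c^i,\gamma^i}\to V^j_{c,\gamma}$ in $\mathbf{X}$, and then derives a contradiction from the closedness of $\mathbf{X}_1$ (resp.\ $\mathbf{X}_{2a},\mathbf{X}_{2b},\mathbf{X}_3$) together with the linear independence of the $V^j_{c,\gamma}$; the constant $C(K)$ obtained this way is non-constructive. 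You instead recover the coefficients directly by applying the defining functionals to $W=\sum_j\beta_j V^j_{c,\gamma}+V$ and inverting the matrix $\bigl(l_i(V^j_{c,\gamma})\bigr)$, which is unit lower-triangular in case (i) and diagonal in cases (ii)--(iv), with uniformity over $K$ again supplied by Lemma~\ref{sec32:lem:V:smooth} and the boundedness of the functionals; this yields an explicit, quantitative constant. The price is that you must check non-degeneracy of the diagonal entries $l_{2a}(V^{2a}_{c,\gamma})$ and $l_{2b}(V^{2b}_{c,\gamma})$ in cases (ii)--(iii), and your asymptotic computation is the right one: $(V^{2a}_\theta)'(-1)=2/\bar{U}_\theta(-1)\in(2/3,1]$ on $I_{1,2}\cup I_{2,2}$ (including the subcase $\bar{U}_\theta(-1)=2$, $\eta_1=0$, where the limit equals $1$), and symmetrically $(V^{2b}_\theta)'(1)=2/|\bar{U}_\theta(1)|$ at $x=1$; this, rather than the invertibility remark at the end of Section~\ref{sec_2} (which concerns the full $4\times 4$ matrix and does not by itself isolate $l_{2a}(V^{2a})$), is what actually justifies the uniform lower bound. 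One caveat: your bound $|\beta_2|\le C\|W\|_{\mathbf{X}}$ in cases (ii)--(iii) relies on the boundedness of $l_{2a},l_{2b}$ on $\mathbf{X}$, which the paper asserts when introducing \eqref{eq_l2ab} and also needs for the decomposition \eqref{sec32:eq:decomp:X}, but which its own contradiction proof never uses quantitatively; granting that assertion, your argument is complete.
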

\begin{proof}
	We only prove (i). Similar arguments yield (ii), (iii) and (iv). 
	
	We use contradiction argument. Assume there exists a sequence $(c^i, \gamma^i)\in K$,  $\beta^i:=(\beta_1^i, \beta_2^i, \beta_3^i,\beta_4^i)\in \mathbb{R}^4$, and $V^i \in \mathbf{X}_1$, such that
	\begin{equation}\label{sec32:eq:combine:bdd}
		\|V^i\|_{\mathbf{X}} + | \beta^i | 
		\ge i \|\sum_{j=1}^{4}\beta_j^iV^j_{c^i,\gamma^i} + V^i \|_{\mathbf{X}}.
	\end{equation}
	Without loss of generality we assume that
	\[
		\| V^i \|_{\mathbf{X}} + | \beta^i  | = 1. 
	\] 
	Since $|\beta^i  | \le 1$, there exists a subsequence, still denote as $\beta^i $, and some $\beta\in \mathbb{R}^4$, such that $\beta^i  \to \beta$ as $i \to \infty$. 
	
	Since $K$ is compact, there exists a subsequence of $(c^i,\gamma^i)$, still denoted as $(c^i,\gamma^i)$, and some $(c,\gamma)\in K$ such that $(c^i, \gamma^i) \to (c,\gamma)\in K$ as $i \to \infty$.  Then by Lemma \ref{sec32:lem:V:smooth} we have
	\[
		V^j_{c^i, \gamma^i}\to V^j_{c,\gamma}, \quad 1\le j\le 4. 
	\]
	By (\ref{sec32:eq:combine:bdd}),
	\[
		\sum_{j=1}^{4}\beta^i_j V^j_{c^i, \gamma^i} + V^i \to 0.
	\]
	This implies
	\[
		V^i \to V:= - \sum_{j=1}^{4}\beta^i_j V^j_{c^i, \gamma^i}. 
	\]
	On the other hand, $V^i\in \mathbf{X}_1$. 
	Since $\mathbf{X}_1$ is a closed subspace of $\mathbf{X}$,  we have $V\in \mathbf{X}_1$.
	Thus $V\in \mathbf{X}_1\cap \mbox{span} \{ V_{c,\gamma}^{1}, V_{c,\gamma}^{2}, V_{c,\gamma}^{3}, V^4_{c,\gamma} \}$. So $V=0$. Since $V^1_{c,\gamma}, V^2_{c,\gamma}, V^3_{c,\gamma}, V^4_{c,\gamma}$ are independent for any $(c,\gamma)\in K$. We have $\beta_1=\beta_2=\beta_3=\beta_4=0$. However, $\|V^i\|_{\mathbf{X}}+|(\beta^i_1,\beta^i_2,\beta^i_3,\beta^i_4)|=1$ leads to $\|V\|_{\mathbf{X}}+|(\beta_1,\beta_2,\beta_3,\beta_4)|=1$, contradiction. The lemma is proved. 
\end{proof}

\noindent {\bf Proof of Theorem \ref{sec32:thm:1}.}
Define a map $F: K \times \mathbb{R}^4 \times \mathbf{X}_1 \to \mathbf{Y}$ by
\[
	F(c,\gamma,\beta, V) = G(c,\gamma, \sum_{i=1}^{4}\beta_i V^i_{c,\gamma} + V).
\]
By Proposition \ref{sec32:prop}, $G$ is a $C^{\infty}$ map from $K\times \mathbf{X}$ to $\mathbf{Y}$. Let $\tilde{U} = \tilde{U}(c,\gamma,\beta, V) = \sum_{i=1}^{4}\beta_i V^i_{c,\gamma}+ V$. Using Lemma \ref{sec32:lem:V:smooth}, we have $\tilde{U}\in C^{\infty}(K \times \mathbb{R}^4 \times \mathbf{X}_1, \mathbf{X})$. So it concludes that $F \in C^{\infty}(K \times \mathbb{R}^4 \times \mathbf{X}_1, \mathbf{Y})$. 

Next, by definition $F(c,\gamma,0, 0) = 0$ for all $(c,\gamma)\in K$. Fix some $(\bar{c}, \bar{\gamma}) \in K$, using Lemma \ref{sec32:lem:iso}, we have $F_{V}(\bar{c}, \bar{\gamma}, 0,0)=L_0^{\bar{c}, \bar{\gamma}}:\mathbf{X}_1\to \mathbf{Y}$ is an isomorphism.

Applying Theorem \ref{thm:IFT}, there exist some $\delta>0$ depending only on $K$ and a unique $V\in C^{\infty}(B_{\delta}(\bar{c}, \bar{\gamma})\times B_{\delta}(0), \mathbf{X}_1 )$, such that
\[
	F(c,\gamma,\beta, V(c,\gamma,\beta) ) = 0, \quad \forall (c,\gamma)\in B_{\delta}(\bar{c}, \bar{\gamma}), \beta \in B_{\delta}(0),
\]
and 
\[
	V(\bar{c}, \bar{\gamma},0)=0.
\]
The uniqueness part of Theorem \ref{thm:IFT} holds in the sense that there exists some $0<\bar{\delta}<\delta$,  such that $B_{\bar{\delta}}(\bar{c},\bar{\gamma},0,0)\cap F^{-1}(0) \subset  \{(c,\gamma,\beta,V(c,\gamma,\beta) ) |(c,\gamma) \in B_{\delta}(\bar{c}, \bar{\gamma}), \beta\in B_{\delta}(0)\}$.

\noindent
\textbf{Claim.} there exists some $0 < \delta_1 < \frac{\bar{\delta}}{2}$, such that $V(c,\gamma,0)=0$ for all $(c,\gamma)\in B_{\delta_1}(\bar{c}, \bar{\gamma})$.

\noindent
{\bf Proof of the Claim.}
Since $V(\bar{c}, \bar{\gamma},0) = 0$ and $V(c,\gamma,0)$ is continuous in $(c,\gamma)$, there exists some $0<\delta_1 < \frac{\bar{\delta}}{2}$, such that for all $(c,\gamma)\in B_{\delta_1}(\bar{c}, \bar{\gamma})$, $(c,\gamma,0, V(c,\gamma,0))\in B_{\bar{\delta}(\bar{c}, \bar{\gamma},0,0)}$. We know that for all $(c,\gamma)\in B_{\delta_1}(\bar{c},\bar{\gamma})$,
\[
	F(c,\gamma,0, 0) = 0, 
\]
and 
\[
	F(c,\gamma,0, V(c,\gamma,0))=0.
\]
By the above mentioned uniqueness result, $V(c,\gamma,0)=0$, for every $(c,\gamma)\in B_{\delta_1}(\bar{c}, \bar{\gamma})$.

Now we have $V\in C^{\infty}(B_{\delta_1}(\bar{c}, \bar{\gamma})\times B_{\delta_1}(0), \mathbf{X}_1(\bar{c}, \bar{\gamma}) )$, and 
\[
	F(c,\gamma,\beta,V(c,\gamma,\beta))=0, \quad \forall (c,\gamma)\in B_{\delta_1}(\bar{c}, \bar{\gamma}), \beta\in B_{\delta_1}(0),
\]
i.e. for any $(c,\gamma)\in B_{\delta_1}(\bar{c}, \bar{\gamma})$, $\beta\in B_{\delta_1}(0)$
\[
	G(c, \gamma, \sum_{i=1}^{4}\beta_i V^i_{c,\gamma} + V(c,\gamma,\beta) )=0. 
\]
Take derivative of the above with respect to $\beta_i$ at $(c,\gamma,0)$, $1\le i\le 4$, we have
\[
	G_{\tilde{U}}(c,\gamma,0)(V^i_{c,\gamma}+\partial_{\beta_i}V(c,\gamma,0))=0.
\]
Since $G_{\tilde{U}}(c,\gamma,0)V^i_{c,\gamma}=0$ by Lemma \ref{sec32:lem:ker}, we have 
\[
	G_{\tilde{U}}(c,\gamma,0)\partial_{\beta_i}V(c,\gamma,0)=0.
\]
But $\partial_{\beta_i}V(c,\gamma,0)\in C^{\infty}(\mathbf{X}_1)$, so 
\[
	\partial_{\beta_i}V(c,\gamma,0)=0, \quad 1\le i\le 4.
\]
Since $K$ is compact, we can take $\delta_1$ to be a universal constant for each  $(c,\gamma)\in K$. So we have proved the existence of $V$ in Theorem \ref{sec32:thm:1}.

Next, let $(c,\gamma)\in B_{\delta_1}(\bar{c}, \bar{\gamma})$. Let $\delta'$ be a small constant to be determined.  For any $U$ satisfies equation (\ref{sec31:eq:NSE}) with $U-U^{c,\gamma}\in \mathbf{X}$, and $\|U-U^{c,\gamma}\|_{ \mathbf{X}}\le \delta'$ there exist some $\beta\in \mathbb{R}^4$ and $V^*\in \mathbf{X}_1$ such that
\[
	U-U^{c,\gamma} = \sum_{i=1}^{4}\beta_i V^i_{c,\gamma}+ V^*.
\]
Then by Lemma \ref{sec32:lem:combine:bdd}, there exists some constant $C>0$ such that
\[
	\frac{1}{C}(|\beta| + \|V^*\|_{\mathbf{X}})
	\le \|\sum_{i=1}^{4}\beta_i V^i_{c,\gamma} + V^*\|_{\mathbf{X}}\le \delta'.
\]
This gives $\|V^*\|_{\mathbf{X}}\le C\delta'$.

Choose $\delta'$ small enough such that $C\delta'<\delta_1$. We have the uniqueness of $V^*$. 
So  $V^*=V(c,\gamma,\beta)$ in (\ref{sec32:eq:U1}).
The theorem is proved.
\qed

Theorem \ref{sec32:thm:2}, \ref{sec32:thm:2'} and Theorem \ref{sec32:thm:3} can be proved by replacing $\mathbf{X}_1$ by $\mathbf{X}_{2a}$, $\mathbf{X}_{2b}$, $\mathbf{X}_3$, replacing $\sum_{i=1}^{4}\beta_i V^i_{c,\gamma}$ by $\beta_2 V^{2a}_{c,\gamma}+ \beta_3 V^3_{c,\gamma}+\beta_4V^4_{c,\gamma}$, $\beta_2 V^{2b}_{c,\gamma}+ \beta_3 V^3_{c,\gamma}+\beta_4V^4_{c,\gamma}$ and $\beta_3 V^3_{c,\gamma}+\beta_4V^4_{c,\gamma}$ respectively.

\texorpdfstring{}

\section{Existence of axisymmetric, with swirl solutions around \texorpdfstring{$\bm{U^{c,\gamma}}$}{}, when \texorpdfstring{$\bm{(c,\gamma)\in I_{k,l}}$}{} with \texorpdfstring{$\bm{(k,l)\in A_2}$}{} or \texorpdfstring{$\bm{A_3}$}{}} \label{sec33:sec}


Denote $\bar{U}_{\theta}=U^{c,\gamma}_{\theta}$. If $(c, \gamma)\in I_{k,l}$ with $(k,l)\in A_2$, then $\bar{U}_{\theta}(-1)=2$ with $\eta_1=4$ and $-3<\bar{U}_{\theta}(1)\ne -2$ or $\bar{U}_{\theta}(1)=-2$ with $\eta_2=0$. If $(c, \gamma)\in I_{k,l}$ with $(k,l)\in A_3$, then $\bar{U}_{\theta}(1)=-2$ with $\eta_2=-4$ and $3>\bar{U}_{\theta}(-1)\ne 2$ or $\bar{U}_{\theta}(-1)=2$ with $\eta_1=0$.

We only need to concentrate on the cases when $(k,l)\in A_2$, since the results of other cases can be obtained from these cases by the transformation $\tilde{x}=-x$ and $\tilde{U}(\tilde{x})=-U(-x)$. 

Choose $0<\epsilon<1/2$, satisfying $\epsilon>-\bar{U}_{\theta}(1)/4$ if $\bar{U}_{\theta}(1)>-2$, and 
    $\epsilon>-\bar{U}_{\theta}(1)/2-1$ if $\bar{U}_{\theta}(1)\le -2$. 
Define
\[
\begin{split}
	\mathbf{M}_1 = & \mathbf{M}_1(\epsilon)\\
		:= & \Big\{ \tilde{U}_{\theta}\in C^3(-\frac{1}{2},\frac{1}{2})\cap C^1(-1,1)\cap C[-1,1] \mid \tilde{U}_\theta(-1) = \tilde{U}_\theta(1) = 0, \\
		& \| \ln\frac{1+x}{3}(1-x)^{-1+2\epsilon} \tilde{U}_{\theta} \|_{L^\infty(-1,1)}< \infty, \\
		& \| (1+x) \left( \ln\frac{1+x}{3} \right)^2 (1-x)^{2\epsilon} \tilde{U}'_{\theta} \|_{L^\infty(-1,1)}<\infty, \\
		& || \tilde{U}_{\theta}'' ||_{L^{\infty}(-\frac{1}{2}, \frac{1}{2})} < \infty, ||\tilde{U}_{\theta}'''||_{L^{\infty}(-\frac{1}{2}, \frac{1}{2})}<\infty \Big\}, \\
	\mathbf{M}_2=& \mathbf{M}_2(\epsilon)\\
		:= & \{ \tilde{U}_{\phi} \in C^2(-1,1) \mid 
		\| (1-x^2)^{\epsilon}\tilde{U}_{\phi} \|_{L^\infty(-1,1)}<\infty,  \|(1-x^2)^{1+\epsilon}\tilde{U}'_{\phi} \|_{L^\infty(-1,1)}<\infty, \\
        	&\| (1 - x^2)^{2+\epsilon} \tilde{U}''_{\phi} \|_{L^\infty(-1,1)}<\infty \}, 
\end{split}
\]
with the following norms accordingly
\[
	\begin{split}
	& \| \tilde{U}_{\theta} \|_{\mathbf{M}_1} := \| \ln\frac{1+x}{3}(1-x)^{-1+2\epsilon} \tilde{U}_{\theta} \|_{L^\infty(-1,1)} 
	+ \| (1+x)\left(\ln\frac{1+x}{3}\right)^2(1-x)^{2\epsilon} \tilde{U}'_{\theta} \|_{L^\infty(-1,1)}\\
	& \hspace{1.5cm}  + ||\tilde{U}_{\theta}''||_{L^{\infty}(-\frac{1}{2}, \frac{1}{2})}+ ||\tilde{U}_{\theta}'''||_{L^{\infty}(-\frac{1}{2}, \frac{1}{2})}, \\
	& \| \tilde{U}_{\phi} \|_{\mathbf{M}_2} := \|(1-x^2)^{\epsilon}\tilde{U}_{\phi}\|_{L^\infty(-1,1)} + \|(1-x^2)^{1+\epsilon}\tilde{U}'_{\phi} \|_{L^\infty(-1,1)} + \| (1-x^2)^{2+\epsilon}\tilde{U}''_{\phi} \|_{L^\infty(-1,1)}. \\
	\end{split}
\]
Next define the following function spaces: 
\[
	\begin{split}
	& \mathbf{N}_1 = \mathbf{N}_1(\epsilon) := \{ \xi_{\theta}\in C^2(-\frac{1}{2},\frac{1}{2})\cap C[-1,1] \mid \xi_\theta(-1) = \xi_\theta(1) =\xi''_{\theta}(0) = 0, \\
	& \hspace{2.8 cm} \| \left(\ln\frac{1+x}{3}\right)^2(1-x)^{-1+2\epsilon} \xi_{\theta} \|_{L^\infty(-1,1)}<\infty, ||\xi'_{\theta}||_{L^{\infty}(-\frac{1}{2},\frac{1}{2})}<\infty,  ||\xi''_{\theta}||_{L^{\infty}(-\frac{1}{2},\frac{1}{2})}<\infty\}, \\
	& \mathbf{N}_2 = \mathbf{N}_2(\epsilon) := \{\xi_{\phi}\in C(-1,1) \mid \| (1-x^2)^{1+\epsilon}\xi_{\phi} \|_{L^\infty(-1,1)}<\infty\}, \\
	\end{split}
\]
with the following norms accordingly: 
\[
	\begin{split}
	& \| \xi_{\theta} \|_{\mathbf{N}_1} := \| \left(\ln\frac{1+x}{3}\right)^2(1-x)^{-1+2\epsilon} \xi_{\theta} \|_{L^\infty(-1,1)}+||\xi'_{\theta}||_{L^{\infty}(-\frac{1}{2},\frac{1}{2})}+||\xi''_{\theta}||_{L^{\infty}(-\frac{1}{2},\frac{1}{2})}, \\
	& \| \xi_{\phi} \|_{\mathbf{N}_2} := \|(1-x^2)^{1+\epsilon}\xi_{\phi} \|_{L^\infty(-1,1)}. 
	\end{split}
\]
Then let $\mathbf{X} := \{ \tilde{U} = (\tilde{U}_\theta, \tilde{U}_\phi ) \mid \tilde{U}_\theta \in \mathbf{M}_1, \tilde{U}_{\phi} \in \mathbf{M}_2\}$ with norm $ \| \tilde{U} \|_{\mathbf{X}} = \| \tilde{U}_\theta \|_{\mathbf{M}_1} + \| \tilde{U}_\phi \|_{\mathbf{M}_2}$, $\mathbf{Y} := \{ \xi = ( \xi_\theta, \xi_\phi ) \mid \xi_\theta \in \mathbf{N}_1, \xi_\phi \in \mathbf{N}_2 \}$, with norm $ \| \xi \|_{\mathbf{Y}} = \| \xi_\theta \|_{\mathbf{N}_1} + \| \xi_\phi \|_{\mathbf{N}_2}$. It can be proved that $\mathbf{M}_1$, $\mathbf{M}_2$, $\mathbf{N}_1$, $\mathbf{N}_2$, $\mathbf{X}$ and $\mathbf{Y}$ are Banach spaces.


Let $l_i:\mathbf{X}\to \mathbb{R}$, $1\le i\le 4$, be the bounded linear functionals defined by (\ref{sec31:eq:fcnal:l}) for each $V\in \mathbf{X}$. Define $\mathbf{X}_1:=\ker l_1\cap \ker l_{2} \cap \ker l_3 \cap \ker l_4$. It can be seen that $\mathbf{X}_1$ is independent of $(c,\gamma)$.

\begin{thm}\label{sec33:thm:1}
	For every compact subset $K\subset I_{2,1}$, for every $(c,\gamma)\in K$, there exists $\delta = \delta(K) > 0$, and $V\in C^{\infty}( K\times B_{\delta}(0), \mathbf{X}_1)$ satisfying $V(c,\gamma,0)=0$ and $\displaystyle \frac{\partial V}{\partial \beta_i}\big |_{\beta=0}=0$, $1\le i\le 4$, $\beta=(\beta_1,\beta_2,\beta_3,\beta_4)$, such that 
	\begin{equation}\label{sec33:eq:U1}
		U = U^{c,\gamma} + \sum_{i=1}^{4} \beta_i V_{c,\gamma}^i + V(c,\gamma,\beta)
	\end{equation}
	satisfies equation (\ref{sec31:eq:NSE}) with $\hat{c}_1= c_1 - \frac{1}{2}\psi[\tilde{U}_{\phi}](-1)$, $\hat{c}_2 = c_2 - \frac{1}{2}\psi[\tilde{U}_{\phi}](1)$, $\hat{c}_3 = c_3 +\frac{1}{2}(\varphi_{c,\gamma}[\tilde{U}_{\theta}])''(0)$.

	Moreover, there exists some $\delta'=\delta'(K)>0$, such that if $\| U - U^{c,\gamma } \|_{\mathbf{X}} < \delta'$, $(c,\gamma) \in K$,  and $U$ satisfies  equation (\ref{sec31:eq:NSE}) with some constants $\hat{c}_1, \hat{c}_2, \hat{c}_3$, then (\ref{sec33:eq:U1}) holds for some $|\beta|< \delta$. 
\end{thm}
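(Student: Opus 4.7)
The argument mirrors that of Theorem \ref{sec32:thm:1}, with the Banach spaces $\mathbf{X},\mathbf{Y}$ now adapted to the logarithmic behavior $\bar U_\theta-2\sim 4/\ln(1+x)$ at $x=-1$ coming from $(c,\gamma)\in I_{2,1}$ (so $\bar U_\theta(-1)=2$, $\eta_1=4$, and $\bar U_\theta(1)=2+2\sqrt{1+c_2}>-2$). Define $G(c,\gamma,\tilde U)=A(c,\gamma,\tilde U)+Q(\tilde U,\tilde U)$ on $K\times\mathbf{X}$ by \eqref{sec31:eq:G}; its zeros correspond to solutions $U=\bar U+\tilde U$ of \eqref{sec31:eq:NSE} with the constants $\hat c_1,\hat c_2,\hat c_3$ displayed in the statement. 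The goal is to verify: (a) $G\in C^\infty(K\times\mathbf{X},\mathbf{Y})$; (b) the Fr\'echet derivative $L_0^{c,\gamma}$ of $G$ at $\tilde U=0$, given by \eqref{sec31:eq:Linear0}, restricts to an isomorphism $\mathbf{X}_1\to\mathbf{Y}$. Once these are in hand, applying Theorem \ref{thm:IFT} to $F(c,\gamma,\beta,V):=G(c,\gamma,\sum_{i=1}^{4}\beta_iV_{c,\gamma}^i+V)$ on $K\times\mathbb{R}^4\times\mathbf{X}_1$, together with the $C^\infty$-dependence $V_{c,\gamma}^i\in C^\infty(K,\mathbf{X})$ (the analogue of Lemma \ref{sec32:lem:V:smooth}, whose proof uses Proposition \ref{propA_1}(2) in place of (1)), produces the map $V(c,\gamma,\beta)$ with the stated properties exactly as at the end of Section \ref{sec_3}; the uniqueness assertion follows from a compactness argument identical to Lemma \ref{sec32:lem:combine:bdd}.

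\textbf{Smoothness of $G$.} For (a), the bilinear estimate on $Q$ copies Lemma \ref{sec32:lem:Q:well-def} almost verbatim, because $\mathbf{M}_2$ is unchanged and the triple integrals defining $\psi$ are controlled as in \eqref{sec32:eq:temp1}; the only small twist is to retain the $\ln\frac{1+x}{3}$ weight when passing from $\mathbf{M}_1$-bounds on $\tilde U_\theta,\tilde V_\theta$ to the $(\ln\frac{1+x}{3})^2$-weight of $Q_\theta$ in $\mathbf{N}_1$. The real new ingredient is the bound on $A(c,\gamma,\cdot)$ and its derivatives $\pt_c^\alpha\pt_\gamma^j A$: the weights of $\mathbf{M}_1,\mathbf{N}_1$ are designed so that Proposition \ref{propA_1}(2), which gives $|\pt_c^\alpha\pt_\gamma^j\bar U_\theta|\le C(\ln\frac{1+x}{3})^{-2}$ on $K$ for $|\alpha|+j\ge 1$ with $\alpha_1=0$ (forced since $c_1=-1$ on $I_{2,1}$), absorbs one factor of $\ln\frac{1+x}{3}$ when passing a $\mathbf{M}_1$-input through $\bar U_\theta$-coefficients to a $\mathbf{N}_1$-output. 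The checks $A_\theta\in\mathbf{N}_1$, $A_\phi\in\mathbf{N}_2$, and the uniform bounds on $\pt_c^\alpha\pt_\gamma^j A$ in $\mathbf{Y}$, then proceed termwise as in Proposition \ref{sec32:prop}.

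\textbf{Isomorphism of $L_0^{c,\gamma}$.} For (b) the principal task is the analogue of Lemma \ref{sec32:lem:W}. Starting from the expansion $\bar U_\theta(x)=2+4/\ln(1+x)+o(1/\ln(1+x))$ as $x\to -1^+$ (Theorem 1.3 of \cite{LLY1} combined with the $\eta_1=4$ analysis of \cite{LLY2}), an integration in \eqref{sec31:eq:ab} gives
\[
e^{a(x)}\le C(1-x)^{-1-\bar U_\theta(1)/2}\bigl(\ln\tfrac{1+x}{3}\bigr)^{2},\qquad e^{-a(x)}\le C(1-x)^{1+\bar U_\theta(1)/2}\bigl(\ln\tfrac{1+x}{3}\bigr)^{-2},
\]
near $x=-1$, together with the usual power bounds near $x=1$ as in Lemma \ref{lem3_2_ab} and parallel estimates for $e^{\pm b(x)}$. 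Inserting these into \eqref{sec31:eq:Wthei} for $W^{c,\gamma,1}_\theta$ and \eqref{sec31:eq:Wphi} for $W^{c,\gamma}_\phi$, a direct termwise computation shows $W^{c,\gamma}:\mathbf{Y}\to\mathbf{X}$ is bounded and a right inverse of $L_0^{c,\gamma}$; the cancellation is tight, as the $(\ln(1+s))^{2}$ growth of $e^{a(s)}$ inside the defining integral is exactly compensated by the $(\ln\frac{1+s}{3})^{-2}$ decay of $\xi_\theta/(1-s^2)$ allowed by $\mathbf{N}_1$. The same $e^{\pm a}$ bounds applied to \eqref{eq_basis} confirm that all four $V^1_{c,\gamma},V^2_{c,\gamma},V^3_{c,\gamma},V^4_{c,\gamma}$ lie in $\mathbf{X}$, so $\ker L_0^{c,\gamma}=\mathrm{span}\{V^1_{c,\gamma},V^2_{c,\gamma},V^3_{c,\gamma},V^4_{c,\gamma}\}$; combined with the invertibility of $(l_i(V^j_{c,\gamma}))$ recorded in Section \ref{sec_2} this yields $\mathbf{X}=\ker L_0^{c,\gamma}\oplus\mathbf{X}_1$ and completes (b). The main technical obstacle throughout is precisely this log-weight bookkeeping: the powers of $\ln\frac{1+x}{3}$ built into $\mathbf{M}_1$ and $\mathbf{N}_1$ must match the $(\ln(1+x))^{\pm 2}$ asymptotics of $e^{\pm a(x)}$ forced by $\eta_1=4$, and all constants must be shown uniform on the compact set $K\subset I_{2,1}$.
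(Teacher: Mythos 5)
Your proposal is correct and follows essentially the same route as the paper: the decomposition $G=A+Q$ with the log-weighted spaces, the $e^{\pm a},e^{\pm b}$ estimates reflecting $\eta_1=4$ (matching Lemma \ref{lem3_3_ab}), the right inverse $W^{c,\gamma,1}_\theta$, the kernel $\mathrm{span}\{V^1_{c,\gamma},\dots,V^4_{c,\gamma}\}$ with the $(l_i(V^j_{c,\gamma}))$-based splitting $\mathbf{X}=\ker L_0^{c,\gamma}\oplus\mathbf{X}_1$, smooth dependence of the $V^i_{c,\gamma}$ via Proposition \ref{propA_1}(2), the implicit function theorem applied to $F(c,\gamma,\beta,V)$, and the compactness bound for the uniqueness statement. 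No gaps beyond routine verifications that the paper itself carries out.
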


Let $l_{2b}$ be the bounded linear functional defined by (\ref{eq_l2ab}). Define $\mathbf{X}_{2b}:=\ker l_{2b} \cap \ker l_3 \cap \ker l_4$. Then $\mathbf{X}_{2b}$ is independent of $(c,\gamma)$. 

\begin{thm}\label{sec33:thm:2}
	For every compact subset $K$ of $I_{2,3}$ or $I_{4,3}$, there exist $\delta = \delta(K) > 0$, and $V\in C^{\infty}( K\times B_{\delta}(0), \mathbf{X}_{2b})$ satisfying $V(c,\gamma,0)=0$ and $\displaystyle \frac{\partial V}{\partial \beta_i}\big |_{\beta=0}=0$, $i=2,3,4$, $\beta=(\beta_2,\beta_3,\beta_4)$, such that 
	\begin{equation}\label{sec33:eq:U2}
		U = U^{c,\gamma} + \beta_2 V_{c,\gamma}^{2b} + \beta_3 V_{c,\gamma}^3 + \beta_4 V_{c,\gamma}^4+V(c,\gamma,\beta)
	\end{equation}
	satisfies equation (\ref{sec31:eq:NSE}) with $\hat{c}_1= c_1 - \frac{1}{2}\psi[\tilde{U}_{\phi}](-1)$, $\hat{c}_2 = c_2 - \frac{1}{2}\psi[\tilde{U}_{\phi}](1)$, $\hat{c}_3 = c_3 +\frac{1}{2}(\varphi_{c,\gamma}[\tilde{U}_{\theta}])''(0)$.

	Moreover, there exists some $\delta'=\delta'(K)>0$, such that if $\| U - U^{c,\gamma } \|_{\mathbf{X}} < \delta'$, $(c,\gamma) \in K$,  and $U$ satisfies  equation (\ref{sec31:eq:NSE}) with some constants $\hat{c}_1, \hat{c}_2, \hat{c}_3$, then (\ref{sec33:eq:U2}) holds for some $|\beta|< \delta$. 
\end{thm}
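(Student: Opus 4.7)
The plan is to reproduce, in the modified function spaces $\mathbf{X}$ and $\mathbf{Y}$ of this section, the implicit function theorem argument used for Theorem \ref{sec32:thm:2}. The new weights $\ln\frac{1+x}{3}$ and $\left(\ln\frac{1+x}{3}\right)^2$ are tailored to the logarithmic singularity of $\bar{U}_{\theta}$ at $x=-1$ coming from $\bar{U}_{\theta}(-1)=2$ with $\eta_1=4$. First I would establish the analogue of Proposition \ref{sec32:prop}: $G\in C^\infty(K\times \mathbf{X},\mathbf{Y})$ with Fr\'echet derivative at $(c,\gamma,\tilde{U})$ given by (\ref{sec31:eq:Linear}). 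The estimates in Lemmas \ref{sec32:lem:A:well-def} and \ref{sec32:lem:Q:well-def} carry over once one inserts the extra factor $\left|\ln\frac{1+x}{3}\right|\ge \ln(3/2)$; the weight $\left(\ln\frac{1+x}{3}\right)^2$ in $\mathbf{N}_1$ is chosen precisely so that products of two $\mathbf{M}_1$--elements, each contributing at most one factor of $\ln\frac{1+x}{3}$, still lie in $\mathbf{N}_1$.

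Second, I would sharpen Lemma \ref{lem3_2_ab} near $x=-1$: using $\bar{U}_{\theta}(x)-2 \sim 4/\ln(1+x)$, one computes $a_{c,\gamma}(x)=2\ln|\ln(1+x)|+O(1)$, so $e^{\pm a_{c,\gamma}(x)}$ is comparable to $\left(\ln\frac{1+x}{3}\right)^{\pm 2}$ near $x=-1$, while the estimates of Lemma \ref{lem3_2_ab} near $x=1$ remain valid because $\bar{U}_\theta(1)\le -2$ without a logarithmic correction. I would then define $W^{c,\gamma}:=(W_\theta^{c,\gamma,2b},W_\phi^{c,\gamma})$ and prove a weighted analogue of Lemma \ref{sec32:lem:W}, showing $W^{c,\gamma}:\mathbf{Y}\to\mathbf{X}$ is a bounded right inverse of $L_0^{c,\gamma}$. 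The main technical obstacle appears in bounding $\left\|\ln\frac{1+x}{3}(1-x)^{-1+2\epsilon} W_\theta^{c,\gamma,2b}(\xi)\right\|_{L^\infty}$ near $x=-1$: the prefactor $e^{-a_{c,\gamma}(x)}\ln\frac{1+x}{3}\sim \left(\ln\frac{1+x}{3}\right)^{-1}$ must absorb the integral $e^{-a_{c,\gamma}(x)}\int_1^x e^{a_{c,\gamma}(s)}(1-s^2)^{-1}\xi_\theta(s)\,ds$, and this is exactly where the $\left(\ln\frac{1+x}{3}\right)^2$ weight on $\mathbf{N}_1$ plays its role.

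Third, combining these asymptotics with the formulas for $V_{c,\gamma}^1,V_{c,\gamma}^2,V_{c,\gamma}^3,V_{c,\gamma}^4$, I would identify $\ker L_0^{c,\gamma}\cap \mathbf{X}=\mathrm{span}\{V_{c,\gamma}^{2b},V_{c,\gamma}^3,V_{c,\gamma}^4\}$: since $\bar{U}_{\theta}(1)\le -2$, $V_{c,\gamma}^1$ fails the $\mathbf{M}_1$-vanishing condition at $x=1$, so the only $(d_1,d_2)$-combination of $V_{c,\gamma}^1,V_{c,\gamma}^2$ lying in $\mathbf{X}$ is a scalar multiple of $V_{c,\gamma}^{2b}=V_{c,\gamma}^2-V_{c,\gamma}^1\int_0^1 e^{a_{c,\gamma}(s)}\,ds$. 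The invertibility of the matrix $(l_i(V_{c,\gamma}^j))$ for $i,j\in\{2b,3,4\}$ then yields $\mathbf{X}=\mathrm{span}\{V_{c,\gamma}^{2b},V_{c,\gamma}^3,V_{c,\gamma}^4\}\oplus \mathbf{X}_{2b}$, whence the analogue of Lemma \ref{sec32:lem:iso} gives the isomorphism $L_0^{c,\gamma}:\mathbf{X}_{2b}\to\mathbf{Y}$. Smoothness $V_{c,\gamma}^{2b},V_{c,\gamma}^3,V_{c,\gamma}^4\in C^\infty(K,\mathbf{X})$ follows as in Lemma \ref{sec32:lem:V:smooth}, this time invoking Proposition \ref{propA_1}(2), whose $\left(\ln\frac{1+x}{3}\right)^{-2}$ control on $\pt_c^\alpha\pt_\gamma^j \bar{U}_{\theta}$ is matched to the $\mathbf{M}_1$-weight.

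Finally, applying the implicit function theorem to $F(c,\gamma,\beta,V):=G(c,\gamma,\beta_2 V_{c,\gamma}^{2b}+\beta_3 V_{c,\gamma}^3+\beta_4 V_{c,\gamma}^4+V)$ on $K\times\mathbb{R}^3\times \mathbf{X}_{2b}$ produces $V(c,\gamma,\beta)$. A covering/compactness argument on $K$ upgrades the local $\delta$ to one depending only on $K$; the IFT uniqueness combined with the continuity of $V(c,\gamma,0)$ in $(c,\gamma)$ forces $V(c,\gamma,0)\equiv 0$, and differentiating $F(c,\gamma,\beta,V(c,\gamma,\beta))=0$ in $\beta_i$ at $\beta=0$, together with $L_0^{c,\gamma}V_{c,\gamma}^{2b}=L_0^{c,\gamma}V_{c,\gamma}^3=L_0^{c,\gamma}V_{c,\gamma}^4=0$, yields $\pt_{\beta_i}V(c,\gamma,0)=0$. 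The uniqueness part follows from the analogue of Lemma \ref{sec32:lem:combine:bdd} for $\{V_{c,\gamma}^{2b},V_{c,\gamma}^3,V_{c,\gamma}^4\}$, together with a small enough $\delta'$ that forces every admissible residual $V^\ast\in\mathbf{X}_{2b}$ into the IFT uniqueness ball. The hardest single step is the weighted right-inverse estimate in Step 2, where the logarithmic weights on $\mathbf{M}_1$ and $\mathbf{N}_1$ have to balance precisely against the $\ln|\ln(1+x)|$ behavior of $a_{c,\gamma}(x)$.
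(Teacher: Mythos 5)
Your proposal follows essentially the same route as the paper: the paper proves Theorem \ref{sec33:thm:1} via exactly this chain (smoothness of $G$ in the log-weighted spaces, the log-corrected asymptotics of $a_{c,\gamma},b_{c,\gamma}$ in Lemma \ref{lem3_3_ab}, the right inverse $W^{c,\gamma}=(W_\theta^{c,\gamma,2b},W_\phi^{c,\gamma})$, the kernel $\mathrm{span}\{V^{2b}_{c,\gamma},V^3_{c,\gamma},V^4_{c,\gamma}\}$, the direct-sum decomposition and isomorphism on $\mathbf{X}_{2b}$, smoothness via Proposition \ref{propA_1}(2), then the IFT with the compactness and norm-equivalence arguments), and then obtains Theorem \ref{sec33:thm:2} by precisely the substitutions you describe. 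No gaps of substance; your outline is correct.
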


For $\tilde{U}_{\phi}\in \mathbf{M}_2$, let $\psi[\tilde{U}_{\phi}](x)$ be defined by (\ref{sec31:eq:psi}). Let $K$ be a compact subset contained in either $I_{2,1}$, $I_{2,3}$ or $I_{4,3}$. Define a map $G = G(c,\gamma,\tilde{U})$ on $K\times \mathbf{X}$ by (\ref{sec31:eq:G}). 

\begin{prop}\label{sec33:prop}
	The map $G$ is in $C^{\infty}(K\times \mathbf{X}, \mathbf{Y})$ in the sense that $G$ has  continuous Fr\'{e}chet derivatives of every order. Moreover, the Fr\'{e}chet derivative of $G$ with respect to $\tilde{U}$ at $(c,\gamma,\tilde{U})\in K\times \mathbf{X}$ is given by the linear bounded operator $L^{c,\gamma}_{\tilde{U}}: \mathbf{X}\rightarrow \mathbf{Y}$ defined as in (\ref{sec31:eq:Linear}).
\end{prop}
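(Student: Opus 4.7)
\textbf{Proof plan for Proposition \ref{sec33:prop}.}
The plan is to mimic the proof of Proposition \ref{sec32:prop}: decompose $G(c,\gamma,\tilde{U}) = A(c,\gamma,\tilde{U}) + Q(\tilde{U}, \tilde{U})$ as in \eqref{sec31:eq:A}--\eqref{sec31:eq:Q}, and establish two analogs of Lemmas \ref{sec32:lem:A:well-def} and \ref{sec32:lem:Q:well-def}: first, for each fixed $(c,\gamma)\in K$, $A(c,\gamma,\cdot): \mathbf{X}\to\mathbf{Y}$ is a bounded linear operator whose norm is uniformly controlled on $K$; second, $Q:\mathbf{X}\times\mathbf{X}\to\mathbf{Y}$ is a bounded bilinear operator. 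These, combined with the smoothness of $\bar{U}_{\theta}$ in $(c,\gamma)$ provided by Proposition \ref{propA_1}(2) (or (3) in the $I_{4,3}$ case), yield $G\in C^{\infty}(K\times\mathbf{X},\mathbf{Y})$, and the formula \eqref{sec31:eq:Linear} for the Fr\'echet derivative follows by direct computation.

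The main new technical point is handling the logarithmic behavior of $\bar{U}_{\theta}$ near $x=-1$: since $\bar{U}_{\theta}(-1)=2$ with $\eta_1=4$, one has $\bar{U}_{\theta}=2-4/\ln(1+x)+\cdots$, so $\bar{U}_{\theta}\in L^{\infty}$ but the combination $2x+\bar{U}_{\theta}$ vanishes at $x=-1$ at the rate $O(1/|\ln(1+x)|)$. This is the reason for the extra $\ln\frac{1+x}{3}$ weight in $\mathbf{M}_1$ and the $\left(\ln\frac{1+x}{3}\right)^2$ weight in $\mathbf{N}_1$. In verifying $A_{\theta}\in\mathbf{N}_1$ I would split
\[
A_{\theta}=(1-x^2)\tilde{U}_{\theta}'+(2x+\bar{U}_{\theta})\tilde{U}_{\theta}+\tfrac{1}{2}(l_{c,\gamma}[\tilde{U}_{\theta}])''(0)(1-x^2),
\]
and bound each piece against the weight $\left(\ln\frac{1+x}{3}\right)^2(1-x)^{-1+2\epsilon}$: the first piece is directly controlled by the $\tilde{U}_{\theta}'$ part of the $\mathbf{M}_1$ norm, which already carries two powers of $\ln\frac{1+x}{3}$; the second piece uses the $O(1/|\ln(1+x)|)$ decay of $2x+\bar{U}_{\theta}$ to absorb exactly one logarithmic factor, leaving the other to match the single $\ln\frac{1+x}{3}$ in the $\mathbf{M}_1$ weight on $\tilde{U}_{\theta}$; the third piece is a quadratic polynomial whose coefficient is bounded by $\|\tilde{U}_{\theta}\|_{\mathbf{M}_1}$ via the $L^{\infty}(-\tfrac{1}{2},\tfrac{1}{2})$ parts of the norm. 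The estimate $A_{\phi}=(1-x^2)\tilde{U}_{\phi}''+\bar{U}_{\theta}\tilde{U}_{\phi}'$ is identical to that in \eqref{sec32:eq:paradiff:temp} since $\bar{U}_{\theta}\in L^{\infty}$.

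For $Q$, the spaces $\mathbf{M}_2$ and $\mathbf{N}_2$ are unchanged from Section \ref{sec_3}, so all estimates involving $\psi[\tilde{U}_{\phi},\tilde{V}_{\phi}]$ and its derivatives carry over verbatim from the proof of Lemma \ref{sec32:lem:Q:well-def}. The only modification is for the term $\tfrac{1}{2}\tilde{U}_{\theta}\tilde{V}_{\theta}$ in $Q_{\theta}$: from the $\mathbf{M}_1$ norms, $|\tilde{U}_{\theta}\tilde{V}_{\theta}|\lesssim (1-x)^{2-4\epsilon}/|\ln\frac{1+x}{3}|^{2}\|\tilde{U}\|_{\mathbf{X}}\|\tilde{V}\|_{\mathbf{X}}$, and multiplying by the $\mathbf{N}_1$ weight leaves $(1-x)^{1-2\epsilon}$, which is uniformly bounded on $(-1,1)$.

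The main obstacle, requiring more care than in Proposition \ref{sec32:prop}, is the parameter differentiation. In analogy with \eqref{sec32:eq:paradiff:A}, $\partial_c^{\alpha}\partial_{\gamma}^j A(c,\gamma,\tilde{U})$ is linear in $(\tilde{U}_{\theta},\tilde{U}_{\phi}')$ with coefficients involving $\partial_c^{\alpha}\partial_{\gamma}^j U_{\theta}^{c,\gamma}$; by Proposition \ref{propA_1}(2), these coefficients satisfy $|\partial_c^{\alpha}\partial_{\gamma}^j U_{\theta}^{c,\gamma}|\le C(K)/\left(\ln\frac{1+x}{3}\right)^2$ uniformly on $K$ (with $\alpha_1=0$ throughout, and additionally $\alpha_2=0$ in the $I_{4,3}$ case). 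The $\left(\ln\frac{1+x}{3}\right)^2$ weight in $\mathbf{N}_1$ is chosen precisely to cancel this blow-up, yielding $\|\partial_c^{\alpha}\partial_{\gamma}^j A(c,\gamma,\tilde{U})\|_{\mathbf{Y}}\le C(\alpha,j,K)\|\tilde{U}\|_{\mathbf{X}}$ uniformly on $K$. Standard Banach-space differential calculus then gives $A\in C^{\infty}(K\times\mathbf{X},\mathbf{Y})$, and the proposition follows as in the proof of Proposition \ref{sec32:prop}.
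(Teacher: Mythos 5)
Your proposal is correct and follows essentially the same route as the paper's proof: the decomposition $G=A+Q$, the absorption of one logarithmic factor by the $O(1/|\ln(1+x)|)$ vanishing of $2x+\bar{U}_{\theta}$ in the $A_{\theta}$ and $Q_{\theta}$ estimates, and the use of Proposition \ref{propA_1}(2) together with the $\left(\ln\frac{1+x}{3}\right)^2$ weight in $\mathbf{N}_1$ to control $\partial_c^{\alpha}\partial_{\gamma}^j A$ are exactly the steps in Lemmas \ref{sec33:lem:A:well-def}, \ref{sec33:lem:Q:well-def} and the proof of Proposition \ref{sec33:prop}. Two immaterial slips: since $\eta_1=4$ the expansion is $\bar{U}_{\theta}=2+4\left(\ln\frac{1+x}{3}\right)^{-1}+O\left(\left(\ln\frac{1+x}{3}\right)^{-2}\right)$ (your sign is off, but only the $O(1/|\ln(1+x)|)$ rate is used), and the $I_{4,3}$ case is also covered by Proposition \ref{propA_1}(2) (with $\alpha_2=0$, as you correctly state), not by part (3).
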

To prove Proposition \ref{sec33:prop}, we first prove the following lemmas:\\

\begin{lem}\label{sec33:lem:A:well-def}
	For every $(c,\gamma)\in K$, $A(c,\gamma,\cdot): \mathbf{X}\to \mathbf{Y}$ defined by (\ref{sec31:eq:A}) is a well-defined bounded linear operator.
\end{lem}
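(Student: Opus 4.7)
The plan is to follow the structure of Lemma \ref{sec32:lem:A:well-def}, with the single modification that the norms of $\mathbf{M}_1$ and $\mathbf{N}_1$ now carry the logarithmic factor $\ln\frac{1+x}{3}$ (single power in the first, double power in the second). Linearity of $A$ is immediate, so what remains is to verify $A(c,\gamma,\tilde{U})\in\mathbf{Y}$ with operator norm bounded by some $C(K)$. The key pointwise input is the asymptotic behavior at the pole $x=-1$: since $(k,l)\in A_2$ we have $\bar{U}_\theta(-1)=2$ with $\eta_1=4$, so by Proposition \ref{prop2_1} and the asymptotic analysis in \cite{LLY1}, $\bar{U}_\theta(x)-2=4/\ln(1+x)+\text{lower order}$ as $x\to -1^+$, and combined with the interior smoothness of $\bar{U}_\theta$ this yields
\[
	|2x+\bar{U}_\theta(x)|\le \frac{C(K)}{\bigl|\ln\tfrac{1+x}{3}\bigr|}, \qquad -1<x<1,
\]
which is the cancellation that makes the doubled log weight of $\mathbf{N}_1$ compatible with the single log weight of $\mathbf{M}_1$.

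Next I would carry out the estimate for $A_\theta(x)=l_{c,\gamma}[\tilde{U}_\theta](x)+\tfrac12(l_{c,\gamma}[\tilde{U}_\theta])''(0)(1-x^2)$. First, $|(l_{c,\gamma}[\tilde{U}_\theta])''(0)|\le C\|\tilde{U}_\theta\|_{\mathbf{M}_1}$: direct differentiation of $l_{c,\gamma}[\tilde{U}_\theta]$ twice at $x=0$ produces a linear combination of $\tilde{U}_\theta(0),\tilde{U}'_\theta(0),\tilde{U}''_\theta(0),\tilde{U}'''_\theta(0)$ with coefficients involving $\bar{U}_\theta(0),\bar{U}'_\theta(0),\bar{U}''_\theta(0)$, all bounded uniformly on $K$ by the interior smoothness of $\bar{U}_\theta$, with each factor from $\tilde{U}_\theta$ controlled by the $C^3(-1/2,1/2)$ piece of $\|\tilde{U}_\theta\|_{\mathbf{M}_1}$. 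Second, for the weighted sup-norm component of $\|A_\theta\|_{\mathbf{N}_1}$, I would split $l_{c,\gamma}[\tilde{U}_\theta]=(1-x^2)\tilde{U}'_\theta+(2x+\bar{U}_\theta)\tilde{U}_\theta$: multiplying the first summand by $(\ln\tfrac{1+x}{3})^2(1-x)^{-1+2\epsilon}$ reproduces exactly the $\mathbf{M}_1$ quantity $(1+x)(\ln\tfrac{1+x}{3})^2(1-x)^{2\epsilon}\tilde{U}'_\theta$, so it is controlled by $\|\tilde{U}_\theta\|_{\mathbf{M}_1}$; the second summand is estimated using the displayed inequality above combined with $|\tilde{U}_\theta|\le \|\tilde{U}_\theta\|_{\mathbf{M}_1}/(|\ln\tfrac{1+x}{3}|(1-x)^{-1+2\epsilon})$, giving an overall $O(1)$ factor after the $\mathbf{N}_1$ weight is applied; the correction term $\tfrac12 l''(0)(1-x^2)$ is controlled using $(1+x)(\ln\tfrac{1+x}{3})^2\to 0$ at $x=-1$ and $(1-x)^{2\epsilon}\to 0$ at $x=1$.

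On $(-\tfrac12,\tfrac12)$ the bounds on $A'_\theta$ and $A''_\theta$ follow from the interior boundedness of $\bar{U}_\theta,\bar{U}'_\theta,\bar{U}''_\theta$ together with the local $C^2, C^3$ control built into $\mathbf{M}_1$; the boundary values $A_\theta(\pm 1)=0$ come from the endpoint conditions encoded in $\mathbf{M}_1$ and the above pointwise estimate, and $A''_\theta(0)=0$ is built into the definition of $A_\theta$. For $A_\phi(x)=(1-x^2)\tilde{U}''_\phi+\bar{U}_\theta\tilde{U}'_\phi$, since $\bar{U}_\theta$ remains bounded on $[-1,1]$ (the limits at both poles exist and are finite by Proposition \ref{prop2_1}), the bound $\|(1-x^2)^{1+\epsilon}A_\phi\|_{L^\infty}\le C\|\tilde{U}_\phi\|_{\mathbf{M}_2}$ is obtained exactly as in the proof of Lemma \ref{sec32:lem:A:well-def}. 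The main obstacle is the careful tracking of the logarithmic weight in the $\theta$-component estimate: the term $(2x+\bar{U}_\theta)\tilde{U}_\theta$ would fail the $\mathbf{N}_1$ bound without the precise cancellation $|2x+\bar{U}_\theta|=O(1/|\ln|)$, and this compatibility between the log asymptotic of $\bar{U}_\theta-2$ and the log-weighted norms is the genuinely new input compared to Case 1.
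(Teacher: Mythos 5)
Your proposal is correct and follows essentially the same route as the paper's proof: the decomposition of $A_\theta$ into $(1-x^2)\tilde{U}'_\theta$, $(2x+\bar{U}_\theta)\tilde{U}_\theta$ and the $\tfrac12 l''(0)(1-x^2)$ correction, with the key cancellation $|2x+\bar{U}_\theta|\le C\bigl|\ln\tfrac{1+x}{3}\bigr|^{-1}$ coming from the expansion $\bar{U}_\theta=2+4\bigl(\ln\tfrac{1+x}{3}\bigr)^{-1}+O\bigl(\bigl(\ln\tfrac{1+x}{3}\bigr)^{-2}\bigr)$, is exactly the argument used in the paper, as are the interior estimates for $A'_\theta,A''_\theta$ and the $A_\phi$ bound. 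No gaps.
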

\begin{proof}
	In the following, $C$ denotes a universal constant which may change from line to line. For convenience we denote $l=l_{c,\gamma}[\tilde{U}_{\theta}]$ defined by (\ref{sec31:eq:l}), and $A=A(c,\gamma,\cdot)$ for some fixed $(c,\gamma)\in K$. We make use of the property of $\bar{U}_{\theta}$ that $\bar{U}_{\theta}\in C^2(-1,1)\cap L^{\infty}(-1,1)$, and $\bar{U}_{\theta}=2+4\left(\ln \frac{1+x}{3}\right)^{-1}+O(1)\left(\ln \frac{1+x}{3}\right)^{-2}$.

	$A$ is clearly linear. For every $\tilde{U}\in\mathbf{X}$, we prove that $A\tilde{U}$ defined by (\ref{sec31:eq:A}) is in $\mathbf{Y}$ and there exists some constant $C$ such that $\|A\tilde{U}\|_{\mathbf{Y}}\le C\|\tilde{U}\|_{\mathbf{X}}$ for all $\tilde{U}\in \mathbf{X}$.

	By computation,
	\[
	    l'(x)=(1-x^2)\tilde{U}''_{\theta}+\bar{U}_{\theta}\tilde{U}'_{\theta}+(2+\bar{U}'_{\theta})\tilde{U}_{\theta},
	\]
	\[
		l''(x)=(1-x^2)\tilde{U}'''_{\theta}+(\bar{U}_{\theta}-2x)\tilde{U}''_{\theta}+2(\bar{U}'_{\theta}+1)\tilde{U}'_{\theta}+\bar{U}''_{\theta}\tilde{U}_{\theta}.
	\]
	
	By the fact that $\tilde{U}_{\theta}\in \mathbf{M}_1$, we have $|l''(0)|\le C||\tilde{U}_{\theta}||_{\mathbf{M}_1}$. So for $-1<x<1$, we have
	\[
	\begin{split}
		& \quad |\left( \ln \frac{1+x}{3} \right)^2 (1-x)^{-1+2\epsilon}A_{\theta}|  \\
		& \le | \left( \ln \frac{1+x}{3} \right)^2 (1-x)^{-1+2\epsilon}l(x)|+\frac{1}{2}|l''(0)|\left( \ln \frac{1+x}{3} \right)^2 (1+x) (1-x)^{2\epsilon}\\
		& \le |\left( \ln \frac{1+x}{3} \right)^2 (1+x) (1-x)^{2\epsilon}\tilde{U}'_{\theta}|+ \left| (2x+\bar{U}_{\theta})\left( \ln \frac{1+x}{3} \right) \right| \cdot\left|\left( \ln \frac{1+x}{3} \right)\right|(1-x)^{-1+2\epsilon}|\tilde{U}_{\theta}|\\
		& \quad +C||\tilde{U}_{\theta}|| \left( \ln \frac{1+x}{3} \right)^2 (1+x) (1-x)^{2\epsilon}\\
		& \le C\|\tilde{U}_{\theta}\|_{\mathbf{M}_1},
	\end{split}
	\]
	where we have used the property that $\bar{U}_{\theta}(x)=2+4\left(\ln \frac{1+x}{3}\right)^{-1}+O(1)\left(\ln \frac{1+x}{3}\right)^{-2}$, so there exists constant $C>0$ such that 
	\begin{equation*}
		(2x+\bar{U}_\theta) \left( \ln \frac{1+x}{3} \right) \leq C, \quad -1<x<1.
	\end{equation*}
	We also see from the above that $\displaystyle{\lim_{x\to \pm 1}A_{\theta}(x)=0}$. 
	
For $-\frac{1}{2}<x<\frac{1}{2}$, 
	\[
	  \begin{split}
	    |A'_{\theta}|& =|l'(x)-l''(0)x|\\
	       & \le |\tilde{U}''_{\theta}|+|\bar{U}_{\theta}||\tilde{U}'_{\theta}|+(2+|\bar{U}'_{\theta}|)|\tilde{U}_{\theta}|+|l''(0)|\\
	       & \le C\|\tilde{U}_{\theta}\|_{\mathbf{M}_1},
	    \end{split}
	\]
	and
	\[
	\begin{split}
	   |A''_{\theta}|& =|l''(x)-l''(0)|\\
	   & \le |\tilde{U}'''_{\theta}|+(|\bar{U}_{\theta}|+2)|\tilde{U}''_{\theta}|+2(|\bar{U}'_{\theta}|+1)|\tilde{U}'_{\theta}|+|\bar{U}''_{\theta}||\tilde{U}_{\theta}|+|l''(0)|\\
	   & \le C\|\tilde{U}_{\theta}\|_{\mathbf{M}_1}.
	   \end{split} 
	\]
	
	By computation $A''_{\theta}(0)=0$. So we have $A_{\theta}\in \mathbf{N}_1$ and $\|A_{\theta}\|_{\mathbf{N}_1}\le C\|\tilde{U}_{\theta}\|_{\mathbf{M}_1}$.
	
	Next, since $A_{\phi}=(1-x^2)\tilde{U}''_{\phi}+\bar{U}_{\theta}\tilde{U}'_{\phi}$, by the fact that $\tilde{U}_{\phi}\in \mathbf{M}_2$  we have that
	\[
		\left|(1-x^2)^{1+\epsilon}A_{\phi}\right| \le (1-x^2)^{2+\epsilon}|\tilde{U}''_{\phi}|+(1-x^2)^{1+\epsilon}|\bar{U}_{\theta}\|\tilde{U}'_{\phi}|\le C\|\tilde{U}_{\phi}\|_{\mathbf{M}_2}.
	\]
	So $A_{\phi}\in \mathbf{N}_1$, and $\|A_{\phi}\|_{\mathbf{N}_1}\le C\|\tilde{U}_{\phi}\|_{\mathbf{M}_2}$. We have proved that $A\tilde{U}\in \mathbf{Y}$ and $\|A\tilde{U}\|_{\mathbf{Y}}\le C\|\tilde{U}\|_{\mathbf{X}}$ for every $\tilde{U}\in \mathbf{X}$. The proof is finished.
\end{proof}

\begin{lem}\label{sec33:lem:Q:well-def}
	The map $Q:\mathbf{X}\times\mathbf{X}\to \mathbf{Y}$ defined by (\ref{sec31:eq:Q}) is a well-defined bounded bilinear operator.
\end{lem}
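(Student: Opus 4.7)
The plan is to follow the template of Lemma \ref{sec32:lem:Q:well-def} essentially line by line, the only new feature being the logarithmic weight $\ln\frac{1+x}{3}$ that now appears in both the $\mathbf{M}_1$ and $\mathbf{N}_1$ norms near $x=-1$. Bilinearity of $Q$ is immediate from \eqref{sec31:eq:Q}, so the task is just to prove the boundedness estimate $\|Q(\tilde U,\tilde V)\|_{\mathbf Y}\le C\|\tilde U\|_{\mathbf X}\|\tilde V\|_{\mathbf X}$ component by component, and to verify the boundary conditions $Q_\theta(\pm 1)=Q_\theta''(0)=0$ built into $\mathbf{N}_1$.

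For the product term $\tfrac12\tilde U_\theta\tilde V_\theta$ in $Q_\theta$, the key observation is that the $\mathbf M_1$ norm gives
\[
|\tilde U_\theta(x)|\le C\Bigl|\ln\tfrac{1+x}{3}\Bigr|^{-1}(1-x)^{1-2\epsilon}\,\|\tilde U_\theta\|_{\mathbf M_1},
\]
so the product carries an extra $|\ln\frac{1+x}{3}|^{-2}$ factor, precisely what is needed to absorb the $(\ln\frac{1+x}{3})^2$ weight in the target $\mathbf N_1$ norm. The remaining weight $(1-x)^{-1+2\epsilon}\cdot(1-x)^{2-4\epsilon}=(1-x)^{1-2\epsilon}$ is manifestly bounded on $(-1,1)$. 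For the $\psi$ pieces, $\mathbf M_2$ is identical to the one in Section \ref{sec_3}, so the integral estimates from the proof of Lemma \ref{sec32:lem:Q:well-def} — in particular the bound
\[
\Bigl|\psi(x)-\tfrac{1-x}{2}\psi(-1)-\tfrac{1+x}{2}\psi(1)\Bigr|\le C(1-x^2)^{1-2\epsilon}\|\tilde U_\phi\|_{\mathbf M_2}\|\tilde V_\phi\|_{\mathbf M_2},
\]
together with $|\psi(\pm 1)|\le C\|\tilde U_\phi\|_{\mathbf M_2}\|\tilde V_\phi\|_{\mathbf M_2}$ — carry over verbatim. Multiplying by the new weight $(\ln\frac{1+x}{3})^2(1-x)^{-1+2\epsilon}$ produces $(\ln\frac{1+x}{3})^2(1+x)^{1-2\epsilon}$, which is still bounded on $(-1,1)$ because the polynomial decay $(1+x)^{1-2\epsilon}$ at $x=-1$ kills any logarithmic blow up (and $\ln\frac{1+x}{3}$ stays bounded near $x=1$). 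The term $\frac14(\tilde U_\theta\tilde V_\theta)''(0)(1-x^2)$ is controlled by the $C^3(-\tfrac12,\tfrac12)$ part of the $\mathbf M_1$ norm, as in Lemma \ref{sec32:lem:Q:well-def}.

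For the interior regularity at $0$, $Q_\theta'$ and $Q_\theta''$ on $(-\tfrac12,\tfrac12)$ are estimated directly from the formulas for $(\tilde U_\theta\tilde V_\theta)^{(k)}$, $\psi^{(k)}$, $k=1,2$, using $|\psi'|,|\psi''|\le C\|\tilde U_\phi\|_{\mathbf M_2}\|\tilde V_\phi\|_{\mathbf M_2}$ and the $C^3(-\tfrac12,\tfrac12)$ control in $\mathbf M_1$; this matches the corresponding step in Section \ref{sec_3}. Note $\psi''(0)=0$, so by construction $Q_\theta''(0)=0$. From the weighted bound on $Q_\theta$ one also reads off $\lim_{x\to\pm1}Q_\theta(x)=0$. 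Finally, for $Q_\phi(x)=\tilde U_\theta(x)\tilde V_\phi'(x)$, the bound
\[
(1-x^2)^{1+\epsilon}|Q_\phi(x)|\le |\tilde U_\theta(x)|\cdot(1-x^2)^{1+\epsilon}|\tilde V_\phi'(x)|\le C\|\tilde U_\theta\|_{\mathbf M_1}\|\tilde V_\phi\|_{\mathbf M_2}
\]
is immediate since $|\tilde U_\theta|\le C\|\tilde U_\theta\|_{\mathbf M_1}$ uniformly on $(-1,1)$ (the factor $|\ln\frac{1+x}{3}|^{-1}$ is uniformly bounded there).

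The only genuinely new bookkeeping is tracking how the logarithmic weight in $\mathbf M_1$ compensates the squared log weight in $\mathbf N_1$, and checking that the $(1-x^2)^{1-2\epsilon}$ decay from the $\psi$ bound still dominates the $(\ln\frac{1+x}{3})^2$ factor at $x=-1$. Both reduce to the elementary inequality $|\ln\frac{1+x}{3}|^2(1+x)^{\alpha}\to 0$ as $x\to -1$ for any $\alpha>0$, so there is no conceptual obstacle.
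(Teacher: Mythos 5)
Your proposal is correct and follows essentially the same route as the paper: reduce to the estimates of Lemma \ref{sec32:lem:Q:well-def} for the $\psi$-terms and $(\tilde U_\theta\tilde V_\theta)''(0)$, and absorb the new $(\ln\frac{1+x}{3})^2$ weight using the logarithmic decay of $\tilde U_\theta,\tilde V_\theta$ built into $\mathbf M_1$ together with the boundedness of $(\ln\frac{1+x}{3})^2(1+x)^{1-2\epsilon}$, then handle $Q_\theta',Q_\theta''$ on $(-\tfrac12,\tfrac12)$, $Q_\theta''(0)=0$, and $Q_\phi$ exactly as in Section \ref{sec_3}. The bookkeeping of the weights matches the paper's proof, so no changes are needed.
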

\begin{proof}
	In the following, $C$ denotes a universal constant which may change from line to line. For convenience we denote $\psi=\psi[\tilde{U}_{\phi}, \tilde{V}_{\phi}]$ defined by (\ref{sec31:eq:psi}).  

	It is clear that $Q$ is a bilinear operator. For every $\tilde{U},\tilde{V}\in\mathbf{X}$, we will prove that $Q(\tilde{U},\tilde{V})$ is in $\mathbf{Y}$ and there exists some constant $C$ independent of $\tilde{U}$ and $\tilde{V}$ such that $\|Q(\tilde{U},\tilde{V})\|_{\mathbf{Y}}\le C\|\tilde{U}\|_{\mathbf{X}}\|\tilde{V}\|_{\mathbf{X}}$.
    
	For $\tilde{U},\tilde{V}\in \mathbf{X}$, by the same arguments as in Lemma \ref{sec32:lem:Q:well-def}, there exists some constant $C>0$ such that 
	
	\[
	    |(\tilde{U}_{\theta}\tilde{V}_{\theta})''(0)|\le C||\tilde{U}_{\theta}||_{\mathbf{M}_1}||\tilde{V}_{\theta}||_{\mathbf{M}_1}, 
	\]
	and 
	\[
		\left|\psi(x)-\frac{1}{2}\psi(-1)(1-x)-\frac{1}{2}\psi(1)(1+x)\right|\le C(1-x^2)^{1-2\epsilon}||\tilde{U}_{\phi}||_{\mathbf{M}_2}||\tilde{V}_{\phi}||_{\mathbf{M}_2},\quad -1<x<1.
	\]

	So we have that for $-1<x<1$, 
	\[
	\begin{split}
		& |\left(\ln \frac{1+x}{3}\right)^2 (1-x)^{-1+2\epsilon}Q_{\theta}(x)|\\
		& \le \frac{1}{2} \left| \left(\ln \frac{1+x}{3}\right) (1-x)^{-1+2\epsilon} \tilde{U}_{\theta}(x)\right| \cdot \left|\left(\ln \frac{1+x}{3}\right) \tilde{V}_{\theta}(x)\right| \\
		& \quad + \left| \ln \frac{1+x}{3} \right| (1-x)^{-1+2\epsilon}\left|\psi(x)-\frac{1}{2}\psi(-1)(1-x)-\frac{1}{2}\psi(1)(1+x)\right|\\
		& \quad +\frac{1}{4} \left| \ln \frac{1+x}{3} \right| (1+x) (1-x)^{2\epsilon}|(\tilde{U}_{\theta}\tilde{V}_{\theta})''(0)|\\
		& \le \frac{1}{2}\|\tilde{U}_{\theta}\|_{\mathbf{M}_1}\|\tilde{V}_{\theta}\|_{\mathbf{M}_1}+C\|\tilde{U}_\phi\|_{\mathbf{M}_2}\|\tilde{V} _{\phi}\|_{\mathbf{M}_2}+C(1-x^2)^{2\epsilon}\|\tilde{U}_{\theta}\|_{\mathbf{M}_1}\|\tilde{V}_{\theta}\|_{\mathbf{M}_1}\\
		& \le C\|\tilde{U}\|_{\mathbf{X}}\|\tilde{V}\|_{\mathbf{X}}.
	\end{split}
	\]
	From this we also have $\displaystyle \lim_{x\to \pm 1}Q_{\theta}(x)=0$. 
	
	Similar as in Lemma \ref{sec32:lem:Q:well-def}, we have that for $-\frac{1}{2}<x<\frac{1}{2}$,
	\[
	    |Q'_{\theta}(x)| 
 \le C\|\tilde{U}\|_{\mathbf{X}}\|\tilde{V}\|_{\mathbf{X}},\quad |Q''_{\theta}(x)|\le C\|\tilde{U}\|_{\mathbf{X}}\|\tilde{V}\|_{\mathbf{X}}.
	\]
	By computation $Q_{\theta}''(0)=0$, so $Q_{\theta}\in\mathbf{N}_1$, and $\|Q_{\theta}\|_{\mathbf{N}_1}\le C(\epsilon)\|\tilde{U}\|_{\mathbf{X}}\|\tilde{V}\|_{\mathbf{X}}$.
	
	Next, since $Q_{\phi}(x)=\tilde{U}_{\theta}(x)\tilde{V}'_{\phi}(x)$, for $-1<x<1$, 
	\[
		\left|(1-x^2)^{1+\epsilon}Q_{\phi}(x)\right|  \le |\tilde{U}_{\theta}(x)|(1-x^2)^{1+\epsilon}|\tilde{V}'_{\phi}|
		\le \|\tilde{U}_{\theta}\|_{\mathbf{M}_1}\|\tilde{V}_{\phi}\|_{\mathbf{M}_2}.
	\]
	So $Q_{\phi}\in \mathbf{N}_2$ and 
	$
		\|Q_{\phi}\|_{\mathbf{N}_2}\le \|\tilde{U}_{\theta}\|_{\mathbf{M}_1}\|\tilde{V}_{\phi}\|_{\mathbf{M}_2}.
	$
	Thus we have proved that $Q(\tilde{U}, \tilde{V})\in \mathbf{Y}$ and $\|Q(\tilde{U},\tilde{V})\|_{\mathbf{Y}}\le C\|\tilde{U}\|_{\mathbf{X}}\|\tilde{V}\|_{\mathbf{X}}$ for all $\tilde{U}, \tilde{V}\in \mathbf{X}$. Lemma \ref{sec33:lem:Q:well-def} is proved.
\end{proof}


\vspace{1em}

\noindent 
{\bf Proof of Proposition \ref{sec33:prop}.}
By definition, $G(c,\gamma,\tilde{U})=A(c,\gamma,\tilde{U})+Q(\tilde{U},\tilde{U})$ for $(c,\gamma,\tilde{U})\in K \times \mathbf{X}$.  
Using standard theories in functional analysis, by Lemma \ref{sec33:lem:Q:well-def} it is clear that $Q$ is $C^{\infty}$ on $\mathbf{X}$.  
By Lemma \ref{sec33:lem:A:well-def}, $A(c,\gamma; \cdot): \mathbf{X}\to \mathbf{Y}$ is $C^{\infty}$ for each $(c,\gamma)\in K$.

Let $\alpha=(\alpha_1,\alpha_2,\alpha_3)$ be a multi-index where $\alpha_i\ge 0$, $i=1,2,3$, and $j\ge 0$. For all $|\alpha|+j\ge 1$,  we have
\begin{equation}\label{eq_prop4_3_1}
	\pt_{c}^\alpha \pt_{\gamma}^j A(c,\gamma,\tilde{U}) = \pt_{c}^\alpha \pt_{\gamma}^j U^{c,\gamma}_{\theta}\left(
	\begin{matrix}
		\tilde{U}_{\theta}  \\  \tilde{U}'_{\phi} 
	\end{matrix}\right) + \frac{1}{2} (\pt_{c}^\alpha \pt_{\gamma}^j U^{c,\gamma}_{\theta} \cdot \tilde{U}_\theta)'' (0) 
	\begin{pmatrix}
	1-x^2   \\   0  
	\end{pmatrix}.  
\end{equation}

By Proposition \ref{propA_1} (2), we have
\[
	|\left(\ln\frac{1+x}{3}\right)^2 (1-x)^{-1+2\epsilon} \pt_{c}^\alpha \pt_{\gamma}^j A_{\theta}(c,\gamma,\tilde{U}) | \leq C(\alpha,j,K) \| \tilde{U}_{\theta} \|_{\mathbf{M}_1}, \quad -1<x<1,
\]
and for $-\frac{1}{2}<x<\frac{1}{2}$,
\[
     | \pt_{c}^\alpha \pt_{\gamma}^j A'_{\theta}(c,\gamma,\tilde{U}) | \leq C(\alpha,j,K) \| \tilde{U}_{\theta} \|_{\mathbf{M}_1}, \quad | \pt_{c}^\alpha \pt_{\gamma}^j A''_{\theta}(c,\gamma,\tilde{U}) | \leq C(\alpha,j,K) \| \tilde{U}_{\theta} \|_{\mathbf{M}_1}.
\]
The above estimates and (\ref{eq_prop4_3_1}) also imply that 
$$
	\pt_{c}^\alpha \pt_{\gamma}^j A_{\theta}(c,\gamma,\tilde{U}) (-1) = \pt_{c}^\alpha \pt_{\gamma}^j A_{\theta}(c,\gamma,\tilde{U})(1) = \pt_{c}^\alpha \pt_{\gamma}^j A_{\theta}(c,\gamma,\tilde{U})'' (0) = 0. 
$$
So $\pt_{c}^\alpha \pt_{\gamma}^j A_{\theta}(c,\gamma,\tilde{U}) \in \mathbf{N}_1$, with $\|\pt_{c}^\alpha \pt_{\gamma}^j A_{\theta}(c,\gamma,\tilde{U}) \|_{\mathbf{N}_1} \leq C(\alpha,j,K) \|\tilde{U}_{\theta} \|_{\mathbf{M}_1}$ for all $(c,\gamma,\tilde{U}) \in K\times \mathbf{X}$. 

Next, by Proposition \ref{propA_1} (2) and the fact that $\tilde{U}_{\phi}\in \mathbf{M}_1$, we have 
\begin{equation}\label{eq_prop3_3_2}
	(1-x^2)^{1+\epsilon} | \pt_{c}^\alpha \pt_{\gamma}^j A_{\phi}(c,\gamma,\tilde{U})(x)| 
	= |\pt_{c}^\alpha \pt_{\gamma}^j  U^{c,\gamma}_{\theta}(x)| \cdot |(1-x^2)^{1+\epsilon}U'_{\phi}|
	\leq C(\alpha,j,K) \| \tilde{U}_{\phi} \|_{\mathbf{M}_2}. 
\end{equation}
So $\pt_{c}^\alpha \pt_{\gamma}^j A_{\phi}(c,\gamma,\tilde{U})\in \mathbf{N}_2$ with $ \|\pt_{c}^\alpha \pt_{\gamma}^j A_{\phi}(c,\gamma,\tilde{U}) \|_{\mathbf{N}_2} \leq C(\alpha,j,K) \| \tilde{U}_{\phi} \|_{\mathbf{M}_2}$ for all $(c,\gamma,\tilde{U})\in K\times \mathbf{X}$. Thus $\pt_{c}^\alpha \pt_{\gamma}^j A(c,\gamma,\tilde{U})\in \mathbf{Y}$, with $\| \pt_{c}^\alpha \pt_{\gamma}^j A(c,\gamma,\tilde{U}) \|_{\mathbf{Y}} \leq C(\alpha,j,K) \| \tilde{U} \|_{\mathbf{X}}$ for all $(c,\gamma,\tilde{U})\in K\times \mathbf{X}$, $|\alpha|+j\ge 1$. 

So for each $(c,\gamma)\in K$, $\pt_{c}^\alpha \pt_{\gamma}^j A(c,\gamma; \cdot): \mathbf{X}\to \mathbf{Y}$ is a bounded linear map with uniform bounded norm on $K$. Then by standard theories in functional analysis, $A: K\times\mathbf{X}\to \mathbf{Y}$ is $C^{\infty}$. So $G$ is a $C^{\infty}$ map from $K  \times \mathbf{X}$ to $\mathbf{Y}$. By direct calculation we get its Fr\'{e}chet derivative with respect to $\mathbf{X}$ is given by  the linear bounded operator $L^{c,\gamma}_{\tilde{U}}: \mathbf{X}\rightarrow \mathbf{Y}$ defined as  (\ref{sec31:eq:Linear}). The proof is finished.     \qed 
\\


Let $a_{c,\gamma}(x), b_{c,\gamma}(x)$ be functions defined by (\ref{sec31:eq:ab}). For convenience we denote $a(x)=a_{c,\gamma}(x)$, $b(x)=b_{c,\gamma}(x)$, and $\bar{U}_{\theta}=U^{c,\gamma}_{\theta}$. 
\begin{lem}\label{lem3_3_ab}
  For $(c,\gamma)\in I_{k,l}$ with $(k,l)\in A_2$, there exists some constant $C>0$, depending only on $(c,\gamma)$, such that  for any $-1<x<1$, 
 \begin{equation}\label{sec33:eq:eb}
		 e^{b(x)}\le C \left(\ln\frac{1+x}{3}\right)^2 (1+x) (1-x)^{-\frac{\bar{U}_{\theta}(1)}{2}},  \quad
		 e^{-b(x)}\le C \left(\ln\frac{1+x}{3}\right)^{-2} (1+x)^{-1}(1-x)^{\frac{\bar{U}_{\theta}(1)}{2}},
	\end{equation}
	and 
	\begin{equation}\label{sec33:eq:ea}
		e^{a(x)}\le C \left| \ln \frac{1+x}{3} \right|^2 (1-x)^{-1-\frac{\bar{U}_{\theta}(1)}{2}}, \quad 
		e^{-a(x)}\le C\left(\ln\frac{1+x}{3}\right)^{-2} (1-x)^{1+\frac{\bar{U}_{\theta}(1)}{2}}. 
	\end{equation}
\end{lem}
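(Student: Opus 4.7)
The plan is to derive the bounds on $e^{\pm a(x)}$ and $e^{\pm b(x)}$ by computing precise asymptotics of $a$ and $b$ near the two endpoints, exponentiating, and exploiting the identity $a(x)=b(x)-\ln(1-x^2)$ to pass from the $b$-estimates to the $a$-estimates. Since the middle region $[-1/2,1/2]$ is harmless because $\bar{U}_\theta$ is smooth there, the whole analysis reduces to two one-sided asymptotic computations.

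First I would record the asymptotic behavior of $\bar{U}_\theta$ at both poles in the present regime. Since $(c,\gamma)\in I_{k,l}$ with $(k,l)\in A_2$, Proposition \ref{prop2_1} gives $c_1=-1$ and $\eta_1=4$, so by the asymptotic analysis from \cite{LLY1} and \cite{LLY2} one has, near $x=-1$,
\[
\bar{U}_\theta(x)=2+\frac{4}{\ln\frac{1+x}{3}}+O(1)\left(\ln\frac{1+x}{3}\right)^{-2}.
\]
Near $x=1$, the two subcases $\bar{U}_\theta(1)\ne -2$ and ($\bar{U}_\theta(1)=-2$, $\eta_2=0$) both yield $\bar{U}_\theta(s)=\bar{U}_\theta(1)+O((1-s)^{\alpha_0})$ for some $\alpha_0>0$, via Theorem 1.3 in \cite{LLY1} and Lemma 2.3 in \cite{LLY2}.

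Next I would plug these expansions into $b(x)=\int_0^x\frac{\bar{U}_\theta(s)}{1-s^2}ds$. Near $s=-1$, I decompose
\[
\frac{\bar{U}_\theta(s)}{1-s^2}=\frac{2}{(1-s)(1+s)}+\frac{4}{(1-s)(1+s)\ln\frac{1+s}{3}}+\frac{1}{1-s^2}\cdot O\!\left(\ln\frac{1+s}{3}\right)^{-2}.
\]
The first two terms are the only non-integrable ones at $s=-1$. The first integrates to $\ln(1+s)+O(1)$; the second, after the substitution $u=\ln\frac{1+s}{3}$ (so $du=ds/(1+s)$), integrates to $2\ln|\ln\frac{1+s}{3}|+O(1)$. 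The third is integrable at $-1$ because $\int(\ln\frac{1+s}{3})^{-2}\,\frac{ds}{1+s}$ converges. Near $s=1$, $\frac{\bar{U}_\theta(s)}{1-s^2}=\frac{\bar{U}_\theta(1)}{2(1-s)}+\text{integrable}$, contributing $-\frac{\bar{U}_\theta(1)}{2}\ln(1-x)+O(1)$. Combining,
\[
b(x)=\ln(1+x)+2\ln\!\left|\ln\tfrac{1+x}{3}\right|-\tfrac{\bar{U}_\theta(1)}{2}\ln(1-x)+O(1),\qquad -1<x<1,
\]
which upon exponentiation gives \eqref{sec33:eq:eb}, both for $e^{b(x)}$ and $e^{-b(x)}$.

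Finally, since $a(x)=b(x)-\ln(1-x^2)$, we immediately have $e^{\pm a(x)}=e^{\pm b(x)}(1-x^2)^{\mp 1}$; substituting the bounds for $e^{\pm b(x)}$ just derived, the $(1+x)$ factor cancels on the lower-pole side and the $(1-x)$ exponent shifts by $\mp 1$ on the upper-pole side, yielding \eqref{sec33:eq:ea}. The only delicate point in the argument is the correct identification of the logarithmic correction generated by the $4/\ln\frac{1+s}{3}$ term in the expansion of $\bar{U}_\theta$ at $s=-1$: this is precisely what produces the $(\ln\frac{1+x}{3})^{\pm 2}$ factors distinguishing the present lemma from its Case 1 analogue, Lemma \ref{lem3_2_ab}, and once it is isolated via the substitution $u=\ln\frac{1+s}{3}$, the remaining computations are routine.
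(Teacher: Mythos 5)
Your proposal is correct and follows essentially the same route as the paper: record the endpoint asymptotics $\bar{U}_\theta=2+4\left(\ln\frac{1+x}{3}\right)^{-1}+O\!\left(\left(\ln\frac{1+x}{3}\right)^{-2}\right)$ near $x=-1$ and $\bar{U}_\theta=\bar{U}_\theta(1)+O((1-x)^{\alpha_0})$ near $x=1$, integrate to obtain $b(x)=\ln\frac{1+x}{3}+2\ln\left(-\ln\frac{1+x}{3}\right)-\frac{\bar{U}_\theta(1)}{2}\ln(1-x)+O(1)$ and the corresponding expansion of $a(x)$, then exponentiate. The only cosmetic difference is that you deduce the $a$-estimates from the $b$-estimates via $a(x)=b(x)-\ln(1-x^2)$ rather than writing both expansions directly, which is immaterial.
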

\begin{proof}
   Let 
	\[
		\alpha_0=\min\left\{1, \left(1+\frac{\bar{U}_{\theta}(1)}{2}\right)\}\chi_{\{\bar{U}_{\theta}(1)>-2\}}+\chi_{\{\bar{U}_{\theta}(1)\le - 2\}} \right\}.
	\]
	Under the assumption of $\bar{U}_{\theta}$ in this case, by Theorem 1.3 in \cite{LLY1} or Lemma 2.14 in \cite{LLY2} we have,
		\[
		\bar{U}_{\theta}=2+\frac{4}{\ln\frac{1+x}{3}}+O(1)\left(\ln\frac{1+x}{3}\right)^{-2}=\bar{U}_{\theta}(1)+O((1-x)^{\alpha_0}), \quad -1<x<1.
	\]
	Thus, by definition of $a(x)$ and $b(x)$ in (\ref{sec31:eq:ab}),  for $-1<x<1$, we have
	\begin{equation*}
	\begin{split}
		& b(x)=\ln\left(\frac{1+x}{3}\right) + 2\ln\left(-\ln\left(\frac{1+x}{3}\right)\right) - \frac{\bar{U}_{\theta}(1)}{2}\ln(1-x)+O(1),\\
		& a(x)=2\ln\left(-\ln\left(\frac{1+x}{3}\right)\right)-\left(\frac{\bar{U}_{\theta}(1)}{2}+1\right)\ln(1-x)+O(1).
	\end{split}
	\end{equation*}
	The lemma then follows from the above etimates.
\end{proof}


For $\xi = (\xi_\theta, \xi_\phi) \in \mathbf{Y}$, let the map $W^{c,\gamma}$ be defined as 
$$
	W^{c,\gamma}(\xi) := (W^{c,\gamma}_\theta(\xi), W^{c,\gamma}_\phi(\xi)), 
$$
where 
\begin{equation*}
	W^{c,\gamma}_{\theta}(\xi)=\left\{ 
	\begin{array}{ll}
		W^{c,\gamma,1}_{\theta}(\xi) & \textrm{ if } (c,\gamma)\in I_{2,1},\\
		W^{c,\gamma,2b}_{\theta}(\xi) & \textrm{ if }(c,\gamma)\in I_{2,3} \textrm{ or } I_{4,3},\\
	\end{array}
	\right.
\end{equation*}
$W^{c,\gamma,i}_{\theta}$, $i=1,2b$, are defined by (\ref{sec31:eq:Wthei}), and $W^{c,\gamma}_\phi(\xi)$ is defined by (\ref{sec31:eq:Wphi}).

\begin{lem}\label{sec33:lem:W}
	For every $(c,\gamma)\in K$, $W^{c,\gamma}: \mathbf{Y}\rightarrow\mathbf{X}$ is continuous, and is a right inverse of $L^{c,\gamma}_{0}$. 
\end{lem}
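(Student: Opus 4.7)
The plan is to follow the strategy of Lemma \ref{sec32:lem:W}, with the asymptotic bounds on $e^{\pm a}, e^{\pm b}$ upgraded to the log-weighted estimates provided by Lemma \ref{lem3_3_ab}. First I show $W^{c,\gamma}_\theta$ is well-defined and verify the weighted $\mathbf{M}_1$-bound $\|W^{c,\gamma}_\theta(\xi)\|_{\mathbf{M}_1}\le C\|\xi_\theta\|_{\mathbf{N}_1}$, treating the two cases $(c,\gamma)\in I_{2,1}$ and $(c,\gamma)\in I_{2,3}\cup I_{4,3}$ separately. In the first case $W_\theta=W^{c,\gamma,1}_\theta$: using $|\xi_\theta(s)|\le C\|\xi_\theta\|_{\mathbf{N}_1}(\ln\frac{1+s}{3})^{-2}(1-s)^{1-2\epsilon}$ together with (\ref{sec33:eq:ea}), the integrand is controlled so that near $x=1$ the integrability in $(1-s)$ is governed by the choice $\epsilon>-\bar U_\theta(1)/4$ (or $\epsilon>-\bar U_\theta(1)/2-1$) and near $x=-1$ the logarithmic factor $(\ln\frac{1+s}{3})^2$ from $e^{a(s)}$ is cancelled by the $(\ln\frac{1+s}{3})^{-2}$ from $\xi_\theta$, leaving an essentially $(1+s)^{-1}$ integrand whose $-\ln(1+x)$ primitive yields, after multiplying by the $e^{-a(x)}\sim(\ln\frac{1+x}{3})^{-2}$ prefactor, the desired $(\ln\frac{1+x}{3})^{-1}$ behavior. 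The second case $W_\theta=W^{c,\gamma,2b}_\theta$ is analogous, with integration starting at $1$ absorbing the blow-up of $e^{a(s)}$ at the north pole exactly as in Case 3 of Lemma \ref{sec32:lem:W}.

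For the derivatives, I differentiate $W_\theta$ via (\ref{sec31:eq:diff:Wthe}) using (\ref{sec31:eq:diff:a}), then bound $(1+x)(\ln\frac{1+x}{3})^2(1-x)^{2\epsilon}|W'_\theta|$ by splitting into the $a'(x)W_\theta$ piece (controlled by the bound just obtained together with the asymptotic $|a'(x)|\lesssim (1+x)^{-1}|\ln\frac{1+x}{3}|^{-1}+(1-x)^{-1}$ read off from Lemma \ref{lem3_3_ab}) and the $\xi_\theta/(1-x^2)$ piece (controlled directly by $\xi\in\mathbf{N}_1$). The interior $C^2$ and $C^3$ bounds on $(-1/2,1/2)$ follow from further differentiation, the smoothness of $\bar U_\theta$ and $\xi_\theta$ in this interval, and the $\|\xi'_\theta\|_{L^\infty(-1/2,1/2)}+\|\xi''_\theta\|_{L^\infty(-1/2,1/2)}$ contributions in $\|\xi_\theta\|_{\mathbf{N}_1}$. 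The $\phi$-component $W^{c,\gamma}_\phi$ is handled exactly as in Lemma \ref{sec32:lem:W}, with (\ref{sec33:eq:eb}) replacing (\ref{sec32:eq:eb}): a double-integral estimate bounds $W_\phi$, and (\ref{sec31:eq:diff1:Wphi})--(\ref{sec31:eq:diff2:Wphi}) give the weighted control of $W'_\phi$ and $W''_\phi$.

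To conclude that $W^{c,\gamma}$ is a right inverse of $L^{c,\gamma}_0$, note that by construction $l_{c,\gamma}[W^{c,\gamma,i}_\theta](x)=\xi_\theta(x)$ for $i=1,2b$, and since the defining condition $\xi''_\theta(0)=0$ holds for every $\xi_\theta\in\mathbf{N}_1$, one gets $(l_{c,\gamma}[W^{c,\gamma,i}_\theta])''(0)=0$. Comparing with (\ref{sec31:eq:Linear0}) recovers $\xi_\theta$ in the first component; the second component equation $(1-x^2)W''_\phi+\bar U_\theta W'_\phi=\xi_\phi$ follows from (\ref{sec31:eq:diff1:Wphi})--(\ref{sec31:eq:diff2:Wphi}). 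The main obstacle is the bookkeeping of the $(\ln\frac{1+x}{3})^{\pm 2}$ factors that appear in $e^{\pm a}$ and $e^{\pm b}$ at the $\eta_1=4$ pole: these must line up with the log weights built into $\mathbf{N}_1$ and $\mathbf{M}_1$ so that each weighted integral produces exactly the correct power of the logarithm. Once this is tracked carefully near $x=-1$, the rest of the argument proceeds as in Lemma \ref{sec32:lem:W}.
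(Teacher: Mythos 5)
Your proposal is correct and follows essentially the same route as the paper's proof: the two-case weighted bound on $W_\theta$ via Lemma \ref{lem3_3_ab}, the derivative estimate through (\ref{sec31:eq:diff:Wthe}) using the cancellation $(2x+\bar U_\theta)\ln\frac{1+x}{3}=O(1)$ (equivalent to your asymptotic for $a'$), the interior $C^2$–$C^3$ bounds, the $W_\phi$ estimates with (\ref{sec33:eq:eb}), and the right-inverse check from $\xi''_\theta(0)=0$. No gaps.
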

\begin{proof}
	In the following, $C$ denotes a universal constant which may change from line to line.  We make use of the property that $\bar{U}_{\theta}\in C^2(-1,1)\cap L^{\infty}(-1,1)$ and the range of $\epsilon$.  For convenience let us write  $W:=W^{c,\gamma}(\xi)$ and $W^i_{\theta}:=W_{\theta}^{c,\gamma,i}(\xi)$ for $\xi\in \mathbf{Y}$. 


	We first prove $W_{\theta}: \mathbf{Y}\to\mathbf{X}$ is well-defined. 

	\noindent
	{\bf Claim.} There exists $C>0$, such that 
	\begin{equation}\label{sec33:eq:Wthe:bdd}
		\left| \left( \ln \frac{1+x}{3} \right) (1-x)^{-1+2\epsilon}W_{\theta}(x)\right| \le C \|\xi_{\theta}\|_{\mathbf{N}_1}. 
	\end{equation}
	
	\noindent 
	{\bf Proof of the Claim.} We prove the claim for each $W^i$, $i=1,2b$. 

	\textbf{Case 1.} $(c,\gamma)\in I_{2,1}$, then $\bar{U}_{\theta}(-1)=2$ with $\eta_1=4$ and $\bar{U}_{\theta}(1)>-2$.
	
	 In this case $W_{\theta}=W^1_{\theta}$. 
	Using the fact that $\xi_{\theta}\in \mathbf{N}_1$, in the expression of $W_{\theta}=W^1_{\theta}$ in (\ref{sec31:eq:Wthei}), for any $-1<x<1$ 
	\[
	\begin{split}
		& \quad \left| \left( \ln \frac{1+x}{3} \right) (1-x)^{-1+2\epsilon}W^1_{\theta}(x)\right| \\
		& \le \left| \ln \frac{1+x}{3} \right| (1-x)^{-1+2\epsilon}\|\xi_{\theta}\|_{\mathbf{N}_1}e^{-a(x)} 
		\int_{0}^{x}\frac{e^{a(s)}}{1+s} \left( \ln \frac{1+s}{3} \right)^{-2} (1-s)^{-2\epsilon}ds.
	\end{split}
	\]
	Applying (\ref{sec33:eq:ea}) in the above, using the fact that $4\epsilon> -\bar{U}_{\theta}(1)$, we have
	\begin{equation}\label{sec33:eq:Wthe:bdd1}
	\begin{split}
		& \quad  \left| \ln \left( \frac{1+x}{3} \right) (1-x)^{-1+2\epsilon} W^1_{\theta}(x)\right| \\
		& \le \|\xi_{\theta}\|_{\mathbf{N}_1} \left| \ln \frac{1+x}{3} \right|^{-1} (1-x)^{\frac{\bar{U}_{\theta}(1)}{2}+2\epsilon}
		\int_{0}^{x} \frac{1}{1+s} (1-s)^{-1-\frac{\bar{U}_{\theta}(1)}{2}-2\epsilon}ds\\
		& \le C \|\xi_{\theta}\|_{\mathbf{N}_1} \left( 1 + \left| \ln \frac{1+x}{3} \right|^{-1} \right)\left(1+(1-x)^{\frac{\bar{U}_{\theta}(1)}{2}+2\epsilon}\right)\\
		& \le C \|\xi_{\theta}\|_{\mathbf{N}_1}. 
	\end{split}
	\end{equation}

	\textbf{Case 2.} $(c,\gamma)\in I_{2,3}$ or $I_{4,3}$, then $\bar{U}_{\theta}(-1)=2$ with $\eta_1=4$ and $-3<\bar{U}_{\theta}(1)<-2$ or $\bar{U}_{\theta}(1)=-2$ with $\eta_2=0$.
	
	In this case $W_{\theta}=W^{2b}_{\theta}$. 
	Using the fact that $\xi_{\theta}\in \mathbf{N}_1$, and (\ref{sec33:eq:ea}) we first have
	\[
		\int_1^0e^{a(s)}\frac{|\xi_{\theta}(s)|}{1-s^2}ds
		\le C\|\xi_{\theta}\|_{\mathbf{N}_1}
		\int_1^0 (1-s)^{-\frac{\bar{U}_{\theta}(1)}{2}-1-2\epsilon}ds\le C\|\xi_{\theta}\|_{\mathbf{N}_1}. 
	\]
	So the definition of $W^{2b}_{\theta}$ makes sense.
	
	In the expression of $W^{2b}_{\theta}$ in (\ref{sec31:eq:Wthei}), we have for any $-1<x<1$ that 
	\[
	\begin{split}
		& \quad \left| \left( \ln \frac{1+x}{3} \right) (1-x)^{-1+2\epsilon}W^{2b}_{\theta}(x) \right|  \\
		& \le \left| \ln \frac{1+x}{3} \right| (1-x)^{-1+2\epsilon}\|\xi_{\theta}\|_{\mathbf{N}_1}e^{-a(x)}\int_{1}^{x}e^{a(s)} \left( \ln \frac{1+s}{3} \right)^{-2} \frac{1}{1+s} (1-s)^{-2\epsilon}ds. 
	\end{split}
	\]
	Applying (\ref{sec33:eq:ea}) in the above, using the fact that $\bar{U}_{\theta}(1)\le -2$, and $\epsilon<1/2$, we have that 
	\begin{equation}\label{sec33:eq:Wthe:bdd2}
	\begin{split}
		& \quad \left| \left( \ln \frac{1+x}{3} \right) (1-x)^{-1+2\epsilon} W^{2b}_{\theta}(x) \right|  \\
		& \le \|\xi_{\theta}\|_{\mathbf{N}_1} \left| \ln \frac{1+x}{3} \right|^{-1} (1-x)^{\frac{\bar{U}_{\theta}(1)}{2}+2\epsilon}
		\int_{1}^{x} \frac{1}{1+s} (1-s)^{-1-\frac{\bar{U}_{\theta}(1)}{2}-2\epsilon}ds  \\
		& \le C \|\xi_{\theta}\|_{\mathbf{N}_1}\left(1 + \left| \ln \frac{1+x}{3} \right|^{-1} \right) \\
		& \le C \|\xi_{\theta}\|_{\mathbf{N}_1}.
	\end{split}
	\end{equation}
	%
	So (\ref{sec33:eq:Wthe:bdd}) can be obtained from (\ref{sec33:eq:Wthe:bdd1}) and (\ref{sec33:eq:Wthe:bdd2}). The claim is proved.
	
	From the claim we also have that $\lim_{x\to \pm 1}W_{\theta}(x)=0$.
	
	By (\ref{sec31:eq:diff:Wthe}), (\ref{sec31:eq:diff:a}), (\ref{sec33:eq:Wthe:bdd}), and the property that $\bar{U}_{\theta}=2+4\left(\ln \frac{1+x}{3}\right)^{-1}+O(1)\left(\ln \frac{1+x}{3}\right)^{-2}$, we have that for $-1<x<1$,
	\begin{equation}\label{eq3_3_W_1}
	\begin{split}
		& \quad \left| (1+x) \left( \ln \frac{1+x}{3} \right)^2 (1-x)^{2\epsilon}W'_{\theta} \right| \\
		& \le \left| (2x+\bar{U}_{\theta}) \left( \ln \frac{1+x}{3} \right)\right|\cdot\left|  \left( \ln \frac{1+x}{3} \right)(1-x)^{-1+2\epsilon}W_{\theta} \right|
		+ \left( \ln \frac{1+x}{3} \right)^2 (1-x)^{-1+2\epsilon}|\xi_{\theta}(x)|  \\
		& \le  C \|\xi_{\theta}\|_{\mathbf{N}_1}.
	\end{split}
	\end{equation}
	By (\ref{sec31:eq:diff:a}), it can be seen that $|a''(x)|,|a'''(x)|\le C$ for $-\frac{1}{2}<x<\frac{1}{2}$. Then using this fact and (\ref{sec33:eq:Wthe:bdd}) and (\ref{eq3_3_W_1}), we have, for $-\frac{1}{2}<x<\frac{1}{2}$,
	\[
	    |W_{\theta}''(x)|=\left|a''(x)W_{\theta}(x)+a'(x)W'_{\theta}(x)+\left(\frac{\xi_{\theta}}{1-x^2}\right)'\right|\le C \|\xi_{\theta}\|_{\mathbf{N}_1},
	\]
	and 
	\[
	     |W_{\theta}'''(x)|=\left|a'''(x)W_{\theta}(x)+2a''(x)W'_{\theta}(x)+a'(x)W''_{\theta}(x)+\left(\frac{\xi_{\theta}}{1-x^2}\right)''\right|\le C \|\xi_{\theta}\|_{\mathbf{N}_1}.
	\]

	So we have shown that $W_{\theta}\in \mathbf{M}_1$, and $\|W_{\theta}\|_{\mathbf{M}_1}\le C\|\xi_{\theta}\|_{\mathbf{N}_1}$ for some constant $C$. 
	
	By the definition of $W_{\phi}(\xi)$ in (\ref{sec31:eq:Wphi}) , using (\ref{sec33:eq:eb}) and the fact that $\xi_{\phi}\in \mathbf{N}_2$, we have, for every $-1<x<1$, 
	\[
	\begin{split}
		& \quad (1-x^2)^{\epsilon}|W_{\phi}(x)|  \le  (1-x^2)^{\epsilon}\int_{0}^{x}e^{-b(t)}\int_{0}^{t}e^{b(s)}\frac{|\xi_{\phi}(s)|}{1-s^2}dsdt\\
		& \le \|\xi_{\phi}\|_{\mathbf{N}_2}  (1-x^2)^{\epsilon}\int_{0}^{x}e^{-b(t)}\int_{0}^{t}e^{b(s)}(1-s^2)^{-2-\epsilon}dsdt \\
		& \le  C\|\xi_{\phi}\|_{\mathbf{N}_2} (1-x^2)^{\epsilon} \int_{0}^{x} \left(\ln\frac{1+t}{3}\right)^{-2} (1+t)^{-1} (1-t)^{\frac{\bar{U}_{\theta}(1)}{2}}\\
		& \quad \cdot \int_{0}^{t} \left(\ln\frac{1+s}{3}\right)^2 (1+s)^{-1-\epsilon} (1-s)^{-\frac{\bar{U}_{\theta}(1)}{2}-2-\epsilon}dsdt\\
		& \le C \|\xi_{\phi}\|_{\mathbf{N}_2} (1-x^2)^{\epsilon}\int_{0}^{x}(1-t^2)^{-1-\epsilon}dt\\
		& \le C\|\xi_{\phi}\|_{\mathbf{N}_2}. 
		\end{split}
	\]
	
	Using (\ref{sec31:eq:diff1:Wphi}), (\ref{sec33:eq:eb}) and the fact that $\xi_{\phi}\in \mathbf{N}_2$, we have, for every $-1<x<1$, 
	\begin{equation}\label{sec33:eq:Wphi:bdd:temp}
	\begin{split}
		|(1-x^2)^{1+\epsilon}W'_{\phi}(x)| & \le \|\xi_{\phi}\|_{\mathbf{N}_2} \left( \ln \frac{1+x}{3} \right)^{-2} (1+x)^{\epsilon}(1-x)^{\frac{\bar{U}_{\theta}(1)}{2}+1+\epsilon}  \\
		& \quad \cdot \int_{0}^{x} \left( \ln \frac{1+s}{3} \right)^2 (1+s)^{-1-\epsilon}(1-s)^{-\frac{\bar{U}_{\theta}(1)}{2}-2-\epsilon}ds\\
		& \le C\|\xi_{\phi}\|_{\mathbf{N}_2}. 
	\end{split}
	\end{equation}
	Similarly, since $|b'(x)|=\frac{|\bar{U}_{\theta}|}{1-x^2}$, using (\ref{sec31:eq:diff2:Wphi}),  (\ref{sec33:eq:Wphi:bdd:temp}) and the fact that $\xi_{\phi}\in \mathbf{N}_2$, we have
	\[
		|(1-x^2)^{2+\epsilon}W''_{\phi}(x)|\le C(1-x^2)^{1+\epsilon}|W'_{\phi}|+(1-x^2)^{1+\epsilon}|\xi_{\phi}|\le C\|\xi_{\phi}\|_{\mathbf{N}_2}.
	\]
	Then $W(\xi)\in\mathbf{X}$ for all $\xi\in\mathbf{Y}$, and $\|W(\xi)\|_{\mathbf{X}}\le C\|\xi\|_{\mathbf{Y}}$ for some constant $C$. So $W:\mathbf{Y}\rightarrow\mathbf{X}$ is well-defined and continuous. 
	 
	By definition of $W^i$, $i=1,2b$, we have $l_{c,\gamma}[W^i_{\theta}](x)=\xi_{\theta}$. So $(l_{c,\gamma}[W^i_{\theta}])''(0)=\xi''_{\theta}(0)=0$, then $l_{c,\gamma}[W^i_{\theta}](x)+\frac{1}{2}(l_{c,\gamma}[W^i_{\theta}])''(0)(1-x^2)=\xi_{\theta}$.  Thus $L_0^{c,\gamma}W(\xi)=\xi$, $W$ is a right inverse of $L_0^{c,\gamma}$. 
\end{proof}



Let $V_{c,\gamma}^i$, $1\le i\le 4$ and  $V_{c,\gamma}^{2b}$ be vectors defined by (\ref{eq_basis}) and (\ref{sec32:eq:V2b}), we have
\begin{lem}\label{sec33:lem:ker}
	\[
		\ker L^{c,\gamma}_{0}=\left\{
		\begin{array}{ll}
		\mathrm{span}\{V_{c,\gamma}^1, V_{c,\gamma}^2, V_{c,\gamma}^3, V_{c,\gamma}^4 \}, & \textrm{if } (c,\gamma)\in I_{2,1},\\
		\mathrm{span}\{ V_{c,\gamma}^{2b}, V_{c,\gamma}^3, V_{c,\gamma}^4 \}, & \textrm{if } (c,\gamma)\in I_{2,3} \textrm{ or }I_{4,3}.\\
		\end{array}
		\right.
	 \]
\end{lem}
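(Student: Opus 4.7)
The proof will follow the structure of Lemma \ref{sec32:lem:ker}. Given $V \in \mathbf{X}$ with $L_0^{c,\gamma} V = 0$, expansion \eqref{sec31:eq:ker} gives $V = d_1 V_{c,\gamma}^1 + d_2 V_{c,\gamma}^2 + d_3 V_{c,\gamma}^3 + d_4 V_{c,\gamma}^4$ for some $d_i \in \mathbb{R}$, so the entire argument reduces to identifying precisely which linear combinations of these four vectors belong to $\mathbf{X}$, under the logarithmically-weighted norms of $\mathbf{M}_1$ and $\mathbf{M}_2$. For this I will rely on the pointwise estimates in Lemma \ref{lem3_3_ab} for $e^{\pm a(x)}$ and $e^{\pm b(x)}$, which produce the asymptotics
\[
|V^1_\theta(x)| = e^{-a(x)} = O(1)\bigl(\ln\tfrac{1+x}{3}\bigr)^{-2}(1-x)^{1+\bar{U}_\theta(1)/2},
\]
and analogous formulas for $V^2_\theta$, $V^3_\phi$, and their derivatives, together with the routine bound that $a''(x)$, $b''(x)$ are locally bounded on $(-1/2,1/2)$ so interior $C^2$/$C^3$ norms are under control.

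For Case 1, $(c,\gamma) \in I_{2,1}$, we have $\bar{U}_\theta(1) > -2$, so the choice $\epsilon > -\bar{U}_\theta(1)/4$ makes $\bar{U}_\theta(1)/2 + 2\epsilon > 0$. The critical weight $\bigl|\ln\tfrac{1+x}{3}\bigr|(1-x)^{-1+2\epsilon} V^i_\theta(x)$ then tends to $0$ at $x=-1$ (thanks to the $\bigl(\ln\tfrac{1+x}{3}\bigr)^{-2}$ factor dominating the $\bigl|\ln\tfrac{1+x}{3}\bigr|$ weight) and at $x=1$ (thanks to the positive power surplus at $x=1$). One then checks analogously the $\mathbf{M}_1$ weight on $\tilde U'_\theta$ and the $\mathbf{M}_2$ weights on $V^3_\phi$, $(V^3_\phi)'$, $(V^3_\phi)''$. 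Since all four vectors lie in $\mathbf{X}$ and are visibly linearly independent (the $\theta$- and $\phi$-components decouple), they form a basis of the kernel.

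For Case 2, $(c,\gamma) \in I_{2,3}$ or $I_{4,3}$, so $\bar{U}_\theta(1) \le -2$. Here the decisive observation is that $V^1_\theta$ fails the $\mathbf{M}_1$ norm at $x=1$: indeed, since $e^{-a(x)}$ decays like $(1-x)^{1+\bar{U}_\theta(1)/2}$ with $1 + \bar{U}_\theta(1)/2 \le 0$, the quantity $\bigl|\ln\tfrac{1+x}{3}\bigr|(1-x)^{-1+2\epsilon} V^1_\theta$ blows up as $x \to 1$ for the admissible range of $\epsilon$, so $V^1 \notin \mathbf{X}$. Consequently, for $d_1 V^1_{c,\gamma} + d_2 V^2_{c,\gamma}$ to lie in $\mathbf{X}$, the boundary condition $\tilde U_\theta(1) = 0$ forces $d_1 = -d_2 \int_0^{1} e^{a(s)} ds$, which is exactly the combination yielding $V^{2b}_{c,\gamma}$ (up to the scalar $d_2$). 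It remains to verify that $V^{2b}$, $V^3$, and $V^4$ do belong to $\mathbf{X}$; for $V^{2b}$ this uses the formula $V^{2b}_\theta(x) = e^{-a(x)} \int_1^x e^{a(s)} ds$ and the fact that the integral against the power $(1-s)^{-1-\bar{U}_\theta(1)/2}$ absorbs the bad power near $x=1$, leaving a logarithmic-in-$(1+x)$ weight at $x=-1$ that is controlled by the $\bigl(\ln\tfrac{1+x}{3}\bigr)^{-2}$ factor of $e^{-a(x)}$.

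The main obstacle is the second case, where one has to identify exactly the one linear combination of $V^1$ and $V^2$ surviving in $\mathbf{X}$: this requires verifying simultaneously that $V^1$ itself is not in $\mathbf{X}$ (so $V^{2b}$ is genuinely singled out) and that $V^{2b}$ satisfies all three $\mathbf{M}_1$-norm requirements, namely the logarithmically weighted $L^\infty$ bound on $\tilde U_\theta$ near $x=-1$, the $(1+x)\bigl(\ln\tfrac{1+x}{3}\bigr)^2(1-x)^{2\epsilon}$ weighted $L^\infty$ bound on $\tilde U'_\theta$, and the interior $C^3$ bound on $(-1/2,1/2)$. These verifications are bookkeeping with the asymptotics from Lemma \ref{lem3_3_ab} but require careful tracking of the cancellations that produce $V^{2b}$ from $V^1$ and $V^2$.
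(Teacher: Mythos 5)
Your proposal is correct and follows essentially the same route as the paper: reduce to the four-dimensional solution space \eqref{sec31:eq:ker}, use the asymptotics of Lemma \ref{lem3_3_ab} to verify $V^1_{c,\gamma},\dots,V^4_{c,\gamma}\in\mathbf{X}$ when $(c,\gamma)\in I_{2,1}$, and for $I_{2,3}$, $I_{4,3}$ show $V^{2b}_{c,\gamma},V^3_{c,\gamma},V^4_{c,\gamma}\in\mathbf{X}$ while $V^1_{c,\gamma}\notin\mathbf{X}$. The only cosmetic difference is how the coefficient of $V^1_{c,\gamma}$ is killed in the second case: you extract the constraint $d_1=-d_2\int_0^1 e^{a}$ directly from $\tilde U_\theta(1)=0$ and the weighted bound, whereas the paper first rewrites $\mathrm{span}\{V^1_{c,\gamma},V^2_{c,\gamma}\}=\mathrm{span}\{V^1_{c,\gamma},V^{2b}_{c,\gamma}\}$ and then concludes $d_1=0$ from $d_1(V^1_{c,\gamma})_\theta\in\mathbf{M}_1$; the two arguments rest on the same estimates.
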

\begin{proof}
	Let $V\in\mathbf{X}$ satisfy $L^{c,\gamma}_{0}V=0$. We know that $V$ is given by (\ref{sec31:eq:ker}) for some $d_1,d_2,d_3,d_4\in\mathbb{R}$.
	For convenience we denote $a(x)=a_{c,\gamma}(x)$, $b(x)=b_{c,\gamma}(x)$ and $V^i=V^i_{c,\gamma}$, $i=1,2,2b,3,4$.
	
	By Lemma \ref{lem3_3_ab} and the expressions of $V^1,V^2$ in (\ref{eq_basis}), we have that	
		
	\begin{equation}\label{eq3_3_ker_1}
		V^1_{\theta}(x) = e^{-a(x)}= O(1) \left( \ln \frac{1+x}{3} \right)^{-2} (1-x)^{1+\frac{\bar{U}_{\theta}(1)}{2}}, 
	\end{equation}
	and 
	\begin{equation}\label{eq3_3_ker_2}
		 V^2_{\theta}(x) =e^{-a(x)}\int_{0}^{x}e^{a(s)}ds		
		=  O(1) \left( \ln \frac{1+x}{3} \right)^{-2} (1-x)\left((1-x)^{\frac{\bar{U}_{\theta}(1)}{2}}+1\right). 
	\end{equation}
	
By (\ref{sec31:eq:diff:a}), we also have
	\begin{equation}\label{eq3_3_ker_3}
		 \left|\frac{d}{dx}V^1_{\theta}(x)\right|=\left|e^{-a(x)}a'(x)\right|
		=  O(1) \left( \ln \frac{1+x}{3} \right)^{-2} (1+x)^{-1} (1-x)^{\frac{\bar{U}_{\theta}(1)}{2}},
	\end{equation}
	\begin{equation}\label{eq3_3_ker_4}
		 \left|\frac{d}{dx}V^2_{\theta}(x)\right|=\left| -V^2_{\theta}(x)a'(x) + 1\right| 
		=  O(1)\left( \ln \frac{1+x}{3} \right)^{-2} (1+x)^{-1}\left((1-x)^{\frac{\bar{U}_{\theta}(1)}{2}}+1\right).
	\end{equation}

  If $\bar{U}_{\theta}(1)<- 2$ or $\bar{U}_{\theta}(1)=-2$ with $\eta_2=0$, by \eqref{sec32:eq:V2b} we have 
	\begin{equation}\label{eq3_3_ker_7}
	   \left|V^{2b}_{\theta}(x)\right|=  O(1) \left( \ln \frac{1+x}{3} \right)^{-2}(1-x)^{1+\frac{\bar{U}_{\theta}(1)}{2}},
	\end{equation}
	\begin{equation*}
	   \left|\frac{d}{dx}V^{2b}_{\theta}(x)\right|
				= O(1) \left( \ln \frac{1+x}{3} \right)^{-2} (1+x)^{-1}(1-x)^{\frac{\bar{U}_{\theta}(1)}{2}}.
	\end{equation*}
	Next, by computation we have for $i=1,2,2b$
	\[
	 \frac{d^2}{dx^2}V^i_{\theta}=(V^i_{\theta})'a'(x)+V^i_{\theta}a''(x),\quad  \frac{d^3}{dx^3}V^i_{\theta}=(V^i_{\theta})''a'(x)+2(V^i_{\theta})'a''(x)+V^i_{\theta}a'''(x).
	\]
	Using the definition of $a(x)$ in (\ref{sec31:eq:ab}), there exists some constant $C$, depending on $c,\gamma$, such that
	\begin{equation}\label{eq3_3_ker_8}
	   \left |\frac{d^2}{dx^2}V^i_{\theta}\right|\le C, \quad \left|\frac{d^3}{dx^3}V^i_{\theta}\right|\le C, \quad -\frac{1}{2}<x<\frac{1}{2}, \; i=1,2,2b.
	\end{equation}
	
	Moreover, by Lemma \ref{lem3_3_ab}, and the expressions of $V^3$ in (\ref{eq_basis}), we have
	\begin{equation}\label{eq3_3_ker_9}
		V^3_{\phi}(x)=\int_{0}^{x}e^{-b(t)}dt=O(1)(1-x)^{1+\frac{\bar{U}_{\theta}(1)}{2}}+O(1),
	\end{equation}
	and 
	\begin{equation}\label{eq3_3_ker_10}
	\begin{split}
		&\left|\frac{d}{dx}V^3_{\phi}(x)\right|=e^{-b(x)}= O(1) \left( \ln \frac{1+x}{3} \right)^{-2} (1+x)^{-1}(1-x)^{\frac{\bar{U}_{\theta}(1)}{2}},\\
			&\left|\frac{d^2}{dx^2}V^3_{\phi}(x)\right|=e^{-b(x)}|b'(x)| = O(1)\left( \ln \frac{1+x}{3} \right)^{-2} (1+x)^{-2} (1-x)^{-1+\frac{\bar{U}_{\theta}(1)}{2}}. 
			\end{split}
	\end{equation}

	When $(c,\gamma)\in I_{2,1}$, $\bar{U}(-1)=2$ with $\eta_1=4$, and $\bar{U}(1)>-2$, using estimates (\ref{eq3_3_ker_1})-(\ref{eq3_3_ker_4}),  (\ref{eq3_3_ker_8})-(\ref{eq3_3_ker_10}), and the definition of $V_{c,\gamma}^4$, it is not hard to verify that $V_{c,\gamma}^i\in \mathbf{X}$, $1\le i\le 4$.  It is clear that $\{V_{c,\gamma}^i , 1\le i\le 4\}$ are independent. So $\{V_{c,\gamma}^i, 1\le i\le 4\}$ is a basis of the kernel.

	Similarly, when $(c,\gamma)\in I_{2,3} \textrm{ or }I_{4,3}$, it can be checked that $\mathrm{span}\{V_{c,\gamma}^1, V_{c,\gamma}^2\}=\mathrm{span}\{V_{c,\gamma}^1, V_{c,\gamma}^{2b}\}$, where $V_{c,\gamma}^{2b}$, given by (\ref{sec32:eq:V2b}), is a linear combination of $V_{c,\gamma}^1, V_{c,\gamma}^2$.  So $L^{c,\gamma}_{0}V=0$ implies 
	$$
		V = d_1 V_{c,\gamma}^1 + d_2 V_{c,\gamma}^{2b} + d_3 V_{c,\gamma}^3 + d_4 V_{c,\gamma}^4.
	$$
	It can be checked by estimates (\ref{eq3_3_ker_1}), (\ref{eq3_3_ker_3}), and (\ref{eq3_3_ker_7})-(\ref{eq3_3_ker_10}) that in this case $V_{c,\gamma}^{2b}, V_{c,\gamma}^3, V_{c,\gamma}^4 \in \mathbf{X}$, and  $  V_{c,\gamma}^1 \notin \mathbf{X}$.  So $d_1 V_{c,\gamma}^1\in \mathbf{X}$. This means $d_1 (V_{c,\gamma}^1)_{\theta}\in \mathbf{M}_1$ Thus $d_1=0$.
	The lemma is proved.
\end{proof}

\begin{cor}\label{sec33:cor:all:sol}
	For any $\xi\in\mathbf{Y}$, all solutions of $L^{c,\gamma}_{0}V=\xi$, $V\in\mathbf{X}$, are given by
	\[
		V=W^{c,\gamma}(\xi)+\left\{
		\begin{array}{ll}
			d_1V_{c,\gamma}^1+d_2V_{c,\gamma}^2+d_3V_{c,\gamma}^3+d_4V_{c,\gamma}^4, & \textrm{if }(c,\gamma)\in I_{2,1},\\
			d_2V_{c,\gamma}^{2b}+d_3V_{c,\gamma}^3+d_4V_{c,\gamma}^4, & \textrm{if }(c,\gamma)\in I_{2,3} \textrm{ or }I_{4,3}.
		\end{array}
		\right.
	\]
\end{cor}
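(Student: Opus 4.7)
The plan is to derive this corollary as an immediate consequence of Lemma \ref{sec33:lem:W} and Lemma \ref{sec33:lem:ker}, using the linearity of $L_0^{c,\gamma}$. First, I would fix $\xi \in \mathbf{Y}$ and observe that $W^{c,\gamma}(\xi) \in \mathbf{X}$ by the well-definedness part of Lemma \ref{sec33:lem:W}, and $L_0^{c,\gamma} W^{c,\gamma}(\xi) = \xi$ by the right-inverse property proved there.

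Next, suppose $V \in \mathbf{X}$ is any solution of $L_0^{c,\gamma} V = \xi$. By linearity of $L_0^{c,\gamma}$, the difference satisfies
\begin{equation*}
    L_0^{c,\gamma}\bigl(V - W^{c,\gamma}(\xi)\bigr) = \xi - \xi = 0,
\end{equation*}
so $V - W^{c,\gamma}(\xi) \in \ker L_0^{c,\gamma}$. Applying Lemma \ref{sec33:lem:ker} to identify the kernel in each of the two subcases $(c,\gamma) \in I_{2,1}$ versus $(c,\gamma) \in I_{2,3} \cup I_{4,3}$ immediately yields the two corresponding expressions
\begin{equation*}
    V - W^{c,\gamma}(\xi) = d_1 V^1_{c,\gamma} + d_2 V^2_{c,\gamma} + d_3 V^3_{c,\gamma} + d_4 V^4_{c,\gamma}
\end{equation*}
or
\begin{equation*}
    V - W^{c,\gamma}(\xi) = d_2 V^{2b}_{c,\gamma} + d_3 V^3_{c,\gamma} + d_4 V^4_{c,\gamma},
\end{equation*}
for suitable real constants. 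Rearranging gives the stated formula. Conversely, any $V$ of this form satisfies $L_0^{c,\gamma} V = \xi$, again by linearity and the right-inverse property.

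There is no substantive obstacle here: all the hard analytic work (the $L^\infty$-weighted estimates showing $W^{c,\gamma}(\xi) \in \mathbf{X}$, the identification of the kernel using the asymptotic growth rates of $V^1_{c,\gamma}$ and $V^2_{c,\gamma}$ near $x=\pm 1$, and the verification that $L_0^{c,\gamma} W^{c,\gamma}(\xi) = \xi$) has already been done in the preceding two lemmas. The corollary is a formal linear-algebra assembly of those two inputs, and the write-up should consist of only a few lines.
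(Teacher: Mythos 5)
Your proposal is correct and is exactly the argument the paper intends: the corollary is stated without a written proof precisely because it follows immediately from Lemma \ref{sec33:lem:W} (well-definedness and the right-inverse property of $W^{c,\gamma}$) together with Lemma \ref{sec33:lem:ker} (the kernel identification), via linearity of $L_0^{c,\gamma}$. Nothing further is needed.
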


Let $l_i$, $1\le i\le 4$ and $l_{2b}$ be the functionals on $\mathbf{X}$ defined by (\ref{sec31:eq:fcnal:l}) and (\ref{eq_l2ab}), and $\mathbf{X}_1=\cap_{i=1}^{4}\ker l_i$, $\mathbf{X}_{2b}=\ker l_{2b}\cap\ker l_3\cap \ker  l_4$.  It can be checked that $\mathbf{X}_1$  and $\mathbf{X}_{2b}$ are closed subspaces of $\mathbf{X}$, and
\begin{equation}\label{sec33:eq:decomp:X}
	\mathbf{X} =\left\{
	\begin{array}{ll}
		\mbox{span} \{ V_{c,\gamma}^{1}, V_{c,\gamma}^{2}, V_{c,\gamma}^{3}, V_{c,\gamma}^{4} \} \oplus\mathbf{X}_1,& \quad (c,\gamma)\in I_{2,1},\\
		\mbox{span} \{ V_{c,\gamma}^{2b}, V_{c,\gamma}^{3}, V_{c,\gamma}^{4} \} \oplus\mathbf{X}_{2b}, & \quad (c,\gamma)\in I_{2,3} \textrm{ or }I_{4,3},
	\end{array}
	\right. 
\end{equation}
with the projection operator $P_1: \mathbf{X}\rightarrow\mathbf{X}_1$, $P_{2b}: \mathbf{X}\rightarrow\mathbf{X}_{2b}$ given by (\ref{sec32:eq:proj}). 

\begin{lem}\label{sec33:lem:iso}
	If $(c,\gamma)\in I_{2,1}$, the operator $ L^{c,\gamma}_{0}: \mathbf{X}_1\rightarrow\mathbf{Y}$ is an isomorphism.  
		
	If $(c,\gamma)\in I_{2,3} \textrm{ or }I_{4,3}$, the operator $ L^{c,\gamma}_{0}: \mathbf{X}_{2b}\rightarrow\mathbf{Y}$ is an isomorphism. \end{lem}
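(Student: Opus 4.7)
The plan is to mimic exactly the short proof of the analogous Lemma \ref{sec32:lem:iso}, using the Section 3 analogues of the ingredients. Everything has been set up so that the isomorphism claim falls out by combining surjectivity, identification of the kernel, and the direct-sum decomposition.

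First, I would argue surjectivity of $L^{c,\gamma}_0 : \mathbf{X} \to \mathbf{Y}$. This is immediate from Lemma \ref{sec33:lem:W}: for every $\xi \in \mathbf{Y}$, the element $W^{c,\gamma}(\xi)$ lies in $\mathbf{X}$ and satisfies $L^{c,\gamma}_0 W^{c,\gamma}(\xi) = \xi$. Combined with Lemma \ref{sec33:lem:ker}, which identifies $\ker L^{c,\gamma}_0$, Corollary \ref{sec33:cor:all:sol} then gives the complete description of the preimage of any $\xi \in \mathbf{Y}$ as the translate of $W^{c,\gamma}(\xi)$ by $\ker L^{c,\gamma}_0$.

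Second, I would invoke the direct sum decomposition \eqref{sec33:eq:decomp:X}, which expresses
\[
\mathbf{X} = \ker L^{c,\gamma}_0 \oplus \mathbf{X}_\ast,
\]
where $\mathbf{X}_\ast = \mathbf{X}_1$ when $(c,\gamma)\in I_{2,1}$ and $\mathbf{X}_\ast = \mathbf{X}_{2b}$ when $(c,\gamma)\in I_{2,3}$ or $I_{4,3}$. Restricting $L^{c,\gamma}_0$ to the complementary factor $\mathbf{X}_\ast$ thus makes it injective (the kernel of the restriction is $\ker L^{c,\gamma}_0 \cap \mathbf{X}_\ast = \{0\}$) while preserving surjectivity onto $\mathbf{Y}$, because any $V \in \mathbf{X}$ solving $L^{c,\gamma}_0 V = \xi$ can be replaced by its projection $P_\ast V \in \mathbf{X}_\ast$, which still maps to $\xi$.

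Finally, to upgrade the algebraic bijection to a Banach-space isomorphism, I would note that $L^{c,\gamma}_0 : \mathbf{X}_\ast \to \mathbf{Y}$ is a bounded linear bijection between Banach spaces (boundedness comes from Proposition \ref{sec33:prop}, and $\mathbf{X}_\ast$ is closed in $\mathbf{X}$ as the intersection of kernels of bounded linear functionals $l_i$), so the open mapping theorem gives continuity of the inverse. There is essentially no obstacle here: all the nontrivial analytic work has been done in Lemmas \ref{sec33:lem:W} and \ref{sec33:lem:ker}, and the present lemma is just the formal packaging of those results via the direct-sum structure.
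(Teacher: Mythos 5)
Your proposal is correct and follows essentially the same route as the paper: surjectivity from the right inverse $W^{c,\gamma}$ of Lemma \ref{sec33:lem:W} together with Corollary \ref{sec33:cor:all:sol}, the kernel identification of Lemma \ref{sec33:lem:ker}, and the direct sum decomposition \eqref{sec33:eq:decomp:X}, with the open mapping theorem supplying boundedness of the inverse (a step the paper leaves implicit). No gaps.
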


\begin{proof}
	By Corollary \ref{sec33:cor:all:sol} and Lemma \ref{sec33:lem:ker}, $L^{c,\gamma}_{0}:\mathbf{X}\rightarrow\mathbf{Y}$ is surjective and $\ker L^{c,\gamma}_{0}$  is given by Lemma \ref{sec33:lem:ker}. The conclusion of the lemma then follows in view of the direct sum property (\ref{sec33:eq:decomp:X}).	
\end{proof}

\begin{lem}\label{sec33:lem:V:smooth}
	$V_{c,\gamma}^1, V_{c,\gamma}^2\in C^{\infty}(K,\mathbf{X})$ for compact $K\subset I_{2,1}$. 

	$V_{c,\gamma}^{2b} \in C^{\infty}(K,\mathbf{X})$ for compact $K\subset I_{2,3}$ or $I_{4,3}$. 
	
	$V_{c,\gamma}^3, V_{c,\gamma}^4 \in C^{\infty}(K,\mathbf{X})$ for compact $K\subset I_{k,l}$ with $(k,l)\in A_2$. 
\end{lem}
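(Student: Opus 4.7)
The strategy is to mimic the proof of Lemma 3.6, replacing the estimates coming from Proposition 2.2 (1) by those from Proposition 2.2 (2) and (3). Since $V^4_{c,\gamma}=(0,1)^T$ is independent of $(c,\gamma)$, it lies trivially in $C^\infty(K,\mathbf{X})$, so the real work is to show smoothness of $V^1_{c,\gamma}, V^2_{c,\gamma}$ on $K\subset I_{2,1}$, of $V^{2b}_{c,\gamma}$ on $K\subset I_{2,3}$ or $I_{4,3}$, and of $V^3_{c,\gamma}$ on any compact $K\subset I_{k,l}$ with $(k,l)\in A_2$.

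The first step is to differentiate the defining integrals (\ref{sec31:eq:ab}) in $(c,\gamma)$. For any multi-index $\alpha=(\alpha_1,\alpha_2,\alpha_3)$ and $j\ge 0$ with $|\alpha|+j\ge 1$,
$$
\partial_c^\alpha \partial_\gamma^j a_{c,\gamma}(x)=\partial_c^\alpha \partial_\gamma^j b_{c,\gamma}(x)=\int_0^x \frac{\partial_c^\alpha \partial_\gamma^j \bar{U}_\theta(s)}{1-s^2}\,ds.
$$
Applying Proposition \ref{propA_1} (2) when $K\subset I_{2,1}$ or $I_{2,3}$, and Proposition \ref{propA_1} (3) when $K\subset I_{4,3}$, this integral is bounded on $(-1,1)$ by $C(\alpha,j,K)\,|\ln(1-x^2)|$, uniformly in $(c,\gamma)\in K$.

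The second step is to combine these derivative estimates with the pointwise bounds on $e^{\pm a_{c,\gamma}}$ and $e^{\pm b_{c,\gamma}}$ from Lemma \ref{lem3_3_ab}, and with the differentiation identities (\ref{sec31:eq:diff:a}), (\ref{sec31:eq:diff:Wthe})--(\ref{sec31:eq:diff2:Wphi}). Applying the Leibniz rule to the explicit expressions (\ref{eq_basis}) and (\ref{sec32:eq:V2b}) produces pointwise bounds of the shape
$$
|\partial_c^\alpha \partial_\gamma^j V^i_\theta(x)|\le C(\alpha,j,K)\,e^{-a_{c,\gamma}(x)}\,(1+|\ln(1-x^2)|)^{|\alpha|+j}
$$
with analogous $x$-derivative bounds, and similarly for $V^3_\phi$ using $e^{-b_{c,\gamma}}$. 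For instance, for $V^1_\theta=e^{-a_{c,\gamma}}$ one gets an extra factor $\left(\ln\tfrac{1+x}{3}\right)^{-2}(1-x)^{1+\frac12 \bar{U}_\theta(1)}$ near $x=-1$, which is exactly what the weights $|\ln\tfrac{1+x}{3}|(1-x)^{-1+2\epsilon}$ and $(1+x)\left(\ln\tfrac{1+x}{3}\right)^2(1-x)^{2\epsilon}$ in $\|\cdot\|_{\mathbf{M}_1}$ are designed to absorb. The vanishing of $\partial_c^\alpha \partial_\gamma^j V^i_\theta$ at $x=\pm 1$ and the $C^2,C^3$ bounds on $[-1/2,1/2]$ follow directly from the same pointwise estimates, exactly as in Lemma \ref{sec32:lem:V:smooth}.

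The main obstacle is the bookkeeping of the logarithmic factors: one must track how the factors $\left(\ln\tfrac{1+x}{3}\right)^{\pm 2}$ from Lemma \ref{lem3_3_ab} interact with the weights $\left(\ln\tfrac{1+x}{3}\right)^2$ in $\|\cdot\|_{\mathbf{M}_1}$, and verify that the choice of $\epsilon$ at the start of Section \ref{sec33:sec} suffices to render all weighted sup-norms finite uniformly on $K$. This hinges on the refined asymptotic $\bar{U}_\theta=2+4\left(\ln\tfrac{1+x}{3}\right)^{-1}+O(1)\left(\ln\tfrac{1+x}{3}\right)^{-2}$ near $x=-1$ used already in Lemma \ref{lem3_3_ab}. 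Once this matching is carried out for $V^1, V^2, V^{2b}$ and $V^3$, the standard criterion for $C^\infty$ dependence in a Banach space yields the stated smoothness.
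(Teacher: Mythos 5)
Your plan follows the same route as the paper (differentiate $a_{c,\gamma},b_{c,\gamma}$ in the parameters via Proposition \ref{propA_1}, combine with Lemma \ref{lem3_3_ab} and the explicit formulas for $V^1,V^2,V^{2b},V^3$, then check the weighted norms), but the key quantitative input is stated too weakly, and with it the final step fails. You claim only $|\partial_c^\alpha\partial_\gamma^j a_{c,\gamma}(x)|\le C|\ln(1-x^2)|$, which allows logarithmic growth at \emph{both} endpoints. The paper proves and needs the sharper bound $\partial_c^\alpha\partial_\gamma^j a_{c,\gamma}=\partial_c^\alpha\partial_\gamma^j b_{c,\gamma}=O(1)|\ln(1-x)|$, i.e.\ these derivatives stay \emph{bounded} near $x=-1$: by Proposition \ref{propA_1}(2), $|\partial_c^\alpha\partial_\gamma^j \bar U_\theta|\le C\left(\ln\frac{1+x}{3}\right)^{-2}$, and $\int_{-1}^0 (1+s)^{-1}\left(\ln\frac{1+s}{3}\right)^{-2}ds<\infty$, so no logarithm accumulates at the left endpoint. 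With your weaker bound, already for $V^1_\theta=e^{-a_{c,\gamma}}$ you would get, near $x=-1$, $|\partial_c^\alpha\partial_\gamma^j V^1_\theta|\le C\left(\ln\frac{1+x}{3}\right)^{-2}|\ln(1+x)|^{|\alpha|+j}$ (times bounded factors in $1-x$); multiplying by the $\mathbf{M}_1$-weight $\left|\ln\frac{1+x}{3}\right|(1-x)^{-1+2\epsilon}$ leaves $|\ln(1+x)|^{|\alpha|+j-1}$, which is unbounded as $x\to-1$ whenever $|\alpha|+j\ge 2$, and the same obstruction appears in the weighted first-derivative norm. So the weights do \emph{not} absorb the log factors your estimate produces, and $C^\infty$ smoothness (all orders) cannot be concluded; the $\epsilon$ only helps at $x=1$, where the spare power $(1-x)^{2(\epsilon-\bar\epsilon)}$ kills the $|\ln(1-x)|$ factors. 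The repair is precisely the refined estimate above, which is the content of the paper's display $\partial_c^\alpha\partial_\gamma^j a=O(1)|\ln(1-x)|$.

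A secondary slip: for $K\subset I_{4,3}$ the applicable derivative estimates are again Proposition \ref{propA_1} part (2), which explicitly covers $(k,l)=(2,1),(2,3),(4,3)$ (with $j=0$ and $\alpha_1=\alpha_2=0$ there); part (3) concerns the $A_3$ cases $(3,1),(3,2),(4,2)$ and carries its weight at the wrong pole for the present lemma. The rest of your outline (triviality of $V^4$, the treatment of $V^3_\phi$ in $\mathbf{M}_2$, vanishing at $x=\pm1$, interior $C^2$/$C^3$ bounds) matches the paper and is fine, since the $(1-x^2)^\epsilon$-type weights in $\mathbf{M}_2$ absorb logarithms at both ends.
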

\begin{proof}
It is clear that $V^4_{c,\gamma}\in  C^{\infty}(K,\mathbf{X})$ for all compact set $K$ described as in the lemma.
    
    Let $\alpha=(\alpha_1,\alpha_2,\alpha_3)$ be a multi-index where $\alpha_i\ge 0$, $i=1,2,3$, and $j\ge 0$. 
    For convenience we denote $a(x)=a_{c,\gamma}(x)$, $b(x)=b_{c,\gamma}(x)$ and $V^i=V^i_{c,\gamma}$, $i=1,2,2b,3$.

		 Using Proposition \ref{propA_1} part (2), we have that for all $|\alpha|+j\ge 1$ and $(c,\gamma)\in K$,
		 \begin{equation}\label{sec33:eq:Vsmooth:temp}
		\pt_c^j \pt_\gamma^i a(x) = \pt_c^j \pt_\gamma^i b(x) = \int_{0}^{x}\frac{1}{1-s^2} \pt_c^j \pt_\gamma^i U^{c,\gamma}(s)ds = O(1)|\ln(1-x)|. 
	\end{equation}
	
	(1) If $K\subset I_{2,1}$, we have $U_{\theta}^{c,\gamma}(-1)=2$ with $\eta_1=4$ and $U_{\theta}^{c,\gamma}(1)>-2$. 

	Let $2\bar{\epsilon}:=\max\{0, -\frac{1}{2}U^{c,\gamma}_{\theta}(1) \mid (c,\gamma)\in K\}$, then $ \bar{\epsilon}<\epsilon$. 
	Using the expressions of $V^1,V^2$ in (\ref{eq_basis}), Lemma \ref{lem3_3_ab}, estimates (\ref{eq3_3_ker_1}), (\ref{eq3_3_ker_2}), (\ref{sec33:eq:Vsmooth:temp}) and Proposition \ref{propA_1} (2), we have that for all $|\alpha|+j\ge 1$ and $(c,\gamma)\in K$, 
	\[
	\begin{split}
		\left| \pt_c^\alpha \pt_\gamma^j V^1_{\theta}(x) \right| 
		& = e^{-a(x)}O\left( \left|\ln(1-x)\right|^{|\alpha|+j} \right) \\
		& =O(1) \left( \ln \frac{1+x}{3} \right)^{-2} (1-x)^{1-2\bar{\epsilon}}\left|\ln(1-x)\right|^{|\alpha|+j}, \\
		\left| \pt_c^\alpha \pt_\gamma^j V^2_{\theta}(x) \right| & = e^{-a(x)}\left|\int_{0}^{x}e^{a(s)}ds\right|O\left( \left|\ln(1-x)\right|^{|\alpha|+j}\right)\\
		&=O(1) \left( \ln \frac{1+x}{3} \right)^{-2} (1-x)^{1-2\bar{\epsilon}}\left|\ln(1-x)\right|^{|\alpha|+j},
		\end{split}
	\]
	and 
	\[
	\begin{split}
		\left| \frac{d}{dx}\pt_c^\alpha \pt_\gamma^j V^1_{\theta}(x) \right| 
		& = e^{-a(x)}|a'(x)|O\left( \left|\ln(1-x)\right|^{|\alpha|+j} \right) \\
		& =O(1) \left( \ln \frac{1+x}{3} \right)^{-2} (1+x)^{-1}(1-x)^{-2\bar{\epsilon}}\left|\ln(1-x)\right|^{|\alpha|+j},\\
		\left| \frac{d}{dx}\pt_c^\alpha \pt_\gamma^j V^2_{\theta}(x) \right| 
		& = |-V^2_{\theta}a'(x)+1|O\left( \left|\ln(1-x)\right|^{|\alpha|+j} \right) \\
		& =O(1) \left( \ln \frac{1+x}{3} \right)^{-2} (1+x)^{-1}(1-x)^{-2\bar{\epsilon}}\left|\ln(1-x)\right|^{|\alpha|+j}.
	\end{split}
	\]
	From the above we can see that for all $|\alpha|+j\ge 1$, there exists some constant $C=C(\alpha,j,K)$, such that for $i=1,2$
	\begin{eqnarray*}
		\left| \left( \ln \frac{1+x}{3} \right) (1-x)^{-1+2\epsilon}\pt_c^\alpha \pt_\gamma^j  V^i_{\theta}(x)\right|\le C,  \\
		\left| \left( \ln \frac{1+x}{3} \right)^2 (1+x) (1-x)^{2\epsilon}\frac{d}{dx}\pt_c^\alpha \pt_\gamma^j V^i_{\theta}(x)\right|\le C.
	\end{eqnarray*}
	We also have that for $|\alpha|+j\ge 1$, $\pt_c^\alpha \pt_\gamma^j  V^i_{\theta}(1) = \pt_c^\alpha \pt_\gamma^j  V^i_{\theta}(-1) = 0$, $i=1,2$. 
	
	Next, using the definition of $a(x)$ in (\ref{sec31:eq:ab}), there exists some constant $C=C(K)$, such that  
	\[
	    \left| \frac{d^2}{dx^2}\pt_c^\alpha \pt_\gamma^jV^i_{\theta}\right|\le C, \quad \left| \frac{d^3}{dx^3}\pt_c^\alpha \pt_\gamma^jV^i_{\theta}\right|\le C, \quad -\frac{1}{2}<x<\frac{1}{2},\quad i=1,2.
	\]
	The above implies that for all $|\alpha|+j \ge 1$, $\pt_c^\alpha \pt_\gamma^j  V^i_{\theta}\in \mathbf{M}_1$, $i=1,2$, so $V^1,V^2\in C^{\infty}(K, \mathbf{X})$.

	
	
	(2) If $K\subset I_{2,3}$ or $I_{4,3}$, we have $U_{\theta}^{c,\gamma}(-1)=2$ with $\eta_1=4$, and $U^{c,\gamma}(1)\in(-3,-2)$ or $U^{c,\gamma}(1)=-2$ with $\eta_2=0$. 
	
%
	
	In this case $\gamma = \gamma^-(c_1,c_2,c_3)$. Using the expressions of $V^{2b}$ in (\ref{sec32:eq:V2b}), Lemma \ref{lem3_3_ab},  the estimates (\ref{eq3_3_ker_7}), (\ref{sec33:eq:Vsmooth:temp}) and Proposition \ref{propA_1}, we have that for all $|\alpha|\ge 1$, 
	\[
		\left| \pt_c^\alpha V^{2b}_{\theta}(x) \right|
		 = O(1)\left( \ln \frac{1+x}{3} \right)^{-2} (1-x)^{1-2\bar{\epsilon}}\left|\ln(1-x)\right|^{|\alpha|}, 
	\]
	and 
	\[
	   \left| \frac{d}{d x}\pt_c^\alpha V^{2b}_{\theta}(x) \right|
		 = O(1)\left( \ln \frac{1+x}{3} \right)^{-2} (1+x)^{-1}(1-x)^{-2\bar{\epsilon}}\left|\ln(1-x)\right|^{|\alpha|}.  
	\]
	From the above we see that for any $|\alpha|\ge 1$, there exists some constant $C=C(\alpha,K)$, such that for all $(c,\gamma)\in K$,
	\[
		\left| \left( \ln \frac{1+x}{3} \right) (1-x)^{-1+2\epsilon}\pt^\alpha_{c} V^{2b}_{\theta}(x)\right|\le C,\quad 
		\left| \left( \ln \frac{1+x}{3} \right)^2 (1+x) (1-x)^{2\epsilon}\frac{d}{dx}\pt^\alpha_c V^{2b}_{\theta}(x)\right|\le C. 
	\]
	We also have that for $|\alpha| \ge 1$, 
	$
		\pt^\alpha_{c} V^{2b}_{\theta}(1) = \pt^\alpha_{c} V^{2b}_{\theta}(-1) = 0. 
	$
	
	Similarly as part (1), we have
	\[
	    \left| \frac{d^2}{dx^2}\pt_c^\alpha \pt_\gamma^jV^{2b}_{\theta}\right|\le C, \quad \left| \frac{d^3}{dx^3}\pt_c^\alpha \pt_\gamma^jV^{2b}_{\theta}\right|\le C, \quad \textrm{ for all }-\frac{1}{2}<x<\frac{1}{2}.
	\]
	The above implies that  for all $|\alpha|\ge 1$, $\pt^\alpha_{c} V^{2b}_{\theta}\in \mathbf{M}_1$, so $V^{2b}\in C^{\infty}(K,\mathbf{X})$. 
	
	(3) Let $K$ be a subset of $I_{k,l}$ with $(k,l)\in A_2$. 
	
		Using the expressions of $V^3$ in (\ref{eq_basis}), Lemma \ref{lem3_3_ab}, estimates (\ref{eq3_3_ker_9}), (\ref{eq3_3_ker_10}), (\ref{sec33:eq:Vsmooth:temp}) and Proposition \ref{propA_1}, we have that for all $|\alpha|+j\ge 1$,
		
	\[
	   \left|\partial_{c}^{\alpha}\partial_{\gamma}^{j}V^3_{\phi}\right|=O(1)\left((1-x)^{1+\frac{\bar{U}_{\theta}(1)}{2}}+1\right)|\ln(1-x)|^{|\alpha|+j},
	\]
	\[
	    \left|\frac{d}{dx}\partial_{c}^{\alpha}\partial_{\gamma}^{j}V^3_{\phi}(x)\right|= O(1) \left( \ln \frac{1+x}{3} \right)^{-2} (1+x)^{-1}(1-x)^{\frac{\bar{U}_{\theta}(1)}{2}}|\ln(1-x)|^{|\alpha|+j},
	\]
	\[
	    \left|\frac{d^2}{dx^2}\partial_{c}^{\alpha}\partial_{\gamma}^{j}V^3_{\phi}(x)\right|=O(1)\left( \ln \frac{1+x}{3} \right)^{-2} (1+x)^{-2} (1-x)^{-1+\frac{\bar{U}_{\theta}(1)}{2}}|\ln(1-x)|^{|\alpha|+j}.
	\]
	Since $\epsilon>\max\{0,-1-\frac{\bar{U}_{\theta}(1)}{2}\}$, there exists some $C=C(\alpha, j, K)$ such that for all  $(c,\gamma)\in K$,
	\[
	   | (1-x^2)^{\epsilon}\partial_{c}^{\alpha}\partial_{\gamma}^{j}V^3_{\phi}|\le C, \quad \left|(1-x^2)^{1+\epsilon}\frac{d}{dx}\partial_{c}^{\alpha}\partial_{\gamma}^{j}V^3_{\phi}\right|\le C, \quad \quad \left|(1-x^2)^{2+\epsilon}\frac{d^2}{dx^2}\partial_{c}^{\alpha}\partial_{\gamma}^{j}V^3_{\phi}\right|\le C.
	\]
	The above implies that for any $|\alpha|+j\ge 1$, $\partial_{c}^{\alpha}\partial_{\gamma}^{j}V^3_{\phi}\in \mathbf{M}_2$, so $V^3\in C^{\infty}(K,\mathbf{X})$. 
\end{proof}
\\

Similar arguments as in Lemma \ref{sec32:lem:combine:bdd} imply the following lemma. 
\begin{lem}\label{sec33:lem:combine:bdd}
	(i) If $ K\subset \subset I_{2,1}$,  then there exists $C=C(K)>0$ such that for all $(c,\gamma)\in K$, $\beta:=(\beta_1,\beta_2,\beta_3,\beta_4)\in \mathbb{R}^4$, and $V\in \mathbf{X}_1$, 
	\[
		\| V \|_{\mathbf{X}} + | \beta | 
		\leq C \| \sum_{i=1}^{4}\beta_i V^i_{c,\gamma} + V \|_{\mathbf{X}}. 
	\] 

	(ii) If $K\subset \subset I_{2,3}$ or $I_{4,3}$, then there exists $C=C(K)>0$ such that for all $(c,\gamma)\in K$,  $(\beta_2,\beta_3,\beta_4)\in \mathbb{R}^3$, and $V\in \mathbf{X}_{2b}$, 
	\[
		\| V \|_{\mathbf{X}} + | (\beta_2, \beta_3,\beta_4) | 
		\leq C \| \beta_2 V^{2b}_{c,\gamma} + \beta_3 V^3_{c,\gamma} +\beta_4V^4_{c,\gamma}+ V \|_{\mathbf{X}}. 
	\] 
\end{lem}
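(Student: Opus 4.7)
The plan is to follow the same contradiction scheme used in the proof of Lemma \ref{sec32:lem:combine:bdd}, with the one change being that the relevant direct sum decomposition and the ``$C^\infty$ in $(c,\gamma)$'' inputs come from this section (namely (\ref{sec33:eq:decomp:X}) and Lemma \ref{sec33:lem:V:smooth}) rather than from Section \ref{sec_3}. Since (i) and (ii) are structurally identical, I would write out (i) in detail and indicate that (ii) follows by the same argument with $\{V_{c,\gamma}^{2b}, V_{c,\gamma}^3, V_{c,\gamma}^4\}$ in place of $\{V_{c,\gamma}^1, V_{c,\gamma}^2, V_{c,\gamma}^3, V_{c,\gamma}^4\}$.

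For (i), assume by contradiction that there exist sequences $(c^i, \gamma^i) \in K$, $\beta^i = (\beta_1^i, \beta_2^i, \beta_3^i, \beta_4^i) \in \mathbb{R}^4$ and $V^i \in \mathbf{X}_1$ with
\[
    \|V^i\|_{\mathbf{X}} + |\beta^i| \ge i \, \Big\| \sum_{j=1}^4 \beta_j^i V_{c^i,\gamma^i}^j + V^i \Big\|_{\mathbf{X}}.
\]
After rescaling, arrange $\|V^i\|_{\mathbf{X}} + |\beta^i| = 1$, so that the right-hand side sum tends to $0$ in $\mathbf{X}$. Using compactness of $K$ and boundedness of $\{\beta^i\}$, extract subsequences so that $(c^i,\gamma^i) \to (c,\gamma) \in K$ and $\beta^i \to \beta \in \mathbb{R}^4$. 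By Lemma \ref{sec33:lem:V:smooth}, the basis vectors $V_{c^i,\gamma^i}^j$ converge to $V_{c,\gamma}^j$ in $\mathbf{X}$ for $j=1,2,3,4$. Therefore
\[
    V^i \to V := -\sum_{j=1}^4 \beta_j V_{c,\gamma}^j \quad \text{in } \mathbf{X}.
\]

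Since $\mathbf{X}_1$ is a closed subspace of $\mathbf{X}$, the limit $V$ lies in $\mathbf{X}_1$. On the other hand $V \in \mathrm{span}\{V_{c,\gamma}^1, V_{c,\gamma}^2, V_{c,\gamma}^3, V_{c,\gamma}^4\}$. The direct sum decomposition (\ref{sec33:eq:decomp:X}) forces $V = 0$, and then linear independence of $\{V_{c,\gamma}^1, V_{c,\gamma}^2, V_{c,\gamma}^3, V_{c,\gamma}^4\}$ gives $\beta = 0$. This in turn yields $\|V^i\|_{\mathbf{X}} \to 0$, contradicting $\|V^i\|_{\mathbf{X}} + |\beta^i| = 1$. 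Hence (i) holds.

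The main (and in fact only nontrivial) input is the continuity in $(c,\gamma)$ of the elements $V_{c,\gamma}^j$ regarded as $\mathbf{X}$-valued functions: this is what allows the passage from convergence of coefficients $\beta^i$ to convergence in the norm of $\mathbf{X}$ of the combination $\sum_j \beta_j^i V_{c^i,\gamma^i}^j$. This input is precisely Lemma \ref{sec33:lem:V:smooth}, already established. There is no real obstacle beyond bookkeeping. Part (ii) is proved by the verbatim argument, using the decomposition $\mathbf{X} = \mathrm{span}\{V_{c,\gamma}^{2b}, V_{c,\gamma}^3, V_{c,\gamma}^4\} \oplus \mathbf{X}_{2b}$ from (\ref{sec33:eq:decomp:X}) together with the $C^\infty(K,\mathbf{X})$ regularity of $V_{c,\gamma}^{2b}, V_{c,\gamma}^3, V_{c,\gamma}^4$ from Lemma \ref{sec33:lem:V:smooth}.
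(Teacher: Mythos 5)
Your proof is correct and is essentially the paper's own argument: the paper proves this lemma by pointing to the contradiction/compactness argument of Lemma \ref{sec32:lem:combine:bdd}, which you reproduce, substituting the decomposition (\ref{sec33:eq:decomp:X}) and the $C^{\infty}(K,\mathbf{X})$ regularity from Lemma \ref{sec33:lem:V:smooth} exactly as intended (and you even fix the small slip in the paper's template, where the limit $V$ should be $-\sum_j \beta_j V^j_{c,\gamma}$ rather than a quantity still carrying the index $i$).
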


\noindent 
{\bf Proof of Theorem \ref{sec33:thm:1}.}
Define a map $F: K \times \mathbb{R}^4 \times \mathbf{X}_1 \to \mathbf{Y}$ by
\[
	F(c,\gamma,\beta, V) = G(c,\gamma,\sum_{i=1}^{4}\beta_i V^i_{c,\gamma} + V).
\]
By Proposition \ref{sec33:prop}, $G$ is a $C^{\infty}$ map from $K\times \mathbf{X}$ to $\mathbf{Y}$. Let $\tilde{U} = \tilde{U}(c,\gamma,\beta, V) = \sum_{i=1}^{4}\beta_i V^i_{c,\gamma}+ V$. Using Lemma \ref{sec33:lem:V:smooth}, we have $\tilde{U}\in C^{\infty}(K \times \mathbb{R}^4 \times \mathbf{X}_1, \mathbf{X})$. So it concludes that $F \in C^{\infty}(K \times \mathbb{R}^4 \times \mathbf{X}_1, \mathbf{Y})$. 

Next, by definition $F(c,\gamma,0, 0) = 0$ for all $(c,\gamma)\in K$. Fix some $(\bar{c}, \bar{\gamma}) \in K$, using Lemma \ref{sec33:lem:iso}, we have $F_{V}(\bar{c}, \bar{\gamma}, 0,0)=L_0^{\bar{c}, \bar{\gamma}}:\mathbf{X}_1\to \mathbf{Y}$ is an isomorphism.

Applying Theorem \ref{thm:IFT}, there exist some $\delta>0$ depending only on $K$ and a unique $V\in C^{\infty}(B_{\delta}(\bar{c}, \bar{\gamma})\times B_{\delta}(0), \mathbf{X}_1 )$, such that
\[
	F(c,\gamma,\beta, V(c,\gamma,\beta) ) = 0, \quad \forall (c,\gamma)\in B_{\delta}(\bar{c}, \bar{\gamma}), \beta \in B_{\delta}(0),
\]
and 
\[
	V(\bar{c}, \bar{\gamma},0)=0.
\]
The uniqueness part of Theorem \ref{thm:IFT} holds in the sense that there exists some $0<\bar{\delta}<\delta$,  such that $B_{\bar{\delta}}(\bar{c},\bar{\gamma},0,0)\cap F^{-1}(0) \subset  \{(c,\gamma,\beta,V(c,\gamma,\beta) ) |(c,\gamma) \in B_{\delta}(\bar{c}, \bar{\gamma}), \beta\in B_{\delta}(0)\}$.

\noindent
{\bf Claim.} There exists some $0 < \delta_1 < \frac{\bar{\delta}}{2}$, such that $V(c,\gamma,0)=0$ for every $(c,\gamma)\in B_{\delta_1}(\bar{c}, \bar{\gamma})$.

\noindent 
{\bf Proof of the Claim.}
Since $V(\bar{c}, \bar{\gamma},0) = 0$ and $V(c,\gamma,0)$ is continuous in $(c,\gamma)$, there exists some $0<\delta_1 < \frac{\bar{\delta}}{2}$, such that for all $(c,\gamma)\in B_{\delta_1}(\bar{c}, \bar{\gamma})$, $(c,\gamma,0, V(c,\gamma,0))\in B_{\bar{\delta}(\bar{c}, \bar{\gamma},0,0)}$. We know that for all $(c,\gamma)\in B_{\delta_1}(\bar{c},\bar{\gamma})$,
\[
	F(c,\gamma,0, 0) = 0, 
\]
and 
\[
	F(c,\gamma,0, V(c,\gamma,0))=0.
\]
By the above mentioned uniqueness result, $V(c,\gamma,0)=0$, for every $(c,\gamma)\in B_{\delta_1}(\bar{c}, \bar{\gamma})$.

Now we have $V\in C^{\infty}(B_{\delta_1}(\bar{c}, \bar{\gamma})\times B_{\delta_1}(0), \mathbf{X}_1(\bar{c}, \bar{\gamma}) )$, and 
\[
	F(c,\gamma,\beta,V(c,\gamma,\beta))=0, \quad \forall (c,\gamma)\in B_{\delta_1}(\bar{c}, \bar{\gamma}), \beta\in B_{\delta_1}(0).
\]
i.e. for any $(c,\gamma)\in B_{\delta_1}(\bar{c}, \bar{\gamma})$, $\beta\in B_{\delta_1}(0)$
\[
	G(c,\gamma,\sum_{i=1}^{4}\beta_i V^i_{c,\gamma}+ V(c,\gamma,\beta) )=0. 
\]
Take derivative of the above with respect to $\beta_i$ at $(c,\gamma,0)$, $1\le i\le 4$, we have
\[
	G_{\tilde{U}}(c,\gamma,0)(V^i_{c,\gamma}+\partial_{\beta_i}V(c,\gamma,0))=0.
\]
Since $G_{\tilde{U}}(c,\gamma,0)V^i_{c,\gamma}=0$ by Lemma \ref{sec33:lem:ker}, we have 
\[
	G_{\tilde{U}}(c,\gamma,0)\partial_{\beta_i}V(c,\gamma,0)=0.
\]
But $\partial_{\beta_i}V(c,\gamma,0)\in C^{\infty}(\mathbf{X}_1)$, so 
\[
	\partial_{\beta_i}V(c,\gamma,0)=0, \quad 1\le i\le 4.
\]
Since $K$ is compact, we can take $\delta_1$ to be a universal constant for each  $(c,\gamma)\in K$. So we have proved the existence of $V$ in Theorem \ref{sec33:thm:1}. 

Next, let $(c,\gamma)\in B_{\delta_1}(\bar{c}, \bar{\gamma})$. Let $\delta'$ be a small constant to be determined.  For any $U$ satisfies equation (\ref{sec31:eq:NSE}) with $U-U^{c,\gamma}\in \mathbf{X}$, and $\|U-U^{c,\gamma}\|_{ \mathbf{X}}\le \delta'$ there exist some $\beta\in \mathbb{R}^4$ and $V^*\in \mathbf{X}_1$ such that
\[
	U-U^{c,\gamma} =\sum_{i=1}^{4}\beta_i V^i_{c,\gamma}+ V^*.
\]
Then by Lemma \ref{sec33:lem:combine:bdd}, there exists some constant $C>0$ such that
\[
	\frac{1}{C}(|\beta| + \|V^*\|_{\mathbf{X}})
	\le \|\sum_{i=1}^{4}\beta_i V^i_{c,\gamma}+ V^*\|_{\mathbf{X}}\le \delta'.
\]
This gives $\|V^*\|_{\mathbf{X}}\le C\delta'$.

Choose $\delta'$ small enough such that $C\delta'<\delta_1$. We have the uniqueness of $V^*$. 
So  $V^*=V(c,\gamma,\beta)$ in (\ref{sec33:eq:U1}).
The theorem is proved. 
\qed

Theorem \ref{sec33:thm:2} can be proved by replacing $\mathbf{X}_1$ by $\mathbf{X}_{2b}$, $\sum_{i=1}^{4}\beta_i V^i_{c,\gamma}$ by $(\beta_2 V^{2b}_{c,\gamma}+ \beta_3 V^3_{c,\gamma}+ \beta_4 V^4_{c,\gamma})$ respectively. 
\\
\\

The case $\bar{U}_{\theta}(1)=-2$ with $\eta_2=-4$,  ($\bar{U}_{\theta}(-1)<3$, $\bar{U}_{\theta}(-1)\ne 2$ or $\bar{U}_{\theta}(-1)=2$ with $\eta_1=0$) can conclude the following theorems and the theorems can be proved similarly. 
\addtocounter{thm}{-2}
\renewcommand{\thethm}{\thesection.\arabic{thm}'}%
\begin{thm}\label{sec33:thm:1'}
	For every compact subset $K\subset I_{3,1}$ , there exist $\delta = \delta(K) > 0$, and $V\in C^{\infty}( K\times B_{\delta}(0), \mathbf{X}_1)$ satisfying $V(c,\gamma,0)=0$ and $\displaystyle \frac{\partial V}{\partial \beta_i}\big |_{\beta=0}=0$, $1\le i\le 4$, $\beta=(\beta_1,\beta_2,\beta_3,\beta_4)$, such that 
	\begin{equation}\label{sec33:eq:U1'}
		U = U^{c,\gamma} + \sum_{i=1}^{4} \beta_i V_{c,\gamma}^i + V(c,\gamma,\beta)
	\end{equation}
	satisfies equation (\ref{sec31:eq:NSE}) with $\hat{c}_1= c_1 - \frac{1}{2}\psi[\tilde{U}_{\phi}](-1)$, $\hat{c}_2 = c_2 - \frac{1}{2}\psi[\tilde{U}_{\phi}](1)$, $\hat{c}_3 = c_3 +\frac{1}{2}(\varphi_{c,\gamma}[\tilde{U}_{\theta}])''(0)$.

	Moreover, there exists some $\delta'=\delta'(K)>0$, such that if $\| U - U^{c,\gamma } \|_{\mathbf{X}} < \delta'$, $(c,\gamma) \in K$,  and $U$ satisfies  equation (\ref{sec31:eq:NSE}) with some constants $\hat{c}_1, \hat{c}_2, \hat{c}_3$, then (\ref{sec33:eq:U1'}) holds for some $|\beta|< \delta$. 
\end{thm}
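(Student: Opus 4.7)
The proof will parallel that of Theorem~\ref{sec33:thm:1}, but with the roles of the two endpoints $x=\pm 1$ reversed, and the most efficient route is to reduce to Theorem~\ref{sec33:thm:1} via the reflection symmetry of the system. Specifically, I will use the change of variables $\tilde x = -x$, $\tilde U_\theta(\tilde x) = -U_\theta(-\tilde x)$, $\tilde U_\phi(\tilde x) = U_\phi(-\tilde x)$ (with $U_\phi^2$ being even, the sign is immaterial there; the normalization is chosen so that divergence-free and boundary behaviors transform consistently). A direct computation shows that this involution sends a solution $U$ of \eqref{sec31:eq:NSE} with parameters $(c_1,c_2,c_3)$ to a solution $\tilde U$ of the same system with parameters $(c_2,c_1,c_3)$; simultaneously $\gamma\mapsto -\gamma$ and the roles $\gamma^-\leftrightarrow -\gamma^+$ swap. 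Consequently, if $(c,\gamma)\in I_{3,1}$, then the reflected parameter $(\tilde c,\tilde\gamma):=((c_2,c_1,c_3),-\gamma)$ lies in $I_{2,1}$, and $\bar{\tilde U}_\theta(-1)=2$ with $\tilde\eta_1=4$, while $\bar{\tilde U}_\theta(1)>-2$ or $=-2$ with $\tilde\eta_2=0$.

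Next, I will verify that the reflection is an isometry (up to an obvious relabeling) between the Banach spaces $\mathbf{X},\mathbf{Y}$ used in Theorem~\ref{sec33:thm:1} and the analogous mirror-image spaces needed for Theorem~\ref{sec33:thm:1'}. Concretely, the spaces for $I_{3,1}$ are defined by the same formulae as in Section~\ref{sec33:sec} but with $\ln\frac{1+x}{3}$ replaced by $\ln\frac{1-x}{3}$ and the weight $(1-x)^{-1+2\epsilon}$ replaced by $(1+x)^{-1+2\epsilon}$ (and correspondingly in $\mathbf{M}_1,\mathbf{N}_1$). The reflection $U(x)\mapsto -U(-x)$ carries these weighted norms exactly onto the norms used for the $I_{2,1}$-case, and it carries the kernel basis vectors $V^1_{c,\gamma},V^2_{c,\gamma},V^3_{c,\gamma},V^4_{c,\gamma}$ to (scalar multiples of) the corresponding vectors for $(\tilde c,\tilde\gamma)$, as well as the four linear functionals $l_1,\dots,l_4$ onto themselves (with $l_2$ unchanged, $l_3\mapsto -l_3$). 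In particular, the closed subspace $\mathbf{X}_1 = \cap_{i=1}^4 \ker l_i$ is preserved.

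With these identifications in place, the conclusion of Theorem~\ref{sec33:thm:1'} for $K\subset\subset I_{3,1}$ follows from Theorem~\ref{sec33:thm:1} applied to the reflected compact set $\tilde K\subset\subset I_{2,1}$: pulling back the $C^\infty$ family $\tilde V(\tilde c,\tilde\gamma,\tilde\beta)$ produced there via the involution gives the desired $V(c,\gamma,\beta)\in C^\infty(K\times B_\delta(0),\mathbf{X}_1)$ with $V(c,\gamma,0)=0$, $\partial_{\beta_i}V|_{\beta=0}=0$, and the representation \eqref{sec33:eq:U1'}. The pressure parameter corrections $\hat c_1,\hat c_2,\hat c_3$ map correctly because under reflection $\psi[\tilde U_\phi](\pm 1)$ interchange and $(\varphi_{c,\gamma}[\tilde U_\theta])''(0)$ is invariant. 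The local uniqueness statement transfers verbatim from the uniqueness statement of Theorem~\ref{sec33:thm:1}.

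Alternatively, one can carry out a self-contained proof entirely in the mirror-image function spaces, repeating verbatim the arguments of Section~\ref{sec33:sec}: establishing the analogues of Lemmas~\ref{sec33:lem:A:well-def}--\ref{sec33:lem:combine:bdd} and Proposition~\ref{sec33:prop}, where the asymptotics $\bar U_\theta = -2 - 4(\ln\frac{1-x}{3})^{-1} + O((\ln\frac{1-x}{3})^{-2})$ as $x\to 1$ and Proposition~\ref{propA_1}(3) replace the corresponding ingredients used in Section~\ref{sec33:sec}. The main (and only nontrivial) obstacle in either approach is bookkeeping: making sure every weight, every kernel vector, and every parameter correction is transformed consistently. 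Since the PDE is preserved exactly by the reflection, no new analytic estimates are required, and Theorem~\ref{thm:IFT} applied in the reflected setting closes the argument as in the proof of Theorem~\ref{sec33:thm:1}.
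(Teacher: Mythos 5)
Your proposal is correct and is essentially the paper's own route: Section \ref{sec33:sec} explicitly reduces the $A_3$ cases (including $I_{3,1}$) to the $A_2$ cases via the same reflection $\tilde{x}=-x$, $\tilde{U}(\tilde{x})=-U(-x)$, proving Theorem \ref{sec33:thm:1} in detail and obtaining Theorem \ref{sec33:thm:1'} "similarly" in the mirrored weighted spaces, exactly as you describe (your bookkeeping of how $c_1\leftrightarrow c_2$, $\gamma\mapsto-\gamma$, the weights, the kernel vectors $V^i_{c,\gamma}$, the functionals $l_i$ up to sign, and $\hat{c}_1,\hat{c}_2,\hat{c}_3$ transform is accurate). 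The only nitpick is that $l_1$ also changes sign under the reflection, not just $l_3$, but since $\ker(-l_i)=\ker l_i$ this does not affect the invariance of $\mathbf{X}_1$.
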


Recall that $V_{c,\gamma}^{2a}$ is defined by (\ref{sec32:eq:V2a}), $l_{2a}$ be the bounded linear functional defined by (\ref{eq_l2ab}). Define $\mathbf{X}_{2a}:=\ker l_{2a} \cap \ker l_3 \cap \ker l_4$. Then $\mathbf{X}_{2a}$ is independent of $(c,\gamma)$. 

\begin{thm}\label{sec33:thm:2'}
	For every compact subset $K$ of $I_{3,2}$ or $I_{4,2}$, for every $(c,\gamma)\in K$, there exist $\delta = \delta(K) > 0$, and $V\in C^{\infty}( K\times B_{\delta}(0), \mathbf{X}_{2a})$ satisfying $V(c,\gamma,0)=0$ and $\displaystyle \frac{\partial V}{\partial \beta_i}\big |_{\beta=0}=0$, $i=2,3,4$, $\beta=(\beta_2,\beta_3,\beta_4)$, such that 
	\begin{equation}\label{sec33:eq:U2'}
		U = U^{c,\gamma} + \beta_2 V_{c,\gamma}^{2a} + \beta_3 V_{c,\gamma}^3 + \beta_4 V_{c,\gamma}^4+ V(c,\gamma,\beta)
	\end{equation}
	satisfies equation \eqref{sec31:eq:NSE} with $\hat{c}_1= c_1 - \frac{1}{2}\psi[\tilde{U}_{\phi}](-1)$, $\hat{c}_2 = c_2 - \frac{1}{2}\psi[\tilde{U}_{\phi}](1)$, $\hat{c}_3 = c_3 +\frac{1}{2}(\varphi_{c,\gamma}[\tilde{U}_{\theta}])''(0)$.

	Moreover, there exists some $\delta'=\delta'(K)>0$, such that if $\| U - U^{c,\gamma } \|_{\mathbf{X}} < \delta'$, $(c,\gamma) \in K$,  and $U$ satisfies  equation \eqref{sec31:eq:NSE} with some constants $\hat{c}_1, \hat{c}_2, \hat{c}_3$, then \eqref{sec33:eq:U2'} holds for some $|\beta|< \delta$. 
\end{thm}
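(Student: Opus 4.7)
The plan is to reduce Theorem \ref{sec33:thm:2'} to Theorem \ref{sec33:thm:2} by the reflection symmetry mentioned at the start of this section, namely $\tilde{x}=-x$, $\tilde{U}(\tilde{x})=-U(-x)$. The key observation is that equation \eqref{sec31:eq:NSE} is invariant under this transformation (with the coefficients $c_1, c_2$ swapped and $c_3$ unchanged in $P_{\hat{c}}$), and the transformation sends $J_3$ to $J_2$, $J_4$ to $J_4$, and interchanges the $\gamma^+$/$\gamma^-$ roles. In particular it sends $I_{3,2}$ to $I_{2,3}$ and $I_{4,2}$ to $I_{4,3}$, which are precisely the parameter sets covered by Theorem \ref{sec33:thm:2}.

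First I would verify the symbolic correspondence. For $\bar{U}=U^{c,\gamma}$ with $(c,\gamma)\in I_{3,2}\cup I_{4,2}$, the reflected function $\check{\bar U}_\theta(x):=-\bar{U}_\theta(-x)$ solves \eqref{eq:UthP} with $(c_1,c_2)$ swapped, and satisfies $\check{\bar U}_\theta(-1)=-\bar U_\theta(1)=2$ with $\check\eta_1=-\eta_2=4$, while $\check{\bar U}_\theta(1)<3$ or $=2$ with $\check\eta_2=0$. Thus the reflected parameters lie in $I_{2,3}\cup I_{4,3}$ and satisfy the hypotheses of Theorem \ref{sec33:thm:2}. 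The basis vectors transform as $V^{2b}_{c,\gamma}\mapsto V^{2a}$ (up to sign) because $\int_1^x e^{a(s)}ds\mapsto -\int_{-1}^{\tilde x}e^{\check a(\tilde s)}d\tilde s$ after change of variables, and similarly $V^3, V^4$ map to themselves (up to sign). The functionals $l_{2a}$ and $l_{2b}$ get interchanged, so $\mathbf{X}_{2a}$ maps to $\mathbf{X}_{2b}$ under pullback.

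Second I would transplant the function spaces. The norms in $\mathbf{M}_1, \mathbf{M}_2, \mathbf{N}_1, \mathbf{N}_2$ defined in Section \ref{sec33:sec} have their singular weight $\ln\frac{1+x}{3}$ attached to the $-1$ endpoint; in the $A_3$ case the singular behaviour of $\bar U_\theta$ sits at $x=1$, and one defines the mirrored spaces with $\ln\frac{1-x}{3}$ attached to the $+1$ endpoint instead. The reflection $x\mapsto -x$ is an isometric isomorphism between the two families of spaces, carries $\mathbf{X}_{2a}$ (mirrored) to $\mathbf{X}_{2b}$ (original), and conjugates the map $G$ of the $A_3$ case to the map $G$ of the $A_2$ case.

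Finally I would apply Theorem \ref{sec33:thm:2} to the reflected compact set $\check K:=\{(c_2,c_1,c_3,-\gamma):(c_1,c_2,c_3,\gamma)\in K\}\subset I_{2,3}\cup I_{4,3}$, obtaining $\delta=\delta(\check K)=\delta(K)$ and a $C^\infty$ family $\check V(\check c,\check\gamma,\beta)\in \mathbf{X}_{2b}$ such that $U^{\check c,\check\gamma}+\beta_2 V^{2b}_{\check c,\check\gamma}+\beta_3 V^3_{\check c,\check\gamma}+\beta_4 V^4_{\check c,\check\gamma}+\check V$ solves \eqref{sec31:eq:NSE}. Pulling back by the reflection yields $V(c,\gamma,\beta):=-\check V(\check c,\check\gamma,\beta)(-\cdot)\in\mathbf{X}_{2a}$ with $V(c,\gamma,0)=0$, $\partial_{\beta_i}V|_{\beta=0}=0$ for $i=2,3,4$, and expression \eqref{sec33:eq:U2'} solving \eqref{sec31:eq:NSE} with the stated $\hat c_i$; the identities for $\hat c_1,\hat c_2,\hat c_3$ follow from their symmetry (in particular $\psi[\tilde U_\phi](\pm 1)$ get interchanged). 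The uniqueness statement with $\delta'$ transfers similarly from Theorem \ref{sec33:thm:2}.

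The only mild obstacle is the bookkeeping in the second paragraph: one must check that the reflected weights, the reflected functionals, and the reflected direct-sum decomposition exactly match those already set up in Section \ref{sec33:sec}, so that the isomorphism statement, the kernel computation, and the projection are transported unambiguously. Once this bookkeeping is done, no new analysis is required: Proposition \ref{sec33:prop}, Lemmas \ref{sec33:lem:A:well-def}--\ref{sec33:lem:combine:bdd}, and the IFT argument all apply verbatim in the mirrored setting.
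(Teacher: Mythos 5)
Your proposal is correct and follows essentially the paper's own route: the paper states at the start of this section that the $A_3$ cases are obtained from the $A_2$ cases via the transformation $\tilde{x}=-x$, $\tilde{U}(\tilde{x})=-U(-x)$, and the primed theorems are asserted to follow "similarly," which is exactly the reflection reduction (with the swap $c_1\leftrightarrow c_2$, $\gamma\mapsto-\gamma$, $I_{3,2}\cup I_{4,2}\to I_{2,3}\cup I_{4,3}$, $V^{2b}\mapsto V^{2a}$, $l_{2b}\leftrightarrow l_{2a}$, and mirrored weighted spaces) that you carry out. Your bookkeeping of the swapped constants $\hat{c}_1,\hat{c}_2$ and of $\psi[\tilde{U}_\phi](\pm 1)$ is consistent, so no gap remains.
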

\renewcommand{\thethm}{\thesection.\arabic{thm}}%

\section{Existence of axisymmetric, with swirl solutions around \texorpdfstring{$\bm{U^{c,\gamma}}$}{}, when \texorpdfstring{$\bm{(c,\gamma)\in I_{4,3}}$}{}}\label{sec_5}

If $(c,\gamma)\in I_{4,1}$, then $\bar{U}_{\theta}(-1)=2$ with $\eta_1=4$ and $\bar{U}_{\theta}(1)=-2$ with $\eta_2=-4$.

Let $0<\epsilon<\frac{1}{2}$, define
\[ 
\begin{split}
	&
	\begin{split}
	\mathbf{M}_1 = & \mathbf{M}_1(\epsilon)
		:=  \Big\{  \tilde{U}_\theta \in C^3(-\frac{1}{2},\frac{1}{2})\cap C^1(-1,1)\cap C[-1,1]  \mid  \tilde{U}_\theta(1)=\tilde{U}_\theta(-1)=0,\\
		& \| \ln \frac{1+x}{3} \ln \frac{1-x}{3} \tilde{U}_\theta \|_{L^\infty(-1,1)} < \infty, \\
		& \| (1-x^2) \left( \ln\frac{1+x}{3} \right)^2 \left( \ln\frac{1-x}{3} \right)^2 \tilde{U}'_\theta \|_{L^\infty(-1,1)} < \infty, \\
		& \| \tilde{U}_{\theta}'' \|_{L^{\infty}(-\frac{1}{2},\frac{1}{2}) } < \infty,  
		\| \tilde{U}_{\theta}''' \|_{L^{\infty}(-\frac{1}{2},\frac{1}{2})} < \infty \Big\},
	\end{split}\\
	&
	\begin{split}
	\mathbf{M}_2=& \mathbf{M}_2(\epsilon)
		:= \Big\{  \tilde{U}_\phi \in  C^2(-1, 1) \mid  (1-x^2)^{\epsilon}\|\tilde{U}_\phi \|_{L^\infty(-1,1)} < \infty, \\ 
		& \|(1-x^2)^{1+\varepsilon} \tilde{U}_\phi'\|_{L^\infty(-1,1)} < \infty, 
		\|(1-x^2)^{2+\varepsilon} \tilde{U}_\phi'' \|_{L^\infty(-1,1)} <\infty \Big\}, 
	\end{split}
\end{split}
\]
with the following norms accordingly
\[
\begin{split}
	& \|\tilde{U}_\theta\|_{\mathbf{M}_1} := \|\ln\frac{1+x}{3}\ln\frac{1-x}{3}\tilde{U}_\theta\|_{L^\infty(-1,1)} \\
	& \qquad + \|(1-x^2)\left(\ln\frac{1+x}{3}\right)^2\left(\ln\frac{1-x}{3}\right)^2\tilde{U}_\theta'\|_{L^\infty(-1,1)} \\
	& \qquad + ||\tilde{U}_{\theta}''||_{L^{\infty}(-\frac{1}{2},\frac{1}{2})}+  ||\tilde{U}_{\theta}'''||_{L^{\infty}(-\frac{1}{2},\frac{1}{2})}, \\
	& \|\tilde{U}_\phi\|_{\mathbf{M}_2}:=  \|(1-x^2)^{\epsilon} \tilde{U}_\phi\|_{L^\infty(-1,1)} + \|(1-x^2)^{1+\varepsilon} \tilde{U}_\phi'\|_{L^\infty(-1,1)}  + \|(1-x^2)^{2+\varepsilon} \tilde{U}_\phi'' \|_{L^\infty(-1,1)} .
\end{split}
\]

Next, define the following function spaces:
\[
\begin{split}
	& 
	\begin{split}
		\mathbf{N}_1= &\mathbf{N}_{1}(\epsilon):= \left\{  \xi_\theta \in C^2( -\frac{1}{2},\frac{1}{2})\cap C[-1,1] \mid  \xi_\theta(1)=\xi_{\theta}(-1)=\xi''_{\theta}(0)=0,\right. \\ 
		& \left.  \|\left(\ln\frac{1+x}{3}\right)^2\left(\ln\frac{1-x}{3}\right)^2\xi_\theta\|_{L^\infty(-1,1)} < \infty, ||\xi_{\theta}'||_{L^{\infty}(-\frac{1}{2},\frac{1}{2})}<\infty,  ||\xi_{\theta}''||_{L^{\infty}(-\frac{1}{2},\frac{1}{2})} \right\},
	\end{split} \\
	& 
	\mathbf{N}_2=\mathbf{N}_2(\epsilon):= \left\{  \xi_\phi \in C(-1, 1) \mid  \|(1-x^2)^{1+\varepsilon} \xi_\phi\|_{L^\infty(-1,1)} < \infty  \right\}, 
\end{split}
\]
with the following norms accordingly 
\[
\begin{split}
	&\|\xi_{\theta}\|_{\mathbf{N}_1}=\|\left(\ln\frac{1+x}{3}\right)^2\left(\ln\frac{1-x}{3}\right)^2\xi_{\theta}\|_{L^\infty(-1,1)}+||\xi_{\theta}'||_{L^{\infty}(-\frac{1}{2},\frac{1}{2})}+||\xi_{\theta}''||_{L^{\infty}(-\frac{1}{2},\frac{1}{2})},\\
	&\|\xi_{\phi}\|_{\mathbf{N}_2}=\|(1-x^2)^{1+\epsilon}\xi_{\phi}\|_{L^\infty(-1,1)}. 
	\end{split}
\]
Then let $\mathbf{X} := \{ \tilde{U} = (\tilde{U}_\theta, \tilde{U}_\phi ) \mid \tilde{U}_\theta \in \mathbf{M}_1, \tilde{U}_{\phi} \in \mathbf{M}_2\}$ with norm $ \| \tilde{U} \|_{\mathbf{X}} = \| \tilde{U}_\theta \|_{\mathbf{M}_1} + \| \tilde{U}_\phi \|_{\mathbf{M}_2}$, $\mathbf{Y} := \{ \xi = ( \xi_\theta, \xi_\phi ) \mid \xi_\theta \in \mathbf{N}_1, \xi_\phi \in \mathbf{N}_2 \}$, with norm $ \| \xi \|_{\mathbf{Y}} = \| \xi_\theta \|_{\mathbf{N}_1} + \| \xi_\phi \|_{\mathbf{N}_2}$. It can be proved that $\mathbf{M}_1$, $\mathbf{M}_2$, $\mathbf{N}_1$, $\mathbf{N}_2$, $\mathbf{X}$ and $\mathbf{Y}$ are Banach spaces.

Let $l_i:\mathbf{X}\to \mathbb{R}$, $1\le i\le 4$, be the bounded linear functionals defined by (\ref{sec31:eq:fcnal:l}) for each $V\in \mathbf{X}$. Let $\mathbf{X}_1:=\cap_{i=1}^4 \ker l_i$. It can be seen that $\mathbf{X}_1$ is independent of $(c,\gamma)$. 

\begin{thm}\label{sec34:thm:1}
	For every compact subset $K\subset I_{4,1}$, for every $(c,\gamma)\in K$, there exist $\delta = \delta(K) > 0$, and $V\in C^{\infty}( K\times B_{\delta}(0), \mathbf{X}_1)$ satisfying $V(c,\gamma,0)=0$ and $\displaystyle \frac{\partial V}{\partial \beta_i}\big |_{\beta=0}=0$, $1\le i\le 4$, $\beta=(\beta_1,\beta_2,\beta_3,\beta_4)$, such that 
	\begin{equation}\label{sec34:eq:U1}
		U = U^{c,\gamma} + \sum_{i=1}^{4} \beta_i V_{c,\gamma}^i + V(c,\gamma,\beta)
	\end{equation}
	satisfies equation (\ref{sec31:eq:NSE}) with $\hat{c}_1= c_1 - \frac{1}{2}\psi[\tilde{U}_{\phi}](-1)$, $\hat{c}_2 = c_2 - \frac{1}{2}\psi[\tilde{U}_{\phi}](1)$, $\hat{c}_3 = c_3 +\frac{1}{2}(\varphi_{c,\gamma}[\tilde{U}_{\theta}])''(0)$.

	Moreover, there exists some $\delta'=\delta'(K)>0$, such that if $\| U - U^{c,\gamma } \|_{\mathbf{X}} < \delta'$, $(c,\gamma) \in K$,  and $U$ satisfies  equation (\ref{sec31:eq:NSE}) with some constants $\hat{c}_1, \hat{c}_2, \hat{c}_3$, then (\ref{sec34:eq:U1}) holds for some $|\beta|< \delta$. 
\end{thm}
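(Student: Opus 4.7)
The strategy mirrors the IFT-based arguments in Sections \ref{sec_3} and \ref{sec33:sec}, but adapted to the most singular case in which $\bar U_\theta$ has logarithmic behavior at both endpoints: by Proposition \ref{prop2_1}, for $(c,\gamma)\in I_{4,1}$ one has $\bar U_\theta(\pm1)=\pm2$ together with $\eta_1=4,\eta_2=-4$, and by \cite{LLY2} (or the analogue of Lemma \ref{lem3_3_ab}) the asymptotics are
\[
\bar U_\theta(x)=2+\frac{4}{\ln\frac{1+x}{3}}+O(1)\Bigl(\ln\tfrac{1+x}{3}\Bigr)^{-2},\qquad
\bar U_\theta(x)=-2-\frac{4}{\ln\frac{1-x}{3}}+O(1)\Bigl(\ln\tfrac{1-x}{3}\Bigr)^{-2},
\]
which accounts for the symmetric $\ln\frac{1\pm x}{3}$ weights appearing in $\mathbf{M}_1$ and $\mathbf{N}_1$. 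The plan is to (i) verify that $G(c,\gamma,\tilde U)=A(c,\gamma,\tilde U)+Q(\tilde U,\tilde U)$ defined by \eqref{sec31:eq:G} is a $C^\infty$ map $K\times\mathbf{X}\to\mathbf{Y}$; (ii) invert the linearization $L_0^{c,\gamma}$ on a suitable complement of its kernel; and (iii) apply Theorem B.

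\emph{Step 1: $G\in C^\infty(K\times\mathbf{X},\mathbf{Y})$.} I would prove the analogues of Lemmas \ref{sec33:lem:A:well-def} and \ref{sec33:lem:Q:well-def}, showing that $A$ is bounded linear and $Q$ is bounded bilinear. The only new point relative to Section \ref{sec33:sec} is that both factors $(2x+\bar U_\theta)\ln\frac{1+x}{3}$ and $(2x+\bar U_\theta)\ln\frac{1-x}{3}$ are now bounded on $(-1,1)$, which lets the $l_{c,\gamma}[\tilde U_\theta]$ term be controlled by the doubly-weighted $\mathbf{M}_1$-norm. The triple-integral estimate for $\psi[\tilde U_\phi,\tilde V_\phi]$ and the cutoff $\frac12\psi(\pm1)(1\mp x)$ proceed exactly as in Lemma \ref{sec32:lem:Q:well-def} using $\tilde U_\phi,\tilde V_\phi\in\mathbf{M}_2$. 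Smoothness in $(c,\gamma)$ then follows from Proposition \ref{propA_1}(4), which supplies the bound $|\partial_c^\alpha\partial_\gamma^j\bar U_\theta|\le C(\ln\tfrac{1+x}{3})^{-2}(\ln\tfrac{1-x}{3})^{-2}$ needed to absorb exactly the weights in $\mathbf{M}_1$ and $\mathbf{N}_1$.

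\emph{Step 2: kernel, right inverse, and isomorphism.} I first establish the analogue of Lemma \ref{lem3_3_ab}, namely
\[
e^{a(x)}\!\le C\bigl(\ln\tfrac{1+x}{3}\bigr)^2\bigl(\ln\tfrac{1-x}{3}\bigr)^2,\qquad
e^{-a(x)}\!\le C\bigl(\ln\tfrac{1+x}{3}\bigr)^{-2}\bigl(\ln\tfrac{1-x}{3}\bigr)^{-2}(1-x^2),
\]
together with the corresponding estimates on $e^{\pm b}$. Combined with the weights in $\mathbf{M}_1,\mathbf{N}_1$ these give that $W^{c,\gamma,1}_\theta$ (the version with integration from $0$) is a well-defined continuous map $\mathbf{N}_1\to\mathbf{M}_1$, with $\lim_{x\to\pm1}W_\theta=0$ and $(l_{c,\gamma}[W_\theta])''(0)=\xi_\theta''(0)=0$; paired with $W_\phi^{c,\gamma}$ from \eqref{sec31:eq:Wphi}, this yields a right inverse of $L_0^{c,\gamma}$. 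Next, I check that in this regime all four basis vectors $V_{c,\gamma}^1,V_{c,\gamma}^2,V_{c,\gamma}^3,V_{c,\gamma}^4$ from \eqref{eq_basis} lie in $\mathbf{X}$: thanks to the double-log asymptotics of $\bar U_\theta$, $e^{-a}$ carries the factor $(\ln\frac{1+x}{3})^{-2}(\ln\frac{1-x}{3})^{-2}(1-x^2)$, which is compatible with the $\mathbf{M}_1$ weights. Hence $\ker L_0^{c,\gamma}=\mathrm{span}\{V_{c,\gamma}^i:1\le i\le 4\}$, and invertibility of $(l_i(V_{c,\gamma}^j))$ from Section \ref{sec_2} yields $\mathbf{X}=\mathrm{span}\{V_{c,\gamma}^i\}\oplus\mathbf{X}_1$; consequently $L_0^{c,\gamma}:\mathbf{X}_1\to\mathbf{Y}$ is an isomorphism.

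\emph{Step 3: smoothness of $V_{c,\gamma}^i$ in parameters and IFT.} Using Proposition \ref{propA_1}(4) and $\partial_c^\alpha\partial_\gamma^j a(x)=O(|\ln(1-x^2)|^{|\alpha|+j+1})$, I obtain $V_{c,\gamma}^i\in C^\infty(K,\mathbf{X})$ as in Lemma \ref{sec33:lem:V:smooth}. Then $F(c,\gamma,\beta,V):=G(c,\gamma,\sum_i\beta_iV_{c,\gamma}^i+V)$ is $C^\infty$ on $K\times\mathbb R^4\times\mathbf{X}_1$ with $F(c,\gamma,0,0)=0$ and $F_V(c,\gamma,0,0)=L_0^{c,\gamma}:\mathbf{X}_1\to\mathbf{Y}$ an isomorphism. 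Theorem B produces $V(c,\gamma,\beta)\in\mathbf{X}_1$ with $V(c,\gamma,0)=0$; differentiating $G(c,\gamma,\sum_i\beta_iV_{c,\gamma}^i+V(c,\gamma,\beta))=0$ in $\beta_i$ at $\beta=0$ and using $L_0^{c,\gamma}V_{c,\gamma}^i=0$ yields $\partial_{\beta_i}V(c,\gamma,0)=0$. The uniqueness statement is proved verbatim as in the proof of Theorem \ref{sec33:thm:1} via the analogue of Lemma \ref{sec33:lem:combine:bdd}, using compactness of $K$ and continuity of $(c,\gamma)\mapsto V_{c,\gamma}^i$.

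The main obstacle is Step 2: verifying that the chosen weights $(\ln\tfrac{1+x}{3})(\ln\tfrac{1-x}{3})$ and $(1-x^2)(\ln\tfrac{1+x}{3})^2(\ln\tfrac{1-x}{3})^2$ are simultaneously compatible with boundedness of $A$, the right-inverse estimates for $W_\theta^{c,\gamma,1}$, and membership of all four $V_{c,\gamma}^i$ in $\mathbf{X}$. The delicate point is that the doubly-singular behavior of $e^{\pm a}$ at both endpoints forces the exponent on $(1-x)^{-1+2\epsilon}$ (or $(1+x)^{-1+2\epsilon}$) to be absent in the leading $\mathbf{M}_1$-weight on $\tilde U_\theta$, so the decay is now entirely encoded in the logarithmic factors; the double-log estimates in Proposition \ref{propA_1}(4) are precisely tuned to compensate.
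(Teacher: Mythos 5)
Your overall strategy is the same as the paper's: the same weighted spaces, the decomposition $G=A+Q$, the right inverse $(W^{c,\gamma,1}_\theta,W^{c,\gamma}_\phi)$, the four–dimensional kernel $\mathrm{span}\{V^1_{c,\gamma},\dots,V^4_{c,\gamma}\}$, the splitting $\mathbf{X}=\mathrm{span}\{V^i_{c,\gamma}\}\oplus\mathbf{X}_1$, and the IFT together with a combine--bound lemma for the uniqueness part. Two of the estimates you state, however, are not correct as written, and one of them is load-bearing. First, your bound $e^{-a(x)}\le C(\ln\frac{1+x}{3})^{-2}(\ln\frac{1-x}{3})^{-2}(1-x^2)$ is false: in the case $(c,\gamma)\in I_{4,1}$ one has $a(x)=2\ln(-\ln\frac{1+x}{3})+2\ln(-\ln\frac{1-x}{3})+O(1)$, because the $(1\pm x)$ contributions of $\bar U_\theta$ cancel against $-\ln(1-x^2)$, so $e^{-a}\asymp(\ln\frac{1+x}{3})^{-2}(\ln\frac{1-x}{3})^{-2}$ with no power of $(1-x^2)$; this is Lemma \ref{lem3_4_ab}. (As written your two $e^{\pm a}$ bounds are incompatible.) Your conclusions about membership of the $V^i_{c,\gamma}$ in $\mathbf{X}$ survive with the correct bound, so this slip is harmless but must be fixed.

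Second, and more seriously, in Step 3 you invoke $\partial_c^\alpha\partial_\gamma^j a(x)=O(|\ln(1-x^2)|^{|\alpha|+j+1})$. In this doubly critical case there is no $(1\mp x)^{2\epsilon}$ slack in the $\mathbf{M}_1$, $\mathbf{N}_1$ weights—as you yourself observe, all the decay is carried by the logarithms—so extra powers of $|\ln(1-x)|$ cannot be absorbed: for instance $\ln\frac{1+x}{3}\,\ln\frac{1-x}{3}\,\partial_c^\alpha\partial_\gamma^j V^1_\theta$ would then only be controlled by quantities of size $(\ln\frac{1-x}{3})^{|\alpha|+j-1}$ near $x=1$, which blow up, and the verification that $\partial_c^\alpha\partial_\gamma^j V^i_\theta\in\mathbf{M}_1$ (your analogue of Lemma \ref{sec33:lem:V:smooth}) fails. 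The correct statement, which follows from Proposition \ref{propA_1} part (4) because $(1-s^2)^{-1}(\ln\frac{1+s}{3})^{-2}(\ln\frac{1-s}{3})^{-2}$ is integrable up to both endpoints, is $\partial_c^\alpha\partial_\gamma^j a(x)=\partial_c^\alpha\partial_\gamma^j b(x)=O(1)$, i.e.\ \eqref{sec34:eq:Vsmooth:temp} in the paper's Lemma \ref{sec34:lem:V:smooth}. With $O(1)$ in place of your logarithmic bound, your Step 3 and the remainder of the argument go through exactly as in the paper.
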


For $\tilde{U}_{\phi}\in \mathbf{M}_2$, let $\psi[\tilde{U}_{\phi}](x)$ be defined by (\ref{sec31:eq:psi}). Let $K$ be a compact subset in $I_{4,1}$. Define a map $G = G(c,\gamma,\tilde{U})$ on $K\times \mathbf{X}$ by (\ref{sec31:eq:G}). 

\begin{prop}\label{sec34:prop}
	The map $G$ is in $C^{\infty}(K\times \mathbf{X}, \mathbf{Y})$ in the sense that $G$ has  continuous Fr\'{e}chet derivatives of every order. Moreover, the Fr\'{e}chet derivative of $G$ with respect to $\tilde{U}$ at $(c,\gamma,\tilde{U})\in K\times \mathbf{X}$ is given by the linear bounded operator $L^{c,\gamma}_{\tilde{U}}: \mathbf{X}\rightarrow \mathbf{Y}$ defined as  in (\ref{sec31:eq:Linear}).
\end{prop}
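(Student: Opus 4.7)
The plan is to follow precisely the strategy of Propositions 3.1 and 4.1, writing $G(c,\gamma,\tilde{U}) = A(c,\gamma,\tilde{U}) + Q(\tilde{U},\tilde{U})$ via \eqref{sec31:eq:A}--\eqref{sec31:eq:Q}, and proving three ingredients: (i) for each fixed $(c,\gamma) \in K$, $A(c,\gamma,\cdot) : \mathbf{X} \to \mathbf{Y}$ is a bounded linear operator; (ii) $Q : \mathbf{X} \times \mathbf{X} \to \mathbf{Y}$ is a bounded bilinear operator; (iii) for every multi-index $\alpha$ and $j \geq 0$ with $|\alpha|+j \geq 1$, $\partial_c^\alpha \partial_\gamma^j A(c,\gamma, \cdot) : \mathbf{X} \to \mathbf{Y}$ is a bounded linear operator with norm bounded uniformly in $(c,\gamma) \in K$. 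Standard functional analysis then yields $G \in C^\infty(K \times \mathbf{X}, \mathbf{Y})$ with Fr\'{e}chet derivative given by $L_{\tilde{U}}^{c,\gamma}$.

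For (i), the structural input is the two-sided asymptotics of $\bar{U}_\theta$ from Theorem 1.3 in \cite{LLY1} and Lemma 2.14 in \cite{LLY2}: under the standing case assumption ($\bar{U}_\theta(-1)=2$ with $\eta_1 = 4$ and $\bar{U}_\theta(1) = -2$ with $\eta_2 = -4$),
\[
\bar{U}_\theta(x) = 2 + \frac{4}{\ln\frac{1+x}{3}} + O\!\left(\!\left(\ln\tfrac{1+x}{3}\right)^{-2}\right) \text{ near } -1, \quad \bar{U}_\theta(x) = -2 - \frac{4}{\ln\frac{1-x}{3}} + O\!\left(\!\left(\ln\tfrac{1-x}{3}\right)^{-2}\right) \text{ near } 1,
\]
whence the crucial uniform bound
\[
\left|(2x + \bar{U}_\theta(x)) \ln\tfrac{1+x}{3} \ln\tfrac{1-x}{3}\right| \leq C, \qquad -1<x<1.
\]
This is exactly what is needed to absorb the transport-type contribution $(2x+\bar{U}_\theta)\tilde{U}_\theta$ inside $l_{c,\gamma}[\tilde{U}_\theta]$ against the double-logarithmic weight $(\ln\frac{1+x}{3})^2(\ln\frac{1-x}{3})^2(1-x^2)^{-1+2\epsilon}$ defining $\mathbf{N}_1$, using the $\mathbf{M}_1$-control of $\tilde{U}_\theta$. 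The remaining term $(1-x^2)\tilde{U}'_\theta$ is handled directly since $(1-x^2)$ cancels one $(1-x^2)$-weight; the correction $\frac{1}{2}l_{c,\gamma}[\tilde{U}_\theta]''(0)(1-x^2)$ contributes only interior $C^3$-regularity bounded by $\|\tilde{U}_\theta\|_{\mathbf{M}_1}$, and the condition $A_\theta''(0)=0$, $A_\theta(\pm 1)=0$ is checked directly. The $\phi$-component $A_\phi = (1-x^2)\tilde{U}_\phi'' + \bar{U}_\theta \tilde{U}_\phi'$ is immediate from $\bar{U}_\theta \in L^\infty(-1,1)$ and the definition of $\mathbf{M}_2$.

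For (ii), the estimates mirror those in Lemma 3.3 and Lemma 4.3 with essentially no change: one bounds $\psi[\tilde{U}_\phi,\tilde{V}_\phi]$ using $|\tilde{U}_\phi\tilde{V}_\phi'/(1-s^2)| \leq C(1-s^2)^{-2-2\epsilon}\|\tilde{U}_\phi\|_{\mathbf{M}_2}\|\tilde{V}_\phi\|_{\mathbf{M}_2}$, obtains finiteness of $\psi(\pm 1)$ and the decay $|\psi(x) - \frac{1-x}{2}\psi(-1) - \frac{1+x}{2}\psi(1)| \leq C(1-x^2)^{1-2\epsilon}\|\tilde{U}_\phi\|_{\mathbf{M}_2}\|\tilde{V}_\phi\|_{\mathbf{M}_2}$, and observes that the pointwise bound $|\tilde{U}_\theta(x)\tilde{V}_\theta(x)| \leq \|\tilde{U}_\theta\|_{\mathbf{M}_1}\|\tilde{V}_\theta\|_{\mathbf{M}_1}(\ln\frac{1+x}{3}\ln\frac{1-x}{3})^{-2}$ allows the $Q_\theta$-estimate to close against the $\mathbf{N}_1$ weight; the computation $Q_\theta''(0)=0$ and the vanishing at $\pm 1$ follow by inspection. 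The component $Q_\phi = \tilde{U}_\theta \tilde{V}_\phi'$ uses $|\tilde{U}_\theta| \leq C\|\tilde{U}_\theta\|_{\mathbf{M}_1}$ (finite since the defining weight only vanishes at $\pm 1$) and is routine.

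For (iii), one differentiates the explicit formula
\[
\partial_c^\alpha \partial_\gamma^j A(c,\gamma,\tilde{U}) = \partial_c^\alpha \partial_\gamma^j \bar{U}_\theta \begin{pmatrix} \tilde{U}_\theta \\ \tilde{U}_\phi' \end{pmatrix} + \frac{1}{2} (\partial_c^\alpha \partial_\gamma^j \bar{U}_\theta \cdot \tilde{U}_\theta)''(0) \begin{pmatrix} 1-x^2 \\ 0 \end{pmatrix}
\]
and invokes Proposition \ref{propA_1}(4), which gives $(\ln\frac{1+x}{3})^2(\ln\frac{1-x}{3})^2 |\partial_c^\alpha \partial_\gamma^j \bar{U}_\theta| \leq C(\alpha,j,K)$ uniformly on $K \times (-1,1)$, together with interior $C^3$-bounds on $\partial_c^\alpha \partial_\gamma^j \bar{U}_\theta$ on $(-1/2,1/2)$. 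These exactly match the double-logarithmic weight in $\mathbf{N}_1$, so the bound $\|\partial_c^\alpha \partial_\gamma^j A(c,\gamma,\tilde{U})\|_{\mathbf{Y}} \leq C(\alpha,j,K)\|\tilde{U}\|_{\mathbf{X}}$ follows by the same manipulations as in (i).

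The main obstacle is (i): the doubled logarithmic degeneracies at both poles force simultaneous use of both endpoint asymptotics of $\bar{U}_\theta$, and the product $(2x+\bar{U}_\theta)\tilde{U}_\theta$ must be controlled by balancing the two logarithmic factors in the $\mathbf{N}_1$-weight against the two endpoint decay rates of $2x+\bar{U}_\theta$ \textemdash{} anything less precise than the expansion $\bar{U}_\theta \pm 2 = \pm 4/\ln\frac{1\mp x}{3} + O(\ln\frac{1\mp x}{3})^{-2}$ would be insufficient. Once this estimate is in hand, the rest of the argument is a direct adaptation of Section \ref{sec_3} and Section \ref{sec33:sec}.
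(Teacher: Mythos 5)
Your proposal follows essentially the same route as the paper's proof: decompose $G(c,\gamma,\tilde U)=A(c,\gamma,\tilde U)+Q(\tilde U,\tilde U)$, show $A(c,\gamma,\cdot)$ is bounded linear using the two-sided expansion of $\bar U_\theta$ and the uniform bound $\bigl|(2x+\bar U_\theta)\ln\frac{1+x}{3}\ln\frac{1-x}{3}\bigr|\le C$, show $Q$ is bounded bilinear via the $\psi$-estimates, control $\pt_c^\alpha\pt_\gamma^j A$ with Proposition \ref{propA_1}(4), and conclude by standard functional analysis, exactly as in Lemmas \ref{sec34:lem:A:well-def} and \ref{sec34:lem:Q:well-def}. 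One minor slip: the $\mathbf{N}_1$-weight in this case is $\bigl(\ln\frac{1+x}{3}\bigr)^2\bigl(\ln\frac{1-x}{3}\bigr)^2$ with no factor $(1-x^2)^{-1+2\epsilon}$ (that factor belongs to the Case 1/2 spaces); with the correct weight your estimates close exactly as you describe.
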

To prove Proposition \ref{sec34:prop}, we first prove the following lemmas:

\begin{lem}\label{sec34:lem:A:well-def}
	For every $(c,\gamma)\in K$, $A(c,\gamma,\cdot): \mathbf{X}\to \mathbf{Y}$ defined by (\ref{sec31:eq:A}) is a well-defined bounded linear operator.
\end{lem}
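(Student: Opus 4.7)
The structure of the argument will parallel Lemma \ref{sec32:lem:A:well-def} and Lemma \ref{sec33:lem:A:well-def}, the only new feature being that $\bar{U}_\theta$ now has logarithmic singularities at \emph{both} poles, so the weights in $\mathbf{M}_1$ and $\mathbf{N}_1$ carry factors of $\ln\frac{1+x}{3}\ln\frac{1-x}{3}$ at both ends. Linearity of $A(c,\gamma,\cdot)$ is immediate from \eqref{sec31:eq:A}; what requires work is the bound $\|A\tilde{U}\|_{\mathbf{Y}}\le C\|\tilde{U}\|_{\mathbf{X}}$. The plan is to estimate $A_\theta$ and $A_\phi$ separately, fix $(c,\gamma)\in K$, and let $l:=l_{c,\gamma}[\tilde{U}_\theta]$ so that $A_\theta=l(x)+\tfrac{1}{2}l''(0)(1-x^2)$.

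First I would record the asymptotics of $\bar{U}_\theta$ furnished by Proposition \ref{prop2_1}(ii) and Lemma 2.14 of \cite{LLY2}: near $x=-1$,
\[
\bar{U}_\theta(x)=2+\frac{4}{\ln\frac{1+x}{3}}+O(1)\left(\ln\frac{1+x}{3}\right)^{-2},
\]
and the symmetric expansion near $x=1$. The crucial consequence is the pointwise bound
\[
\left|(2x+\bar{U}_\theta(x))\,\ln\tfrac{1+x}{3}\ln\tfrac{1-x}{3}\right|\le C\quad\text{on }(-1,1),
\]
since the $2(x+1)+\frac{4}{\ln\frac{1+x}{3}}$ piece times $\ln\frac{1+x}{3}$ is $O(1)$ near $-1$ (and $\ln\frac{1-x}{3}$ is bounded there), and symmetrically near $1$. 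This is the analogue of the estimate $|(2x+\bar{U}_\theta)\ln\frac{1+x}{3}|\le C$ used in Lemma \ref{sec33:lem:A:well-def}, and it is the only genuinely new ingredient.

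Next I would bound $l''(0)$ by $C\|\tilde{U}_\theta\|_{\mathbf{M}_1}$ using the formula
\[
l''(x)=(1-x^2)\tilde{U}_\theta'''+(\bar{U}_\theta-2x)\tilde{U}_\theta''+2(\bar{U}_\theta'+1)\tilde{U}_\theta'+\bar{U}_\theta''\tilde{U}_\theta,
\]
together with boundedness of $\bar{U}_\theta, \bar{U}_\theta', \bar{U}_\theta''$ near $x=0$ and the definition of $\|\cdot\|_{\mathbf{M}_1}$. For the weighted $L^\infty$ bound on $A_\theta$ I would split
\[
(\ln\tfrac{1+x}{3})^2(\ln\tfrac{1-x}{3})^2\,A_\theta = (\ln\tfrac{1+x}{3})^2(\ln\tfrac{1-x}{3})^2(1-x^2)\tilde{U}_\theta'+\bigl[(2x+\bar{U}_\theta)\ln\tfrac{1+x}{3}\ln\tfrac{1-x}{3}\bigr]\bigl[\ln\tfrac{1+x}{3}\ln\tfrac{1-x}{3}\,\tilde{U}_\theta\bigr]+\tfrac{1}{2}l''(0)(\ln\tfrac{1+x}{3})^2(\ln\tfrac{1-x}{3})^2(1-x^2).
\]
The first term is $\le\|\tilde{U}_\theta\|_{\mathbf{M}_1}$ by definition, the middle term is $\le C\|\tilde{U}_\theta\|_{\mathbf{M}_1}$ by the key estimate and the first weighted seminorm of $\mathbf{M}_1$, and the last term is finite because $(1\pm x)(\ln\frac{1\pm x}{3})^2\to 0$ at the respective endpoint. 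The vanishing $A_\theta(\pm1)=0$ then follows by letting $x\to\pm1$, and $A_\theta''(0)=0$ follows by direct computation from the definition. The $C^2$ bounds on $[-\tfrac12,\tfrac12]$ reduce to straightforward estimates on $l', l''$ since $\bar{U}_\theta\in C^2(-1,1)$ is bounded with bounded derivatives on compact subsets of $(-1,1)$, plus the $l''(0)$ bound already obtained.

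Finally, for $A_\phi=(1-x^2)\tilde{U}_\phi''+\bar{U}_\theta\tilde{U}_\phi'$, the estimate
\[
\left|(1-x^2)^{1+\epsilon}A_\phi\right|\le (1-x^2)^{2+\epsilon}|\tilde{U}_\phi''|+\|\bar{U}_\theta\|_{L^\infty}(1-x^2)^{1+\epsilon}|\tilde{U}_\phi'|\le C\|\tilde{U}_\phi\|_{\mathbf{M}_2}
\]
is immediate from $\bar{U}_\theta\in L^\infty(-1,1)$ and the defining norm of $\mathbf{M}_2$. Putting these pieces together yields $A(c,\gamma,\tilde{U})\in\mathbf{Y}$ with $\|A\tilde U\|_{\mathbf{Y}}\le C\|\tilde U\|_{\mathbf{X}}$. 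The main obstacle in the whole argument is isolating and proving the uniform double--logarithm bound on $(2x+\bar{U}_\theta)$; once that is in hand, the rest is routine in the spirit of the earlier sections.
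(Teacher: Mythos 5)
Your proposal is correct and follows essentially the same route as the paper: the double-logarithm bound $|(2x+\bar{U}_\theta)\ln\frac{1+x}{3}\ln\frac{1-x}{3}|\le C$ derived from the two-sided expansion of $\bar{U}_\theta$ is exactly the key estimate used there, and the splitting of the weighted $A_\theta$ bound, the $l''(0)$ estimate, the endpoint/interior checks, and the $A_\phi$ estimate all match the paper's argument.
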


\begin{proof}
	In the following, $C$ denotes a universal constant which may change from line to line. We denote $l=l_{c,\gamma}[\tilde{U}_{\theta}]$ defined by (\ref{sec31:eq:l}), and $A=A(c,\gamma,\cdot)$ for some fixed $(c,\gamma)\in K$. We make use of the property of $\bar{U}_{\theta}$ that $\bar{U}_{\theta}\in C^2(-1,1)\cap L^{\infty}(-1,1)$, and $\bar{U}_{\theta}=2+4\left(\ln \frac{1+x}{3}\right)^{-1}+O(1)\left(\ln \frac{1+x}{3}\right)^{-2}=-2-4\left(\ln \frac{1-x}{3}\right)^{-1}+O(1)\left(\ln \frac{1-x}{3}\right)^{-2}$.

	$A$ is clearly linear. For every $\tilde{U}\in\mathbf{X}$, we prove that $A\tilde{U}$ defined by (\ref{sec31:eq:A}) is in $\mathbf{Y}$ and there exists some constant $C$ such that $\|A\tilde{U}\|_{\mathbf{Y}}\le C\|\tilde{U}\|_{\mathbf{X}}$ for all $\tilde{U}\in \mathbf{X}$.

	By computation, 
	\[
	    l'(x)=(1-x^2)\tilde{U}''_{\theta}+\bar{U}_{\theta}\tilde{U}'_{\theta}+(2+\bar{U}'_{\theta})\tilde{U}_{\theta},
	\]
	\[
		l''(x)=(1-x^2)\tilde{U}'''_{\theta}+(\bar{U}_{\theta}-2x)\tilde{U}''_{\theta}+2(\bar{U}'_{\theta}+1)\tilde{U}'_{\theta}+\bar{U}''_{\theta}\tilde{U}_{\theta}.
	\]
	
	By the fact that $\tilde{U}_{\theta}\in \mathbf{M}_1$, we have $|l''(0)|\le C||\tilde{U}_{\theta}||_{\mathbf{M}_1}$. So for $-1<x<1$, we have
	\[
	\begin{split}
		& \quad |\left( \ln \frac{1+x}{3} \right)^2 \left( \ln \frac{1-x}{3} \right)^2 A_{\theta}| \\
		& \le |\left( \ln \frac{1+x}{3} \right)^2 \left( \ln \frac{1-x}{3} \right)^2 (1-x^2) \tilde{U}'_{\theta}|+ \left| (2x+\bar{U}_{\theta})\left( \ln \frac{1+x}{3} \right)^2 \left( \ln \frac{1-x}{3} \right)^2 \right| |\tilde{U}_{\theta}|\\
		& \quad +\frac{1}{2} \left( \ln \frac{1+x}{3} \right)^2 \left( \ln \frac{1-x}{3} \right)^2 (1-x^2)|l''(0)|\\ 
		& \le C\|\tilde{U}_{\theta}\|_{\mathbf{M}_1},
	\end{split}
	\]
	where we have used the property that that $\bar{U}_{\theta}=2+4\left(\ln \frac{1+x}{3}\right)^{-1}+O(1)\left(\ln \frac{1+x}{3}\right)^{-2}$ $=-2-4\left(\ln \frac{1-x}{3}\right)^{-1}+O(1)\left(\ln \frac{1-x}{3}\right)^{-2}$, so there exists some constant $C>0$, such that
	\begin{equation*}
		(2x+\bar{U}_\theta) \left( \ln \frac{1+x}{3} \right)\left( \ln \frac{1-x}{3} \right) \leq C, \quad -1<x<1. 
	\end{equation*}
	We also see from the above that $\displaystyle{\lim_{x\to \pm 1}A_{\theta}(x)=0}$. 
	
	For $-\frac{1}{2}<x<\frac{1}{2}$, 
	\[
	  \begin{split}
	    |A'_{\theta}|& =|l'(x)-l''(0)x|\\
	       & \le |\tilde{U}''_{\theta}|+|\bar{U}_{\theta}||\tilde{U}'_{\theta}|+(2+|\bar{U}'_{\theta}|)|\tilde{U}_{\theta}|+|l''(0)|\\
	       & \le C\|\tilde{U}_{\theta}\|_{\mathbf{M}_1},
	    \end{split}
	\]
	and
	\[
	\begin{split}
	   |A''_{\theta}|& =|l''(x)-l''(0)|\\
	   & \le |\tilde{U}'''_{\theta}|+(|\bar{U}_{\theta}|+2)|\tilde{U}''_{\theta}|+2(|\bar{U}'_{\theta}|+1)|\tilde{U}'_{\theta}|+|\bar{U}''_{\theta}||\tilde{U}_{\theta}|+|l''(0)|\\
	   & \le C\|\tilde{U}_{\theta}\|_{\mathbf{M}_1}.
	   \end{split} 
	\]
	
	By computation $A''_{\theta}(0)=0$. So we have $A_{\theta}\in \mathbf{N}_1$ and $\|A_{\theta}\|_{\mathbf{N}_1}\le C\|\tilde{U}_{\theta}\|_{\mathbf{M}_1}$.
	
	Next, since $A_{\phi}=(1-x^2)\tilde{U}''_{\phi}+\bar{U}_{\theta}\tilde{U}'_{\phi}$, by the fact that $\tilde{U}_{\phi}\in \mathbf{M}_2$  we have that
	\[
		\left|(1-x^2)^{1+\epsilon}A_{\phi}\right| \le (1-x^2)^{2+\epsilon}|\tilde{U}''_{\phi}|+(1-x^2)^{1+\epsilon}|\bar{U}_{\theta}\|\tilde{U}'_{\phi}|\le C\|\tilde{U}_{\phi}\|_{\mathbf{M}_2}.
	\]
	So $A_{\phi}\in \mathbf{N}_1$, and $\|A_{\phi}\|_{\mathbf{N}_1}\le C\|\tilde{U}_{\phi}\|_{\mathbf{M}_2}$. We have proved that $A\tilde{U}\in \mathbf{Y}$ and $\|A\tilde{U}\|_{\mathbf{Y}}\le C\|\tilde{U}\|_{\mathbf{X}}$ for every $\tilde{U}\in \mathbf{X}$. The proof is finished.
	%
	%
\end{proof}

\begin{lem}\label{sec34:lem:Q:well-def}
	The map $Q:\mathbf{X}\times\mathbf{X}\to \mathbf{Y}$ defined by (\ref{sec31:eq:Q}) is a well-defined bounded bilinear operator.
\end{lem}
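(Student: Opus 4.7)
The plan is to follow the template of Lemma \ref{sec32:lem:Q:well-def} and Lemma \ref{sec33:lem:Q:well-def}, adapting the weighted estimates to accommodate the logarithmic singularities of $\bar{U}_\theta$ at \emph{both} endpoints. Bilinearity is immediate from the definition, so the task reduces to the bound $\|Q(\tilde{U},\tilde{V})\|_{\mathbf{Y}} \le C\|\tilde{U}\|_{\mathbf{X}}\|\tilde{V}\|_{\mathbf{X}}$.

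First I would handle the $\phi$-triple integral $\psi=\psi[\tilde{U}_\phi,\tilde{V}_\phi]$. Using $\tilde{U}_\phi,\tilde{V}_\phi\in\mathbf{M}_2$ gives $|\tilde{U}_\phi \tilde{V}'_\phi/(1-s^2)| \le (1-s^2)^{-2-2\epsilon}\|\tilde{U}_\phi\|_{\mathbf{M}_2}\|\tilde{V}_\phi\|_{\mathbf{M}_2}$, so since $\epsilon<1/2$ the endpoint values $\psi(\pm 1)$ are finite with bound $C\|\tilde{U}_\phi\|_{\mathbf{M}_2}\|\tilde{V}_\phi\|_{\mathbf{M}_2}$. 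Writing $\psi(x)-\tfrac{1-x}{2}\psi(-1)-\tfrac{1+x}{2}\psi(1)$ as a convex combination of $(\psi(x)-\psi(-1))(1-x)/2$ and $(\psi(x)-\psi(1))(1+x)/2$ and integrating the triple integral from $\pm 1$ yields the bound $C(1-x^2)^{1-2\epsilon}\|\tilde{U}_\phi\|_{\mathbf{M}_2}\|\tilde{V}_\phi\|_{\mathbf{M}_2}$ exactly as in (\ref{sec32:eq:temp1}). Also $\psi'(x)$ and $\psi''(x)$ are uniformly bounded on $(-1/2,1/2)$ by the same argument, and $\psi''(0)=0$ by inspection of (\ref{sec31:eq:psi}). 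Finally, $(\tilde{U}_\theta\tilde{V}_\theta)''(0)$ is controlled by the $C^3$-part of the $\mathbf{M}_1$ norm, giving $|(\tilde{U}_\theta\tilde{V}_\theta)''(0)|\le C\|\tilde{U}_\theta\|_{\mathbf{M}_1}\|\tilde{V}_\theta\|_{\mathbf{M}_1}$.

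The main new ingredient (and the only real obstacle) is that $\tilde{U}_\theta$ is allowed to blow up like $|\ln\tfrac{1+x}{3}|^{-1}|\ln\tfrac{1-x}{3}|^{-1}$ at both endpoints, and $Q_\theta$ must be estimated in $\mathbf{N}_1$, whose weight is $(\ln\tfrac{1+x}{3})^2(\ln\tfrac{1-x}{3})^2$. For the quadratic term I split the weights evenly:
\[
	\left|\left(\ln\tfrac{1+x}{3}\right)^2\left(\ln\tfrac{1-x}{3}\right)^2 \tilde{U}_\theta \tilde{V}_\theta\right|
	\le \|\tilde{U}_\theta\|_{\mathbf{M}_1}\|\tilde{V}_\theta\|_{\mathbf{M}_1},
\]
since each factor is absorbed into the $\mathbf{M}_1$-norm of one of $\tilde{U}_\theta,\tilde{V}_\theta$. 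For the $\psi$-combination term, combining the previous estimate with the weight, we need $(\ln\tfrac{1+x}{3})^2(\ln\tfrac{1-x}{3})^2(1-x^2)^{-2\epsilon}$ to be bounded on $(-1,1)$, which holds because $1-2\epsilon>0$ so polynomial decay at the endpoints absorbs the logarithmic blowup (more precisely, $|\ln(1\pm x)|^k(1\mp x)^\alpha$ is bounded for any $\alpha>0$, $k\ge 0$). The $(1-x^2)$ correction term is similarly controlled because $(\ln)^4(1-x^2)$ is bounded. Vanishing at $\pm 1$ and the identity $Q_\theta''(0)=0$ follow as in the previous lemmas.

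For the $\phi$-component $Q_\phi=\tilde{U}_\theta\tilde{V}'_\phi$, note that $\tilde{U}_\theta\in\mathbf{M}_1$ is globally bounded on $(-1,1)$ since $|\ln\tfrac{1+x}{3}||\ln\tfrac{1-x}{3}|$ is bounded below by $(\ln 3)^2$ at $x=0$ and grows toward $\pm 1$, giving $|\tilde{U}_\theta|\le C\|\tilde{U}_\theta\|_{\mathbf{M}_1}$; therefore $(1-x^2)^{1+\epsilon}|Q_\phi|\le C\|\tilde{U}_\theta\|_{\mathbf{M}_1}\|\tilde{V}_\phi\|_{\mathbf{M}_2}$, so $Q_\phi\in\mathbf{N}_2$. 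Interior derivative bounds $|Q'_\theta|, |Q''_\theta|\le C\|\tilde{U}\|_{\mathbf{X}}\|\tilde{V}\|_{\mathbf{X}}$ on $(-1/2,1/2)$ follow verbatim from the analogous steps in Lemma \ref{sec33:lem:Q:well-def}, using the $C^3$-bounds in $\mathbf{M}_1$ together with the estimates on $\psi',\psi''$ obtained above. Assembling these pieces gives $Q(\tilde U,\tilde V)\in\mathbf{Y}$ with the desired bound.
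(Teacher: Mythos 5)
Your proposal is correct and takes essentially the same approach as the paper's proof: the same endpoint and convex-combination estimate for $\psi$, the even splitting of the logarithmic weight between $\tilde{U}_{\theta}$ and $\tilde{V}_{\theta}$ in the quadratic term, boundedness of $\tilde{U}_{\theta}$ via the $\mathbf{M}_1$ weight for $Q_{\phi}$, and interior derivative bounds carried over from the earlier lemmas. Only a small slip to fix: the quantity that must be bounded for the $\psi$-combination term is $\left(\ln\frac{1+x}{3}\right)^2\left(\ln\frac{1-x}{3}\right)^2(1-x^2)^{1-2\epsilon}$ (weight times the bound $C(1-x^2)^{1-2\epsilon}$), not $\left(\ln\frac{1+x}{3}\right)^2\left(\ln\frac{1-x}{3}\right)^2(1-x^2)^{-2\epsilon}$; your justification via $1-2\epsilon>0$ already matches the corrected exponent.
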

\begin{proof}
	In the following, $C$ denotes a universal constant which may change from line to line. 
	For convenience we denote $\psi=\psi[\tilde{U}_{\phi}, \tilde{V}_{\phi}]$ defined by (\ref{sec31:eq:psi}).  

	It is clear that $Q$ is a bilinear operator. For every $\tilde{U},\tilde{V}\in\mathbf{X}$, we will prove that $Q(\tilde{U},\tilde{V})$ is in $\mathbf{Y}$ and there exists some constant $C$ independent of $\tilde{U}$ and $\tilde{V}$ such that $\|Q(\tilde{U},\tilde{V})\|_{\mathbf{Y}}\le C\|\tilde{U}\|_{\mathbf{X}}\|\tilde{V}\|_{\mathbf{X}}$.
    
For $\tilde{U},\tilde{V}\in \mathbf{X}$, we have, using the fact that $\tilde{U}_{\phi}, \tilde{V}_{\phi}\in \mathbf{M}_2$, that
	\begin{equation}\label{eq3_18_0}
		\left|\frac{ \tilde{U}_\phi(s)\tilde{V} _\phi'(s)}{1-s^2}\right| \le (1-s^2)^{-2-2\epsilon}\|\tilde{U}_{\phi}\|_{\mathbf{M}_2}\|\tilde{V}_{\phi}\|_{\mathbf{M}_2}, \quad \forall -1<s<1.
	\end{equation}
	It follows that $\psi(\tilde{U},\tilde{V})(x)$ is well-defined and
	\begin{equation*}
	    |\psi(-1)|\le C\|\tilde{U}_{\phi}\|_{\mathbf{M}_2}\|\tilde{V}_{\phi}\|_{\mathbf{M}_2}, \quad |\psi(1)|\le C\|\tilde{U}_{\phi}\|_{\mathbf{M}_2}\|\tilde{V}_{\phi}\|_{\mathbf{M}_2}.  
	\end{equation*}
	Moreover,
	\begin{equation}\label{eq3_18_3}
		\begin{split}
		& \left|\psi(x)-\frac{1}{2}\psi(-1)(1-x)-\frac{1}{2}\psi(1)(1+x)\right|\\
		& =\left|\frac{1}{2}\psi(x)(1-x)+\frac{1}{2}\psi(x)(1+x)-\frac{1}{2}\psi(-1)(1-x)-\frac{1}{2}\psi(1)(1+x)\right|\\
		& \le \frac{1}{2}(1-x)|\psi(x)-\psi(-1)|+\frac{1}{2}(1+x)|\psi(x)-\psi(1)|\\
		& =\frac{1}{2}(1-x)\left|\int_{-1}^{x} \int_{0}^{l} \int_{0}^{t} \frac{2 \tilde{U}_\phi(s)\tilde{V} _\phi'(s)}{1-s^2} ds dt dl\right|+\frac{1}{2}(1+x)\left|\int_{1}^{x} \int_{0}^{l} \int_{0}^{t} \frac{2 \tilde{U}_\phi(s)\tilde{V} _\phi'(s)}{1-s^2} ds dt dl\right|\\
		& \le C(1-x)(1+x)^{1-2\epsilon}\|\tilde{U}_\phi\|_{\mathbf{M}_2}\|\tilde{V} _{\phi}\|_{\mathbf{M}_2}+C(1+x)(1-x)^{1-2\epsilon}\|\tilde{U}_\phi\|_{\mathbf{M}_2}\|\tilde{V} _{\phi}\|_{\mathbf{M}_2}\\
		& \le C(1-x^2)^{1-2\epsilon}\|\tilde{U}_\phi\|_{\mathbf{M}_2}\|\tilde{V} _{\phi}\|_{\mathbf{M}_2}.
		\end{split}
	\end{equation}
	By (\ref{eq3_18_0}), we also have
	\begin{equation*}
	   |\psi'(x)|=\left|\int_{0}^{x} \int_{0}^{t} \frac{2 \tilde{U}_\phi(s)\tilde{V} _\phi'(s)}{1-s^2} ds dt\right|\le C \|\tilde{U}_\phi\|_{\mathbf{M}_2}\|\tilde{V} _{\phi}\|_{\mathbf{M}_2}, \quad -\frac{1}{2}<x<\frac{1}{2},
	\end{equation*}
	and 
	\begin{equation*}
	   |\psi''(x)|=\left|\int_{0}^{x} \frac{2 \tilde{U}_\phi(s)\tilde{V} _\phi'(s)}{1-s^2} ds\right|\le C \|\tilde{U}_\phi\|_{\mathbf{M}_2}\|\tilde{V} _{\phi}\|_{\mathbf{M}_2}, \quad -\frac{1}{2}<x<\frac{1}{2}. 
	\end{equation*}
	Using the fact that $\tilde{U}_{\theta},\tilde{V}_{\theta}\in \mathbf{M}_1$, we have
	\begin{equation}\label{eq3_18_6}
		\begin{split}
		|(\tilde{U}_{\theta}\tilde{V}_{\theta})''(0)| & \le |\tilde{U}''_{\theta}(0)\|\tilde{V}_{\theta}(0)|+2|\tilde{U}'_{\theta}(0)\|\tilde{V}'_{\theta}(0)|+|\tilde{U}_{\theta}(0)\|\tilde{V}''_{\theta}(0)|\\
		& \le C\|\tilde{U}_{\theta}\|_{\mathbf{M}_1}\|\tilde{V}_{\theta}\|_{\mathbf{M}_1}. 
	\end{split}
	\end{equation}
	So by (\ref{eq3_18_3}),  (\ref{eq3_18_6}), and the fact that $\tilde{U}_{\theta}, \tilde{V}_{\theta}\in \mathbf{M}_1$, we have that for $-1<x<1$,
	\[
	\begin{split}
		& |\left(\ln \frac{1+x}{3}\right)^2 \left(\ln \frac{1-x}{3}\right)^2 Q_{\theta}(x)|\\
		& \le \frac{1}{2} \left| \left(\ln \frac{1+x}{3}\right) \left(\ln \frac{1-x}{3}\right) \tilde{U}_{\theta}(x)\right| \cdot \left|\left(\ln \frac{1+x}{3}\right) \left(\ln \frac{1-x}{3}\right) \tilde{V}_{\theta}(x)\right| \\
		& \quad + \left|\left( \ln \frac{1+x}{3}\right)^2 \cdot\left(\ln \frac{1-x}{3} \right)^2\right| \left|\psi(x)-\frac{1}{2}\psi(-1)(1-x)-\frac{1}{2}\psi(1)(1+x)\right|\\
		& \quad +\frac{1}{4} \left|\left( \ln \frac{1+x}{3}\right)^2 \cdot\left(\ln \frac{1-x}{3} \right)^2 \right| (1-x^2) |(\tilde{U}_{\theta}\tilde{V}_{\theta})''(0)|\\
		& \le C\|\tilde{U}\|_{\mathbf{X}}\|\tilde{V}\|_{\mathbf{X}}. 
	\end{split}
	\]
	From this we also have $\displaystyle \lim_{x\to 1}Q_{\theta}(x)= \lim_{x\to -1}Q_{\theta}(x)=0$. 
	Similar as in Lemma \ref{sec32:lem:Q:well-def}, we have that for $-\frac{1}{2}<x<\frac{1}{2}$,
	\[
	    |Q'_{\theta}(x)| 
 \le C\|\tilde{U}\|_{\mathbf{X}}\|\tilde{V}\|_{\mathbf{X}},\quad |Q''_{\theta}(x)|\le C\|\tilde{U}\|_{\mathbf{X}}\|\tilde{V}\|_{\mathbf{X}}.
	\]
	
	So there is $Q_{\theta}\in\mathbf{N}_1$, and $\|Q_{\theta}\|_{\mathbf{N}_1}\le C(\epsilon)\|\tilde{U}\|_{\mathbf{X}}\|\tilde{V}\|_{\mathbf{X}}$.
	
	Next, since $Q_{\phi}(x)=\tilde{U}_{\theta}(x)\tilde{V}'_{\phi}(x)$, for $-1<x<1$, 
	\[
		\left|(1-x^2)^{1+\epsilon}Q_{\phi}(x)\right|  \le |\tilde{U}_{\theta}(x)|(1-x^2)^{1+\epsilon}|\tilde{V}'_{\phi}|
		\le 2\|\tilde{U}_{\theta}\|_{\mathbf{M}_1}\|\tilde{V}_{\phi}\|_{\mathbf{M}_2}.
	\]
	So $Q_{\phi}\in \mathbf{N}_2$, and 
	$
		\|Q_{\phi}\|_{\mathbf{N}_2}\le \|\tilde{U}_{\theta}\|_{\mathbf{M}_1}\|\tilde{V}_{\phi}\|_{\mathbf{M}_2}.
	$
	Thus we have proved that $Q(\tilde{U}, \tilde{V})\in \mathbf{Y}$ and $\|Q(\tilde{U},\tilde{V})\|_{\mathbf{Y}}\le C\|\tilde{U}\|_{\mathbf{X}}\|\tilde{V}\|_{\mathbf{X}}$ for all $\tilde{U}, \tilde{V}\in \mathbf{X}$.  Lemma \ref{sec34:lem:Q:well-def} is proved.
\end{proof}
\\

\noindent 
{\bf Proof of Proposition \ref{sec34:prop}.}
By definition, $G(c,\gamma,\tilde{U})=A(c,\gamma,\tilde{U})+Q(\tilde{U},\tilde{U})$ for $(c,\gamma,\tilde{U})\in K \times \mathbf{X}$.  
Using standard theories in functional analysis, by Lemma \ref{sec34:lem:Q:well-def} it is clear that $Q$ is $C^{\infty}$ on $\mathbf{X}$.  
By Lemma \ref{sec34:lem:A:well-def}, $A(c,\gamma; \cdot): \mathbf{X}\to \mathbf{Y}$ is $C^{\infty}$ for each $(c,\gamma)\in K$.

Let $\alpha=(\alpha_1,\alpha_2,\alpha_3)$ be a multi-index where $\alpha_i\ge 0$, $i=1,2,3$, and $j\ge 0$. For all $|\alpha|+j\ge 1$,  we have
\begin{equation}\label{eq_prop3_3_1}
	\pt_{c}^\alpha \pt_{\gamma}^j A(c,\gamma,\tilde{U}) = \pt_{c}^\alpha \pt_{\gamma}^j U^{c,\gamma}_{\theta}\left(
	\begin{matrix}
		\tilde{U}_{\theta}  \\  \tilde{U}'_{\phi} 
	\end{matrix}\right) + \frac{1}{2} (\pt_{c}^\alpha \pt_{\gamma}^j U^{c,\gamma}_{\theta} \cdot \tilde{U}_\theta)'' (0) 
	\begin{pmatrix}
	1-x^2   \\   0  
	\end{pmatrix}.  
\end{equation}

By Proposition \ref{propA_1} (4), we have
%
\[
	\left|\ln\frac{1+x}{3}\right|^2 \left|\ln\frac{1-x}{3}\right|^2 \left| \pt_{c}^{\alpha} \pt_{\gamma}^j A_{\theta}(c,\gamma,\tilde{U}) \right| \leq C(\alpha,j,K) \| \tilde{U}_{\theta} \|_{\mathbf{M}_1}, \quad -1<x<1,
\]
and for $-\frac{1}{2}<x<\frac{1}{2}.$
\[
     | \pt_{c}^\alpha \pt_{\gamma}^j A'_{\theta}(c,\gamma,\tilde{U}) | \leq C(\alpha,j,K) \| \tilde{U}_{\theta} \|_{\mathbf{M}_1}, \quad | \pt_{c}^\alpha \pt_{\gamma}^j A''_{\theta}(c,\gamma,\tilde{U}) | \leq C(\alpha,j,K) \| \tilde{U}_{\theta} \|_{\mathbf{M}_1}.
\]
The above estimates and (\ref{eq_prop3_3_1}) also imply that 
$$
	\pt_{c}^{\alpha} \pt_{\gamma}^j A_{\theta}(c,\gamma,\tilde{U}) (-1) 
	= \pt_{c}^{\alpha} \pt_{\gamma}^j A_{\theta}(c,\gamma,\tilde{U})(1) 
	= \pt_{c}^{\alpha} \pt_{\gamma}^j  A_{\theta}(c,\gamma,\tilde{U})'' (0) = 0. 
$$
So $\pt_{c}^{\alpha} \pt_{\gamma}^j A_{\theta}(c,\gamma,\tilde{U}) \in \mathbf{N}_1$, with $\|\pt_{c}^{\alpha} \pt_{\gamma}^jA_{\theta}(c,\gamma,\tilde{U}) \|_{\mathbf{N}_1} \leq C(\alpha,j,K) \|\tilde{U}_{\theta} \|_{\mathbf{M}_1}$ for all $(c,\gamma,\tilde{U}) \in K\times \mathbf{X}$. 

Next, by Proposition \ref{propA_1} and the fact that $\tilde{U}_{\phi}\in \mathbf{M}_1$, we have 
\begin{equation}\label{eq_prop3_4_2}
	(1-x^2)^{1+\epsilon} | \pt_{c}^\alpha \pt_{\gamma}^j A_{\phi}(c,\gamma,\tilde{U})(x)| 
	= |\pt_{c}^\alpha \pt_{\gamma}^j  U^{c,\gamma}_{\theta}(x)| \cdot |(1-x^2)^{1+\epsilon}\tilde{U}'_{\phi}|
	\leq C(\alpha,j,K) \| \tilde{U}_{\phi} \|_{\mathbf{M}_2}. 
\end{equation}
So $\pt_{c}^\alpha \pt_{\gamma}^j A_{\phi}(c,\gamma,\tilde{U})\in \mathbf{N}_2$ with $ \|\pt_{c}^\alpha \pt_{\gamma}^j A_{\phi}(c,\gamma,\tilde{U}) \|_{\mathbf{N}_2} \leq C(\alpha,j,K) \| \tilde{U}_{\phi} \|_{\mathbf{M}_2}$ for all $(c,\gamma,\tilde{U})\in K\times \mathbf{X}$. Thus $\pt_{c}^\alpha \pt_{\gamma}^j A(c,\gamma,\tilde{U}) \in \mathbf{Y}$, with $\|\pt_{c}^\alpha \pt_{\gamma}^j A(c,\gamma,\tilde{U}) \|_{\mathbf{Y}} \leq C(\alpha,j,K) \| \tilde{U} \|_{\mathbf{X}}$ for all $(c,\gamma,\tilde{U})\in K\times \mathbf{X}$, $|\alpha|+j \ge 1$. 

So for each $(c,\gamma)\in K$, $\pt_{c}^\alpha \pt_{\gamma}^j A(c,\gamma; \cdot): \mathbf{X}\to \mathbf{Y}$ is a bounded linear map with uniform bounded norm on $K$. Then by standard theories in functional analysis, $A: K\times\mathbf{X}\to \mathbf{Y}$ is $C^{\infty}$. So $G$ is a $C^{\infty}$ map from $K  \times \mathbf{X}$ to $\mathbf{Y}$. By direct calculation we get its Fr\'{e}chet derivative with respect to $\mathbf{X}$ is given by  the linear bounded operator $L^{c,\gamma}_{\tilde{U}}: \mathbf{X}\rightarrow \mathbf{Y}$ defined as  (\ref{sec31:eq:Linear}). The proof is finished.     \qed 
%
%
\\

Let $a_{c,\gamma}(x), b_{c,\gamma}(x)$ be the functions defined by (\ref{sec31:eq:ab}). For convenience we denote $a(x)=a_{c,\gamma}(x)$, $b(x)=b_{c,\gamma}(x)$, and $\bar{U}_{\theta}=U^{c,\gamma}_{\theta}$. 
\begin{lem}\label{lem3_4_ab}
  For $(c,\gamma)\in I_{4,1}$, there exists some constant $C>0$, depending only on $(c,\gamma)$, such that  for any $-1<x<1$, 
 \begin{equation}\label{sec34:eq:eb}
	\begin{split}
		& e^{b(x)}\le C \left(\ln\frac{1+x}{3}\right)^2  \left(\ln\frac{1-x}{3}\right)^2 (1-x^2),  \\
		& e^{-b(x)}\le C \left(\ln\frac{1+x}{3}\right)^{-2}\left(\ln\frac{1-x}{3}\right)^{-2} (1-x^2)^{-1},
	\end{split}
	\end{equation}
	and 
	\begin{equation}\label{sec34:eq:ea}
		e^{a(x)}\le C \left| \ln \frac{1+x}{3} \right|^2 \left| \ln \frac{1-x}{3} \right|^2, \quad 
		e^{-a(x)}\le C \left(\ln\frac{1+x}{3}\right)^{-2} \left(\ln\frac{1-x}{3}\right)^{-2}. 
	\end{equation}
\end{lem}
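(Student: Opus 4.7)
The plan is to follow the template established by the analogous Lemmas in Sections 3 and 4 (the lemmas labelled \texttt{lem3\_2\_ab} and \texttt{lem3\_3\_ab}): derive a two-sided asymptotic expansion for $b(x)$ directly from the integral formula \eqref{sec31:eq:ab}, using the refined boundary expansions of $\bar{U}_\theta$ supplied by Theorem~1.3 of \cite{LLY1} (equivalently Lemma~2.14 of \cite{LLY2}), and then convert it to an expansion for $a(x)$ via the identity $a(x)=-\ln(1-x^2)+b(x)$.

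The input is the pair of expansions, valid in the case $(c,\gamma)\in I_{4,1}$ since then $\bar{U}_\theta(-1)=2$ with $\eta_1=4$ and $\bar{U}_\theta(1)=-2$ with $\eta_2=-4$:
\begin{equation*}
\bar{U}_\theta(s) = 2 + \frac{4}{\ln\frac{1+s}{3}} + O\!\left(\!\left(\ln\tfrac{1+s}{3}\right)^{-2}\right) \text{ as } s\to -1^+,
\end{equation*}
\begin{equation*}
\bar{U}_\theta(s) = -2 - \frac{4}{\ln\frac{1-s}{3}} + O\!\left(\!\left(\ln\tfrac{1-s}{3}\right)^{-2}\right) \text{ as } s\to 1^-.
\end{equation*}
To compute $b(x)=\int_0^x \bar{U}_\theta(s)/(1-s^2)\,ds$, I would fix a small $\eta>0$ and split the integration region into $(-1,-1+\eta)$, $(1-\eta,1)$, and the middle piece where $\bar{U}_\theta$ is bounded (by Proposition~\ref{propA_1}) and so contributes only $O(1)$. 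On the piece near $-1$, partial fractions $\tfrac{1}{1-s^2}=\tfrac12\bigl(\tfrac{1}{1+s}+\tfrac{1}{1-s}\bigr)$ isolate the singular factor $\tfrac{1}{1+s}$; feeding the expansion into that factor, the constant part $2$ integrates to $\ln(1+x)$, the correction $\tfrac{4}{\ln\frac{1+s}{3}}$ yields, via the substitution $u=\ln\frac{1+s}{3}$, an antiderivative $2\ln\bigl|\ln\tfrac{1+s}{3}\bigr|$, and the $O((\ln\frac{1+s}{3})^{-2})$ remainder has bounded antiderivative ($\sim -1/\ln\frac{1+s}{3}$). The mirror-image computation on $(1-\eta,1)$ yields $\ln(1-x)+2\ln|\ln\tfrac{1-x}{3}|+O(1)$. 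Combining,
\begin{equation*}
b(x) = \ln(1-x^2) + 2\ln\Bigl|\ln\tfrac{1+x}{3}\Bigr| + 2\ln\Bigl|\ln\tfrac{1-x}{3}\Bigr| + O(1), \qquad -1<x<1,
\end{equation*}
and exponentiation gives both inequalities in \eqref{sec34:eq:eb}. The bounds \eqref{sec34:eq:ea} on $e^{\pm a(x)}$ follow immediately from $a(x)=-\ln(1-x^2)+b(x)$, which causes the $(1-x^2)^{\pm 1}$ factor to cancel out.

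There is no substantive obstacle; this is essentially a bookkeeping exercise. The only point requiring a little care is the ``gluing'': one must confirm that the expansion at $s=-1$ contributes $O(1)$ after integration over a set bounded away from $-1$, and symmetrically for the expansion at $s=1$. Both are trivial consequences of the uniform bound on $\bar{U}_\theta$ on $[-1+\eta,1-\eta]$ furnished by Proposition~\ref{propA_1}(4), which also implies that the implicit constants in the two endpoint expansions depend only on $(c,\gamma)$, as asserted.
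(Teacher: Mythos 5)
Your proposal is correct and follows essentially the same route as the paper: both insert the endpoint expansions $\bar{U}_\theta = \pm 2 \pm 4(\ln\frac{1\pm x}{3})^{-1} + O((\ln\frac{1\pm x}{3})^{-2})$ into the integral formula \eqref{sec31:eq:ab} to get $b(x)=\ln(1-x^2)+2\ln\bigl|\ln\frac{1+x}{3}\bigr|+2\ln\bigl|\ln\frac{1-x}{3}\bigr|+O(1)$, deduce the expansion of $a(x)$ from $a=-\ln(1-x^2)+b$, and exponentiate. You merely spell out the bookkeeping (splitting the integral and the substitution $u=\ln\frac{1\pm s}{3}$) that the paper leaves implicit.
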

\begin{proof}
   Under the assumption of $\bar{U}_{\theta}$ in this case, we have for some small $\epsilon>0$
	\[
		\bar{U}_{\theta}=2+\frac{4}{\ln\frac{1+x}{3}}+O(1)\left(\ln \frac{1+x}{3}\right)^{-2}, \quad -1<x<0,
	\]
	\[
		\bar{U}_{\theta}=-2-\frac{4}{\ln\frac{1-x}{3}}+O(1)\left(\ln \frac{1-x}{3}\right)^{-2}, \quad 0<x<1.
	\]
	Thus, by definition of $a(x)$ and $b(x)$ in (\ref{sec31:eq:ab}),  for $-1<x<1$, we have
	\[
	\begin{split}
		& b(x)=\ln(1+x) + 2\ln(-\ln(1+x)) + \ln(1-x) + 2\ln(-\ln(1-x)) +O(1),\\
		& a(x)=2\ln(-\ln(1+x))+2\ln(-\ln(1-x))+O(1).
	\end{split}
	\]
	The lemma follows from the above estimates. 
\end{proof}

For $\xi = (\xi_\theta, \xi_\phi) \in \mathbf{Y}$, let the map $W^{c,\gamma}$ be defined as 
$$
	W^{c,\gamma}(\xi) := (W^{c,\gamma,1}_\theta(\xi), W^{c,\gamma}_\phi(\xi)), 
$$
where 
$W^{c,\gamma,1}_{\theta}$ and $W^{c,\gamma}_\phi(\xi)$ are defined by (\ref{sec31:eq:Wthei}) and (\ref{sec31:eq:Wphi}).

\begin{lem}\label{sec34:lem:W}
	For every $(c,\gamma)\in K$, $W^{c,\gamma}: \mathbf{Y}\rightarrow\mathbf{X}$ is continuous, and is a right inverse of $L^{c,\gamma}_{0}$. 
\end{lem}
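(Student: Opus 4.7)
The plan is to follow the template established by Lemma \ref{sec32:lem:W} and Lemma \ref{sec33:lem:W}, using the new asymptotic estimates of $e^{\pm a_{c,\gamma}}$ and $e^{\pm b_{c,\gamma}}$ from Lemma \ref{lem3_4_ab}. The key new feature is that logarithmic factors now appear at both endpoints simultaneously, so every integral must be controlled against two singular endpoints rather than one. Throughout I write $W^1_\theta := W^{c,\gamma,1}_\theta(\xi)$, $W_\phi := W^{c,\gamma}_\phi(\xi)$, $a:=a_{c,\gamma}$, $b:=b_{c,\gamma}$, and let $C$ denote a constant depending only on $(c,\gamma)$.

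First I would establish the pointwise bound
\[
\left| \ln\tfrac{1+x}{3} \ln\tfrac{1-x}{3} \, W^1_\theta(x) \right| \le C \|\xi_\theta\|_{\mathbf{N}_1}, \quad -1 < x < 1.
\]
Using the definition of $W^{c,\gamma,1}_\theta$ in \eqref{sec31:eq:Wthei}, the pointwise bound on $\xi_\theta \in \mathbf{N}_1$, and the estimate for $e^{a(s)}$ in \eqref{sec34:eq:ea}, the logarithmic weights in $e^{a(s)}$ exactly cancel those in $\xi_\theta(s)$, reducing the integrand to $O((1-s^2)^{-1})\|\xi_\theta\|_{\mathbf{N}_1}$. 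The resulting integral from $0$ to $x$ grows like $O(|\ln(1\mp x)|)$ near the endpoints, which is absorbed by the two logarithmic factors in $e^{-a(x)}$ from \eqref{sec34:eq:ea}, leaving an extra inverse logarithm. This also gives $\lim_{x\to\pm 1}W^1_\theta(x) = 0$.

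Second, I would bound the weighted derivative. The first identity of \eqref{sec31:eq:diff:Wthe} for $i=1$ gives
\[
(1-x^2)\left(\ln\tfrac{1+x}{3}\right)^2\left(\ln\tfrac{1-x}{3}\right)^2 (W^1_\theta)'
= -(2x+\bar U_\theta)\left(\ln\tfrac{1+x}{3}\right)^2\left(\ln\tfrac{1-x}{3}\right)^2 W^1_\theta + \left(\ln\tfrac{1+x}{3}\right)^2\left(\ln\tfrac{1-x}{3}\right)^2\xi_\theta.
\]
By the asymptotics of $\bar U_\theta$ used in Lemma \ref{sec34:lem:A:well-def}, we have $|(2x+\bar U_\theta)\ln\tfrac{1+x}{3}\ln\tfrac{1-x}{3}|\le C$, so the first term is controlled by the pointwise bound on $W^1_\theta$ just established, and the second by the $\mathbf{N}_1$ norm. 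On the interior $(-\tfrac12,\tfrac12)$, where $a, a', a'', a'''$ are all bounded, the bounds on $(W^1_\theta)''$ and $(W^1_\theta)'''$ follow routinely by differentiating the identity once or twice more.

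Third, I would estimate $\|W_\phi\|_{\mathbf{M}_2}$. The bound $|(1-x^2)^{1+\epsilon} W_\phi'(x)| \le C\|\xi_\phi\|_{\mathbf{N}_2}$ follows from \eqref{sec31:eq:diff1:Wphi} together with \eqref{sec34:eq:eb}: the inner integrand $e^{b(s)}\xi_\phi(s)/(1-s^2)$ is bounded (after using the $\mathbf{N}_2$ norm of $\xi_\phi$) by $C\|\xi_\phi\|_{\mathbf{N}_2}(\ln\tfrac{1+s}{3})^2(\ln\tfrac{1-s}{3})^2(1-s^2)^{-\epsilon}$, which is integrable since $\epsilon<\tfrac12$, and then the outer factor $e^{-b(t)}$ produces enough decay. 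The bound on $W_\phi$ itself follows by an additional integration, and the bound on $W_\phi''$ from \eqref{sec31:eq:diff2:Wphi} combined with $|b'(x)|=|\bar U_\theta|/(1-x^2)$. This is structurally identical to the corresponding step in Lemma \ref{sec33:lem:W}, except both endpoints now contribute double logarithmic factors that need to be tracked.

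Finally, $L_0^{c,\gamma} W(\xi) = \xi$ follows from direct verification: by \eqref{sec31:eq:diff:Wthe} (first line, $i=1$) we have $l_{c,\gamma}[W^1_\theta](x) = \xi_\theta(x)$, hence $(l_{c,\gamma}[W^1_\theta])''(0) = \xi_\theta''(0) = 0$ since $\xi_\theta \in \mathbf{N}_1$, so the first component of $L_0^{c,\gamma}W(\xi)$ equals $\xi_\theta$; the second component equals $\xi_\phi$ by \eqref{sec31:eq:diff2:Wphi}. The main obstacle is the careful bookkeeping of the double logarithmic weights throughout the integrals bounding $W_\phi$, ensuring that the combination of $\epsilon$-dependent polynomial factors with the four logarithmic factors arising from both copies of $e^{\pm b}$ does not produce divergent integrals at either endpoint.
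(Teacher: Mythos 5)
Your proposal follows essentially the same route as the paper's proof: the weighted pointwise bound on $W^{c,\gamma,1}_{\theta}$ via Lemma \ref{lem3_4_ab}, the weighted derivative bound via \eqref{sec31:eq:diff:Wthe} and the boundedness of $(2x+\bar{U}_{\theta})\ln\frac{1+x}{3}\ln\frac{1-x}{3}$, the $\mathbf{M}_2$ bounds on $W^{c,\gamma}_{\phi}$ from \eqref{sec34:eq:eb}, and the right-inverse property from $l_{c,\gamma}[W_{\theta}]=\xi_{\theta}$ together with $\xi_{\theta}''(0)=0$. The only slip is bookkeeping: the inner integrand controlling $W_{\phi}'$ is of order $\left(\ln\frac{1+s}{3}\right)^2\left(\ln\frac{1-s}{3}\right)^2(1-s^2)^{-1-\epsilon}$ rather than $(1-s^2)^{-\epsilon}$, but since the growth of the inner integral is absorbed by $e^{-b(t)}$ and the weight $(1-x^2)^{1+\epsilon}$, exactly as in the paper, the conclusion is unaffected.
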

\begin{proof}
	In the following, $C$ denotes a universal constant which may change from line to line. We make use of the property that $\bar{U}_{\theta}\in C^2(-1,1)\cap L^{\infty}(-1,1)$ and the fact that $0<\epsilon<1/2$. 
	For convenience let us write  $W:=W^{c,\gamma}(\xi)$ and $W_{\theta}:=W_{\theta}^{c,\gamma,1}(\xi)$ for $\xi\in \mathbf{Y}$. 
	


	We first prove $W_{\theta}: \mathbf{Y}\to\mathbf{X}$ is well-defined. 
	Using Lemma \ref{lem3_4_ab} and the fact that $\xi_{\theta}\in \mathbf{N}_1$, we have, by the expression of $W_{\theta}=W^1_{\theta}$ in (\ref{sec31:eq:Wthei}) and (\ref{sec34:eq:ea}), for any $-1<x<1$ that 
	\begin{equation}\label{sec34:eq:Wthe:bdd}
	\begin{split}
		& \quad \left|  \ln \frac{1+x}{3}  \ln \frac{1-x}{3} W^1_{\theta}(x)\right| \\
		& \le \left| \ln \frac{1+x}{3} \ln \frac{1-x}{3} \right| \|\xi_{\theta}\|_{\mathbf{N}_1}e^{-a(x)} 
		\int_{0}^{x}e^{a(s)} \left| \ln \frac{1+s}{3} \ln \frac{1+s}{3} \right|^{-2} ds  
		\le C \|\xi_{\theta}\|_{\mathbf{N}_1}. 
	\end{split}
	\end{equation}
	From the above we also have that $\lim_{x\to \pm 1}W_{\theta}(x)=0$.
	
	By (\ref{sec31:eq:diff:Wthe}), (\ref{sec31:eq:diff:a}),  (\ref{sec34:eq:Wthe:bdd}), and the property that $\bar{U}_{\theta}=2+O(1)\left(\ln \frac{1+x}{3}\right)^{-1}=-2+O(1)\left(\ln \frac{1-x}{3}\right)^{-1}$, we have that for $-1<x<1$ 
	\begin{equation}\label{sec34:eq:Wthe:bdd2}
	\begin{split}
		& \quad \left| (1-x^2) \left( \ln \frac{1+x}{3} \right)^2 \left( \ln \frac{1-x}{3} \right)^2 W'_{\theta} \right| \\ 
		& \le \left| (2x+\bar{U}_{\theta}) \left( \ln \frac{1+x}{3} \right)^2 \left( \ln \frac{1-x}{3} \right)^2 W_{\theta} \right|
		+ \left( \ln \frac{1+x}{3} \right)^2 \left( \ln \frac{1-x}{3} \right)^2 |\xi_{\theta}(x)|  \\
		& \le  C \|\xi_{\theta}\|_{\mathbf{N}_1}. 
	\end{split}
	\end{equation}
	By (\ref{sec31:eq:diff:a}), it can be seen that $|a''(x)|,|a'''(x)|\le C$ for $-\frac{1}{2}<x<\frac{1}{2}$. Then using this fact, \eqref{sec34:eq:Wthe:bdd} and \eqref{sec34:eq:Wthe:bdd2}, we have, for $-\frac{1}{2}<x<\frac{1}{2}$,
	\[
	    |W_{\theta}''(x)|=\left|a''(x)W_{\theta}(x)+a'(x)W'_{\theta}(x)+\left(\frac{\xi_{\theta}}{1-x^2}\right)'\right|\le C \|\xi_{\theta}\|_{\mathbf{N}_1},
	\]
	and 
	\[
	     |W_{\theta}'''(x)|=\left|a'''(x)W_{\theta}(x)+2a''(x)W'_{\theta}(x)+a'(x)W''_{\theta}(x)+\left(\frac{\xi_{\theta}}{1-x^2}\right)''\right|\le C \|\xi_{\theta}\|_{\mathbf{N}_1}.
	\]

	So we have shown that $W_{\theta}\in \mathbf{M}_1$, and $\|W_{\theta}\|_{\mathbf{M}_1}\le C\|\xi_{\theta}\|_{\mathbf{N}_1}$ for some constant $C$. 
	By the definition of $W_{\phi}(\xi)$ in (\ref{sec31:eq:Wphi}), (\ref{sec31:eq:diff1:Wphi}), (\ref{sec34:eq:eb}) and the fact that $\xi_{\phi}\in \mathbf{N}_2$, we have, for every $-1<x<1$, 
	\[
	\begin{split}
		& \quad (1-x^2)^\epsilon |W_{\phi}(x)|  \le  \int_{0}^{x}e^{-b(t)}\int_{0}^{t}e^{b(s)}\frac{|\xi_{\phi}(s)|}{1-s^2}dsdt\\
		& \le  C\|\xi_{\phi}\|_{\mathbf{N}_2} (1-x^2)^\epsilon \int_{0}^{x}  \left(\ln\frac{1+t}{3}\right)^{-2}\left(\ln\frac{1-t}{3}\right)^{-2} (1-t^2)^{-1} \\
		& \qquad \cdot \int_{0}^{t} \left(\ln\frac{1+s}{3}\right)^2  \left(\ln\frac{1-s}{3}\right)^2 (1-s^2)^{-1-\epsilon}dsdt\\
		& \le C\|\xi_{\phi}\|_{\mathbf{N}_2},
		\end{split}
	\]
	and 
	\begin{equation}\label{sec34:eq:Wphi:bdd:temp}
		|(1-x^2)^{1+\epsilon}W'_{\phi}(x)| \le(1-x^2)^{1+\epsilon} e^{-b(x)}\int_{0}^{x}e^{b(s)}\frac{|\xi_{\phi}(s)|}{1-s^2}ds
		 \le C\|\xi_{\phi}\|_{\mathbf{N}_2}.
	\end{equation}
	Similarly, since $|b'(x)|=\frac{|\bar{U}_{\theta}|}{1-x^2}$, using (\ref{sec31:eq:diff2:Wphi}),  (\ref{sec34:eq:Wphi:bdd:temp}) and the fact that $\xi_{\phi}\in \mathbf{N}_2$, we have
	\[
		|(1-x^2)^{2+\epsilon}W''_{\phi}(x)|
		\le C\|\xi_{\phi}\|_{\mathbf{N}_2}.
	\]
	Therefore $W(\xi)\in\mathbf{X}$ for all $\xi\in\mathbf{Y}$, and $\|W(\xi)\|_{\mathbf{X}}\le C\|\xi\|_{\mathbf{Y}}$ for some constant $C$. So $W:\mathbf{Y}\rightarrow\mathbf{X}$ is well-defined and continuous. 
	 
	By definition of $W$,  we have $l_{c,\gamma}[W_{\theta}](x)=\xi_{\theta}$. So $(l_{c,\gamma}[W_{\theta}])''(0)=\xi''_{\theta}(0)=0$,  $l_{c,\gamma}[W_{\theta}](x)+\frac{1}{2}(l_{c,\gamma}[W_{\theta}])''(0)(1-x^2)=\xi_{\theta}$.  Thus $L_0^{c,\gamma}W(\xi)=\xi$, $W$ is a right inverse of $L_0^{c,\gamma}$.
	 
\end{proof}

Let $V_{c,\gamma}^i$, $1\le i\le 4$, be vectors defined by (\ref{eq_basis}),  we have
\begin{lem}\label{sec34:lem:ker}
	\[
		\ker L^{c,\gamma}_{0}=
		\mathrm{span}\{V_{c,\gamma}^1, V_{c,\gamma}^2, V_{c,\gamma}^3,V^4_{c,\gamma}\}. 
	 \]
\end{lem}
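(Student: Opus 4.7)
Let $V\in \mathbf{X}$ satisfy $L^{c,\gamma}_{0}V=0$. By \eqref{sec31:eq:ker}, we can write $V=d_1V^1_{c,\gamma}+d_2V^2_{c,\gamma}+d_3V^3_{c,\gamma}+d_4V^4_{c,\gamma}$ for some $d_i\in\mathbb{R}$. The linear independence of $\{V^1_{c,\gamma},V^2_{c,\gamma},V^3_{c,\gamma},V^4_{c,\gamma}\}$ in $\mathbf{X}$ is guaranteed by the invertibility of the matrix $(l_i(V^j_{c,\gamma}))_{1\le i,j\le 4}$ established at the end of Section~\ref{sec_2}. Therefore the lemma reduces to showing $V^i_{c,\gamma}\in\mathbf{X}$ for $i=1,2,3,4$. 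In contrast with the analogous lemmas in Sections~\ref{sec_3} and~\ref{sec33:sec}, where the blow-up of $e^{-a(x)}$ at a non-resonant pole forced one of the $V^i$ out of the space, here the double logarithmic weight in the $\mathbf{M}_1$-norm is tuned precisely to the $\eta_1=4$, $\eta_2=-4$ resonance at both poles, and we expect all four solutions to survive.

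I would first dispense with the easy vectors. Since $V^4_{c,\gamma}=(0,1)$ lies trivially in $\mathbf{X}$, it remains to bound $V^3_{c,\gamma}=(0,\int_0^x e^{-b(t)}dt)$. Using Lemma~\ref{lem3_4_ab}, $e^{-b(t)}\le C(\ln\frac{1+t}{3})^{-2}(\ln\frac{1-t}{3})^{-2}(1-t^2)^{-1}$, so the change of variables $u=\ln\frac{1\pm t}{3}$ shows that $\int_0^x e^{-b(t)}dt$ is bounded, giving $(1-x^2)^{\epsilon}|V^3_\phi|\le C$. The identities $\frac{d}{dx}V^3_\phi=e^{-b(x)}$ and $\frac{d^2}{dx^2}V^3_\phi=-b'(x)e^{-b(x)}$, combined again with Lemma~\ref{lem3_4_ab} and $|b'(x)|=|\bar U_\theta|/(1-x^2)$, yield the required weighted bounds on the first and second derivatives, so $V^3_{c,\gamma}\in\mathbf{X}$.

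The main issue is $V^1_{c,\gamma}$ and $V^2_{c,\gamma}$. From Lemma~\ref{lem3_4_ab},
\[
V^1_\theta(x)=e^{-a(x)}=O(1)\Bigl(\ln\tfrac{1+x}{3}\Bigr)^{-2}\Bigl(\ln\tfrac{1-x}{3}\Bigr)^{-2},
\]
so $|\ln\frac{1+x}{3}\ln\frac{1-x}{3}V^1_\theta|\to 0$ as $x\to\pm1$, and in particular $V^1_\theta(\pm 1)=0$. For the weighted derivative, I would use $(V^1_\theta)'=-a'(x)e^{-a(x)}$ together with the key resonance identity
\[
2x+\bar U_\theta(x)=O\!\left(\Bigl|\ln\tfrac{1+x}{3}\Bigr|^{-1}\right)+O\!\left(\Bigl|\ln\tfrac{1-x}{3}\Bigr|^{-1}\right)
\]
coming from $\bar U_\theta(-1)=2$, $\eta_1=4$ and $\bar U_\theta(1)=-2$, $\eta_2=-4$ (as made explicit in the proof of Lemma~\ref{lem3_4_ab}). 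This yields
\[
(1-x^2)\Bigl(\ln\tfrac{1+x}{3}\Bigr)^{2}\Bigl(\ln\tfrac{1-x}{3}\Bigr)^{2}\,|(V^1_\theta)'|\le C,
\]
absorbing exactly one logarithmic factor at each pole. The $C^3$ bounds on $(-\tfrac12,\tfrac12)$ follow from the smoothness of $a(x)$ there. For $V^2_\theta(x)=e^{-a(x)}\int_0^x e^{a(s)}\,ds$, the bound $e^{a(s)}\le C|\ln\frac{1\pm s}{3}|^2$ is integrable near both endpoints, so $\int_0^x e^{a(s)}ds$ is bounded and $V^2_\theta$ inherits the same $(\ln)^{-2}(\ln)^{-2}$ decay as $V^1_\theta$, giving $V^2_\theta(\pm 1)=0$. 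The derivative estimate follows from $(V^2_\theta)'=-a'(x)V^2_\theta+1$ in the same manner.

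The expected obstacle is the sharp derivative bound for $V^1_\theta$ (and $V^2_\theta$) near the poles: without the resonance cancellation in $2x+\bar U_\theta$, the factor $a'(x)\sim (1\pm x)^{-1}$ would produce a non-integrable singularity against the weight. Once Lemma~\ref{lem3_4_ab} (and the higher-order expansion of $\bar U_\theta$ behind it) is invoked, everything reduces to bookkeeping of logarithmic powers. Combining the four inclusions with the linear independence observation above completes the identification of $\ker L^{c,\gamma}_{0}$.
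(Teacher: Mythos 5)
Your proposal is correct and follows essentially the same route as the paper: reduce to the general solution formula \eqref{sec31:eq:ker}, then verify via Lemma \ref{lem3_4_ab} that all four of $V_{c,\gamma}^1,V_{c,\gamma}^2,V_{c,\gamma}^3,V_{c,\gamma}^4$ lie in $\mathbf{X}$, with independence immediate. The only difference is cosmetic: the resonance cancellation $2x+\bar U_\theta=O(|\ln\frac{1\pm x}{3}|^{-1})$ you invoke for the derivative bounds is not actually needed here, since boundedness of $2x+\bar U_\theta$ already suffices against the weight $(1-x^2)\bigl(\ln\frac{1+x}{3}\bigr)^2\bigl(\ln\frac{1-x}{3}\bigr)^2$, which is how the paper argues.
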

\begin{proof}
	Let $V\in\mathbf{X}$, $L^{c,\gamma}_{0}V=0$. We know that $V$ is given by (\ref{sec31:eq:ker}) for some $d_1,d_2,d_3,d_4\in\mathbb{R}$.
	
	By Lemma \ref{lem3_4_ab}, and the expressions of $V^1,V^2$ in (\ref{eq_basis}), we have that	
		
	\begin{equation}\label{eq3_4_ker_1}
		V^1_{\theta}(x) = e^{-a(x)}=  O(1) \left| \ln \frac{1+x}{3} \right|^{-2} \left| \ln \frac{1-x}{3} \right|^{-2},
	\end{equation}
	and 
	\begin{equation}\label{eq3_4_ker_2}
		 V^2_{\theta}(x) =e^{-a(x)}\int_{0}^{x}e^{a(s)}ds		
		=  O(1) \left| \ln \frac{1+x}{3} \right|^{-2} \left| \ln \frac{1+x}{3} \right|^{-2}. 
	\end{equation}
	
By (\ref{sec31:eq:diff:a}), we also have
	\begin{equation*}
		 \left|\frac{d}{dx}V^1_{\theta}(x)\right|=\left|e^{-a(x)}a'(x)\right| = O(1) \left| \ln \frac{1+x}{3} \right|^{-2}\left| \ln \frac{1-x}{3} \right|^{-2} (1-x^2)^{-1},
	\end{equation*}
	\begin{equation*}
		 \left|\frac{d}{dx}V^2_{\theta}(x)\right|=\left| -V^2_{\theta}(x)a'(x) + 1\right| = O(1)  \left| \ln \frac{1+x}{3}\right|^{-2} \left| \ln \frac{1-x}{3} \right|^{-2} (1-x^2)^{-1}.
	\end{equation*}
	
	Next, by computation we have for $i=1,2,$
	\[
	 \frac{d^2}{dx^2}V^i_{\theta}=(V^i_{\theta})'a'(x)+V^i_{\theta}a''(x),\quad  \frac{d^3}{dx^3}V^i_{\theta}=(V^i_{\theta})''a'(x)+2(V^i_{\theta})'a''(x)+V^i_{\theta}a'''(x).
	\]
	Using the definition of $a(x)$ in (\ref{sec31:eq:ab}), there exists some constant $C$, depending on $c,\gamma$, such that
	\begin{equation*}
	   \left |\frac{d^2}{dx^2}V^i_{\theta}\right|\le C, \quad \left|\frac{d^3}{dx^3}V^i_{\theta}\right|\le C, \quad -\frac{1}{2}<x<\frac{1}{2}, \; i=1,2.
	\end{equation*}
	
	Moreover, by Lemma \ref{lem3_4_ab}, and the expressions of $V^3$ in (\ref{eq_basis}), we have
	\begin{equation}\label{eq3_4_ker_9}
		V^3_{\phi}(x)= \int_0^x \left| \ln \frac{1+t}{3} 
		\ln \frac{1-t}{3} \right|^{-2} (1-t^2)^{-1} \cdot \int_0^t \left| \ln \frac{1+s}{3} \ln \frac{1-s}{3} \right|^2 (1-s^2) ds dt=O(1),
	\end{equation}
	and 
	\begin{equation}\label{eq3_4_ker_10}
	\begin{split}
		&\left|\frac{d}{dx}V^3_{\phi}(x)\right|=e^{-b(x)}
		= O(1) \left| \ln \frac{1+x}{3} \ln \frac{1-x}{3} \right|^{-2} (1-x^2)^{-1}, \\
					&\left|\frac{d^2}{dx^2}V^3_{\phi}(x)\right|=e^{-b(x)}|b'(x)| = O(1) \left| \ln \frac{1+x}{3} \ln \frac{1-x}{3} \right|^{-2} (1-x^2)^{-2}. 
			\end{split}
	\end{equation}


	Using the above estimates and the definition of $V_{c,\gamma}^4$, it is not hard to verify that $V_{c,\gamma}^i\in \mathbf{X}$, $1\le i\le 4$.  It is clear that $\{V_{c,\gamma}^i , 1\le i\le 4\}$ are independent. So $\{V_{c,\gamma}^i , 1\le i\le 4\}$ is a basis of the kernel.
\end{proof}

\begin{cor}\label{sec34:cor:all:sol}
	For any $\xi\in\mathbf{Y}$, all solutions of $L^{c,\gamma}_{0}V=\xi$, $V\in\mathbf{X}$, are given by
	\[
		V=W^{c,\gamma}(\xi)+d_1V_{c,\gamma}^1+d_2V_{c,\gamma}^2+d_3V_{c,\gamma}^3+d_4V^4_{c,\gamma}. 
	\]
\end{cor}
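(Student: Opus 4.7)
The plan is to deduce this corollary directly from the two preceding lemmas, since it is essentially a statement about the affine structure of the solution set of a linear equation. By Lemma \ref{sec34:lem:W}, $W^{c,\gamma}:\mathbf{Y}\to\mathbf{X}$ is well-defined and is a right inverse of $L^{c,\gamma}_0$, so $W^{c,\gamma}(\xi)\in\mathbf{X}$ is a particular solution of $L^{c,\gamma}_0 V=\xi$. Likewise, by Lemma \ref{sec34:lem:ker}, the kernel of $L^{c,\gamma}_0$ restricted to $\mathbf{X}$ is exactly $\mathrm{span}\{V^1_{c,\gamma},V^2_{c,\gamma},V^3_{c,\gamma},V^4_{c,\gamma}\}$.

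First, I would verify the forward inclusion: for any constants $d_1,d_2,d_3,d_4\in\mathbb{R}$, the vector $V=W^{c,\gamma}(\xi)+\sum_{i=1}^4 d_i V^i_{c,\gamma}$ lies in $\mathbf{X}$ (using that $W^{c,\gamma}(\xi)\in\mathbf{X}$ by Lemma \ref{sec34:lem:W} and each $V^i_{c,\gamma}\in\mathbf{X}$ by the proof of Lemma \ref{sec34:lem:ker}), and satisfies $L^{c,\gamma}_0 V=\xi+0=\xi$ by linearity.

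For the reverse inclusion, suppose $V\in\mathbf{X}$ is any solution of $L^{c,\gamma}_0 V=\xi$. Then $V-W^{c,\gamma}(\xi)\in\mathbf{X}$ and, by linearity of $L^{c,\gamma}_0$ together with Lemma \ref{sec34:lem:W},
\[
L^{c,\gamma}_0\bigl(V-W^{c,\gamma}(\xi)\bigr)=\xi-\xi=0,
\]
so $V-W^{c,\gamma}(\xi)\in\ker L^{c,\gamma}_0$. Applying Lemma \ref{sec34:lem:ker} yields $d_1,d_2,d_3,d_4\in\mathbb{R}$ with $V-W^{c,\gamma}(\xi)=\sum_{i=1}^4 d_i V^i_{c,\gamma}$, which is the desired representation.

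There is no real obstacle here: the argument is a routine application of the splitting ``particular solution plus element of kernel'' once the two preceding lemmas are in hand. The only thing to be careful about is that both the particular solution $W^{c,\gamma}(\xi)$ and the kernel basis elements all lie in the ambient space $\mathbf{X}$, which is precisely what Lemma \ref{sec34:lem:W} and the proof of Lemma \ref{sec34:lem:ker} supply.
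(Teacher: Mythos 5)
Your argument is correct and is exactly the reasoning the paper relies on (the corollary is stated without a separate proof precisely because it follows from Lemma \ref{sec34:lem:W}, giving the particular solution $W^{c,\gamma}(\xi)\in\mathbf{X}$, and Lemma \ref{sec34:lem:ker}, identifying $\ker L^{c,\gamma}_0$ in $\mathbf{X}$ as $\mathrm{span}\{V^1_{c,\gamma},\dots,V^4_{c,\gamma}\}$). Both inclusions are handled as the paper intends, so there is nothing to add.
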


Let $l_i$, $1\le i\le 4$, be the functionals on $\mathbf{X}$ defined by (\ref{sec31:eq:fcnal:l}), and $\mathbf{X}_1=\cap_{i=1}^4 \ker l_i$ . As shown in Section \ref{sec_2}, the matrix $(l_i(V^{j}_{c,\gamma}))$ is invertible, for every $(c,\gamma)\in K$. So $\mathbf{X}_i$ is a closed subspace of $\mathbf{X}$, and
\begin{equation}\label{sec34:eq:decomp:X}
	\mathbf{X} =
		\mbox{span} \{ V_{c,\gamma}^{1}, V_{c,\gamma}^{2}, V_{c,\gamma}^{3}, V^4_{c,\gamma} \} \oplus\mathbf{X}_1, 
\end{equation}
with the projection operator $P_1: \mathbf{X}\rightarrow\mathbf{X}_1$ given by (\ref{sec32:eq:proj}). 
\begin{lem}\label{sec34:lem:iso}
	The operator $ L^{c,\gamma}_{0}: \mathbf{X}_1\rightarrow\mathbf{Y}$ is an isomorphism.  
\end{lem}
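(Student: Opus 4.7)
The plan is to mirror the proofs of Lemma \ref{sec32:lem:iso} and Lemma \ref{sec33:lem:iso}, since all the ingredients have already been set up. First I would verify surjectivity of $L_0^{c,\gamma} \colon \mathbf{X}_1 \to \mathbf{Y}$. Given any $\xi \in \mathbf{Y}$, Lemma \ref{sec34:lem:W} provides $W^{c,\gamma}(\xi) \in \mathbf{X}$ with $L_0^{c,\gamma} W^{c,\gamma}(\xi) = \xi$, so $L_0^{c,\gamma} \colon \mathbf{X} \to \mathbf{Y}$ is surjective. Applying the projector $P_1$ from \eqref{sec32:eq:proj}, we set $V := P_1 W^{c,\gamma}(\xi) \in \mathbf{X}_1$; by Lemma \ref{sec34:lem:ker} the difference $W^{c,\gamma}(\xi) - V$ lies in $\operatorname{span}\{V_{c,\gamma}^1, V_{c,\gamma}^2, V_{c,\gamma}^3, V_{c,\gamma}^4\} = \ker L_0^{c,\gamma}$, so $L_0^{c,\gamma} V = \xi$.

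Next I would handle injectivity. If $V \in \mathbf{X}_1$ satisfies $L_0^{c,\gamma} V = 0$, then by Lemma \ref{sec34:lem:ker} we have $V \in \operatorname{span}\{V_{c,\gamma}^1, V_{c,\gamma}^2, V_{c,\gamma}^3, V_{c,\gamma}^4\}$. The direct sum decomposition \eqref{sec34:eq:decomp:X} forces $V = 0$, which gives injectivity. Boundedness of $L_0^{c,\gamma}$ is already recorded in Proposition \ref{sec34:prop}, and continuity of the inverse follows from the Banach open mapping theorem since $\mathbf{X}_1$ and $\mathbf{Y}$ are Banach spaces.

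No step here is a genuine obstacle, since the main analytic work (the weighted estimates producing $W^{c,\gamma} \colon \mathbf{Y} \to \mathbf{X}$, the identification of $\ker L_0^{c,\gamma}$, and the direct sum splitting) has already been carried out in Lemmas \ref{sec34:lem:W} and \ref{sec34:lem:ker} together with \eqref{sec34:eq:decomp:X}. The only point requiring a bit of care is ensuring that $P_1$ is well-defined as a bounded operator on $\mathbf{X}$, which is a consequence of the invertibility of the matrix $(l_i(V_{c,\gamma}^j))_{1\le i,j\le 4}$ discussed at the end of Section \ref{sec_2}; this invertibility holds in the present setting because the four basis vectors $V_{c,\gamma}^1, \ldots, V_{c,\gamma}^4$ all lie in $\mathbf{X}$ by Lemma \ref{sec34:lem:ker}.
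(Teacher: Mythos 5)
Your proposal is correct and follows essentially the same route as the paper: the paper deduces surjectivity from Corollary \ref{sec34:cor:all:sol} (i.e.\ the right inverse $W^{c,\gamma}$ of Lemma \ref{sec34:lem:W} plus the kernel identification of Lemma \ref{sec34:lem:ker}) and injectivity from the direct sum decomposition \eqref{sec34:eq:decomp:X}, exactly the ingredients you combine via $P_1$. Your explicit remarks on boundedness of $P_1$ and the open mapping theorem are fine but are just making explicit what the paper leaves implicit.
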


\begin{proof}
	By Corollary \ref{sec34:cor:all:sol} and Lemma \ref{sec34:lem:ker}, $L^{c,\gamma}_{0}:\mathbf{X}\rightarrow\mathbf{Y}$ is surjective and $\ker L^{c,\gamma}_{0}$  is given by Lemma \ref{sec34:lem:ker}. The conclusion of the lemma then follows in view of the direct sum property (\ref{sec34:eq:decomp:X}).	
\end{proof}


\begin{lem}\label{sec34:lem:V:smooth}
	$V_{c,\gamma}^i\in C^{\infty}(K,\mathbf{X})$ for all $1\le i\le 4$ and $(c,\gamma)$ in compact subset $K$ of $I_{4,1}$. 
\end{lem}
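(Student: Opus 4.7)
The plan follows the template of Lemmas \ref{sec32:lem:V:smooth} and \ref{sec33:lem:V:smooth}, adapted to the doubly--logarithmic weights of this section. Since $V^4_{c,\gamma}=(0,1)^T$ is constant, it is trivially in $C^\infty(K,\mathbf{X})$, so the real work concerns $V^1,V^2,V^3$. Note that on $I_{4,1}$ we have $c_1=c_2=-1$ fixed, so the only nontrivial parameter directions are $c_3$ and $\gamma$; in what follows $\alpha=(0,0,\alpha_3)$ and we only need estimates on $\pt_c^\alpha\pt_\gamma^j$ with $|\alpha|+j\ge 1$, which is precisely the range covered by Proposition \ref{propA_1}(4).

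First I would differentiate \eqref{sec31:eq:ab} under the integral to get
\[
\pt_c^\alpha\pt_\gamma^j a(x)=\pt_c^\alpha\pt_\gamma^j b(x)=\int_0^x \frac{\pt_c^\alpha\pt_\gamma^j \bar U_\theta(s)}{1-s^2}\,ds.
\]
Proposition \ref{propA_1}(4) gives $|\pt_c^\alpha\pt_\gamma^j \bar U_\theta(s)|\le C\bigl(\ln\tfrac{1+s}{3}\bigr)^{-2}\bigl(\ln\tfrac{1-s}{3}\bigr)^{-2}$; a change of variables $u=\ln\tfrac{1\pm s}{3}$ near each pole shows that the resulting integrand $\tfrac{1}{1-s^2}\bigl(\ln\tfrac{1+s}{3}\bigr)^{-2}\bigl(\ln\tfrac{1-s}{3}\bigr)^{-2}$ is integrable on $(-1,1)$, so that $|\pt_c^\alpha\pt_\gamma^j a(x)|\le C(\alpha,j,K)$ uniformly in $x\in(-1,1)$ and $(c,\gamma)\in K$.

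Next, for $V^1_\theta=e^{-a(x)}$ and $V^2_\theta=e^{-a(x)}\int_0^x e^{a(s)}ds$, applying the Leibniz/Fa\`a di Bruno formulas yields sums of terms of the form $e^{-a(x)}P(\pt a)$ or $e^{-a(x)}\bigl(\int_0^x e^{a(s)}Q(\pt a(s))ds\bigr)$, where $P,Q$ are polynomials in the derivatives $\pt_c^{\alpha'}\pt_\gamma^{j'}a$. Combining the uniform bound on $\pt^{\alpha}\pt^{j}a$ with the weighted estimates \eqref{sec34:eq:ea} for $e^{\pm a}$ from Lemma \ref{lem3_4_ab}, and using the derivatives of $a$ given by \eqref{sec31:eq:diff:a}, I obtain for $i=1,2$ and $-1<x<1$
\[
\left|\ln\tfrac{1+x}{3}\ln\tfrac{1-x}{3}\,\pt_c^\alpha\pt_\gamma^j V^i_\theta(x)\right|\le C,\qquad \left|(1-x^2)\bigl(\ln\tfrac{1+x}{3}\bigr)^2\bigl(\ln\tfrac{1-x}{3}\bigr)^2\tfrac{d}{dx}\pt_c^\alpha\pt_\gamma^j V^i_\theta(x)\right|\le C,
\]
with constants depending only on $\alpha,j,K$; the decay in $x$ forces the boundary values $\pt_c^\alpha\pt_\gamma^j V^i_\theta(\pm 1)=0$ needed for membership in $\mathbf{M}_1$. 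The $C^2,C^3$ bounds on $(-\tfrac12,\tfrac12)$ are immediate from the smoothness of $a$ on this interval together with \eqref{sec31:eq:diff:a}. Hence $\pt_c^\alpha\pt_\gamma^j V^i_\theta\in\mathbf{M}_1$ with norm uniformly bounded on $K$, and standard functional-analytic arguments give $V^i\in C^\infty(K,\mathbf{X})$ for $i=1,2$. The same scheme applied to $V^3_\phi=\int_0^x e^{-b(t)}dt$, using the $\mathbf{M}_2$ weights and the exponential bounds on $e^{-b}$ in \eqref{sec34:eq:eb}, gives $V^3\in C^\infty(K,\mathbf{X})$.

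The main obstacle is not any single hard estimate but the bookkeeping to track the doubly--logarithmic weights through differentiation: one must verify that the decay of $e^{-a}$ and $e^{-b}$ at both endpoints is fast enough to absorb both the weights in the $\mathbf{M}_1/\mathbf{M}_2$ norms and the logarithmic losses from each $c$- or $\gamma$-derivative acting on $a$ or $b$. This balance is precisely what the restriction $\alpha_1=\alpha_2=0$ in Proposition \ref{propA_1}(4) and the choice of weights in our function spaces are designed to accommodate.
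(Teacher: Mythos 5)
Your proposal is correct and follows essentially the same route as the paper: differentiate $a_{c,\gamma}=b_{c,\gamma}$ under the integral, use Proposition \ref{propA_1}(4) (with $\alpha_1=\alpha_2=0$, as forced on $I_{4,1}$) to conclude the integrand is integrable so that $\pt_c^\alpha\pt_\gamma^j a=O(1)$ uniformly, then combine with the $e^{\pm a}$, $e^{\pm b}$ bounds of Lemma \ref{lem3_4_ab} and \eqref{sec31:eq:diff:a} to get the doubly-logarithmic weighted estimates, vanishing boundary values, and interior $C^2$, $C^3$ bounds giving $\pt_c^\alpha\pt_\gamma^j V^i_\theta\in\mathbf{M}_1$ ($i=1,2$) and $\pt_c^\alpha\pt_\gamma^j V^3_\phi\in\mathbf{M}_2$ uniformly on $K$. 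This is exactly the paper's argument, so no further comment is needed.
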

\begin{proof}
It is clear that $V^4_{c,\gamma}\in  C^{\infty}(K,\mathbf{X})$ for all compact set $K$ in $I_{4,1}$.
    
    Let $\alpha=(\alpha_1,\alpha_2,\alpha_3)$ be a multi-index where $\alpha_i\ge 0$, $i=1,2,3$, and $j\ge 0$. 
    For convenience we denote $a(x)=a_{c,\gamma}(x)$, $b(x)=b_{c,\gamma}(x)$ and $V^i=V^i_{c,\gamma}$, $i=1,2,3$.

    Using Proposition \ref{propA_1} part (4), we have that for all $|\alpha|+j\ge 1$ and $(c,\gamma)\in K$,
		 \begin{equation}\label{sec34:eq:Vsmooth:temp}
		\pt_c^j \pt_\gamma^i a(x) = \pt_c^j \pt_\gamma^i b(x) = \int_{0}^{x}\frac{1}{1-s^2} \pt_c^j \pt_\gamma^i U^{c,\gamma}(s)ds = O(1).
	\end{equation}
	
	Using the expression of $V^i$, $1\le i\le 4$ in (\ref{eq_basis}), Lemma \ref{lem3_4_ab}, (\ref{eq3_4_ker_1}), (\ref{eq3_4_ker_2}), (\ref{sec34:eq:Vsmooth:temp}) and Proposition \ref{propA_1} (4), we have that for all $|\alpha|+j\ge 1$ and $(c,\gamma)\in K$,	

	\[
	\begin{split}
		&\left| \pt_{c}^{\alpha} \pt_\gamma^j V^1_{\theta}(x) \right| = e^{-a(x)}O(1)
		=O(1) \left| \ln \frac{1+x}{3} \ln \frac{1-x}{3} \right|^{-2},\\
		&  \left| \pt_c^\alpha \pt_\gamma^j V^2_{\theta}(x) \right| = e^{-a(x)}\left|\int_{0}^{x}e^{a(s)}ds\right|O(1)=O(1)  \left| \ln \frac{1+x}{3} \ln \frac{1-x}{3} \right|^{-2}, 
		\end{split}
	\]
	and 
	\[
	\begin{split}
	  &  \left| \frac{d}{dx}\pt_c^\alpha \pt_\gamma^j V^1_{\theta}(x) \right| = e^{-a(x)}|a'(x)|O(1)
		=O(1) \left( \ln \frac{1+x}{3} \right)^{-2} \left( \ln \frac{1-x}{3} \right)^{-2}(1-x^2)^{-1},\\
		& \left| \frac{d}{dx}\pt_c^\alpha \pt_\gamma^j V^2_{\theta}(x) \right| = |-V^2_{\theta}a'(x)+1|O(1)
		=O(1) \left( \ln \frac{1+x}{3} \right)^{-2} \left( \ln \frac{1-x}{3} \right)^{-2}(1-x^2)^{-1}.
		\end{split}
	\]
	From the above we can see that for all $|\alpha|+j\ge 1$, there exists some constant $C=C(\alpha,j,K)$, such that for $i=1,2$,
	\begin{eqnarray*}
		\left| \ln \frac{1+x}{3} \ln \frac{1-x}{3}\pt_{c}^{\alpha} \pt_\gamma^j V^i_{\theta}(x)\right|\le C,  \\
		\left| \ln \frac{1+x}{3} \ln \frac{1-x}{3} \right|^2 (1-x^2) \left| \frac{d}{dx} \pt_{c}^{\alpha} \pt_\gamma^jV^i_{\theta}(x)\right|\le C. 
	\end{eqnarray*}
	We also have that for $|\alpha|+j\ge 1$, $\pt_c^\alpha \pt_\gamma^j  V^i_{\theta}(1) = \pt_c^\alpha \pt_\gamma^j  V^i_{\theta}(-1) = 0$, $i=1,2$. 
	
	Next, using the definition of $a(x)$ in (\ref{sec31:eq:ab}), there exists some constant $C=C(K)$, such that  
	\[
	    \left| \frac{d^2}{dx^2}\pt_c^\alpha \pt_\gamma^jV^i_{\theta}\right|\le C, \quad \left| \frac{d^3}{dx^3}\pt_c^\alpha \pt_\gamma^jV^i_{\theta}\right|\le C, \quad -\frac{1}{2}<x<\frac{1}{2},\quad i=1,2.
	\]
	The above implies that  for all $|\alpha|+j \ge 1$, $\pt_c^\alpha \pt_\gamma^j  V^i_{\theta}\in \mathbf{M}_1$, $i=1,2$, so $V^1,V^2\in C^{\infty}(K, \mathbf{X})$.
	
	Using the expressions of $V^3$ in (\ref{eq_basis}), Lemma \ref{lem3_4_ab},  the estimates (\ref{eq3_4_ker_9}), (\ref{eq3_4_ker_10}), (\ref{sec34:eq:Vsmooth:temp}) and Proposition \ref{propA_1} (4), we have that for all $|\alpha|+j\ge 1$,
		
	\[
	   \left|\partial_{c}^{\alpha}\partial_{\gamma}^{j}V^3_{\phi}\right|=O(1),\quad 
	    \left|\frac{d}{dx}\partial_{c}^{\alpha}\partial_{\gamma}^{j}V^3_{\phi}(x)\right|= O(1) \left| \ln \frac{1+x}{3} \ln \frac{1-x}{3} \right|^{-2} (1-x^2)^{-1},
	\]
	\[
	    \left|\frac{d^2}{dx^2}\partial_{c}^{\alpha}\partial_{\gamma}^{j}V^3_{\phi}(x)\right|=O(1) \left| \ln \frac{1+x}{3} \ln \frac{1-x}{3} \right|^{-2} (1-x^2)^{-2}.
	\]
	Since $0<\epsilon<1/2$, there exists some $C=C(\alpha, j, K)$ such that for all  $(c,\gamma)\in K$,
	\[
	   | (1-x^2)^{\epsilon}\partial_{c}^{\alpha}\partial_{\gamma}^{j}V^3_{\phi}|\le C, \quad \left|(1-x^2)^{1+\epsilon}\frac{d}{dx}\partial_{c}^{\alpha}\partial_{\gamma}^{j}V^3_{\phi}\right|\le C, \quad \quad \left|(1-x^2)^{2+\epsilon}\frac{d^2}{dx^2}\partial_{c}^{\alpha}\partial_{\gamma}^{j}V^3_{\phi}\right|\le C.
	\]
	The above implies that for any $|\alpha|+j\ge 1$, $\partial_{c}^{\alpha}\partial_{\gamma}^{j}V^3_{\phi}\in \mathbf{M}_2$, so $V^3\in C^{\infty}(K,\mathbf{X})$. 
	
\end{proof}


Similar arguments as in Lemma \ref{sec32:lem:combine:bdd} imply the following lemma. 
\begin{lem}\label{sec34:lem:combine:bdd}
	There exists $C=C(K)>0$ such that for all $(c,\gamma)\in K\subset \subset I_{4,1}$,  $\beta=(\beta_1,\beta_2,\beta_3,\beta_4)\in \mathbb{R}^4$, and $V\in \mathbf{X}_1$, 
	\[
		\| V \|_{\mathbf{X}} + |\beta | 
		\leq C \| \sum_{i=1}^{4}\beta_i V^i_{c,\gamma}+ V \|_{\mathbf{X}}. 
	\] 
\end{lem}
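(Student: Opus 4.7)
The plan is to mirror the contradiction argument used in the proof of Lemma \ref{sec32:lem:combine:bdd}(i), with the role of $I_{1,1}$ replaced by $I_{4,1}$. Suppose the claimed inequality fails. Then we can choose sequences $(c^i,\gamma^i)\in K$, $\beta^i=(\beta_1^i,\beta_2^i,\beta_3^i,\beta_4^i)\in\mathbb{R}^4$ and $V^i\in\mathbf{X}_1$ with
\[
\|V^i\|_{\mathbf{X}}+|\beta^i|\ge i\Big\|\sum_{j=1}^4\beta_j^i V_{c^i,\gamma^i}^j+V^i\Big\|_{\mathbf{X}}.
\]
By homogeneity we may normalize $\|V^i\|_{\mathbf{X}}+|\beta^i|=1$, so $\sum_{j=1}^4\beta_j^iV_{c^i,\gamma^i}^j+V^i\to 0$ in $\mathbf{X}$.

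Next, extract convergent subsequences. Since $K$ is compact and $\{\beta^i\}$ is bounded, we may assume $(c^i,\gamma^i)\to(c,\gamma)\in K$ and $\beta^i\to\beta\in\mathbb{R}^4$. By Lemma \ref{sec34:lem:V:smooth} the maps $(c,\gamma)\mapsto V_{c,\gamma}^j$ belong to $C^\infty(K,\mathbf{X})$ for $1\le j\le 4$, hence $V_{c^i,\gamma^i}^j\to V_{c,\gamma}^j$ in $\mathbf{X}$. Consequently
\[
V^i\longrightarrow V:=-\sum_{j=1}^4\beta_jV_{c,\gamma}^j\quad\text{in }\mathbf{X}.
\]

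Finally, use the closedness of $\mathbf{X}_1$ and the structural result for the kernel. Since each $V^i\in\mathbf{X}_1$ and $\mathbf{X}_1$ is closed in $\mathbf{X}$ (being the intersection of kernels of the bounded functionals $l_1,\dots,l_4$), we have $V\in\mathbf{X}_1$. Thus
\[
V\in\mathbf{X}_1\cap\operatorname{span}\{V_{c,\gamma}^1,V_{c,\gamma}^2,V_{c,\gamma}^3,V_{c,\gamma}^4\},
\]
and by the direct sum decomposition \eqref{sec34:eq:decomp:X} this intersection is $\{0\}$. Linear independence of $V_{c,\gamma}^1,\dots,V_{c,\gamma}^4$ then forces $\beta=0$, and consequently $V^i\to 0$ in $\mathbf{X}$. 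This contradicts $\|V^i\|_{\mathbf{X}}+|\beta^i|=1$, proving the lemma.

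The only step requiring genuine input beyond the analogue in Section~\ref{sec_3} is the verification that $V_{c,\gamma}^j$ depends smoothly on $(c,\gamma)$ on compact subsets of $I_{4,1}$; this is exactly Lemma \ref{sec34:lem:V:smooth}, so no new obstacle arises.
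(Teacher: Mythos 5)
Your proof is correct and follows exactly the route the paper intends: the paper simply states that "similar arguments as in Lemma \ref{sec32:lem:combine:bdd} imply" Lemma \ref{sec34:lem:combine:bdd}, and your contradiction argument (normalization, compactness of $K$, smooth dependence of $V^j_{c,\gamma}$ from Lemma \ref{sec34:lem:V:smooth}, closedness of $\mathbf{X}_1$, and the direct sum decomposition \eqref{sec34:eq:decomp:X}) is precisely that argument adapted to $I_{4,1}$. No issues.
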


\noindent 
{\bf Proof of Theorem \ref{sec34:thm:1}.}
Define a map $F: K \times \mathbb{R}^4 \times \mathbf{X}_1 \to \mathbf{Y}$ by
\[
	F(c,\gamma,\beta, V) = G(c,\gamma, \sum_{i=1}^4 \beta_i V^i_{c,\gamma} + V).
\]
By Proposition \ref{sec34:prop}, $G$ is a $C^{\infty}$ map from $K\times \mathbf{X}$ to $\mathbf{Y}$. Let $\tilde{U} = \tilde{U}(c,\gamma,\beta, V) = \sum_{i=1}^4 \beta_i V^i_{c,\gamma} + V$. Using Lemma \ref{sec34:lem:V:smooth}, we have $\tilde{U}\in C^{\infty}(K \times \mathbb{R}^4 \times \mathbf{X}_1, \mathbf{X})$. So it concludes that $F \in C^{\infty}(K \times \mathbb{R}^4 \times \mathbf{X}_1, \mathbf{Y})$. 

Next, by definition $F(c,\gamma,0, 0) = 0$ for all $(c,\gamma)\in K$. Fix some $(\bar{c}, \bar{\gamma}) \in K$, using Lemma \ref{sec34:lem:iso}, we have $F_{V}(\bar{c}, \bar{\gamma}, 0,0)=L_0^{\bar{c}, \bar{\gamma}}:\mathbf{X}_1\to \mathbf{Y}$ is an isomorphism.

Applying Theorem \ref{thm:IFT}, there exist some $\delta>0$ depending only on $K$ and a unique $V\in C^{\infty}(B_{\delta}(\bar{c}, \bar{\gamma})\times B_{\delta}(0), \mathbf{X}_1 )$, such that
\[
	F(c,\gamma,\beta, V(c,\gamma,\beta) ) = 0, \quad \forall (c,\gamma)\in B_{\delta}(\bar{c}, \bar{\gamma}), \beta \in B_{\delta}(0),
\]
and 
\[
	V(\bar{c}, \bar{\gamma},0)=0.
\]
The uniqueness part of Theorem \ref{thm:IFT} holds in the sense that there exists some $0<\bar{\delta}<\delta$,  such that $B_{\bar{\delta}}(\bar{c},\bar{\gamma},0,0)\cap F^{-1}(0) \subset  \{(c,\gamma,\beta,V(c,\gamma,\beta) ) |(c,\gamma) \in B_{\delta}(\bar{c}, \bar{\gamma}), \beta\in B_{\delta}(0)\}$.

\noindent
{\bf Claim.} There exists some $0 < \delta_1 < \frac{\bar{\delta}}{2}$, such that $V(c,\gamma,0)=0$ for every $(c,\gamma)\in B_{\delta_1}(\bar{c}, \bar{\gamma})$.

\noindent 
{\bf Proof of the Claim.}
Since $V(\bar{c}, \bar{\gamma},0) = 0$ and $V(c,\gamma,0)$ is continuous in $(c,\gamma)$, there exists some $0<\delta_1 < \frac{\bar{\delta}}{2}$, such that for all $(c,\gamma)\in B_{\delta_1}(\bar{c}, \bar{\gamma})$, $(c,\gamma,0, V(c,\gamma,0))\in B_{\bar{\delta}(\bar{c}, \bar{\gamma},0,0)}$. We know that for all $(c,\gamma)\in B_{\delta_1}(\bar{c},\bar{\gamma})$,
\[
	F(c,\gamma,0, 0) = 0, 
\]
and 
\[
	F(c,\gamma,0, V(c,\gamma,0))=0.
\]
By the above mentioned uniqueness result, $V(c,\gamma,0)=0$, for every $(c,\gamma)\in B_{\delta_1}(\bar{c}, \bar{\gamma})$.

Now we have $V\in C^{\infty}(B_{\delta_1}(\bar{c}, \bar{\gamma})\times B_{\delta_1}(0), \mathbf{X}_1(\bar{c}, \bar{\gamma}) )$, and 
\[
	F(c,\gamma,\beta,V(c,\gamma,\beta))=0, \quad \forall (c,\gamma)\in B_{\delta_1}(\bar{c}, \bar{\gamma}), \beta\in B_{\delta_1}(0).
\]
i.e. for any $(c,\gamma)\in B_{\delta_1}(\bar{c}, \bar{\gamma})$, $\beta\in B_{\delta_1}(0)$
\[
	G(c,\gamma,\sum_{i=1}^4 \beta_i V^i_{c,\gamma} + V(c,\gamma,\beta) )=0. 
\]
Take derivative of the above with respect to $\beta_i$ at $(c,\gamma,0)$, $1\le i\le 4$, we have
\[
	G_{\tilde{U}}(c,\gamma,0)(V^i_{c,\gamma}+\partial_{\beta_i}V(c,\gamma,0))=0.
\]
Since $G_{\tilde{U}}(c,\gamma,0)V^i_{c,\gamma}=0$ by Lemma \ref{sec34:lem:ker}, we have 
\[
	G_{\tilde{U}}(c,\gamma,0)\partial_{\beta_i}V(c,\gamma,0)=0.
\]
But $\partial_{\beta_i}V(c,\gamma,0)\in C^{\infty}(\mathbf{X}_1)$, so 
\[
	\partial_{\beta_i}V(c,\gamma,0)=0, \quad 1\le i\le 4.
\]
Since $K$ is compact, we can take $\delta_1$ to be a universal constant for each  $(c,\gamma)\in K$. So we have proved the existence of $V$ in Theorem \ref{sec34:thm:1}. 

Next, let $(c,\gamma)\in B_{\delta_1}(\bar{c}, \bar{\gamma})$. Let $\delta'$ be a small constant to be determined.  For any $U$ satisfies equation (\ref{sec31:eq:NSE}) with $U-U^{c,\gamma}\in \mathbf{X}$, and $\|U-U^{c,\gamma}\|_{ \mathbf{X}}\le \delta'$ there exist some $\beta_1,\beta_2\in \mathbb{R}$ and $V^*\in \mathbf{X}_1$ such that
\[
	U-U^{c,\gamma} = \sum_{i=1}^4 \beta_i V^i_{c,\gamma} + V^*.
\]
Then by Lemma \ref{sec34:lem:combine:bdd}, there exists some constant $C>0$ such that
\[
	\frac{1}{C}(|\beta| + \|V^*\|_{\mathbf{X}})
	\le \| \sum_{i=1}^4 \beta_i V^i_{c,\gamma} + V^*\|_{\mathbf{X}}\le \delta'.
\]
This gives $\|V^*\|_{\mathbf{X}}\le C\delta'$.

Choose $\delta'$ small enough such that $C\delta'<\delta_1$. We have the uniqueness of $V^*$. 
So  $V^*=V(c,\gamma,\beta)$ in (\ref{sec34:eq:U1}).
Theorem \ref{sec34:thm:1} is proved. 
\qed

\medskip

Now with Theorem \ref{sec32:thm:1}- \ref{sec34:thm:1} we can give the 

\medskip

\noindent
{\bf Proof of Theorem \ref{thm1}.}
Let $K$ be a compact subset of one of the sets $I_{k,l}$, $1\le k\le 8$ and $1\le l\le 3$,  where $I_{k,l}$ are the sets defined by (\ref{eq1_3}). 

For $(c,\gamma)\in K\cap I_{k,l}$ with $1\le k\le 4$ and $l=1$, let
\[
    (u_{\theta}(c,\gamma,\beta), u_{\phi}(c,\gamma,\beta))=\frac{1}{\sin\theta}\left(U^{c,\gamma}+\beta_1 V^3_{c,\gamma}+\beta_2 V^4_{c,\gamma}+V(c,\gamma,0, 0, \beta_1,\beta_2)\right) ,
\]
where $\beta=(\beta_1,\beta_2) \in B_{\delta}(0)$, $\delta,  V_{c,\gamma}^3, V^4_{c,\gamma}$ and $V(c,\gamma,0, 0,\beta_1,\beta_2)$ are as in Theorem \ref{sec32:thm:1}, Theorem \ref{sec33:thm:1}, Theorem \ref{sec33:thm:1'} and Theorem \ref{sec34:thm:1}.

For $(c,\gamma)\in K\cap  I_{k,l}$ with $1\le k\le 4$ and $l=2,3$, let
\[
    (u_{\theta}(c,\gamma,\beta), u_{\phi}(c,\gamma,\beta))=\frac{1}{\sin\theta}\left(U^{c,\gamma}+\beta_1 V^3_{c,\gamma}+\beta_2 V^4_{c,\gamma}+V(c,\gamma, 0, \beta_1,\beta_2)\right), 
\]
where $\beta=(\beta_1,\beta_2) \in B_{\delta}(0)$, $\delta,  V_{c,\gamma}^3, V^4_{c,\gamma}$ and $V(c,\gamma,0,\beta_1,\beta_2)$ are as in  Theorem \ref{sec32:thm:2}, Theorem \ref{sec32:thm:2'}, Theorem \ref{sec33:thm:2} and Theorem \ref{sec33:thm:2'}.

For $(c,\gamma)\in K\cap I_{k,l}$ with $5\le k\le 8$ and $1\le l\le 3$, let
\[
    (u_{\theta}(c,\gamma,\beta), u_{\phi}(c,\gamma,\beta))=\frac{1}{\sin\theta}\left(U^{c,\gamma}+\beta_1 V^3_{c,\gamma}+\beta_2 V^4_{c,\gamma}+V(c,\gamma, \beta_1,\beta_2)\right), 
\]
where $\beta=(\beta_1,\beta_2) \in B_{\delta}(0)$, $\delta, V_{c,\gamma}^3, V^4_{c,\gamma}$ and $V(c,\gamma,\beta_1,\beta_2)$ are as in Theorem \ref{sec32:thm:3}.

With  $(u_{\theta}(c,\gamma,\beta), u_{\phi}(c,\gamma,\beta))$ defined as above, $u_{r}$ defined by (\ref{eq_divefree}) and $p$ defined by (\ref{eq1_2}), the first part of Theorem \ref{thm1} follows from Theorem \ref{sec32:thm:1}- \ref{sec34:thm:1}.

For the second part of Theorem \ref{thm1}, recall $U^{c,\gamma}=\sin\theta u^{c,\gamma}$. It is not hard to check that if $(c,\gamma)\in \hat{I}$, then $U^{c,\gamma}_{\theta}(-1)>3$ or $U^{c,\gamma}_{\theta}(1)<-3$. Let $\{u^i\}$ be a sequence of solutions of (\ref{NS}) satisfying $||\sin\theta(u^i-u^{c,\gamma})||_{L^{\infty}(\mathbb{S}^2\setminus\{S,N\})}\to 0$ as $i\to \infty$. Let $U^i=\sin\theta u^i$ for all $i\in \mathbb{N}$. We have $||U_{\theta}^i-U^{\mu,\gamma}_{\theta}||_{L^{\infty}(-1,1)}\to 0$. By Theorem 1.3 of \cite{LLY1}, $U^i(\pm 1)$ must exists and is finite for every $i$. If $U^{c,\gamma}(-1)>3$, $U^{i}_{\theta}(-1)>3$ for large $i$.  If $U^{c,\gamma}(1)<-3$, $U^{i}_{\theta}(1)<-3$ for large $i$. Then by Theorem 1.4 in \cite{LLY1}, $U^i_{\phi}$ must be constants for large $i$, the theorem is then proved. 
\qed

\end{document}